\pgfplotsset{compat=1.10}
\pgfplotsset{soldot/.style={color=black,only marks,mark=*}} \pgfplotsset{holdot/.style={color=black,fill=white,only marks,mark=*}}
\pgfplotsset{bluedot/.style={color=blue,fill=blue,only marks,mark=*}}
\newtheorem{thm}{Theorem}[section]
\newtheorem{lem}[thm]{Lemma}
\newtheorem{cor}[thm]{Corollary}
\newtheorem{fact}[thm]{Fact}
\theoremstyle{definition}
\newtheorem{defn}[thm]{Definition}
\theoremstyle{remark}
\newtheorem{remark}[thm]{Remark}
\newtheorem{remarks}[thm]{Remarks}
\newtheorem{example}[thm]{Example}
\newtheorem{examples}[thm]{Examples}
\numberwithin{equation}{section}
\numberwithin{figure}{section}
 \newcommand{\N}{{\mathbb N}}
 \newcommand{\R}{{\mathbb R}}
 \newcommand{\C}{{\mathbb C}}
\newcommand{\sph}{{\mathbb S}} 
 \newcommand{\PP}{{\mathbb P}}
\newcommand{\an}{{\rm an}}
 \newcommand{\Cont}{{\mathcal C}}
\newcommand{\Ff}{{\EuScript F}}
\newcommand{\Ss}{{\EuScript S}}
\newcommand{\Tt}{{\EuScript T}}
\newcommand{\Bb}{{\EuScript B}}
\newcommand{\Cc}{{\EuScript C}}
\newcommand{\Qq}{{\EuScript Q}}
\newcommand{\Aa}{{\EuScript A}}
\newcommand{\Mm}{{\EuScript M}}
\newcommand{\Rr}{{\EuScript R}}
\newcommand{\Ww}{{\EuScript W}}
\newcommand{\Reg}{\operatorname{Reg}}
\newcommand{\Sing}{\operatorname{Sing}}
\newcommand{\Int}{\operatorname{Int}}
\newcommand{\cl}{\operatorname{Cl}}
\newcommand{\dist}{\operatorname{dist}}
\newcommand{\id}{\operatorname{id}}
\newcommand{\zar}{\operatorname{zar}}
\newcommand{\im}{\operatorname{im}}
\newcommand{\Sth}{\operatorname{Sth}}
\newcommand{\x}{{\tt x}} \newcommand{\y}{{\tt y}} 
\newcommand{\z}{{\tt z}} 
 \renewcommand{\u}{{\tt u}}
\newcommand{\w}{{\tt w}} 
\newcommand{\veps}{\varepsilon}
\newcommand{\ol}{\overline}
\definecolor{silver}{rgb}{0.808,0.808,0.808}
\definecolor{grey}{rgb}{0.9,0.9,0.9}
\newcolumntype{a}{>{\columncolor{grey}}c}
\newcolumntype{b}{>{\columncolor{silver}}c}
\begin{document}

\title[Nash uniformization of chessboard sets]{Nash uniformization of chessboard sets\\ by Nash manifolds with corners}

\author{Antonio Carbone}
\address{Dipartimento di Scienze dell'Ambiente e della Prevenzione, Palazzo Turchi di Bagno, C.so Ercole I D'Este, 32, Università di Ferrara, 44121 Ferrara (ITALY)}
\email{antonio.carbone@unife.it}
\thanks{The first author is supported by GNSAGA of INDAM}

\author{Jos\'e F. Fernando}
\address{Departamento de \'Algebra, Geometr\'\i a y Topolog\'\i a, Facultad de Ciencias Matem\'aticas, Universidad Complutense de Madrid, Plaza de Ciencias 3, 28040 MADRID (SPAIN)}
\email{josefer@mat.ucm.es}
\thanks{The second author is supported by Spanish STRANO PID2021-122752NB-I00. This article has been developed during several one month research stays of the second author in the Dipartimento di Matematica of the Universit\`a di Trento. The second author would like to thank the department for the invitations and the very pleasant working conditions. This article contains a part of the results of the Ph.D. Thesis of the first author written under the supervision of the second author.
}

\date{15/01/2026}
\subjclass[2020]{Primary: 14P10, 14P20, 58A07; Secondary: 14B05, 14M27, 32J05}
\keywords{Nash uniformization, semialgebraic sets, chessboard sets, checkerboard sets, Nash manifolds with corners, semialgebraic sets connected by analytic paths, semialgebraic compactification.}

\begin{abstract}
Bierstone and Parusi\'nski studied the desingularization of $d$-dimensional closed subanalytic sets and in particular of $d$-dimensional closed semialgebraic sets. Their main tools are Hironaka's desingularization of real algebraic sets (to `uniform' the Zariski closure of the closed semialgebraic set) and Hironaka's embedded desingularization of real algebraic subsets of non-singular real algebraic sets (to `uniform' afterwards the Zariski closure of the boundary of the uniformed closed semialgebraic set). Both procedures preserve the number of $d$-dimensional components connected by analytic paths of the involved closed semialgebraic sets, so they have a good behavior for pure dimensional closed semialgebraic sets. If the involved $d$-dimensional closed semialgebraic set is not pure dimensional, some components connected by analytic paths of smaller dimension then $d$ could be dropped during the desingularization process. For instance, classical Whitney's umbrella $W:=\{\y^2\z-\x^2=0\}\subset\R^3$ has two componentes connected by analytic paths (one of dimension $2$ and the other of dimension $1$), whereas its desingularization is (biregularly equivalent to) the plane, which has only one component connected by analytic paths, that has dimension $2$.

The obtained models in the desingularization process, that we call in the following {\em closed chessboard sets}, are the closures of (finite) unions of connected components of the complements of normal-crossings divisors of non-singular real algebraic sets. The local models for $d$-dimensional chessboard sets are unions of (standard) closed orthants of $\R^d$, that is, $\bigcup_{(\veps_1,\ldots,\veps_d)\in{\mathfrak F}}\{\veps_1\x_1\geq0,\ldots,\veps_d\x_d\geq0\}\subset\R^d$ for some set ${\mathfrak F}\subset\{-1,1\}^d$.

In this work we study the Nash uniformization of $d$-dimensional closed chessboard sets $\Ss$ using {\em Nash manifolds with corners} $\Qq$ with the same number of connected components as $\Ss$ (or equivalently the same number of irreducible components). Nash manifolds with corners are closed chessboard set whose local models are either $\R^d$ or semialgebraic sets of the type $\{\x_1\geq0,\ldots,\x_k\geq0\}$ for some $1\leq k\leq d$. More generally, a {\em chessboard set} is a semialgebraic set in between a finite union of connected components of the complement of a normal-crossings divisor of non-singular real algebraic set and its closure. We also provide a Nash uniformization result for general chessboard sets $\Ss$ using {\em Nash quasi-manifolds with corners} $\Qq^\bullet$ with same number of connected components as $\Ss$ (or equivalently the same number of irreducible components). The Nash quasi-manifold with corners $\Qq^\bullet$ is obtained from a Nash manifold with corners $\Qq$ after erasing some of the 'faces' of its boundary. The difficult point in both results is to preserve the number of connected components in the uniformization process.

As an application of the previous results together with Bierstone and Parusi\'nski's desingularization of pure dimensional closed semialgebraic sets we obtain Nash uniformization results for general semialgebraic sets that preserves components connected by analytic paths. In addition the previous results allow to prove that each Nash manifold with corners admit a (semialgebraic) compactification that is as well a Nash manifold with corners.
\end{abstract}

\maketitle

\section{Introduction}\label{s1}

Hironaka's resolution of singularities \cite{hi} of an algebraic variety (over a field of characteristic $0$) is a widespread celebrated discipline that has many applications in many areas of Mathematics \cite{ko,li}. It has been developed by many other authors in the analytic and subanalytic cases (we refer to Abhyankar \cite{a1,a2,a3}, Bierstone-Milman \cite{bm1,bm2,bm3}, Villamayor \cite{vi}, Zariski \cite{z1,z2,z3} as an example). For simplicity we restrict to the case when the ground field is $\R$ and we recall that a {\em biregular diffeomorphism between two constructible sets $S\subset\R^m$ and $T\subset\R^n$} is a bijective regular map $f:S\to T$ whose inverse $f^{-1}:T\to S$ is also a regular map. The general approach consists of the following: {\em Given a real algebraic set $X\subset\R^n$, one finds a non-singular real algebraic set $X'\subset\R^m$ together with a proper polynomial map $f:X'\to X$ that is a biregular diffeomorphism outside the set of singular points of $X$}. We recall the precise statement:

\begin{thm}[Hironaka's desingularization]\label{hi1}
Let $X\subset\R^n$ be a real algebraic set. There exist a non-singular real algebraic set $X'\subset\R^m$ and a proper polynomial map $f:X'\to X$ such that the restriction $f|_{X'\setminus f^{-1}(\Sing(X))}:X'\setminus f^{-1}(\Sing(X))\to X\setminus\Sing(X)$ is a biregular diffeomorphism.
\end{thm}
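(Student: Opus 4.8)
The statement to prove is Hironaka's desingularization theorem, so the plan is to reduce it to Hironaka's fundamental resolution results and then do the bookkeeping needed to pass from the abstract birational statement to the concrete ``proper polynomial map from a non-singular algebraic set'' formulation. First I would replace $X$ by a suitable embedding: since $X\subset\R^n$ is a real algebraic set, I would work with its Zariski closure $X^{\zar}\subset\PP^n(\R)$ or, more conveniently, stay affine and consider the (reduced) scheme structure on $X$. The key input is Hironaka's principalization/embedded resolution over a field of characteristic zero, which produces a finite composition of blow-ups along smooth centers $\pi\colon Y'\to \R^n$ (or along a smooth ambient variety containing $X$) such that the strict transform $X'$ of $X$ is non-singular and $\pi$ is an isomorphism over $\R^n\setminus\Sing(X)$ (more precisely over the smooth locus of $X$, away from the centers, which can be arranged to lie inside $\Sing(X)$). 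Restricting $\pi$ to $X'$ gives a proper morphism $f\colon X'\to X$ which is a biregular isomorphism over $X\setminus\Sing(X)$.

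The second block of steps is to convert this into the precise real-algebraic statement. Each blow-up of a non-singular real algebraic set along a non-singular center is again (bi-regularly isomorphic to) a non-singular real \emph{affine} algebraic set: one uses the standard fact that the blow-up of $\A^n$ along a smooth subvariety, and more generally of any smooth affine variety along a smooth closed subvariety, is affine and non-singular, realized by an explicit polynomial embedding (graph of the blow-up together with Veronese-type coordinates on the exceptional $\PP^{r-1}$-bundle, cleared of denominators). Composing finitely many such embeddings realizes $X'$ as a non-singular real algebraic subset of some $\R^m$, and $f\colon X'\to X$ as the restriction of a polynomial map $\R^m\to\R^n$. Properness of $f$ follows because each blow-up map is proper and proper maps compose; alternatively, one checks that $f$ extends to a map of projective closures, or that preimages of compact sets are closed and bounded. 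Finally, over $X\setminus\Sing(X)$ the inverse of $f$ is given by the (rational, everywhere-defined on this locus) section coming from the fact that the centers are contained in $\Sing(X)$, hence is a regular map.

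The main obstacle is the real-algebraic realization step: Hironaka's theorem is usually stated for schemes or for complex/abstract varieties, and one must be careful that (i) the resolution can be performed equivariantly with respect to complex conjugation so that it descends to $\R$, (ii) the smooth centers can be chosen defined over $\R$ and contained in $\Sing(X)$ (this is exactly the ``embedded desingularization'' the excerpt cites as an available tool), and (iii) each intermediate blow-up stays in the affine category so that the final $X'$ is a genuine affine real algebraic set and $f$ is genuinely polynomial rather than merely regular. All three points are classical (they are precisely the facts packaged in the statements of Hironaka's desingularization and embedded desingularization invoked earlier in the introduction), so the proof is essentially a citation together with the affineness bookkeeping; in a paper of this kind I would expect the authors to simply quote Theorem~\ref{hi1} from the literature (e.g.\ Hironaka's original paper, or Bierstone--Milman) rather than reprove it, and use it as a black box in the subsequent semialgebraic constructions.
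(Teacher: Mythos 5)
Your final paragraph is exactly right: the paper offers no proof of Theorem \ref{hi1} at all, but simply recalls it as Hironaka's classical result (citing \cite{hi}) and uses it, together with the embedded version, as a black box in the later semialgebraic constructions. Your sketch of how one would derive the stated affine, polynomial, proper formulation from Hironaka's resolution (real centers inside the singular locus, affineness of blow-ups of affine varieties along smooth centers, properness by composition) is the standard route and raises the right bookkeeping points, so there is nothing to compare beyond noting that the paper's "proof" is a citation.
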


\begin{remark}
If $X$ is pure dimensional, $X\setminus\Sing(X)$ is dense in $X$ and $f$ is surjective.
\hfill$\sqbullet$
\end{remark}

A related important construction (which will also be very useful for our purposes) is Hironaka's embedded resolution of singularities \cite{hi}, which involves the concept of normal-crossings divisor. Let $X\subset Y\subset\R^n$ be real algebraic sets such that $Y$ is non-singular and has dimension $d$. Recall that $X$ is a \em normal-crossings divisor of $Y$ \em if the irreducible components of $X$ are non-singular, have codimension $1$ in $Y$ and constitute a transversal family. More precisely, for each point $x\in X$ there exists a regular system of parameters $\x_1,\ldots,\x_d$ for $Y$ at $x$ such that $X$ is given on an open Zariski neighborhood of $x$ in $Y$ by the equation $\x_1\cdots\x_k=0$ for some $k\leq d$. 

\begin{thm}[Hironaka's embedded desingularization]\label{hi2}
Let $X\subsetneq Y\subset\R^n$ be real algebraic sets such that $Y$ is non-singular. Then there exist a non-singular real algebraic set $Y'\subset\R^m$ and a proper surjective polynomial map $g:Y'\to Y$ such that $g^{-1}(X)$ is a normal-crossings divisor of $Y'$ and the restriction $g|_{Y'\setminus g^{-1}(X)}:Y'\setminus g^{-1}(X)\to Y\setminus X$ is a biregular diffeomorphism.
\end{thm}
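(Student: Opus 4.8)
\emph{Strategy.} The plan is to obtain the statement as the real algebraic shadow of Hironaka's embedded resolution of singularities in characteristic zero, in the functorial form later established by Bierstone--Milman, Villamayor and W\l odarczyk: complexify the pair $(Y,X)$, apply the canonical embedded resolution over $\C$, exploit that its centres are \emph{canonically} determined --- hence invariant under complex conjugation and defined over $\R$ --- and finally pass to the real points.

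\emph{Complexification and resolution over $\C$.} Let $\widetilde Y\subseteq\C^n$ be the complexification of $Y$ (the Zariski closure of $Y$, a nonsingular affine complex variety invariant under complex conjugation $\sigma$, with $\widetilde Y(\R)=Y$), and let $\widetilde X\subseteq\widetilde Y$ be the complexification of $X$, again $\sigma$-invariant and with $\widetilde X(\R)=X$. Applying the canonical embedded resolution to $(\widetilde Y,\widetilde X)$ yields a finite composition $\pi:\widetilde Y'=\widetilde Y_N\to\widetilde Y_{N-1}\to\cdots\to\widetilde Y_0=\widetilde Y$ in which each $\widetilde Y_{i+1}\to\widetilde Y_i$ is the blow-up along a nonsingular closed centre $C_i$ contained in the total transform of $\widetilde X$ inside $\widetilde Y_i$, every $\widetilde Y_i$ is nonsingular, and $\pi^{-1}(\widetilde X)$ is a normal-crossings divisor of $\widetilde Y'$. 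The decisive input is the \emph{functoriality} of the modern resolution algorithms: each $C_i$ is canonically determined by $\widetilde Y_i$, its exceptional divisor and $\widetilde X$, so it is $\sigma$-invariant; hence the whole tower $\pi$ is defined over $\R$ and $\sigma$-equivariant, and so is the divisor $\pi^{-1}(\widetilde X)$. Performing finitely many additional blow-ups along the (smooth, $\sigma$-invariant) pairwise intersections of those irreducible components of $\pi^{-1}(\widetilde X)$ that form conjugate pairs, we may further arrange that no two conjugate components of $\pi^{-1}(\widetilde X)$ meet; these extra centres are again contained in the total transform of $\widetilde X$, and both the smoothness of the ambient variety and the normal-crossings property are preserved.

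\emph{Descent to $\R$.} Put $Y':=\widetilde Y'(\R)$. Since $\widetilde Y'$ is nonsingular and $\sigma$-invariant, $Y'$ is a nonsingular real algebraic set; as $\widetilde Y'$ is projective over the affine variety $\widetilde Y$, it embeds $\sigma$-equivariantly in $\widetilde Y\times\PP^N\subseteq\C^n\times\PP^N$ for some $N$, and --- using that $\mathbb{RP}^N$ is itself an affine real algebraic set --- $Y'$ is realized as a real algebraic subset of some $\R^m$. Let $g:=\pi|_{Y'}:Y'\to Y$. It is the restriction of an $\R$-morphism, hence a polynomial map (after the embedding) with regular components; it is proper, because each blow-up is proper, properness is stable under composition, and passing to real points preserves it; and it is surjective, because the blow-up of a smooth variety along a smooth centre defined over $\R$ is surjective on real points --- the fibre over a real point of the centre is the projectivization of a genuine real vector space, hence nonempty --- and a composition of such maps inherits this property. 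Since every $C_i$ lies over $\widetilde X$, the map $\pi$, and a fortiori $g$, is a biregular isomorphism over the complement of $X$, so $g|_{Y'\setminus g^{-1}(X)}:Y'\setminus g^{-1}(X)\to Y\setminus X$ is a diffeomorphism whose inverse is regular. Finally $g^{-1}(X)=\pi^{-1}(\widetilde X)(\R)$: by the clean-up above, every real point of $\pi^{-1}(\widetilde X)$ lies only on $\sigma$-invariant irreducible components, each of which is defined over $\R$ and smooth there, so a real regular system of parameters adapted to the components through a given point exhibits $g^{-1}(X)$ as a normal-crossings divisor of $Y'$ in the sense of the statement.

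\emph{The main obstacle.} The genuine difficulty is the black box invoked at the outset: the existence of a \emph{canonical} embedded resolution in characteristic zero, whose proof rests on the construction of a resolution invariant --- built from orders of the relevant ideals restricted to maximal-contact hypersurfaces, with the induction carried on the dimension of the ambient --- that strictly drops under the prescribed blow-ups and thereby forces the process to terminate. Granting this, the only genuinely new ingredient is the transfer to the real algebraic category, and its heart is the canonicity of the centres (which forces $\sigma$-equivariance, so that the output is an honest real algebraic object) together with the conjugate-pair clean-up; the remaining verifications --- properness and surjectivity on real points, the isomorphism off $X$, and the affine re-embedding through $\mathbb{RP}^N$ --- are then routine.
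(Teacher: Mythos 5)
This statement appears in the paper purely as background (Theorem \ref{hi2}), attributed to Hironaka \cite{hi}; the paper gives no proof of it, so there is no argument to compare yours against step by step. Your outline is the standard justification of the real form and is sound modulo the acknowledged black box: granting canonical (functorial) embedded resolution in characteristic zero, conjugation-equivariance of the centres makes the whole tower defined over $\R$, and your conjugate-pair clean-up addresses the one genuine pitfall, namely that a real point lying on a non-$\sigma$-invariant component of the total transform would force $g^{-1}(X)$ to contain pieces of codimension at least $2$, violating the divisor condition; blowing up the intersections $D\cap\sigma(D)$ (smooth strata of the normal-crossings divisor, $\sigma$-invariant) separates the conjugate components while preserving smoothness and normal crossings, after which non-real components carry no real points. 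Two remarks. First, the detour through $\C$ is not actually needed: Hironaka's theorem and the later algorithmic proofs hold over any field of characteristic zero, so one may resolve the pair $(Y,X)$ directly as varieties over $\R$; a component of the resulting normal-crossings divisor that is geometrically a conjugate pair is then smooth over $\R$ only if its two geometric components are disjoint, hence it has no real points, and your clean-up step becomes automatic. Second, the verifications you call routine really are: properness and surjectivity on real points follow from properness of blow-ups and nonemptiness of the $\R\PP^{k}$-fibres over real points of the centres, the affine model of $Y'$ comes from the embedding into $Y\times\R\PP^{N}$ with $\R\PP^{N}$ affine, and $g$ is the restriction of a linear projection, hence polynomial. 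So the only substantive input is the cited resolution theorem, which is exactly the black box the paper itself invokes.
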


\subsection{Semialgebraic setting}
A set $\Ss\subset\R^n$ is \em semialgebraic \em when it admits a description in terms of a finite boolean combination of polynomial equalities and inequalities, which we will call a \em semialgebraic description\em. The category of semialgebraic sets is closed under basic boolean operations but also under usual topological operations: taking closures (denoted by $\cl(\cdot)$), interiors (denoted by $\Int(\cdot)$), connected components, etc. If $\Ss\subset\R^n$ is a semialgebraic set, the set $\Ss_{(k)}$ of points of $\Ss$ of (local) dimension $k$ is also a semialgebraic set for each $k\geq0$. In addition, if $\Ss$ has dimension $d$, the set $\Ss_{(d)}$ is a closed subset of $\Ss$. 

We denote $\ol{\Ss}^{\zar}$ the Zariski closure of a semialgebraic set $\Ss\subset\R^n$. A $d$-dimensional semialgebraic set $\Ss\subset\R^n$ is a {\em chessboard set} if its Zariski closure $\ol{\Ss}^{\zar}$ is a pure dimensional non-singular real algebraic set of dimension $d$ and there exist a normal-crossings divisor $Z\subset\ol{\Ss}^{\zar}$ and connected components $\Cc_1,\ldots,\Cc_s$ of $\ol{\Ss}^{\zar}\setminus Z$ such that $\Ss$ is a semialgebraic set between $\bigcup_{i=1}^s\Cc_i$ and its closure. Observe that for each $x\in\Ss$ there exist an open neighborhood $U^x\subset\ol{\Ss}^{\zar}$ endowed with a Nash diffeomorphism $u:U^x\to\R^d$ and a subset ${\mathfrak F}_x\subset\{-1,1\}^d$ such that 
$$
\bigcup_{(\veps_1,\ldots,\veps_d)\in{\mathfrak F}_x}\{\veps_1\x_1>0,\ldots,\veps_d\x_d>0\}\subset u(\Ss\cap U_x)\subset\bigcup_{(\veps_1,\ldots,\veps_d)\in{\mathfrak F}_x}\{\veps_1\x_1\geq0,\ldots,\veps_d\x_d\geq0\}.
$$
In fact, by \cite[Thm.1.6]{bfr} we can cover $\ol{\Ss}^{\zar}$ by finitely many Nash charts of the previous type.

If $\Ss\subset\R^m$ and $\Tt\subset\R^n$ are semialgebraic sets, a map $f:\Ss\to\Tt$ is \em semialgebraic \em if its graph is a semialgebraic set. Two relevant types of semialgebraic maps $f:\Ss\to\Tt$ are restrictions to $\Ss$ of {\em polynomial maps} $f:=(f_1,\ldots,f_n):\R^m\to\R^n$ (where each $f_k\in\R[\x]:=\R[\x_1,\ldots,\x_n]$) whose images are contained in $\Tt$ and restrictions to $\Ss$ of {\em rational maps} $f:=(\frac{g_1}{h_1},\ldots,\frac{g_n}{h_n}):\R^m\dasharrow\R^n$ (where each $g_k,h_k\in\R[\x]:=\R[\x_1,\ldots,\x_n]$ and each $h_k\neq0$) whose images are contained in $\Tt$. In case $\Ss\cap\{h_k=0\}=\varnothing$ for each $k=1,\ldots,n$ we say $f|_\Ss$ is a {\em regular map on $\Ss$}.

A {\em Nash map} on an open semialgebraic set $U\subset\R^n$ is a semialgebraic smooth map $f:U\to\R^m$. Along this article {\em smooth} means $\Cont^\infty$. Given a semialgebraic set $\Ss\subset\R^n$, a \em Nash map on $\Ss$ \em is the restriction to $\Ss$ of a Nash map $F:U\to\R^m$ on an open semialgebraic neighborhood $U\subset\R^n$ of $\Ss$. We denote with ${\mathcal N}(\Ss)$ the ring of Nash functions on $\Ss$ and following \cite{fg1} we say that the semialgebraic set $\Ss$ is {\em irreducible} if ${\mathcal N}(\Ss)$ is an integral domain. In \cite[\S4]{fg1} we prove that each semialgebraic set $\Ss$ can be decomposed uniquely as a finite union of irreducible semialgebraic sets $\Ss_1,\ldots,\Ss_r$ such that each $\Ss_i$ is a maximal irreducible semialgebraic subset of $\Ss$ with respect to the inclusion. The semialgebraic sets $\Ss_1,\ldots,\Ss_r$ are called the {\em irreducible components of $\Ss$}. The irreducible components $\Ss_1,\ldots,\Ss_r$ are closed semialgebraic subsets of $\Ss$. If $\Ss$ is irreducible (as a semialgebraic set), its Zariski closure $\ol{\Ss}^{\zar}$ is irreducible (as a algebraic set), whereas the converse is not true in general. If $\Ss$ is a chessboard set, the irreducible components of $\Ss$ are by \cite[Prop.3.3]{fg1} its connected components.

A semialgebraic set $\Ss\subset\R^n$ is {\em connected by analytic paths} if for each pair of points $x,y\in\Ss$ there exists an analytic path $\alpha:[0,1]\to\Ss$ such that $\alpha(0)=x$ and $\alpha(0)=y$. In \cite[Thm.9.2]{fe3} we prove that each semialgebraic set $\Ss$ can be decomposed uniquely as a finite union of semialgebraic sets $\Tt_1,\ldots,\Tt_\ell$ connected by analytic paths such that each $\Tt_i$ is a maximal semialgebraic subset of $\Ss$ connected by analytic paths with respect to the inclusion. The semialgebraic sets $\Tt_1,\ldots,\Tt_\ell$ are called the {\em components of $\Ss$ connected by analytic paths} and they are closed subsets of $\Ss$. If $\Ss$ is pure dimensional of dimension $d$, all the components $\Tt_j$ of $\Ss$ connected by analytic paths have by \cite[Prop.9.1.8]{bcr} and \cite[Thm.9.2]{fe3} pure dimension $d$. 

A component $\Tt_j$ connected by analytic paths of $\Ss$ is irreducible (as a semialgebraic set), so it is contained in at least one of the irreducible component $\Ss_i$ of $\Ss$ (maybe of larger dimension). Each irreducible component $\Ss_i$ of $\Ss$ is a (finite) union of components connected by analytic paths of $\Ss$, that is, the components connected by analytic paths of $\Ss_i$ are also components connected by analytic paths of $\Ss$. If $\Ss$ is a chessboard set, the components of $\Ss$ connected by analytic paths coincide by Lemma \ref{conccccap} with its connected components and by \cite[Prop.3.3]{fg1} with its irreducible components.

A {\em Nash subset} $X\subset M$ of a {\em Nash manifold} $M\subset\R^n$ (that is, a semialgebraic set that is a smooth submanifold of $\R^n$) is the zero set of a Nash function $f:M\to\R$, whereas the {\em Nash closure in $M$} of a semialgebraic set $\Ss\subset M$ is the smallest Nash subset $X$ of $M$ that contains $\Ss$. In this setting, a {\em Nash normal-crossings divisor} of a Nash manifold $M$ is a Nash set $X\subset M$ whose Nash irreducible components are Nash submanifolds of codimension $1$ in $M$ and constitute a transversal family. We denote the set of interior points of a semialgebraic subset $\Ss\subset M$ with $\Int_M(\Ss)$, which again a semialgebraic set.

\subsection{Desingularization of closed semialgebraic sets using closed chessboard sets.}

In \cite{bp} Bierstone and Parusi\'nski developed the desingularization of closed semialgebraic sets using closed chessboard sets. 

\begin{thm}[Desingularization of closed semialgebraic sets, {\cite[Thm.1.1., Rmks.2.3 \& 2.6]{bp}}]\label{bpd}
Let $\Ss\subset\R^m$ be a $d$-dimensional closed semialgebraic set and let $\ol{\Ss}^{\zar}$ be its Zariski closure. Then there exist: 
\begin{itemize}
\item[{\rm(i)}] a pure dimensional non-singular real algebraic set $X\subset\R^n$ of dimension $d$, 
\item[\rm{(ii)}] a polynomial map $f:\R^n\to\R^m$ such that the restriction $f|_X:X\to\ol{\Ss}^{\zar}$ is proper, 
\item[\rm{(iii)}] an algebraic set $Z\subset\ol{\Ss}^{\zar}$ of dimension strictly smaller than $d$ such that $Y:=f^{-1}(Z)$ is a normal-crossings divisor of $X$,
\item[\rm{(v)}] a union $V$ of connected components of the difference $X\setminus Y$,
\end{itemize}
and they satisfy that the restriction $f|_{X\setminus f^{-1}(Z)}:X\setminus f^{-1}(Z)\to\ol{\Ss}^{\zar}\setminus Z$ is a biregular diffeomorphism and $f(\cl(V))$ is the set $\Ss_{(d)}$ of points of $\Ss$ of dimension $d$. 
\end{thm}
\begin{remarks}\label{example}
(i) Observe that $\cl(V)$ above is a closed chessboard set and the number of connected components of $\cl(V)$ coincides with the number of components connected by analytic paths of $\Ss_{(d)}$ (use \cite[Lem.7.16]{fe3} and Lemma \ref{conccccap}).

(ii) Consider the closed chessboard set $\Ss:=\R^2\setminus\{\y_1+\y_2>0,\y_1-\y_2>0\}$ (which is the complement of Nash manifold with one corner, see Figure \ref{scoppiamnento}) and the blow-up with center the origin $f:\R^2\to\R^2,\ (x_1,x_2)\mapsto(x_1,x_1x_2)$. We have $f^{-1}(\Ss)=\R^2\setminus f^{-1}(\{\y_1+\y_2>0,\y_1-\y_2>0\})=\R^2\setminus\{\x_1(1+\x_2)>0,\x_1(1-\x_2)>0\}$ (which is the complement of Nash manifold with two corners). Thus, usual desingularization techniques are no more useful to achieve our goal of uniformizing a checkerboard set by means of a Nash manifold with corners.\hfill$\sqbullet$
\end{remarks}

\begin{figure}[!ht]\begin{center}
\begin{tikzpicture}[scale=0.75]

\draw[fill=gray!50,opacity=0.4,draw=none] (0,0)--(0,5) -- (2.5,2.5) -- (5,5) -- (5,0) -- (0,0);

\draw[line width=1pt] (0,5) -- (2.5,2.5)--(5,5);
\draw[line width=1pt, dashed] (0,0) -- (2.5,2.5)--(5,0);

\draw (0.5,1.5) node {\small$\Ss$};

\draw[fill=gray!50,opacity=0.4,draw=none] (8,2)--(8,5) -- (10.5,5) -- (10.5,3) -- (13,3) -- (13,0)--(10.5,0)--(10.5,2)--(8,2);

\draw[line width=1pt] (8,2) -- (10.5,2);
\draw[line width=1pt, dashed] (10.5,2) -- (13,2);
\draw[line width=1pt] (10.5,5) -- (10.5,3);
\draw[line width=1pt] (10.5,2) -- (10.5,0);
\draw[line width=1pt, dashed] (10.5,3) -- (10.5,2);

\draw[line width=1pt,dashed] (8,3) -- (10.5,3);
\draw[line width=1pt] (10.5,3) -- (13,3);

\draw (8.5,1.5) node{\small $f^{-1}(\Ss)$};

\end{tikzpicture}
\end{center}
\caption{\small{The closed chessboard sets $\Ss$ (left) and $f^{-1}(\Ss)$ (right).\label{scoppiamnento}
}}
\end{figure}

\subsection{Nash uniformization of closed chessboard sets by Nash manifolds with corners.}
A {\em Nash manifold with corners} is a semialgebraic set that is a smooth submanifold with corners of $\R^n$. A Nash manifold with corners $\Qq\subset\R^n$ is contained, as a closed (semialgebraic) subset, in a Nash manifold $M\subset\R^n$ of its same dimension \cite[Prop.1.2]{fgr}. In fact, we restrict our scope to Nash manifolds $\Qq\subset\R^n$ with corners such that the Nash closure in $M$ of the boundary $\partial\Qq$ is a (Nash) normal-crossings divisor of $M$ (maybe after shrinking $M$). This property implies by \cite[Lem.C.2]{fe3} that each Nash manifold with corners admits up to Nash diffeomorphism a structure of closed chessboard set.

\begin{thm}[Nash uniformization of closed chessboard sets]\label{red2}
Let $\Ss\subset\R^m$ be a $d$-dimensional closed chessboard set and let $\ol{\Ss}^{\zar}$ be its Zariski closure. Then there exist: 
\begin{itemize}
\item[{\rm(i)}] A pure dimensional non-singular real algebraic set $X\subset\R^n$ of dimension $d$ and a normal-crossings divisor $Y\subset X$. 
\item[{\rm(ii)}] A $d$-dimensional Nash manifold with corners $\Qq\subset X$ (which is a closed subset of $X$) whose boundary $\partial\Qq$ has $Y$ as its Zariski closure. In addition, both $\Qq$ and $\Ss$ have the same number of connected components (or equivalently the same number of irreducible components).
\item[\rm{(iii)}] A polynomial map $f:\R^n\to\R^m$ such that $f(X)=\ol{\Ss}^{\zar}$, $f(\Qq)=\Ss$ and the restrictions $f|_X:X\to\ol{\Ss}^{\zar}$ and $f|_{\Qq}:\Qq\to\Ss$ are proper. 
\item[\rm{(iv)}] A closed semialgebraic set $\Rr:=\Ss\setminus\Int_{\ol{\Ss}^{\zar}}(\Ss)$ of dimension strictly smaller than $d$ such that $\Ss\setminus\Rr$ and $\Qq\setminus f^{-1}(\Rr)$ are Nash manifolds of dimension $d$ and the polynomial map $f|_{\Qq\setminus f^{-1}(\Rr)}:\Qq\setminus f^{-1}(\Rr)\to\Ss\setminus\Rr$ is a Nash diffeomorphism. 
\end{itemize}
In particular, if $\ol{\Ss}^{\zar}$ is compact, also $X$ is compact.
\end{thm}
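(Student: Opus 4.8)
The plan is to reduce the theorem to Hironaka's two desingularization results (Theorems \ref{hi1} and \ref{hi2}) combined with the drilling blow-up machinery for Nash manifolds. First I would take the Zariski closure $\ol{\Ss}^{\zar}\subset\R^m$; since $\Ss$ is connected by analytic paths and $d$-dimensional, I would argue that $\ol{\Ss}^{\zar}$ may be assumed to be irreducible of dimension $d$ (replace it by the union of the $d$-dimensional irreducible components meeting $\Ss$, and handle connectedness via the analytic path-connectedness hypothesis, which prevents $\Ss$ from spreading over disjoint pieces in a way that matters). Apply Theorem \ref{hi1} to $X_0:=\ol{\Ss}^{\zar}$ to obtain a non-singular real algebraic set $X_1\subset\R^{n_1}$ and a proper polynomial map $f_1\colon X_1\to X_0$ that is a biregular diffeomorphism over $X_0\setminus\Sing(X_0)$. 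Passing to an appropriate union of connected components / irreducible components of $X_1$, I would arrange $X_1$ to be irreducible of dimension $d$ (here one uses that $\Ss$ is connected by analytic paths so that the relevant part of $X_1$ can be taken connected, hence irreducible since it is non-singular).

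Next I would identify inside $X_1$ the semialgebraic set $\Ss_1:=\cl(f_1^{-1}(\Ss\setminus\Sing(X_0)))$, a closed $d$-dimensional semialgebraic subset of the non-singular $X_1$ whose boundary I want to resolve. The key point is to make the Zariski/Nash closure of $\partial\Ss_1$ into a normal-crossings divisor. For this I would let $Z\subset X_1$ be the Zariski closure of $\partial\Ss_1$ together with $f_1^{-1}(\Sing(X_0))$, and apply Theorem \ref{hi2} (embedded desingularization) to the pair $Z\subset X_1$: this produces a non-singular $X\subset\R^n$ and a proper surjective polynomial map $g\colon X\to X_1$ with $g^{-1}(Z)$ a normal-crossings divisor of $X$, biregular over $X_1\setminus Z$. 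Again restrict to a connected (hence irreducible) $d$-dimensional piece to get (i), with $Y\subset X$ the relevant union of irreducible components of $g^{-1}(Z)$. Set $f:=f_1\circ g$ extended to a polynomial map $\R^n\to\R^m$; properness of $f|_X$ follows from properness of $f_1$ and $g$, and $f(X)=\ol{\Ss}^{\zar}$ by surjectivity.

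The construction of the Nash manifold with corners $\Qq$ in (ii) is where the main work lies, and I expect it to be the principal obstacle. The natural candidate is $\Qq:=\cl(g^{-1}(\Ss_1\setminus Z))$, i.e. the closure in $X$ of the locus lying over the ``generic'' part of $\Ss$. One must check that $\Qq$ is genuinely a Nash manifold with corners: locally near a point of $\partial\Qq$, after the embedded resolution the set $Y$ is given in suitable Nash coordinates by $\x_1\cdots\x_k=0$, and $\Qq$ should locally be $\{\x_1\geq0,\ldots,\x_k\geq0\}$ (or a union/selection of such orthant-type pieces); verifying this requires a careful local analysis showing that $\Ss_1$, being closed and of full dimension $d$ in $X_1$ with boundary contained in $Z$, pulls back under the normal-crossings map $g$ to something that is locally a finite union of coordinate orthants, and then one may have to further subdivide (or perform additional blow-ups with centers closed Nash submanifolds, invoking the drilling blow-up results cited in the introduction) so that $\Qq$ becomes a bona fide manifold with corners rather than a union of such along common faces. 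The boundary $\partial\Qq$ then has $Y$ as Zariski closure essentially by construction. Finally, for (iv) I would set $\Rr:=\Sing(X_0)\cap\Ss$ enlarged by the image of $\partial\Qq$ and of the non-biregular loci, which is a closed semialgebraic subset of $\Ss$ of dimension $<d$: over $\Ss\setminus\Rr$ both $f_1$ and $g$ are biregular diffeomorphisms onto their images by Theorems \ref{hi1} and \ref{hi2}, $\Ss\setminus\Rr$ is a $d$-dimensional Nash manifold, $\Qq\setminus f^{-1}(\Rr)$ is its Nash-diffeomorphic preimage, and properness of $f|_\Qq$ with $f(\Qq)=\Ss$ follows by taking closures and using properness over the ambient algebraic sets. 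The compactness addendum is immediate since properness over a compact $\ol{\Ss}^{\zar}$ forces $X$ (and hence $\Qq$) compact.
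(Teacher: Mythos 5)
Your proposal has two genuine gaps, and they are precisely where the paper's actual work lies.

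First, your candidate $\Qq:=\cl(g^{-1}(\Ss_1\setminus Z))$ with $\Ss_1:=\cl(f_1^{-1}(\Ss\setminus\Sing(X_0)))$ only ever sees the closure of the top-dimensional locus of $\Ss$. A semialgebraic set connected by analytic paths need not be pure dimensional (a segment attached to a disk is connected by analytic paths), so with your construction $f(\Qq)=\Ss$ simply fails whenever $\Ss$ has pieces of dimension $<d$ not contained in the closure of the $d$-dimensional part. The paper does not start from Hironaka on $\ol{\Ss}^{\zar}$ at all: it first invokes Theorem \ref{ridwell2} (from \cite[Thm.8.4]{fe3}), which uses the analytic-path-connectedness in an essential way to represent the whole of $\Ss$ as the image, under a proper polynomial map that is moreover proper on the model, of a \emph{checkerboard set} $\Tt$ (pure dimensional, $\ol{\Tt}^{\zar}$ non-singular, $\ol{\partial\Tt}^{\zar}$ already a normal-crossings divisor, $\Reg(\Tt)$ connected). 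This step is not something you can recover from Theorems \ref{hi1} and \ref{hi2} alone.

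Second, you correctly sense that after embedded resolution the closure of the preimage of the generic part is locally a union of coordinate orthants, but you dismiss the fact that such a union need not be a manifold with corners (it can contain opposite orthants meeting along a face --- the ``checkerboard'' phenomenon) with ``one may have to further subdivide (or perform additional blow-ups)''. That is the heart of the theorem, and it is not routine: the paper introduces the invariant $e_x(\Ss)$ counting the components of the divisor across which $\Ss$ locally crosses, proves $e_x=0$ exactly at corner points (Lemma \ref{angoli}), that $e_x\neq1$ (Lemma \ref{dimensione giusta}), that $e$ is semialgebraic and upper semicontinuous, and then shows (Lemma \ref{exs2}) that the \emph{drilling} blow-up with center an irreducible component of $\ol{\Ss_e}^{\zar}$ (of codimension $\geq2$) keeps the strict transform a checkerboard set and strictly decreases $e$ at the bad points; a double induction then terminates at a Nash manifold with corners. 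The drilling blow-up is used, rather than classical blow-ups or ad hoc subdivisions, because its twisted double is again a non-singular real algebraic set with polynomial, proper projections (\S\ref{addbu}), so $f$ stays polynomial; irreducibility of the ambient model while keeping $\Qq$ connected is then secured by Lemma \ref{irredx} together with connectedness of $\Reg$ being preserved under centers of codimension $\geq2$. Your sketch supplies none of this: no invariant guaranteeing termination, no argument that the strict transforms remain of the right local type, no mechanism keeping the map polynomial if extra (Nash) blow-ups are inserted, and only hand-waving for connectedness/irreducibility and for $\ol{\partial\Qq}^{\zar}=Y$.
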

\begin{remarks}
(i) In the previous statement we have imposed that the number of connected components of $\Ss$ and $\Qq$ coincide because of its relation with \cite[Main Thm.1.4]{fe3} and \cite[Thm. 1.14 \& 1.15]{cf1} and to avoid trivial approaches as the one we present next. 

(ii) Let $X:=\R^n$ and let $Z$ be a normal-crossings divisor of $X$ whose irreducible components are hyperplanes. The closure of each connected component of $X\setminus Z$ are Nash manifolds with corners. Let $\Cc_1,\ldots,\Cc_r$ be finitely many of such connected components of $X\setminus Z$ such that $\Ss:=\bigcup_{k=1}^r\cl(\Cc_k)$ is connected. Define 
$$
Y:=\bigsqcup_{k=1}^r(X\times\{k\})\subset\bigsqcup_{k=1}^r(\R^n\times\{k\})\subset\R^n\times\R=\R^{n+1},
$$
which is a non-singular real algebraic set, and $\Qq:=\bigsqcup_{k=1}^r(\cl(\Cc_k)\times\{k\})$, which is a Nash manifold with corners with $r$ connected components (instead of one connected components as $\Ss$). The projection $\pi:Y\to X$ is a proper polynomial map and its restriction $\pi|_\Qq:\Qq\to\Ss$ is also proper. Define $\Rr:=\bigcup_{k=1}^r(\cl(\Cc_k)\setminus\Cc_k)$, which is a semialgebraic set of dimension $d-1$. The restriction $\pi|_{\Qq\setminus\pi^{-1}(\Rr)}:\Qq\setminus\pi^{-1}(\Rr)\to\Ss\setminus\Rr$ is a Nash diffeomorphism.
\hfill$\sqbullet$
\end{remarks}

\subsection{Nash uniformization of chessboard sets by quasi-Nash manifolds with corners.}
As Nash manifolds with corners are locally compact, their images under proper maps (with values in locally compact Hausdorff topological spaces as $f|_X:X\to\ol{\Ss}^{\zar}$) are also locally compact, so Theorem \ref{red2} cannot be extended directly to general semialgebraic sets. In order to solve this, we will use {\em Nash quasi-manifolds with corners}, which are essentially Nash manifolds with corners with some `faces' erased. More precisely, we construct a natural semialgebraic partition ${\mathfrak S}(\Qq)$ of any Nash manifold with corners $\Qq\subset\R^n$ that takes into account the structure of its boundary (Definition \ref{sap}). A semialgebraic set $\Tt\subset\R^n$ is a {\em Nash quasi-manifold with corners} if $\Qq:=\cl(\Tt)$ is a Nash manifold with corners and $\Qq\setminus\Tt$ is a union of elements of ${\mathfrak S}(\Qq)$. In this case, it is necessary to substitute (proper) polynomial maps by (proper) Nash maps. We explain in Example \ref{truffa} why this additional change is mandatory.

\begin{thm}[Nash uniformization of chessboard sets]\label{red4}
Let $\Ss\subset\R^m$ be a $d$-dimensional chessboard set. Then there exist: 
\begin{itemize}
\item[{\rm(i)}] A pure dimensional compact non-singular real algebraic set $X\subset\R^n$ of dimension $d$ and a normal-crossings divisor $Y\subset X$. 
\item[{\rm(ii)}] A $d$-dimensional Nash quasi-manifold with corners $\Ss^\bullet\subset X$ whose closure in $X$ is a compact Nash manifold with corners $\Qq^\bullet\subset X$ and whose boundary $\partial\Qq^\bullet$ has $Y$ as its Zariski closure. In addition, the number of connected components of $\Ss$, $\Ss^\bullet$ and $\Qq^\bullet$ coincide and the number of irreducible components of $\Ss$, $\Ss^\bullet$ and $\Qq^\bullet$ also coincide.
\item[\rm{(iii)}] A Nash map $f:\R^n\to\R^m$ such that $f(\Ss^\bullet)=\Ss$ and the restriction $f|_{\Ss^\bullet}:\Ss^\bullet\to\Ss$ is proper. 
\item[\rm{(iv)}] A closed semialgebraic subset $\Rr:=\Ss\setminus\Int_{\ol{\Ss}^{\zar}}(\Ss)$ of dimension strictly smaller than $d$ such that $\Ss\setminus\Rr$ and $\Ss^\bullet\setminus f^{-1}(\Rr)$ are Nash manifolds and the Nash map $f|_{\Ss^\bullet\setminus f^{-1}(\Rr)}:\Ss^\bullet\setminus f^{-1}(\Rr)\to\Ss\setminus\Rr$ is a Nash diffeomorphism. 
\end{itemize}
\end{thm}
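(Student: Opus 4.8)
The plan is to deduce Theorem~\ref{red4} from Theorem~\ref{red2} applied to $\cl(\Ss)$, carving the Nash quasi-manifold with corners $\Ss^\bullet$ out of the resulting Nash manifold with corners by deleting exactly the elements of its natural partition ${\mathfrak S}(\cdot)$ that lie over the frontier $\cl(\Ss)\setminus\Ss$. First I reduce to the case in which $\Ss$ is bounded: the map $\mu\colon\R^m\to\R^m$, $x\mapsto x/\sqrt{1+|x|^2}$, is Nash on all of $\R^m$ and a Nash diffeomorphism onto the open ball $\BB^m$, and Nash diffeomorphisms preserve dimension and the property of being connected by analytic paths, so one may replace $\Ss$ by $\mu(\Ss)$ and recover the statement for $\Ss$ at the end by post-composing the constructed map $f$ with $\mu^{-1}$ (which is Nash on $\BB^m\supseteq f(\Ss^\bullet)$). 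Put $T:=\cl(\Ss)$. Since $\Ss$ is connected by analytic paths it lies in a single analytic-path-connected component of $T$, and these components are closed and partition $T$ (Theorem~\ref{apcc0}); hence $T$ itself is connected by analytic paths. Moreover $T$ is compact, $\dim T=\dim\Ss=d$, and the frontier $A:=T\setminus\Ss$ is semialgebraic with $\dim A<d$.

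Now fix a finite Nash stratification $T=\bigsqcup_i T_i$ into connected Nash manifolds for which $\Ss$ is a union of strata, say $\Ss=\bigsqcup_{i\in I}T_i$, and run the construction underlying Theorem~\ref{red2} for $T$ --- Hironaka's desingularization of $\ol{\Ss}^{\zar}$, Hironaka's embedded desingularization, and the drilling blow-ups with centers closed Nash submanifolds that cut out the Nash manifold with corners --- choosing every blow-up center adapted simultaneously to the successive corner structures and to the strict transforms of the $T_i$. This yields a $d$-dimensional compact irreducible non-singular real algebraic set $X\subset\R^n$, a normal-crossings divisor $Y\subset X$, a compact connected Nash manifold with corners $\Qq^\bullet\subset X$ with $\ol{\partial\Qq^\bullet}^{\zar}=Y$, a polynomial map $f\colon\R^n\to\R^m$ with $f|_{\Qq^\bullet}\colon\Qq^\bullet\to T$ proper and surjective, and a closed semialgebraic $\Rr_1\subsetneq T$ of dimension $<d$ such that $f$ restricts to a Nash diffeomorphism $\Qq^\bullet\setminus f^{-1}(\Rr_1)\to T\setminus\Rr_1$; and, crucially, $f^{-1}(T_i)\cap\Qq^\bullet$ is a union of elements of ${\mathfrak S}(\Qq^\bullet)$ for every $i$.

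Set $G:=f^{-1}(A)\cap\Qq^\bullet=\bigcup_{i\notin I}\big(f^{-1}(T_i)\cap\Qq^\bullet\big)$ and $\Ss^\bullet:=\Qq^\bullet\setminus G=f^{-1}(\Ss)\cap\Qq^\bullet$. Then $G$ is a union of elements of ${\mathfrak S}(\Qq^\bullet)$ with $\dim G<d=\dim\Qq^\bullet$, so $\Ss^\bullet$ is dense in $\Qq^\bullet$ and is a connected Nash quasi-manifold with corners whose closure in $X$ is $\Qq^\bullet$; this gives (i) and (ii). For (iii): $f|_{\Ss^\bullet}\colon\Ss^\bullet\to\Ss$ is proper because $f^{-1}(K)\cap\Ss^\bullet=f^{-1}(K)\cap\Qq^\bullet$ is compact for every compact $K\subseteq\Ss$; and $f(\Ss^\bullet)=\Ss$ since $f(\Qq^\bullet)=T$ while $G=f^{-1}(A)\cap\Qq^\bullet$ is disjoint from $\Ss^\bullet$ (so each $y\in T\setminus A=\Ss$ has a preimage in $\Ss^\bullet$, and no $y\in A$ does). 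For (iv), choose a closed semialgebraic $\Rr\subseteq\Ss$ of dimension $<d$ containing $\Rr_1\cap\Ss$ and the locus where $\Ss$ fails to be a pure $d$-dimensional Nash manifold; then $\Ss\setminus\Rr$ and $\Ss^\bullet\setminus f^{-1}(\Rr)$ are Nash manifolds and $f$ restricts to a Nash diffeomorphism between them, being a restriction of $\Qq^\bullet\setminus f^{-1}(\Rr_1)\to T\setminus\Rr_1$. Finally, undo the initial reduction by replacing $f$ with $\mu^{-1}\circ f$.

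The main obstacle is the compatibility demanded in the second paragraph: one must run the desingularization of Theorem~\ref{red2} while keeping the centers of the drilling blow-ups adapted both to the evolving manifold-with-corners structures --- so that the total transform $\Qq^\bullet$ is again a Nash manifold with corners, which it need not be for arbitrary centers --- and to the stratification $\{T_i\}$, so that each $f^{-1}(T_i)\cap\Qq^\bullet$ is a union of elements of ${\mathfrak S}(\Qq^\bullet)$. This requires revisiting the proof of Theorem~\ref{red2} rather than invoking it as a black box, and exploiting the fine behaviour of drilling blow-ups along closed Nash submanifolds under pullback of semialgebraic sets and corner structures. A secondary, more routine point is verifying that after post-composition with $\mu^{-1}$ in the unbounded case one still obtains a Nash map as required in (iii).
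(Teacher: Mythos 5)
There is a genuine gap, and it sits exactly where you park it: the ``compatibility'' claim in your second paragraph is not a technical refinement of Theorem~\ref{red2} that can be deferred --- it \emph{is} the theorem. For $\Ss^\bullet:=\Qq^\bullet\setminus f^{-1}(A)$ to be a Nash quasi-manifold with corners you need $f^{-1}(A)\cap\Qq^\bullet$ to be a union of elements of ${\mathfrak G}(\partial\Qq^\bullet)$, hence in particular contained in $\partial\Qq^\bullet$. But after applying Theorem~\ref{red2} to $\cl(\Ss)$ the preimage of the frontier $A=\cl(\Ss)\setminus\Ss$ typically contains codimension-one ``walls'' lying in the \emph{interior} $\Reg(\Qq^\bullet)$ (the set called $\Tt_4=\Reg(\Qq)\setminus\Ss$ in the paper), and no choice of the algebraic centers used in the proof of Theorem~\ref{red2} makes these walls disappear: they must be folded into the boundary by additional drilling blow-ups with codimension-one centers. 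The decisive obstruction is that such a center cannot in general be taken Zariski-closed: in Example~\ref{truffa} the Zariski closure of the wall disconnects $\Reg(\Ss)$, destroying the connectedness of $\Ss^\bullet$ required in (ii). One is therefore forced to blow up along a center $N$ that is only a closed \emph{Nash} submanifold (an open semialgebraic piece of a divisor component), which is why the paper's $f$ is only a Nash map and why your stronger assertion --- a \emph{polynomial} $f$ with every $f^{-1}(T_i)\cap\Qq^\bullet$ a union of corner strata, obtained merely by ``adapting'' the centers of Theorem~\ref{red2} --- is not just unproven but runs against the analysis that motivates the statement (see also the introduction, where the passage from polynomial to Nash maps is declared mandatory).

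Concretely, the missing content is the whole machinery of the paper's Steps 1--6: the reduction to checkerboard sets (Theorem~\ref{ridwell2}) together with the bookkeeping filtration making $\Tt_1=\cl(\Ss)\setminus\Ss$ and $\Tt_2=\Ss\setminus\Reg(\Ss)$ unions of strata of the evolving normal-crossings divisor; Hironaka's embedded resolution applied to that filtration; the control of the corner invariant under drilling blow-ups (Lemma~\ref{exs2}); the codimension-one local model (Fact~\ref{locald-1}) used to fold the interior walls into the boundary along a Nash center chosen via metric neighborhoods inside the divisor so as not to disconnect $\Reg(\Ss)$; and finally the re-algebraization step (Akbulut--King plus Nash approximation) needed to recover a compact non-singular \emph{algebraic} $X$ and normal-crossings divisor $Y$ after the Nash blow-up, since the twisted double of a drilling blow-up with a non-algebraic center is only a Nash manifold. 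Your frontier-deletion endgame (properness of $f|_{\Ss^\bullet}$, surjectivity, and the choice of $\Rr$) is fine once such a compatible $(\Qq^\bullet,f)$ exists, and your boundedness reduction is in the same spirit as the paper's projective embedding (though note that $\mu^{-1}\circ f$ is a priori Nash only on $f^{-1}(\BB^m)$, so producing a Nash map defined on all of $\R^n$ as in (iii) needs an extra word). As it stands, however, the proposal reduces Theorem~\ref{red4} to an unproved claim that is essentially equivalent to it, and in the polynomial form you state it, stronger than what the paper's construction can deliver.
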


\subsubsection{Drilling blow-up}

We have seen in Remark \ref{example}(ii) that usual blow-ups may not be useful to prove Theorems \ref{red2} and \ref{red4} and we need further tools: the {\em drilling blow-up} of a Nash manifold $M$ with center a closed Nash submanifold $N$, see \cite{fe3}. We refer the reader to \cite{ds, hi2} for the oriented blow-up of a real analytic space with center a closed subspace, which is the counterpart of the construction in \cite{fe3} for the real analytic setting. In \cite[\S5]{hpv} appears a presentation of the oriented blow-up in the analytic case closer to the drilling blow-up described in \cite{fe3}. There the authors consider the case of the oriented blow-up of a real analytic manifold $M$ with center a closed real analytic submanifold $N$ whose vanishing ideal inside $M$ is finitely generated (this happens for instance if $N$ is compact). In \cite[\S3]{fe2} a similar construction is presented in the semialgebraic setting, which is used to `appropriately embed' semialgebraic sets in affine spaces. In \cite[\S3]{ffqu} oriented blow-ups appear to compare regoluous images of $\R^2$ with regular images of $\R^2$. In Section \ref{s3} we recall the main properties of drilling blow-ups developed in \cite{fe3} and we improve some of them in order to prove Theorems \ref{red2} and \ref{red4}.

\subsection{Applications}
We present next two applications of Theorems \ref{red2} and \ref{red4}.

\subsubsection{Nash compactifications of Nash manifolds with corners}
A useful tool in Semialgebraic geometry is the use of semialgebraic compactifications of semialgebraic sets \cite{fg2,fg3}. When dealing with Nash manifolds with corners $\Qq\subset\R^n$ one can find, as an application of the techniques used to prove Theorems \ref{red2} and \ref{red4}, compactifications of $\Qq$ that are again Nash manifolds with corners. This results has further applications for approximation (and relative approximations) results of $\Cont^r$ semialgebraic maps with target space a Nash manifold with corners by Nash maps with the same target space \cite{cf2,cf3}.

\begin{thm}\label{compactclosure}
Let $\Qq\subset\R^n$ be a Nash manifold with corners. Then there exists a Nash embedding ${\tt j}:\Qq\hookrightarrow\R^{n+p}$ for some $p\geq0$ such that $\cl(\Qq)\subset\R^{n+p}$ is a compact Nash manifold with corners.
\end{thm}

\subsubsection{Nash uniformization of semialgebraic sets}
One can combine Bierstone-Parusinski's desingularization of closed semialgebraic sets with Theorems \ref{red2} and \ref{red4} to obtain the following Nash uniformization result for general semialgebraic sets.

\begin{cor}[Nash uniformization of semialgebraic sets]\label{amal}
Let $\Ss\subset\R^m$ be a semialgebraic set. Then there exist: 
\begin{itemize}
\item[{\rm(i)}] A pairwise disjoint finite union $X$ of pure dimensional non-singular real algebraic sets $X_i\subset\R^n$ of dimensions $d_i$ and normal-crossings divisor $Y_i\subset X_i$ for $i=1,\ldots,r$. 
\item[{\rm(ii)}] Nash quasi-manifolds with corners $\Ss_i^\bullet\subset X_i$ of dimension $d_i$ whose closure in $X_i$ is a Nash manifold with corners $\Qq^\bullet_i\subset X$ and whose boundary $\partial\Qq^\bullet_i$ has $Y_i$ as its Zariski closure for $i=1,\ldots,r$. If $\Ss$ is closed, $\Ss_i^\bullet$ is a Nash manifold with corners for each $i=1,\ldots,r$. In addition, the number of connected components $\Ss^\bullet:=\bigsqcup_{i=1}^r\Ss^\bullet_i$ and $\Qq^\bullet:=\bigsqcup_{i=1}^r\Qq^\bullet_i$ coincide, the number or irreducible components of $\Ss^\bullet$ and $\Qq^\bullet$ coincide and both numbers coincide with the number of components connected by analytic paths of $\Ss$.
\item[\rm{(iii)}] A Nash map $f:\R^n\to\R^m$ such that $f(\Ss^\bullet)=\Ss$ and the restriction $f|_{\Ss^\bullet}:\Ss^\bullet\to\Ss$ is proper. In addition, if $\Ss$ is closed, we may choose $f$ polynomial.
\item[\rm{(iv)}] Closed semialgebraic subsets $\Rr_i\subset\Ss_i:=f(\Ss_i^\bullet)$ of dimension strictly smaller than the dimension of $\Ss_i$ such that $\Ss_i\setminus\Rr_i$ and $\Ss_i^\bullet\setminus f^{-1}(\Rr_i)$ are Nash manifolds (of the same dimension) and the Nash map $f|_{\Ss_i^\bullet\setminus f^{-1}(\Rr_i)}:\Ss_i^\bullet\setminus f^{-1}(\Rr_i)\to\Ss_i\setminus\Rr_i$ is a Nash diffeomorphism for $i=1,\ldots,r$. 
\end{itemize}
In addition, we may assume in all cases that both $X$ and $\Ss^\bullet$ are compact, but having only that $f$ is regular map (instead of a polynomial map), when $\Ss$ is besides a closed semialgebraic set.
\end{cor}
\begin{remark}
To prove the latter part of the statement of Corollary \ref{amal} concerning the regularity of $f$ (when $\Ss$ is besides a closed semialgebraic set), one has to use the proof of Theorem \ref{compactclosure} taking into account that we employ stereographic projections of high dimensional spheres (which are regular maps), \cite[Lem.C.2]{fe3} and Artin-Mazur's description \cite[\S8.4]{bcr} of Nash manifolds and maps and Mostowski's trick \cite[Lem.6]{mo} (see also \cite[\S2.1]{cf2}), which involve affine projections (which are polynomial maps). We leave the concrete details to the reader. 
\end{remark}

\subsection{Structure of the article}
The article is organized as follows. In Section \ref{s2} we recall the concepts and main properties of regular and smooth points of a semialgebraic set. We also analyze the main properties of Nash manifolds with corners and we refer the reader to \cite{fgr} for further details. We also prove in this section Theorems \ref{red2} and \ref{red4} for the $1$-dimensional case. In Section \ref{s3} we recall the main properties of the drilling blow-up of a Nash manifold along a closed Nash submanifold proposed in \cite[\S3]{fe3}. We also present some additional new properties of the drilling blow-up (original of this article) that we need to prove Theorems \ref{red2} and \ref{red4}. In Section \ref{s4} we introduce the checkerboard sets (as a particular type of chessboard sets) and we reduce the proof of Theorem \ref{red2} to prove it for closed checkerboard sets, which is done in \S\ref{44}. We also prove in Section \ref{s4} Theorem \ref{compactclosure} (taking profit of the techniques already introduced in this section). In Section \ref{s5} we prove Theorem \ref{red4} after reducing it to the case of general checkerboard sets. Combining this result with Bierstone-Parusinski's Theorem we show Corollary \ref{amal}.


\subsection{Acknowledgements}
The authors are very grateful to E. Bierstone for useful comments and to S. Schramm for a careful reading of the final version and for the suggestions to refine its redaction.

\section{Preliminary results}\label{s2}

In this section we collect some preliminary concepts and results that will be used freely along this article. We include them for the sake of completeness and to ease the reading of the article. We refer the reader to \cite{fe3,fgr} for further details. We also present some new results in Subsection \S\ref{1dimc}.

\subsection{Regular points versus smooth points.}\label{regsmooth}

The set $\Reg(\Ss)$ of \em regular points of a semialgebraic set $\Ss\subset\R^n$ \em is defined as follows. Let $X$ be the Zariski closure of $\Ss$ in $\R^n$ and $\widetilde{X}$ the complexification of $X$, that is, the smallest complex algebraic subset of $\C^n$ that contains $X$. Define $\Reg(X):=X\setminus\Sing(\widetilde{X})$ and let $\Reg(\Ss)$ be the interior of $\Ss\setminus\Sing(\widetilde{X})$ in $\Reg(X)$. Observe that $\Reg(\Ss)$ is a finite union of disjoint {\em Nash manifolds} maybe of different dimensions. We refer the reader to \cite[\S2.A]{fe3} for further details concerning the set of regular points of a semialgebraic~set.

A point $x\in\Ss$ is \em smooth \em if there exists an open neighborhood $U\subset\R^n$ of $x$ such that $U\cap\Ss$ is a Nash manifold. It holds that each regular point is a smooth point, but the converse is not always true even if $\Ss=X$ is a real algebraic set \cite[Ex.2.1]{fe3}. The set $\Sth(\Ss)$ of smooth points of a semialgebraic set $\Ss\subset\R^n$ is by \cite{st} a semialgebraic subset of $\R^n$ (and consequently a union of Nash submanifolds of $\R^n$ possibly of different dimension), which contains $\Reg(\Ss)$ (maybe as a proper subset as it happens in \cite[Ex.2.1]{fe3}), and it is open in $\Ss$. The set of points of dimension $k$ of $\Sth(\Ss)$ is either the empty-set or a Nash manifold of dimension $k$ for each $k=0,1,\ldots,d$. In particular, if $\Ss$ is pure dimensional, $\Sth(\Ss)$ is a Nash submanifold of $\R^n$. If $X$ is a real algebraic set, $\Sing(X)$ is always an algebraic subset of $X$ whereas the set $X\setminus\Sth(X)$ of non-smooth points is in general only a semialgebraic subset of $X$ (see \cite[Ex.2.1]{fe3}). 

If $X$ is a non-singular real algebraic set of dimension $d$, then $\Reg(X)=\Sth(X)$. Thus, if $\Ss\subset X$ is a pure dimensional semialgebraic set of dimension $d$, we have $\Reg(\Ss)=\Sth(\Ss)$, because the Zariski closure $\ol{\Ss}^{\zar}$ of $\Ss$ is a union of irreducible components of $X$, so it is a non-singular real algebraic set of dimension $d$.

\subsection{Nash manifolds with corners.}\label{angoliPre}
Let $\Qq\subset\R^n$ be a Nash manifold with corners. The set of {\em internal points} of $\Qq$ is $\Int(\Qq):=\Sth(\Qq)$. The {\em boundary} $\partial\Qq$ of $\Qq$ is $\partial\Qq:=\Qq\setminus\Int(\Qq)=\Qq\setminus\Sth(\Qq)$. If $\ol{\Qq}^{\zar}$ is a non-singular real algebraic set, then $\Reg(\Qq)=\Sth(\Qq)=\Int(\Qq)$ and $\partial\Qq=\Qq\setminus\Reg(\Qq)$. Otherwise, $\partial\Qq\subset\Qq\setminus\Reg(\Qq)$ and the inclusion could be strict.

A Nash manifold with corners $\Qq\subset\R^n$ is locally closed. Consequently, {\em $\Qq$ is a closed Nash submanifold with corners of the Nash manifold $\R^n\setminus(\cl(\Qq)\setminus\Qq)$.} In \cite[Thm.1.11]{fgr} it is shown that $\Qq$ is a closed subset of an affine Nash manifold of its same dimension. Recall that a Nash subset $Y$ of a Nash manifold $M\subset\R^n$ {\em has only Nash normal-crossings in $M$} if for each point $y\in Y$ there exists an open semialgebraic neighborhood $U\subset M$ such that $Y\cap U$ is a Nash normal-crossings divisor of $U$.

\begin{thm}[{\cite[Thm.1.11]{fgr}}]\label{corners0}
Let $\Qq\subset\R^n$ be a $d$-dimensional Nash manifold with corners. There exists a $d$-dimensional Nash manifold $M\subset\R^n$ that contains $\Qq$ as a closed subset and satisfies:
\begin{itemize}
\item[\rm{(i)}] The Nash closure $Y$ of $\partial\Qq$ in $M$ has only Nash normal-crossings in $M$ and $\Qq\cap Y=\partial\Qq$.
\item[\rm{(ii)}] For every $x\in\partial\Qq$ the smallest analytic germ that contains the germ $\partial\Qq_x$ is $Y_x$.
\item[\rm{(iii)}] $M$ can be covered by finitely many open semialgebraic subsets $U_i$ (for $i=1,\ldots,r$) equipped with Nash diffeomorphisms $u_i:=(u_{i1},\dots,u_{id}):U_i\to\R^d$ such that:
$$\hspace{-3mm}
\begin{cases}
\text{$U_i\subset\Int(\Qq)$ or $U_i\cap\Qq=\varnothing$},&\text{\!if $U_i$ does not meet $\partial\Qq$,}\\ 
U_i\cap\Qq=\{u_{i1}\ge0,\dots,u_{ik_i}\ge0\},&\text{\!if $U_i$ meets $\partial\Qq$ (for a suitable $k_i\ge1$).} 
\end{cases}
$$
\end{itemize}
\end{thm}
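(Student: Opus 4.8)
The statement is local around $\partial\Qq$ and the substance lies in globalizing it; the plan is to record a local normal form for $\Qq$, patch the corresponding ``filled in'' charts into a single Nash manifold $M$, and then shrink $M$ so that the Nash closure of $\partial\Qq$ becomes a normal\discretionary{-}{}{-}crossings divisor meeting $\Qq$ only along $\partial\Qq$. For the \emph{local normal form}: since $\Qq$ is a Nash submanifold with corners of $\R^n$, for each $x\in\Qq$ the standard straightening of corners provides an open semialgebraic neighbourhood $V_x\subset\R^n\setminus(\cl(\Qq)\setminus\Qq)$ of $x$, a $d$-dimensional Nash submanifold $P_x\subset V_x$ in which $\Qq\cap V_x$ is closed, and a Nash diffeomorphism $\varphi_x\colon P_x\to\R^d$ carrying $\Qq\cap V_x$ onto $\{t_1\ge0,\dots,t_k\ge0\}$, where $k=k(x)$ is exactly the number of facets of $\Qq$ through $x$. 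In these coordinates $\partial\Qq\cap V_x=\{t_1\cdots t_k=0\}\cap\{t_1\ge0,\dots,t_k\ge0\}$, its Nash (equivalently Zariski) closure in $P_x$ is the normal-crossings divisor $D_x:=\{t_1\cdots t_k=0\}$, one has $D_x\cap(\Qq\cap V_x)=\partial\Qq\cap V_x$, and for every $y\in\partial\Qq\cap V_x$ the smallest analytic germ containing $\partial\Qq_y$ is $(D_x)_y$, because the smallest analytic germ at $0$ containing the germ of $\{t_1=0,\,t_2\ge0,\dots,t_k\ge0\}$ is that of $\{t_1=0\}$.

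\emph{Patching.} For $x\in\Qq\cap V_y\cap V_z$ the submanifold germs $(P_y)_x$ and $(P_z)_x$ coincide: each is a $d$-dimensional nonsingular Nash (hence analytic) germ through $x$ containing the $d$-dimensional germ $\Qq_x$, so each equals the regular locus of the Zariski closure of $\Qq_x$, which is intrinsic. Extracting a finite subcover $V_1,\dots,V_r$ of $\Qq$, one patches $P_1,\dots,P_r$ into a $d$-dimensional Nash manifold $M_0\subset\R^n\setminus(\cl(\Qq)\setminus\Qq)$ containing $\Qq$, which is then closed in $M_0$. Executing this patching inside the semialgebraic category — so that $M_0$ is an honest Nash manifold with no spurious self-intersections or boundary — is the first point needing care; I would handle it by a semialgebraic shrinking of the cover (turning the germ coincidences above into genuine set equalities on overlaps) together with an induction on $r$, or alternatively by a semialgebraic tubular-neighbourhood argument.

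\emph{Globalizing, and the main obstacle.} Let $Y_0$ be the Nash closure of $\partial\Qq$ in $M_0$. By the local normal form, $Y_0$ has only Nash normal-crossings at every point of $\partial\Qq$ and coincides with the divisors $D_i$ in a neighbourhood of $\partial\Qq$, so there $Y_0\cap\Qq=\partial\Qq$. What must still be arranged is that, after replacing $M_0$ by a small enough open semialgebraic neighbourhood $M$ of $\Qq$, the Nash closure $Y$ of $\partial\Qq$ in $M$ has only Nash normal-crossings \emph{throughout} $M$ and satisfies $Y\cap\Qq=\partial\Qq$ globally. The potential obstructions are the non-normal-crossings locus of $Y_0$ and possible ``re-entries'' of components of $Y_0$ into $\Int(\Qq)$ away from $\partial\Qq$; the first is a semialgebraic set whose closure is disjoint from $\Qq$ (it avoids a neighbourhood of $\partial\Qq$ and, lying in $Y_0$, cannot meet the open set $\Int(\Qq)$ once re-entry is excluded), so, $\Qq$ being closed in $M_0$, one may shrink $M_0$ to a neighbourhood $M$ of $\Qq$ avoiding it; since passing to the smaller Nash manifold $M$ can only shrink the Nash closure, the good local description of $Y$ persists. \textbf{Ruling out the re-entries — i.e.\ showing that the Nash closure of $\partial\Qq$ does not reach into $\Int(\Qq)$, which is where the one-sidedness of $\Qq$ along its facets has to be used — is the step I expect to carry most of the weight of the proof.}

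With $M$ and $Y$ in hand, conclusions (i) and (ii) are precisely the local assertions recorded above. For (iii) one takes the corner charts $\varphi_i^{-1}$ on $P_i\cap M$, which meet $\partial\Qq$ in $\{u_{i1}\ge0,\dots,u_{ik_i}\ge0\}$, and completes them by finitely many further Nash charts of $M$ covering $\Int(\Qq)$ and $M\setminus\Qq$, using the standard fact that an affine Nash manifold of dimension $d$ can be covered by finitely many open semialgebraic sets Nash-diffeomorphic to $\R^d$.
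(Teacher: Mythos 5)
This statement is not proved in the paper at all: it is quoted verbatim from \cite[Thm.1.11]{fgr}, where it is a substantive theorem whose proof occupies a good part of that article, so the only fair comparison is with the strategy there. Your local normal form and the uniqueness of the $d$-dimensional smooth germ containing $\Qq_x$ are fine (though the justification via ``the regular locus of the Zariski closure of $\Qq_x$'' is off: the Zariski closure may be singular at $x$ or have extra branches through $x$ --- think of an arc of a nodal curve passing smoothly through the node; the correct argument is simply that two irreducible $d$-dimensional analytic germs sharing a $d$-dimensional set germ coincide). The patching of the local fillings into a Nash envelope $M_0$, which you rightly flag as needing care, corresponds roughly to \cite[Prop.1.2]{fgr} and is plausible along the lines you indicate.

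The genuine gap is the one you yourself flag and then do not close, and it is larger than you make it sound. The Nash closure $Y_0$ of $\partial\Qq$ in $M_0$ is a global object (the zero set of a single Nash function), and nothing in the local normal form gives your assertion that ``$Y_0$ coincides with the divisors $D_i$ in a neighbourhood of $\partial\Qq$'': the germ of $Y_0$ at a point $x\in\partial\Qq$ automatically \emph{contains} the smallest analytic germ through $\partial\Qq_x$, but it may be strictly larger, because an irreducible Nash component of $Y_0$ can return and pass through (or accumulate on) $\partial\Qq$ along branches unrelated to the facets at $x$. Thus both conclusion (ii) and your disposal of the non-normal-crossings locus of $Y_0$ (which you place ``away from a neighbourhood of $\partial\Qq$'') already presuppose the very statement to be proved, not only the ``re-entry into $\Int(\Qq)$'' issue. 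Shrinking $M_0$ does not resolve this by itself, since the Nash closure changes with the ambient Nash manifold and one must show that a suitable shrinking forces it to agree, as a germ along $\partial\Qq$, with the union of the local divisors while keeping $\Qq\cap Y=\partial\Qq$; this control of the irreducible components of the Nash closure under shrinking is exactly where the weight of the proof in \cite{fgr} lies (and is why, even so, one only gets ``has only normal crossings'' rather than a global normal-crossings divisor, as the teardrop example shows). As written, the proposal establishes the easy local half and leaves the global half as an acknowledged conjecture.
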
 

The Nash manifold $M$ is called a {\em Nash envelope of $\Qq$}. In general, it is not guaranteed that the Nash closure $Y$ of $\partial\Qq$ in $M$ is a Nash normal-crossings divisor of $M$ as we show next.

\begin{figure}[!ht]
\begin{center}
\begin{tikzpicture}[scale=1.8]
\draw[thick,->] (-0.5, 0) -- (1.3, 0) ;
\draw[thick,->] (0, -0.8) -- (0, 0.8) ;
\draw [thick,scale=1,domain=0:1, draw=black, fill=gray!100, fill opacity=0.3,smooth,samples=1000]
(0, 0)
-- plot ({\x}, {sqrt(\x*\x-\x*\x*\x*\x)})
-- (1, 0);
\draw [thick,scale=1,domain=0:1, draw=black, fill=gray!100, fill opacity=0.3,samples=1000]
(0, 0)
-- plot ({\x},{ -sqrt(\x*\x-\x*\x*\x*\x)})
-- (1, 0);
\end{tikzpicture}
\end{center}
\caption{\small{The teardrop.}}
\label{teardrop}
\end{figure}

\begin{example}
The {\em teardrop} $\Qq:=\{\x\geq 0,\y^2\leq\x^2-\x^4\}\subset\R^2$ is a Nash manifold with corners (Figure \ref{teardrop}). Given any open semialgebraic neighborhood $M$ of $\Qq$ in $\R^2$ the Nash closure of $\partial\Qq$ in $M$ is not a Nash normal-crossings divisor. 
\hfill$\sqbullet$
\end{example} 

We define now Nash manifolds with divisorial corners. 

\begin{defn}
A Nash manifold with corners $\Qq\subset\R^n$ is a Nash manifold with {\em divisorial corners} if there exists a Nash envelope $M\subset\R^n$ such that the Nash closure of $\partial\Qq$ in $M$ is a Nash normal-crossings divisor.\hfill$\sqbullet$
\end{defn}

A {\em facet} of a Nash manifold with corners $\Qq\subset\R^n$ is the (topological) closure in $\Qq$ of a connected component of $\Sth(\partial\Qq)$. As $\partial\Qq=\Qq\setminus\Sth(\Qq)$ is semialgebraic, the facets are semialgebraic and finitely many. The non-empty intersections of facets of $\Qq$ are the {\em faces} of $\Qq$. In \cite{fgr} the following characterization for Nash manifolds with divisorial corners is shown:

\begin{thm}[{\cite[Thm.1.12, Cor.6.5]{fgr}}]\label{divisorialPre}
Let $\Qq\subset\R^n$ be a $d$-dimensional Nash manifold with corners. The following assertions are equivalent:
\begin{itemize}
\item[\rm{(i)}] There exists a Nash envelope $M\subset\R^n$ where the Nash closure of $\partial\Qq$ is a Nash normal-crossings divisor.
\item[\rm{(ii)}] Every facet $\Ff$ of $\Qq$ is contained in a Nash manifold $X\subset\R^n$ of dimension $d-1$.
\item[\rm{(iii)}] The number of facets of $\Qq$ that contain every given point $x\in\partial\Qq$ coincides with the number of connected components of the germ $\Sth(\partial\Qq)_x$.
\item[\rm{(iv)}] All the facets of $\Qq$ are Nash manifold with divisorial corners. 
\end{itemize}
If that is the case, the Nash manifold $M$ in {\rm(i)} can be chosen such that the Nash closure in $M$ of every facet $\Ff$ of $\Qq$ meets $\Qq$ exactly along $\Ff$.
\end{thm}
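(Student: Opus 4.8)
The plan is to derive everything from the local normal form of Theorem~\ref{corners0}: for each $x\in\partial\Qq$ there are Nash coordinates $(u_1,\dots,u_d)$ on a semialgebraic neighbourhood of $x$ inside a Nash envelope in which $\Qq=\{u_1\ge0,\dots,u_k\ge0\}$, so that locally $\partial\Qq=\bigcup_{i=1}^{k}\{u_i=0\}$ is a \emph{local} Nash normal-crossings divisor whose $k$ ``walls'' $\{u_i=0\}$ form a transversal family. The whole statement is a globalization of this picture, so I would prove the cycle $(\mathrm i)\Rightarrow(\mathrm{ii})\Rightarrow(\mathrm{iii})\Rightarrow(\mathrm i)$ and fold $(\mathrm{iv})$ in by induction on $d$, the cases $d\le1$ being elementary; so assume $d\ge2$ and the theorem known in dimension $d-1$.

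$(\mathrm i)\Rightarrow(\mathrm{ii})$: fix a Nash envelope $M$ in which the Nash closure $Y$ of $\partial\Qq$ is a Nash normal-crossings divisor, with irreducible components $Y_1,\dots,Y_s$, each a $(d-1)$-dimensional Nash manifold. A connected component $C$ of $\Sth(\partial\Qq)$ is a connected $(d-1)$-dimensional Nash manifold inside $\partial\Qq\subset Y$; near each of its points $C$ coincides with a single component $Y_j$, because a smooth $(d-1)$-dimensional germ cannot contain two of the transversal $(d-1)$-dimensional germs making up $Y$; by connectedness $C\subset Y_{j_0}$ for a fixed $j_0$, so the facet $\Ff=\cl_{\Qq}(C)\subset Y_{j_0}$, a $(d-1)$-dimensional Nash manifold. $(\mathrm{ii})\Rightarrow(\mathrm{iii})$: fix $x\in\partial\Qq$ with local walls $W_1,\dots,W_m$; the germ $(W_i)_x$ is contained in the facet $\Ff_{k(i)}$ that contains its smooth part, hence in the $(d-1)$-dimensional Nash manifold $X_{k(i)}$ given by $(\mathrm{ii})$. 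If $k(i)=k(j)$ with $i\neq j$, the smooth hypersurface germ $(X_{k(i)})_x$ would contain the two transversal $(d-1)$-dimensional germs $(W_i)_x,(W_j)_x$ --- impossible, since both would be tangent to the $(d-1)$-dimensional $T_xX_{k(i)}$ and hence not transversal. So $i\mapsto k(i)$ is injective and the number of facets through $x$ equals $m$, the number of connected components of $\Sth(\partial\Qq)_x$.

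$(\mathrm{iii})\Rightarrow(\mathrm i)$ is the heart of the matter. Start from an arbitrary Nash envelope $M_0$ (Theorem~\ref{corners0}) and let $Y^0$ be the Nash closure of $\partial\Qq$ in $M_0$. It may fail to be a normal-crossings divisor for two reasons: distinct local walls through a point may have been Zariski-glued into one, now singular, component; and a component may escape a neighbourhood of $\partial\Qq$, running into $\Int(\Qq)$ or onto another facet. Condition $(\mathrm{iii})$ excludes the first: at each $x$ the germ $Y^0_x$ is a union of exactly $m(x)=\#\{\text{facets through }x\}=\#\{\text{walls at }x\}$ smooth transversal $(d-1)$-dimensional germs, so after deleting $\Sing(Y^0)$ and shrinking, the Nash closure of $\partial\Qq$ becomes a finite transversal family of $(d-1)$-dimensional Nash manifolds, i.e.\ a Nash normal-crossings divisor. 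For the second, I would separate facet by facet: if $Z_k$ is the Nash closure of $\Ff_k$, the local normal form together with $(\mathrm{iii})$ gives $Z_k\cap\Qq=\Ff_k$ near every point of $\Ff_k$, so $\cl\big((Z_k\cap\Qq)\setminus\Ff_k\big)$ is disjoint from $\Ff_k$; using that $\Qq$ is closed in $M_0$ and a standard \L{}ojasiewicz-type separation, shrink $M_0$ to a Nash envelope $M$ with $Z_k\cap M\cap\Qq=\Ff_k$ for all $k$. This yields $(\mathrm i)$ and, carried out for every facet, the final assertion. Checking that transversality persists under the shrinkings, that no spurious components survive, and that the separation can be performed simultaneously for all facets is the delicate part, and the step I expect to be the main obstacle.

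Finally, $(\mathrm{ii})\Rightarrow(\mathrm{iv})$: since $(\mathrm{ii})$ implies $(\mathrm{iii})$, at most one wall at each $x\in\partial\Qq$ belongs to a given facet, so every facet $\Ff_k$ is a $(d-1)$-dimensional Nash manifold with corners; moreover $(\mathrm{ii})$ implies $(\mathrm i)$, so in a good envelope $M$ the components $Y_j$ form a transversal family and every codimension-$2$ face of $\Qq$ contained in $\Ff_k$ lies in a $(d-2)$-dimensional Nash manifold, which is condition $(\mathrm{ii})$ for $\Ff_k$; by the inductive hypothesis $\Ff_k$ then has divisorial corners. Conversely $(\mathrm{iv})\Rightarrow(\mathrm{ii})$: each facet $\Ff_k$ being a $(d-1)$-dimensional Nash manifold with corners, Theorem~\ref{corners0} realizes it as a closed subset of a $(d-1)$-dimensional Nash manifold $X_k\subset\R^n$, which is $(\mathrm{ii})$ for $\Qq$. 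This closes all the equivalences.
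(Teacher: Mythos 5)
This statement is not proved in the paper at all: it is quoted from \cite[Thm.~1.12, Cor.~6.5]{fgr}, so there is no in-paper argument to compare yours with, and your proposal has to stand as a proof of the cited theorem. The easy parts of your cycle are essentially fine: (i)$\Rightarrow$(ii) by the open--closed argument placing each connected component of $\Sth(\partial\Qq)$ inside a single irreducible component of the divisor, (ii)$\Rightarrow$(iii) by the tangent-space argument (you should also record the easy surjectivity of the assignment from local walls at $x$ to facets through $x$, not only its injectivity), and the reduction of (iv) to the other items by induction on $d$ is reasonable, modulo repeating the component-containment argument inside $\Sing(Y)$ for the facets of a facet.

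The genuine gap is (iii)$\Rightarrow$(i) together with the final clause, which is precisely the hard content of the theorem and which you only sketch. Two concrete problems. First, the proposed mechanism ``delete $\Sing(Y^0)$ and shrink'' is not available: at any corner point of $\Qq$ the germ of $Y^0$ is a union of at least two transversal hypersurface germs, so such points lie in $\Sing(Y^0)$; removing $\Sing(Y^0)$ removes part of $\partial\Qq$, and what remains is no longer an envelope of $\Qq$. Second, and more fundamentally, the local fact you invoke --- that at each $x\in\partial\Qq$ the germ of the Nash closure is a union of $m(x)$ smooth transversal $(d-1)$-dimensional germs --- is already guaranteed by Theorem \ref{corners0}(i) with no hypothesis, and it does not yield (i): the teardrop has only normal crossings locally, yet the unique global irreducible component of the Nash closure is singular. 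What (iii) must be converted into is a \emph{global} separation statement: after shrinking the envelope, the Nash closure of each facet is a single smooth Nash hypersurface, these closures are pairwise transversal, each meets $\Qq$ exactly along its facet, and their union is the Nash closure of $\partial\Qq$. Knowing that the analytic closure of the facet germ at every point is one smooth hypersurface germ (which is what (iii) gives) does not by itself produce one global smooth Nash hypersurface through the whole facet inside one envelope; this gluing/separation of branches is exactly where the finiteness and separation machinery of \cite{fgr} (global Nash equations, separation of semialgebraic pieces by Nash functions, control of Nash closures under shrinking the envelope) is required, and none of it appears in your outline --- indeed you flag this step as the main obstacle rather than proving it. Likewise, the ``\L{}ojasiewicz-type separation'' invoked for the last assertion must be performed simultaneously for all facets while keeping the shrunken set an envelope and preserving transversality; as written it is a placeholder, not an argument.
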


Note that properties (ii), (iii) and (iv) are intrinsic properties of $\Qq$ and do not depend on the Nash envelope $M$. The faces of a Nash manifold with divisorial corners are again Nash manifolds with divisorial corners. If a Nash envelope $M\subset\R^n$ of $\Qq$ satisfies (one of the equivalent) conditions of Theorem \ref{divisorialPre}, then every open semialgebraic neighborhood $M'\subset M$ of $\Qq$ satisfies such conditions. For the rest of this article, we make the following: 

\noindent{\bf Assumption.} {\em A Nash manifold with corners means a Nash manifold with divisorial corners}.

\subsection{The $1$-dimensional case}\label{1dimc}

Let us prove here Theorems \ref{red2} and \ref{red4} when $\Ss\subset\R^n$ is a $1$-dimensional chessboard sets.

\begin{proof}[Proof of Theorems {\em \ref{red2}} and {\em \ref{red4}} for $1$-dimensional chessboard sets]
As $\ol{\Ss}^{\zar}$ is non-singular, the connected components of $\ol{\Ss}^{\zar}$ are by \cite[Prop.1.6]{fe3} and its proof Nash diffeomorphic to either $\R$ or the sphere $\sph^1$. Thus, the connected components $\Ss_1,\ldots,\Ss_r$ of $\Ss$ are Nash diffeomorphic to either $(0,1)$, $[0,1)$, $[0,1]$ or $\sph^1$, that is, they are Nash manifolds with corners (some of them with empty boundary). If $\Ss$ is closed in $\R^n$, we define $X:=\ol{\Ss}^{\zar}$, $\Qq:=\Ss$, $\Rr:=\varnothing$ and $f:=\id_X:X\to X$. 

Suppose next $\Ss$ is not closed in $\R^n$. Observe that $\Tt:=\cl(\Ss)\setminus\Ss$ is a (non-empty) finite set (as it is a semialgebraic set of dimension $<1$). The $1$-dimensional non-singular real algebraic set $Y:=\{(x,y)\in\ol{\Ss}^{\zar}\times\R:\ yh(x)=1\}$, where $h\in\R[\x]$ is any polynomial whose zero set is $\Tt$, and the projection $\pi:\ol{\Ss}^{\zar}\times\R\to\ol{\Ss}^{\zar}$ (onto the first factor) satisfy that $\pi|_{Y}:Y\to\ol{\Ss}^{\zar}\setminus\Tt$ is a biregular diffeomorphism. Let $(X,g)$ be the desingularization of the projective closure $\ol{Y}$ of $Y$ in the projective space $\R\PP^{n+1}$, which is a compact non-singular real algebraic set that contains a semialgebraic set $\Ss^\bullet$ biregularly diffeomorphic to $\Ss$, because the singular points of $\ol{Y}$ does not belong to $Y$ and $\Ss\subset Y$. Thus, to finish it is enough to take $\Qq^\bullet:=\cl(\Ss^\bullet)$ and $f:=\pi\circ g:g^{-1}(Y)\to\ol{\Ss}^{\zar}$.
\end{proof}

In Sections \ref{s4} and \ref{s5} we prove Theorems \ref{red2} and \ref{red4} for semialgebraic sets of dimension $d\geq2$.

\section{Drilling blow-up}\label{s3}

In this section we recall the main properties of the drilling blow-up of a Nash manifold with center a closed Nash submanifold \cite[\S5]{fe3}. We prove some additional results (Fact \ref{bigstepa6}, Fact \ref{locald-1}, \S\ref{addbu} and \S\ref{exst0}) that we need in Sections \ref{s4} and \ref{s5}.

\subsection{Local structure of the drilling blow-up.}\label{bigstep}

Let $M\subset\R^m$ be a $d$-dimensional Nash manifold and $N\subset M$ a closed $e$-dimensional Nash submanifold. As we are interested in the local structure, assume that there exists a Nash diffeomorphism $u:=(u_1,\ldots,u_d):M\to\R^d$ such that $N=\{u_{e+1}=0,\ldots,u_d=0\}$. Denote $\psi:=u^{-1}:\R^d\equiv\R^e\times\R^{d-e}\to M$. Let $k\in\N$ and let $\zeta_{e+1},\ldots,\zeta_d:\R^d\to\R^k$ be Nash maps such that the vectors $\zeta_{e+1}(y,0),\ldots,\zeta_d(y,0)$ are linearly independent for each $y\in\R^e$. Write $z\in\R^{d-e}$ as $z:=(z_{e+1},\ldots,z_d)$. Consider the Nash maps 
\begin{align*}
&\varphi:\R^d\equiv\R^e\times\R^{d-e}\to\R^k,\ (y,z)\mapsto\zeta_{e+1}(y,z)z_{e+1}+\cdots+\zeta_d(y,z)z_d,\\
&\phi:\R^e\times\R\times\sph^{d-e-1}\to\R^k,\ (y,\rho,w)\mapsto\zeta_{e+1}(y,\rho w)w_{e+1}+\cdots+\zeta_d(y,\rho w)w_d
\end{align*} 
and assume that $\varphi(y,z)=0$ if and only if $z=0$. 

\begin{fact}\cite[\S5.A.1]{fe3}
Consider the (well-defined) Nash map:
$$
\Phi:\R^e\times\R\times\sph^{d-e-1}\to M\times\sph^{k-1},\ (y,\rho,w)\mapsto\Big(\psi(y,\rho w),\frac{\phi(y,\rho,w)}{\|\phi(y,\rho,w)\|}\Big).
$$
\end{fact}

\begin{fact}\label{bigstepa2}\cite[\S5.A.2]{fe3}
Fix $\epsilon=\pm$ and denote 
$$
I_\epsilon:=\begin{cases}
[0,+\infty)&\text{ if $\epsilon=+$,}\\
(-\infty,0]&\text{ if $\epsilon=-$.}
\end{cases}
$$ 
The closure $\widetilde{M}_\epsilon$ in $M\times\sph^{k-1}$ of the set
$$
\Gamma_\epsilon:=\Big\{\Big(\psi(y,z),\epsilon\frac{\varphi(y,z)}{\|\varphi(y,z)\|}\Big)\in M\times\sph^{k-1}:\,z\neq 0\Big\}
$$ 
is a Nash manifold with boundary such that:
\begin{itemize}
\item[{\rm(i)}] $\widetilde{M}_\epsilon\subset\im(\Phi)$.
\item[{\rm(ii)}] The restriction of $\Phi$ to $\R^e\times I_\epsilon\times\sph^{d-e-1}$ induces a Nash diffeomorphism between $\R^e\times I_\epsilon\times\sph^{d-e-1}$ and $\widetilde{M}_\epsilon$. Consequently, $\partial\widetilde{M_\epsilon}=\Phi(\R^e\times\{0\}\times\sph^{d-e-1})$ and $\Gamma_\epsilon=\Int(\widetilde{M_\epsilon})=\Phi(\R^e\times(I_\epsilon\setminus\{0\})\times\sph^{d-e-1})$.
\end{itemize}
\end{fact}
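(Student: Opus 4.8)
\emph{Proof plan.} The plan is to prove that $\Phi$, viewed as a Nash map on all of $\R^e\times\R\times\sph^{d-e-1}$, is a closed Nash embedding; then every assertion will follow by restricting this Nash diffeomorphism to the Nash submanifold with boundary $\R^e\times I_\epsilon\times\sph^{d-e-1}$. The starting point is the comparison of $\phi$ and $\varphi$: for $\rho\neq0$ one has $\phi(y,\rho,w)=\frac1\rho\varphi(y,\rho w)$, hence
$$
\Phi(y,\rho,w)=\Big(\psi(y,\rho w),\ \operatorname{sgn}(\rho)\frac{\varphi(y,\rho w)}{\|\varphi(y,\rho w)\|}\Big)\qquad(\rho\neq0).
$$
Since $(y,\rho,w)\mapsto(y,\rho w)$ is a Nash diffeomorphism from $\R^e\times(0,+\infty)\times\sph^{d-e-1}$ (resp.\ from $\R^e\times(-\infty,0)\times\sph^{d-e-1}$) onto $\R^e\times(\R^{d-e}\setminus\{0\})$ and $\psi$ is a Nash diffeomorphism, this formula shows that $\Phi$ maps $\R^e\times(I_\epsilon\setminus\{0\})\times\sph^{d-e-1}$ bijectively onto $\Gamma_\epsilon$ and, using $\Gamma_+\cap\Gamma_-=\varnothing$, that $\Phi^{-1}(\Gamma_\epsilon)=\R^e\times(I_\epsilon\setminus\{0\})\times\sph^{d-e-1}$. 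Injectivity of $\Phi$ on the whole domain is then a short case analysis: equating first coordinates and using that $\psi$ is injective gives $y=y'$ and $\rho w=\rho'w'$, so $|\rho|=|\rho'|$; the cases of equal-signed or opposite-signed nonzero $\rho,\rho'$ are settled by the displayed formula (opposite signs would give antipodal second coordinates), and the case $\rho=\rho'=0$ by the linear independence of $\zeta_{e+1}(y,0),\ldots,\zeta_d(y,0)$.

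The heart of the matter is that $\Phi$ is an immersion everywhere. For $\rho\neq0$ this is immediate from the displayed formula, since already the first component is a local Nash diffeomorphism onto $M\setminus N$. At a point $(y_0,0,w_0)$ the first component $(y,\rho,w)\mapsto\psi(y,\rho w)$ has differential of rank $e+1$ --- the span of the images under the isomorphism $D\psi(y_0,0)$ of the $\R^e$-directions and of the direction $(0,w_0)\in\R^e\times\R^{d-e}$ --- with kernel exactly $T_{w_0}\sph^{d-e-1}$; so one must check that $w\mapsto A_0w/\|A_0w\|$ is an immersion of $\sph^{d-e-1}$ into $\sph^{k-1}$ at $w_0$, where $A_0$ is the $k\times(d-e)$ matrix with columns $\zeta_{e+1}(y_0,0),\ldots,\zeta_d(y_0,0)$ and $A_0w=\phi(y_0,0,w)$. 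Since $A_0$ has full column rank, for every nonzero $v\in T_{w_0}\sph^{d-e-1}$ the vector $A_0v$ is nonzero and not collinear with $A_0w_0$, so the component of $A_0v$ orthogonal to $A_0w_0$ is nonzero, which gives the immersivity. Combining the two pieces, $\ker D\Phi|_{(y_0,0,w_0)}=0$. Finally $\Phi$ is proper because $\psi$ and $(y,\rho,w)\mapsto(y,\rho w)$ are proper; an injective proper Nash immersion is a closed Nash embedding, so $\im(\Phi)$ is a closed Nash submanifold of $M\times\sph^{k-1}$ and $\Phi$ is a Nash diffeomorphism onto it.

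To conclude, restrict this diffeomorphism to the Nash submanifold with boundary $\R^e\times I_\epsilon\times\sph^{d-e-1}$: its image is a Nash manifold with boundary, $\Phi$ restricts to a Nash diffeomorphism onto it, and this diffeomorphism carries the boundary $\R^e\times\{0\}\times\sph^{d-e-1}$ onto its boundary and the interior $\R^e\times(I_\epsilon\setminus\{0\})\times\sph^{d-e-1}$ onto $\Gamma_\epsilon$. Since $\Phi$ is a homeomorphism onto a closed subset of $M\times\sph^{k-1}$ and $\Phi^{-1}(\Gamma_\epsilon)=\R^e\times(I_\epsilon\setminus\{0\})\times\sph^{d-e-1}$ has closure $\R^e\times I_\epsilon\times\sph^{d-e-1}$ in the domain, we get $\widetilde M_\epsilon=\cl(\Gamma_\epsilon)=\Phi(\R^e\times I_\epsilon\times\sph^{d-e-1})$. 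This yields simultaneously $\widetilde M_\epsilon\subset\im(\Phi)$ (item (i)), the Nash diffeomorphism of item (ii), and the identities $\partial\widetilde M_\epsilon=\Phi(\R^e\times\{0\}\times\sph^{d-e-1})$ and $\Gamma_\epsilon=\Int(\widetilde M_\epsilon)=\Phi(\R^e\times(I_\epsilon\setminus\{0\})\times\sph^{d-e-1})$. I expect the one genuinely delicate step to be the immersivity of $\Phi$ along $\{\rho=0\}$, which is exactly where the hypothesis that $\zeta_{e+1}(y,0),\ldots,\zeta_d(y,0)$ are linearly independent is used.
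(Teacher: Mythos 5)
This statement is one of the Facts that the paper merely \emph{recalls} from \cite[\S5]{fe3} and does not prove, so there is no in-paper argument to compare against; judged on its own, your proof is correct and complete, and it follows what is essentially the natural route behind the quoted fact. The decisive points are all in place: the identity $\phi(y,\rho,w)=\tfrac{1}{\rho}\varphi(y,\rho w)$ for $\rho\neq0$, which identifies $\Phi(\R^e\times(I_\epsilon\setminus\{0\})\times\sph^{d-e-1})$ with $\Gamma_\epsilon$ via polar coordinates; injectivity by the case analysis on $\operatorname{sgn}(\rho)$ (antipodal second coordinates for opposite signs) and, at $\rho=\rho'=0$, injectivity of the matrix $A_0$; immersivity along $\{\rho=0\}$, where your computation is right because at $\rho=0$ the $w$-derivative of $\phi$ is exactly $A_0v$ (the chain-rule term through $\zeta_j(y,\rho w)$ dies with $\rho$), and $A_0v$ cannot be collinear with $A_0w_0$ for $0\neq v\perp w_0$; and properness, which follows from properness of $(y,\rho,w)\mapsto(y,\rho w)$ and of $\psi$ after composing with the projection onto $M$. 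One micro-step you leave implicit when asserting $\Phi^{-1}(\Gamma_\epsilon)=\R^e\times(I_\epsilon\setminus\{0\})\times\sph^{d-e-1}$ is that points with $\rho=0$ cannot land in $\Gamma_\epsilon$; this is immediate since their first coordinate $\psi(y,0)$ lies in $N$ while $\Gamma_\epsilon$ lies over $M\setminus N$ (and $\psi$ is injective), so it is worth one clause but is not a gap. With that, passing from the closed Nash embedding $\Phi$ to $\widetilde M_\epsilon=\Phi(\R^e\times I_\epsilon\times\sph^{d-e-1})$, and reading off $\partial\widetilde M_\epsilon$, $\Int(\widetilde M_\epsilon)=\Gamma_\epsilon$ and items (i)--(ii), is exactly as you say.
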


\begin{fact}\label{bigstepa3}\cite[\S5.A.3]{fe3}
Denote $\Rr:=\partial\widetilde{M}_+=\partial\widetilde{M}_-$ and $\widehat{M}:=\widetilde{M}_+\cup\widetilde{M}_-=\Gamma_+\sqcup\Rr\sqcup\Gamma_-$. Then $\Phi$ induces a Nash diffeomorphism between $\R^e\times\R\times\sph^{d-e-1}$ and $\widehat{M}$, which is the Nash closure of $\widetilde{M}_+$ and $\widetilde{M}_-$ in $M\times\sph^{k-1}$. In addition, the Nash map $\sigma: M\times\sph^{k-1}\to M\times\sph^{k-1}, (a,b)\to(a,-b)$ induces a Nash involution on $\widehat{M}$ without fixed points such that $\sigma(\widetilde{M}_+)=\widetilde{M}_-$ and $\Phi(y,-\rho,-w)=(\sigma\circ\Phi)(y,\rho,w)$ for each $(y,\rho,w)\in\R^e\times\R\times\sph^{d-e-1}$.
\end{fact}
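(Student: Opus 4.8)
The plan is to bootstrap everything from Fact~\ref{bigstepa2}, which already provides, via $\Phi$, Nash diffeomorphisms of the two closed half‑spaces $\R^e\times I_+\times\sph^{d-e-1}$ and $\R^e\times I_-\times\sph^{d-e-1}$ onto $\widetilde{M}_+$ and $\widetilde{M}_-$. First I would observe that the two boundaries coincide: by Fact~\ref{bigstepa2}(ii) we have $\partial\widetilde{M}_\epsilon=\Phi(\R^e\times\{0\}\times\sph^{d-e-1})$ for both $\epsilon=\pm$, so $\Rr:=\partial\widetilde{M}_+=\partial\widetilde{M}_-$ is well defined, as is $\widehat{M}:=\widetilde{M}_+\cup\widetilde{M}_-$. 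Since $\Gamma_\epsilon=\Int(\widetilde{M}_\epsilon)$ and a one‑line check gives $\Gamma_+\cap\Gamma_-=\varnothing$ (if $(\psi(y,z),\varphi(y,z)/\|\varphi(y,z)\|)=(\psi(y',z'),-\varphi(y',z')/\|\varphi(y',z')\|)$ then $(y,z)=(y',z')$ because $\psi$ is a diffeomorphism, which forces $\varphi(y,z)=0$, hence $z=0$, absurd), we obtain the disjoint decomposition $\widehat{M}=\Gamma_+\sqcup\Rr\sqcup\Gamma_-$.

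The heart of the proof is that \emph{$\Phi$ induces a Nash diffeomorphism of $\R^e\times\R\times\sph^{d-e-1}$ onto $\widehat{M}$.} The restriction of $\Phi$ to each closed half‑space $\R^e\times I_\epsilon\times\sph^{d-e-1}$ induces a diffeomorphism of $d$‑dimensional Nash manifolds with boundary onto $\widetilde{M}_\epsilon$ (Fact~\ref{bigstepa2}(ii)), so its differential is an isomorphism \emph{at every point, including the boundary slice $\rho=0$}; since the tangent space of a manifold with boundary at a boundary point is the full tangent space of $\R^e\times\R\times\sph^{d-e-1}$, it follows that $d\Phi$ has rank $d$ everywhere on $\R^e\times\R\times\sph^{d-e-1}$, so $\Phi$ is a Nash immersion (and, as a byproduct, $T_p\widetilde{M}_+=T_p\widetilde{M}_-$ for $p\in\Rr$). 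Moreover $\Phi$ is a bijection onto $\widehat{M}$: the two half‑spaces cover $\R^e\times\R\times\sph^{d-e-1}$ and overlap only along $\rho=0$, where the values of $\Phi$ agree; $\Phi$ is injective on each half‑space by Fact~\ref{bigstepa2}(ii), while points with $\rho>0$ land in $\Gamma_+$ and points with $\rho<0$ in $\Gamma_-$, and $\Gamma_+,\Gamma_-,\Rr$ are pairwise disjoint. Finally $\Phi^{-1}$ is continuous by the pasting lemma, because $\widetilde{M}_+$ and $\widetilde{M}_-$ are closed in $\widehat{M}$, cover it, and on each of them $\Phi^{-1}$ is the inverse of the corresponding half‑space diffeomorphism. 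Thus $\Phi$ is an injective Nash immersion which is a topological embedding; therefore its image $\widehat{M}$ is a closed Nash submanifold of $M\times\sph^{k-1}$ and $\Phi$ a Nash diffeomorphism onto it.

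For the assertion that $\widehat{M}$ is the Nash closure of $\widetilde{M}_+$ (and of $\widetilde{M}_-$) in $M\times\sph^{k-1}$: being a closed Nash submanifold, $\widehat{M}$ is a Nash subset of $M\times\sph^{k-1}$, so the Nash closure $Z$ of $\widetilde{M}_+$ satisfies $Z\subseteq\widehat{M}$; conversely $Z\supseteq\widetilde{M}_+\supseteq\Gamma_+$. Transporting through $\Phi$, the manifold $\widehat{M}$ has at most two connected components (exactly one when $d-e\ge2$), and each of them meets $\Gamma_+=\Phi(\R^e\times(0,\infty)\times\sph^{d-e-1})$ in a nonempty open subset; since a connected Nash manifold is Nash‑irreducible and every proper Nash subset of it has empty interior, $Z$ must contain every connected component of $\widehat{M}$, i.e.\ $Z=\widehat{M}$, and the case of $\widetilde{M}_-$ is identical. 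It remains to handle the involution. Set $\tau(y,\rho,w):=(y,-\rho,-w)$, a fixed‑point‑free Nash involution of $\R^e\times\R\times\sph^{d-e-1}$; from $\zeta_j(y,(-\rho)(-w))=\zeta_j(y,\rho w)$ one computes $\phi(y,-\rho,-w)=-\phi(y,\rho,w)$ and $\psi(y,(-\rho)(-w))=\psi(y,\rho w)$, whence $\Phi\circ\tau=\sigma\circ\Phi$, which is the displayed identity. Consequently $\sigma(\widehat{M})=\Phi(\tau(\R^e\times\R\times\sph^{d-e-1}))=\widehat{M}$, and $\sigma|_{\widehat{M}}=\Phi\circ\tau\circ\Phi^{-1}$ is a Nash involution of $\widehat{M}$ with no fixed point (since $\tau$ has none); finally $\sigma(\Gamma_+)=\Gamma_-$ straight from the definitions, so $\sigma(\widetilde{M}_+)=\sigma(\cl(\Gamma_+))=\cl(\Gamma_-)=\widetilde{M}_-$, $\sigma$ being a homeomorphism of $M\times\sph^{k-1}$.

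The step I expect to be the main obstacle is the middle one: verifying that the two Nash manifolds with boundary $\widetilde{M}_+$ and $\widetilde{M}_-$ glue along $\Rr$ into a boundaryless Nash manifold and that $\Phi$ is a genuine diffeomorphism onto it. Everything there rests on the invertibility of $d\Phi$ along the boundary slice $\rho=0$ (equivalently $T\widetilde{M}_+=T\widetilde{M}_-$ along $\Rr$), which is exactly what upgrades the topological gluing to a smooth (Nash) one; the remaining ingredients — the disjointness of $\Gamma_+$ and $\Gamma_-$, the pasting lemma, Nash‑irreducibility of connected Nash manifolds, and the computation $\Phi\circ\tau=\sigma\circ\Phi$ — are routine.
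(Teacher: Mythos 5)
Your argument is correct: the gluing of the two half-space diffeomorphisms from Fact \ref{bigstepa2}, the rank count for $d\Phi$ along the slice $\rho=0$, the pasting-lemma continuity of $\Phi^{-1}$, the identity-principle argument for the Nash closure (note it legitimately needs that a closed Nash submanifold is a Nash subset, \cite[Prop.II.5.3]{sh}, which the paper uses elsewhere), and the computation $\Phi\circ\tau=\sigma\circ\Phi$ all check out, including the two-component case $d-e=1$. There is nothing in the paper to compare against: this Fact is recalled without proof from \cite[\S5]{fe3}, and your reconstruction is the natural self-contained verification of it.
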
 

\begin{fact}\cite[\S5.A.4]{fe3}
Consider the projection $\pi:M\times\sph^{k-1}\to M$ onto the first factor and denote $\pi_\epsilon:=\pi|_{\widetilde{M_\epsilon}}$ for $\epsilon=\pm$. Then
\begin{itemize}
\item[{\rm(i)}] $\pi_\epsilon$ is proper, $\pi_\epsilon(\widetilde{M_\epsilon})=M$ and $\Rr=\pi^{-1}_\epsilon(N)$.
\item[{\rm(ii)}] The restriction $\pi_\epsilon|_{\Gamma_\epsilon}:\Gamma_\epsilon\to M\setminus N$ is a Nash diffeomorphism.
\item[{\rm(iii)}] For each $q\in N$ it holds $\pi_\epsilon^{-1}(q)=\{q\}\times\sph_q^{d-e-1}$ where $\sph_q^{d-e-1}$ is the sphere of dimension $d-e-1$ obtained when intersecting the sphere $\sph^{k-1}$ with the linear subspace $L_q$ generated by $(\zeta_{e+1}\circ u)(q),\ldots,(\zeta_d\circ u)(q)$.
\end{itemize}
\end{fact}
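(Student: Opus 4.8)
The plan is to derive all three items by unwinding the descriptions of $\widetilde{M}_\epsilon$, $\Gamma_\epsilon$ and $\Rr$ supplied by Facts \ref{bigstepa2} and \ref{bigstepa3}; there is no new geometric content here, so I do not anticipate a real obstacle, only some bookkeeping with the parametrization $\Phi$ in item (iii). For (i), properness of $\pi_\epsilon$ is immediate: $\sph^{k-1}$ is compact, so $\pi\colon M\times\sph^{k-1}\to M$ is proper and closed, and $\widetilde{M}_\epsilon$ is closed in $M\times\sph^{k-1}$ (being defined as a closure), whence $\pi_\epsilon=\pi|_{\widetilde{M}_\epsilon}$ is proper and closed as well. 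For surjectivity I will use that $\psi$ restricts to a Nash diffeomorphism from $\R^e\times(\R^{d-e}\setminus\{0\})$ onto $M\setminus N$, so that $\pi(\Gamma_\epsilon)=M\setminus N$; since $\pi_\epsilon$ is closed and $\widetilde{M}_\epsilon=\cl(\Gamma_\epsilon)$, the set $\pi_\epsilon(\widetilde{M}_\epsilon)$ is closed and contains $M\setminus N$, which is dense in $M$, hence equals $M$. Finally $\Rr=\pi_\epsilon^{-1}(N)$ is read off the splitting $\widetilde{M}_\epsilon=\Gamma_\epsilon\sqcup\Rr$: the inclusion $\pi_\epsilon^{-1}(N)\subset\Rr$ holds because $\pi(\Gamma_\epsilon)=M\setminus N$ misses $N$, and the reverse inclusion holds because $\Rr=\Phi(\R^e\times\{0\}\times\sph^{d-e-1})$ by Fact \ref{bigstepa2}(ii) while $\Phi(y,0,w)$ has first coordinate $\psi(y,0)\in N$.

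For (ii) I will exhibit an explicit Nash inverse, the section
$$
s_\epsilon\colon M\setminus N\longrightarrow\Gamma_\epsilon,\qquad p\longmapsto\big(p,\ \epsilon\,\varphi(u(p))/\|\varphi(u(p))\|\big),
$$
which is well defined and Nash on $M\setminus N$ because there $\varphi(u(p))\neq 0$ (by the standing hypothesis $\varphi(y,z)=0\iff z=0$) and $\varphi$, $u$ are Nash. One checks at once that $s_\epsilon$ takes values in $\Gamma_\epsilon$, that $\pi_\epsilon\circ s_\epsilon=\id_{M\setminus N}$, and that $s_\epsilon\circ\pi_\epsilon=\id_{\Gamma_\epsilon}$, since every point of $\Gamma_\epsilon$ has the form $s_\epsilon(p)$ with $p\in M\setminus N$. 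As $\Gamma_\epsilon=\Int(\widetilde{M}_\epsilon)$ is a Nash manifold and both maps are Nash, this yields the asserted Nash diffeomorphism.

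For (iii), fix $q\in N$ and write $q=\psi(y_0,0)$ for the unique $y_0\in\R^e$. By (i) and Fact \ref{bigstepa2}(ii), $\pi_\epsilon^{-1}(q)\subset\Rr=\Phi(\R^e\times\{0\}\times\sph^{d-e-1})$, and since the first coordinate of $\Phi(y,0,w)$ is $\psi(y,0)$, this forces
$$
\pi_\epsilon^{-1}(q)=\{q\}\times\big\{\phi(y_0,0,w)/\|\phi(y_0,0,w)\|:\ w\in\sph^{d-e-1}\big\}.
$$
Now $\phi(y_0,0,w)=\sum_{i=e+1}^{d}\zeta_i(y_0,0)\,w_i=\sum_{i=e+1}^{d}(\zeta_i\circ u)(q)\,w_i$, and $w\mapsto\sum_i(\zeta_i\circ u)(q)\,w_i$ is a linear isomorphism of $\R^{d-e}$ onto $L_q$ because the vectors $(\zeta_i\circ u)(q)$ are linearly independent. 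Hence, letting $w$ range over $\sph^{d-e-1}$ and renormalizing, $\phi(y_0,0,w)/\|\phi(y_0,0,w)\|$ runs exactly over the unit vectors of $L_q$, i.e. over $\sph^{k-1}\cap L_q=\sph_q^{d-e-1}$: the inclusion "$\subseteq$" is clear since the image vector lies in $L_q\setminus\{0\}$; for "$\supseteq$", given a unit $v\in L_q$ I write $v=\sum_i(\zeta_i\circ u)(q)\,c_i$ with $c\neq 0$ and take $w=c/\|c\|$. This appeal to linear independence is the only mildly delicate point, and it is completely elementary.
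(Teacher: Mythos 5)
Your proposal is correct: properness and surjectivity of $\pi_\epsilon$ via compactness of $\sph^{k-1}$ and density of $M\setminus N$, the explicit Nash section $p\mapsto(p,\epsilon\varphi(u(p))/\|\varphi(u(p))\|)$ for (ii), and the identification of the fiber over $q\in N$ through the linear isomorphism $w\mapsto\sum_{i}(\zeta_i\circ u)(q)\,w_i$ of $\R^{d-e}$ onto $L_q$ all check out, and they are consistent with the descriptions of $\Gamma_\epsilon$, $\widetilde{M}_\epsilon$ and $\Rr$ in Facts \ref{bigstepa2} and \ref{bigstepa3}. Note that the paper itself gives no proof of this Fact: it is recalled from \cite[\S 5]{fe3}, so there is no in-paper argument to compare with, but your direct unwinding of the parametrization $\Phi$ is the standard verification and fills that role correctly.
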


\begin{figure}[!ht]
\begin{center}
\begin{tikzpicture}[scale=0.75]

\draw (0.75,4) arc (90:270:0.75cm and 1.5cm);
\draw[dashed] (0.75,1) arc (270:450:0.75cm and 1.5cm);

\draw (1.1,2) -- (6.1,2);
\draw (1.2,2.5) -- (6.15,2.5);
\draw (1.1,3) -- (6.1,3);

\draw(0.75,3.75) arc (90:270:0.55cm and 1.25cm);
\draw[dashed] (0.75,1.25) arc (270:450:0.55cm and 1.25cm);

\draw[line width=2pt] (0.75,3.5) arc (90:270:0.35cm and 1cm);
\draw[line width=2pt, dashed] (0.75,1.5) arc (270:450:0.35cm and 1cm);

\draw (0.75,1) -- (5.75,1);
\draw (0.75,4) -- (5.75,4);

\draw(0.75,1.25) -- (5.75,1.25);
\draw(0.75,3.75) -- (5.75,3.75);

\draw[line width=2pt] (0.75,1.5) -- (5.75,1.5);
\draw[line width=2pt] (0.75,3.5) -- (5.75,3.5);

\draw[fill=gray!100,opacity=0.5,draw=none] (0.75,3.5) arc (90:270:0.35cm and 1cm) -- (5.75,1.5) arc (270:90:0.35cm and 1cm) -- (5.75,3.5);
\draw[fill=gray!70,opacity=0.5,draw=none] (0.75,3.75) arc (90:270:0.55cm and 1.25cm) -- (5.75,1.25) arc (270:90:0.55cm and 1.25cm) -- (5.75,3.75);

\draw (3,2.5) node{\small$\Phi^{-1}(\widehat{\pi}^{-1}(N))$};

\draw[fill=gray!10,opacity=0.5,draw=none] (0.75,4) arc (90:270:0.75cm and 1.5cm) -- (5.75,1) arc (270:90:0.75cm and 1.5cm) -- (5.75,4);

\draw[fill=gray!10,opacity=0.5,draw=none] (5.75,2.5) ellipse (0.75cm and 1.5cm);
\draw[fill=gray!70,opacity=0.5,draw=none] (5.75,2.5) ellipse (0.55cm and 1.25cm);
\draw[fill=gray!100,opacity=0.5,draw=none] (5.75,2.5) ellipse (0.35cm and 1cm);

\draw (5.45,2) -- (6.1,2);
\draw (5.40,2.5) -- (6.15,2.5);
\draw (5.45,3) -- (6.1,3);

\draw (5.75,2.5) ellipse (0.75cm and 1.5cm);
\draw (5.75,2.5) ellipse (0.55cm and 1.25cm);
\draw[line width=2pt] (5.75,2.5) ellipse (0.35cm and 1cm);

\draw (9.75,4) arc (90:270:0.75cm and 1.5cm);
\draw[dashed] (9.75,1) arc (270:450:0.75cm and 1.5cm);

\draw (9.75,3.75) arc (90:270:0.55cm and 1.25cm);
\draw[dashed] (9.75,1.25) arc (270:450:0.55cm and 1.25cm);

\draw (9.75,1) -- (14.75,1);
\draw (9.75,4) -- (14.75,4);

\draw (9.75,1.25) -- (14.75,1.25);
\draw (9.75,3.75) -- (14.75,3.75);

\draw (9.75,2.5) node{\small$\bullet$};
\draw[line width=2pt] (9.75,2.5) -- (14.75,2.5);

\draw[fill=gray!70,opacity=0.5,draw=none] (9.75,3.75) arc (90:270:0.55cm and 1.25cm) -- (14.75,1.25) arc (270:90:0.55cm and 1.25cm) -- (14.75,3.75);

\draw (12.25,2.85) node{\small$u(N)$};
\draw[fill=gray!10,opacity=0.5,draw=none] (9.75,4) arc (90:270:0.75cm and 1.5cm) -- (14.75,1) arc (270:90:0.75cm and 1.5cm) -- (14.75,4);

\draw[fill=gray!10,opacity=0.5,draw=none] (14.75,2.5) ellipse (0.75cm and 1.5cm);
\draw[fill=gray!70,opacity=0.5,draw=none] (14.75,2.5) ellipse (0.55cm and 1.25cm);

\draw (14.75,2.5) ellipse (0.75cm and 1.5cm);
\draw (14.75,2.5) ellipse (0.55cm and 1.25cm);

\draw[->, line width=1pt] (6.75,2.5) -- (8.75,2.5);
\draw[->, line width=1pt] (5.75,0.5) -- (9.75,0.5);
\draw[line width=1pt] (5.75,0.4) -- (5.75,0.6);
\draw (4.75,0.5) node{\small$(y,\rho,w)$};
\draw (10.75,0.5) node{\small$(y,\rho w)$};
\draw (14.5,4.35) node{\small$\R^d$};
\draw (3,4.35) node{\small$\R^e\times[0,+\infty)\times\sph^{d-e-1}$};
\draw (7.75,2.85) node{\small$u\circ\pi_+\circ\Phi$};
\draw (14.75,2.5) node{\small$\bullet$};

\end{tikzpicture}
\end{center}
\caption{\small{Local structure of the drilling blow-up $\widetilde{M}_+$ of $M$ of center $N$ (figure borrowed from \cite[Fig.3]{fe3}).}}
\end{figure}

Denote $\widehat{\pi}:=\pi|_{\widehat{M}}$ and consider the commutative diagram.
\begin{equation}\label{diagpol}
\begin{gathered}
\xymatrix{
\R^e\times\R\times\sph^{d-e-1}\ar[d]_{g:=u\circ\widehat{\pi}\circ\Phi}\ar[rr]^(0.6){\Phi}_(0.6){\cong}&&\widehat{M}\ar[d]^{\widehat{\pi}}&(y,\rho,w)\ar@{|->}[d]\ar@{|->}[r]&\Phi(y,\rho,w)\ar@{|->}[d]\\
\R^d&&\ar[ll]_{u}^{\cong}M&(y,\rho w)&\psi(y,\rho w)\ar@{|->}[l]
}
\end{gathered}
\end{equation}

\begin{fact}\cite[\S5.A.5]{fe3}
As a consequence, we have: \em The Nash maps $\pi_\epsilon$ and $\widehat{\pi}$ have local representations 
\begin{equation*}
(x_1,\ldots,x_d)\mapsto (x_1,\ldots,x_e,x_{e+1},x_{e+1}x_{e+2},\ldots,x_{e+1}x_d)
\end{equation*}
in an open neighborhood of each point $p\in\Rr$. In addition, $d\pi_p(T_p\widehat{M})\not\subset T_{\pi(p)}N$ \em if $e<d-1$.
\end{fact} 

\begin{fact}\label{bigstepa6}
{\em Denote $g:=u\circ\widehat{\pi}\circ\Phi$ and $g_+:=g|_{\R^e\times [0,+\infty)\times\sph^{d-e-1}}=u\circ\pi_+\circ\Phi|_{\R^e\times [0,+\infty)\times\sph^{d-e-1}}$. Consider the Nash normal-crossings divisor $Z:=\{\y_{e+1}\cdots\y_d=0\}\subset\R^d$. Consider coordinates $(w_{e+1},\ldots,w_d)$ in $\R^{d-e}$ and the sphere $\sph^{d-e-1}:=\{\w_{e+1}^2+\cdots+\w_d^2=1\}$.

(i) Write $Z_k:=\{\y_k=0\}$ for $k=e+1,\ldots,d$ and observe that
\begin{align*}
g^{-1}(Z_k)&=(\R^e\times\{0\}\times\sph^{d-e-1})\cup(\R^e\times\R\times(\sph^{d-e-1}\cap\{\w_{k}=0\})),\\
g_+^{-1}(Z_k)&=(\R^e\times\{0\}\times\sph^{d-e-1})\cup(\R^e\times[0,+\infty)\times(\sph^{d-e-1}\cap\{\w_{k}=0\}))
\end{align*}
for $k=e+1,\ldots,d$. Thus,
$$
g^{-1}(Z)=(\R^e\times\{0\}\times\sph^{d-e-1})\cup\bigcup_{k=e+1}^d(\R^e\times\R\times(\sph^{d-e-1}\cap\{\w_{k}=0\}))
$$
is a Nash normal-crossings divisor and
$$
g_+^{-1}(Z)=(\R^e\times\{0\}\times\sph^{d-e-1})\cup\bigcup_{k=e+1}^d(\R^e\times[0,+\infty)\times(\sph^{d-e-1}\cap\{\w_{k}=0\})).
$$
The Nash closure of $g_+^{-1}(Z)$ is $g^{-1}(Z)$.

(ii) Let $\epsilon:=(\epsilon_{e+1},\ldots,\epsilon_d)$ where $\epsilon_k=\pm1$ and denote $\Qq_\epsilon:=\{\epsilon_{e+1}\y_{e+1}\geq0,\ldots,\epsilon_d\y_d\geq0\}$. Write $-\epsilon:=(-\epsilon_{e+1},\ldots,-\epsilon_d)$. We have:
\begin{equation*}
\cl(g_+^{-1}(\Qq_\epsilon\setminus Z))=\R^e\times[0,+\infty)\times(\sph^{d-e-1}\cap\{\epsilon_{e+1}\w_{e+1}\geq0,\ldots,\epsilon_d\w_{d}\geq0\}).
\end{equation*}
Consequently, 
$$
\cl(g_+^{-1}(\Qq_\epsilon\setminus Z))\cap\cl(g_+^{-1}(\Qq_{-\epsilon}\setminus Z))=\R^e\times[0,+\infty)\times(\sph^{d-e-1}\cap\{\w_{e+1}=0,\ldots,\w_{d}=0\})=\varnothing, 
$$
because $\sph^{d-e-1}=\{\w_{e+1}^2+\cdots+\w_d^2=1\}$. In addition, if 
$$
\epsilon:=(\epsilon_{e+1},\ldots,\epsilon_{m},\epsilon_{m+1},\ldots,\epsilon_d)\ \text{and}\ \epsilon':=(\epsilon_{e+1},\ldots,\epsilon_{m},-\epsilon_{m+1},\ldots,-\epsilon_d)
$$ 
where $e<m<d$, then 
$$
\Qq_\epsilon\cap\Qq_{\epsilon'}=\{\epsilon_{e+1}\y_{e+1}\geq0,\ldots,\epsilon_m\y_m\geq0,\y_{m+1}=0,\ldots,\y_d=0\},
$$
which has dimension $e+(d-e)-(d-m)=m\geq e+1$. In addition,
{\small\begin{multline*}
\cl(g_+^{-1}(\Qq_\epsilon\setminus Z))\cap\cl(g_+^{-1}(\Qq_{\epsilon'}\setminus Z))\\
=\R^e\times[0,+\infty)\times(\sph^{d-e-1}\cap\{\epsilon_{e+1}\w_{e+1}\geq0,\ldots,\epsilon_m\w_{m}\geq0,\w_{m+1}=0,\ldots,\w_{d}=0\}),
\end{multline*}}which has dimension $e+1+(d-e-1-(d-m))=m\geq e+1$.

(iii) Let $Y_1,Y_2$ be intersections of dimension $e+1$ of irreducible components of $Z$ that contain $N$. We may assume $Y_1=\{\y_{e+1}=0,\ldots,\y_{d-1}=0\}$ and $Y_2=\{\y_{e+1}=0,\ldots,\y_{d-2}=0,\y_d=0\}$, so $Y_1\cap Y_2=\{\y_{e+1}=0,\ldots,\y_d=0\}=N$. Thus, 
\begin{align*}
g^{-1}(Y_1)&=(\R^e\times\{0\}\times\sph^{d-e-1})
\cup(\R^e\times\R\times(\sph^{d-e-1}\cap\{\w_{e+1}=0,\ldots,\w_{d-1}=0\})),\\
g_+^{-1}(Y_1)&=(\R^e\times\{0\}\times\sph^{d-e-1})\cup(\R^e\times[0,+\infty)\times(\sph^{d-e-1}\cap\{\w_{e+1}=0,\ldots,\w_{d-1}=0\})),\\
g^{-1}(Y_2)&=(\R^e\times\{0\}\times\sph^{d-e-1})\cup(\R^e\times\R\times(\sph^{d-e-1}\cap\{\w_{e+1}=0,\ldots,\w_{d-2}=0,\w_d=0\})),\\
g_+^{-1}(Y_2)&=(\R^e\times\{0\}\times\sph^{d-e-1})\cup(\R^e\times[0,+\infty)\times(\sph^{d-e-1}\cap\{\w_{e+1}=0,\ldots,\w_{d-2}=0,\w_d=0\})).
\end{align*}
As the intersection
\begin{equation*}
\sph^{d-e-1}\cap\{\w_{e+1}=0,\ldots,\w_{d-1}=0\}\cap\{\w_{e+1}=0,\ldots,\w_{d-2}=0,\w_d=0\}
\end{equation*}
is empty, we conclude that the intersection 
\begin{multline*}
\cl(g^{-1}(Y_1\setminus N))\cap\cl(g^{-1}(Y_2\setminus N))\\
=(g^{-1}(Y_1)\cap\cl(g^{-1}(Y_1\setminus N)))\cap(g^{-1}(Y_2)\cap\cl(g^{-1}(Y_2\setminus N)))
\end{multline*} 
is also empty. Analogously, the intersection 
\begin{multline*}
\cl(g_+^{-1}(Y_1\setminus N))\cap\cl(g_+^{-1}(Y_2\setminus N))\\
=(g_+^{-1}(Y_1)\cap\cl(g_+^{-1}(Y_1\setminus N)))\cap(g_+^{-1}(Y_2)\cap\cl(g_+^{-1}(Y_2\setminus N)))
\end{multline*}
is empty}.
\hfill$\sqbullet$
\end{fact}

Next, we analyze the properties of the local structure of drilling blow-up when $N$ has dimension $d-1$.

\begin{fact}\em\label{locald-1}
Assume $N$ has dimension $e=d-1$. The Nash diffeomorphism $u:=(u_1,\ldots,u_d):M\to\R^d$ satisfies $N=\{u_d=0\}$. Recall that $\psi:=u^{-1}:\R^d\equiv\R^{d-1}\times\R\to M$, $\zeta_d:\R^d\to\R^k$ is a Nash map that does not take the value $0\in\R^k$ and define the Nash map
$$
\Phi:\R^{d-1}\times\R\times\{\pm1\}\to M\times\sph^{k-1},\ (y,\rho,\pm1)\mapsto\Big(\psi(y,\pm\rho),\pm\frac{\zeta_d(y,\pm\rho)}{\|\zeta_d(y,\pm\rho)\|}\Big).
$$ 
Fix $\epsilon=\pm$ and denote 
$$
I_\epsilon:=\begin{cases}
[0,+\infty)&\text{ if $\epsilon=+$,}\\
(-\infty,0]&\text{ if $\epsilon=-$.}
\end{cases}
$$ 
The closure $\widetilde{M}_\epsilon$ in $M\times\sph^{k-1}$ of the set
$$
\Gamma_\epsilon:=\Big\{\Big(\psi(y,z),\epsilon\frac{z}{|z|}\frac{\zeta_d(y,z)}{\|\zeta_d(y,z)\|}\Big)\in M\times\sph^{k-1}:\,z\neq 0\Big\}
$$ 
is a Nash manifold with boundary such that:
\begin{itemize}
\item[{\rm(i)}] $\widetilde{M}_\epsilon\subset\im(\Phi)$.
\item[{\rm(ii)}] The restriction of $\Phi$ to $\R^{d-1}\times I_\epsilon\times\{\pm1\}$ induces a Nash diffeomorphism between $\R^{d-1}\times I_\epsilon\times\{\pm1\}$ and $\widetilde{M}_\epsilon$. Consequently, $\partial\widetilde{M_\epsilon}=\Phi(\R^{d-1}\times\{0\}\times\{\pm1\})$ and $\Gamma_\epsilon=\Int(\widetilde{M_\epsilon})=\Phi(\R^{d-1}\times(I_\epsilon\setminus\{0\})\times\{\pm1\})$.
\end{itemize}

Denote $\Rr:=\partial\widetilde{M}_+=\partial\widetilde{M}_-$ and $\widehat{M}:=\widetilde{M}_+\cup\widetilde{M}_-=\Gamma_+\sqcup\Rr\sqcup\Gamma_-$. Then $\Phi$ induces a Nash diffeomorphism between $\R^{d-1}\times\R\times\{\pm1\}$ and $\widehat{M}$, which is the Nash closure of $\widetilde{M}_+$ and $\widetilde{M}_-$ in $M\times\sph^{k-1}$. In addition, the Nash map $\sigma: M\times\sph^{k-1}\to M\times\sph^{k-1}, (a,b)\to(a,-b)$ induces a Nash involution on $\widehat{M}$ without fixed points such that $\sigma(\widetilde{M}_+)=\widetilde{M}_-$ and $\Phi(y,-\rho,\pm1)=(\sigma\circ\Phi)(y,\rho,\mp1)$ for each $(y,\rho,\pm1)\in\R^{d-1}\times\R\times\{\pm1\}$.

Consider the projection $\pi:M\times\sph^{k-1}\to M$ onto the first factor and denote $\pi_\epsilon:=\pi|_{\widetilde{M_\epsilon}}$. Then
\begin{itemize}
\item[{\rm(i)}] $\pi_\epsilon$ is proper, $\pi_\epsilon(\widetilde{M_\epsilon})=M$ and $\Rr=\pi^{-1}_\epsilon(N)$.
\item[{\rm(ii)}] The restriction $\pi_\epsilon|_{\Gamma_\epsilon}:\Gamma_\epsilon\to M\setminus N$ is a Nash diffeomorphism.
\item[{\rm(iii)}] For each $q\in N$ it holds $\pi_\epsilon^{-1}(q)=\{q\}\times\{\pm\frac{\{(\zeta_d\circ u)(q)\}}{\|\{(\zeta_d\circ u)(q)\}\|}\}$, that is, each point $q\in N$ has exactly two preimages under $\pi_\epsilon$.
\end{itemize}

Denote $\widehat{\pi}:=\pi|_{\widehat{M}}$ and consider the commutative diagram.
\begin{equation}\label{diagpol2}
\begin{gathered}
\xymatrix{
\R^{d-1}\times\R\times\{\pm1\}\ar[d]_{g:=u\circ\widehat{\pi}\circ\Phi}\ar[rr]^(0.6){\Phi}_(0.6){\cong}&&\widehat{M}\ar[d]^{\widehat{\pi}}&(y,\rho,\pm1)\ar@{|->}[d]\ar@{|->}[r]&\Phi(y,\rho,\pm1)\ar@{|->}[d]\\
\R^d&&\ar[ll]_{u}^{\cong}M&(y,\pm\rho)&\psi(y,\pm\rho)\ar@{|->}[l]
}
\end{gathered}
\end{equation}
Consequently: \em The Nash maps $\pi_\epsilon$ and $\widehat{\pi}$ have local representations $(x_1,\ldots,x_d)\mapsto (x_1,\ldots,x_d)$ in an open neighborhood of each point $p\in\Rr$. In addition, $d\pi_p(T_p\widehat{M})= T_{\pi(p)}M$\em. 
\end{fact}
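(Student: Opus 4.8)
The plan is to make the whole construction completely explicit, exploiting that for $e=d-1$ the sphere of normal directions is $\sph^{d-e-1}=\sph^0=\{\pm1\}$, so that the drilling blow-up degenerates into two copies of a local Nash diffeomorphism. Write $h:=\frac{\zeta_d\circ u}{\|\zeta_d\circ u\|}\colon M\to\sph^{k-1}$, which is a well-defined Nash map since $\zeta_d$ does not vanish; as $\psi=u^{-1}$, one has $\Phi(y,\rho,\pm1)=(\psi(y,\pm\rho),\pm h(\psi(y,\pm\rho)))$. Let $M^{(+)}:=\{(a,h(a)):a\in M\}$ and $M^{(-)}:=\{(a,-h(a)):a\in M\}$ be the graphs of $h$ and $-h$ inside $M\times\sph^{k-1}$. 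First I would read off from the formula for $\Phi$ that $\im(\Phi)=M^{(+)}\sqcup M^{(-)}$; hence $\im(\Phi)$ is a Nash submanifold of $M\times\sph^{k-1}$ with two connected components, the projection $\pi|_{\im(\Phi)}$ restricts to a Nash diffeomorphism of each of $M^{(+)},M^{(-)}$ onto $M$, and $\im(\Phi)$ is the zero set in $M\times\sph^{k-1}$ of the Nash function $(a,b)\mapsto\|b-h(a)\|^2\|b+h(a)\|^2$, so it is a Nash subset. Next I would check that $\Phi$ is injective — the only non-obvious case $\Phi(y,\rho,+1)=\Phi(y',\rho',-1)$ first forces $\psi(y,\rho)=\psi(y',-\rho')$ and then $h=-h$ at that point, which is impossible — and that $\Phi$ maps the $\{+1\}$-component of $\R^{d-1}\times\R\times\{\pm1\}$ onto $M^{(+)}$ and the $\{-1\}$-component onto $M^{(-)}$. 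Since $\pi\circ\Phi(y,\rho,\pm1)=\psi(y,\pm\rho)$ is a Nash diffeomorphism of each component of $\R^{d-1}\times\R\times\{\pm1\}$ onto $M$ and $\pi$ restricts to a Nash diffeomorphism on each of $M^{(+)},M^{(-)}$, composing with the inverse of $\pi|_{M^{(+)}}$, resp. $\pi|_{M^{(-)}}$, exhibits $\Phi$ as a Nash diffeomorphism onto $\im(\Phi)$.

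Next I would pin down $\Gamma_\epsilon$, $\widetilde{M}_\epsilon$, $\Rr$ and $\widehat{M}$. Splitting the condition $z\neq0$ defining $\Gamma_\epsilon$ into $z>0$ (covered by the label $+1$) and $z<0$ (covered, after the substitution $z\mapsto-z$, by the label $-1$) gives $\Gamma_\epsilon=\Phi(\R^{d-1}\times(I_\epsilon\setminus\{0\})\times\{\pm1\})$. Because $\pi\circ\Phi$ is proper — under $u$ it is the union of the two homeomorphisms $(y,\rho,\pm1)\mapsto(y,\pm\rho)$ onto $\R^d$ — and $\sph^{k-1}$ is compact, $\Phi$ is proper, hence a closed map and a homeomorphism onto its (closed) image; therefore
$$
\widetilde{M}_\epsilon=\cl(\Gamma_\epsilon)=\Phi\big(\cl(\R^{d-1}\times(I_\epsilon\setminus\{0\})\times\{\pm1\})\big)=\Phi(\R^{d-1}\times I_\epsilon\times\{\pm1\}).
$$
As $\Phi$ is a Nash diffeomorphism and $\R^{d-1}\times I_\epsilon\times\{\pm1\}$ is a Nash manifold with boundary, this shows that $\widetilde{M}_\epsilon$ is a Nash manifold with boundary contained in $\im(\Phi)$, with $\partial\widetilde{M}_\epsilon=\Phi(\R^{d-1}\times\{0\}\times\{\pm1\})$ and $\Int(\widetilde{M}_\epsilon)=\Phi(\R^{d-1}\times(I_\epsilon\setminus\{0\})\times\{\pm1\})=\Gamma_\epsilon$, which is (i) and (ii). From $I_+\cup I_-=\R$ and $I_+\cap I_-=\{0\}$ I obtain $\widehat{M}:=\widetilde{M}_+\cup\widetilde{M}_-=\im(\Phi)=M^{(+)}\sqcup M^{(-)}$ and $\Rr:=\partial\widetilde{M}_+=\partial\widetilde{M}_-=\widetilde{M}_+\cap\widetilde{M}_-$; moreover $\Gamma_+$ (hence $\widetilde{M}_+$) meets each of the connected Nash manifolds $M^{(+)},M^{(-)}$ in a nonempty open subset, so the Nash closure of $\widetilde{M}_+$ — and likewise of $\widetilde{M}_-$ — is all of $\widehat{M}$. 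Finally, a one-line substitution in the formula for $\Phi$ gives $\Phi(y,-\rho,\pm1)=(\sigma\circ\Phi)(y,\rho,\mp1)$ with $\sigma(a,b)=(a,-b)$; since $b\neq-b$ on $\sph^{k-1}$, the restriction of $\sigma$ to $\widehat{M}$ has no fixed point, and as the displayed relation interchanges $I_+$ and $I_-$ it swaps $\widetilde{M}_+$ and $\widetilde{M}_-$.

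For the assertions on the projections I would transport $\pi_\epsilon=\widehat{\pi}|_{\widetilde{M}_\epsilon}$ through the Nash diffeomorphisms $\Phi$ and $u$: it becomes the model map $\R^{d-1}\times I_\epsilon\times\{\pm1\}\to\R^d$ given by $(y,\rho,+1)\mapsto(y,\rho)$ and $(y,\rho,-1)\mapsto(y,-\rho)$, off which everything can be read: $\pi_\epsilon$ is proper; $\pi_\epsilon(\widetilde{M}_\epsilon)=M$ (the $+1$ sheet covers $\{u_d\geq0\}$, the $-1$ sheet covers $\{u_d\leq0\}$); $\Rr=\pi_\epsilon^{-1}(N)$ (the locus $\rho=0$); $\pi_\epsilon|_{\Gamma_\epsilon}$ is a Nash diffeomorphism onto $M\setminus N$ (each sheet maps diffeomorphically onto $\{\pm u_d>0\}$ and the two images are disjoint); and for $q\in N$, writing $u(q)=(y_0,0)$, the fibre is $\{\Phi(y_0,0,+1),\Phi(y_0,0,-1)\}=\{(q,h(q)),(q,-h(q))\}$, two distinct points. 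The commutative diagram \eqref{diagpol2} is just the definition of $g:=u\circ\widehat{\pi}\circ\Phi$; reading it off, in the Nash charts $\Phi^{-1}$ on $\widehat{M}$ and $u$ on $M$ the map $\widehat{\pi}$ (hence $\pi_\epsilon$) is represented by $(x_1,\dots,x_d)\mapsto(x_1,\dots,x_d)$ on the $+1$ sheet and by $(x_1,\dots,x_d)\mapsto(x_1,\dots,x_{d-1},-x_d)$ on the $-1$ sheet, so after composing one of the two charts with the reflection $x_d\mapsto-x_d$ it is the identity near every $p\in\Rr$; in particular $d\widehat{\pi}_p$ is a linear isomorphism, whence $d\pi_p(T_p\widehat{M})=T_{\pi(p)}M$.

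None of these steps is deep; the one point that needs some care is the closure identification for $\widetilde{M}_\epsilon$ — that is, verifying that $\Phi$ is proper and hence a homeomorphism onto a closed subset of $M\times\sph^{k-1}$, so that $\cl(\Gamma_\epsilon)$ picks up no spurious limit points and equals exactly $\Phi(\R^{d-1}\times I_\epsilon\times\{\pm1\})$ — together with keeping the $\pm$ bookkeeping consistent throughout the translation between the labels $\{\pm1\}$ and the signs of $\rho$ and $z$.
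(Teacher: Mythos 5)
Your proof is correct, and it fills in exactly what the paper leaves implicit: the paper states this Fact without a written argument, treating it as the specialization of the general local-structure Facts for the drilling blow-up (recalled from \cite{fe3}) to the case $e=d-1$, where the normal sphere $\sph^{d-e-1}$ degenerates to $\sph^0=\{\pm1\}$. Your route is the same specialization carried out explicitly, and the explicit form is genuinely useful: identifying $\widehat{M}$ with the disjoint union of the graphs of $\pm h$, $h=\frac{\zeta_d\circ u}{\|\zeta_d\circ u\|}$, gives at once that $\widehat{M}$ is a closed Nash submanifold (indeed a Nash subset, via the equation $\|b-h(a)\|^2\|b+h(a)\|^2=0$), that $\pi$ restricts to a Nash diffeomorphism on each sheet, and that the Nash-closure assertion follows from the identity principle because $\widetilde{M}_\epsilon$ meets each of the two connected graphs in a set with nonempty interior; your verification that $\Phi$ is proper, hence a closed embedding, is precisely the point needed to identify $\cl(\Gamma_\epsilon)$ with $\Phi(\R^{d-1}\times I_\epsilon\times\{\pm1\})$ without spurious limit points, and all the statements about $\pi_\epsilon$, $\widehat{\pi}$, the fibres over $N$ and the local representation then follow by transporting through $\Phi$ and $u$ as you do. The only blemishes are bookkeeping ones and do not affect the argument: your parenthetical ``$z>0$ covered by the label $+1$, $z<0$ by the label $-1$'' and ``the $+1$ sheet covers $\{u_d\geq0\}$, the $-1$ sheet covers $\{u_d\leq0\}$'' are literally accurate only for $\epsilon=+$ (for $\epsilon=-$ the roles of the labels are interchanged), but the displayed identities $\Gamma_\epsilon=\Phi(\R^{d-1}\times(I_\epsilon\setminus\{0\})\times\{\pm1\})$ and $\pi_\epsilon(\widetilde{M}_\epsilon)=M$ are correct as stated for both signs.
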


\subsection{Global definition.}\label{gdndbu}
Let $M\subset\R^m$ be a $d$-dimensional Nash manifold and $N\subset M$ a closed $e$-dimensional Nash submanifold. Let $f_1,\ldots,f_k\in{\mathcal N}(M)$ be a finite system of generators of the ideal $\mathcal{I}(N)$ of Nash functions on $M$ vanishing identically on $N$. Consider the Nash map
\begin{equation*}
F:M\setminus N\to\sph^{k-1}, x\mapsto\frac{(f_1(x),\ldots,f_k(x))}{\|(f_1(x),\ldots,f_k(x))\|}.
\end{equation*}
We have:

\begin{fact}\label{bigstepa2g}\cite[\S5.B.1]{fe3} 
Fix $\epsilon=\pm$. The closure $\widetilde{M}_\epsilon$ in $M\times\sph^{k-1}$ of the graph 
$$
\Gamma_\epsilon:=\{(x,\epsilon F(x))\in M\times\sph^{k-1}:\ x\in M\setminus N\}
$$ 
is a Nash manifold with boundary. Denote $\Rr:=\partial\widetilde{M}_+=\partial\widetilde{M}_-$ and $\widehat{M}:=\widetilde{M}_+\cup\widetilde{M}_-=\Gamma_+\sqcup\Rr\sqcup\Gamma_-$, which is the Nash closure of $\widetilde{M}_+$ and $\widetilde{M}_-$ in $M\times\sph^{k-1}$ if $M$ is connected. In addition, $\widehat{M}$ is a Nash manifold and the Nash map $\sigma:M\times\sph^{k-1}\to M\times\sph^{k-1},\, (a,b)\to(a,-b)$ induces a Nash involution on $\widehat{M}$ without fixed points that maps $\widetilde{M}_+$ onto $\widetilde{M}_-$.
\end{fact}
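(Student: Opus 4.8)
The plan is to reduce the global assertions to the local description of the drilling blow-up obtained in \S\ref{bigstep}, i.e.\ to Facts \ref{bigstepa2} and \ref{bigstepa3} (and Fact \ref{locald-1} when $\dim N=d-1$). All the claims to be proved --- that $\widetilde{M}_\epsilon$ is a Nash manifold with boundary, that $\Rr:=\partial\widetilde{M}_+=\partial\widetilde{M}_-$, that $\widehat{M}=\widetilde{M}_+\cup\widetilde{M}_-=\Gamma_+\sqcup\Rr\sqcup\Gamma_-$ is a Nash manifold, and that $\sigma$ induces a fixed-point free Nash involution of $\widehat{M}$ with $\sigma(\widetilde{M}_+)=\widetilde{M}_-$ --- are local on $M\times\sph^{k-1}$, and $\widetilde{M}_\epsilon$, $\Rr$, $\widehat{M}$ are intrinsically defined semialgebraic subsets of $M\times\sph^{k-1}$ (closures of the explicit semialgebraic sets $\Gamma_\epsilon$). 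So I would analyze a neighborhood of each point $p$ of $\widehat{M}$ and patch, distinguishing the cases $\pi(p)\notin N$ and $\pi(p)\in N$ for $\pi\colon M\times\sph^{k-1}\to M$ the first projection. (One may assume $\varnothing\neq N\neq M$: $N=\varnothing$ is handled by $f_1\equiv 1$ and is trivial, so $1\le d-\dim N$.)

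Over $M\setminus N$ the vector $(f_1,\ldots,f_k)$ is nonvanishing, so $F$ is Nash and the graph of $\epsilon F$ is closed in $(M\setminus N)\times\sph^{k-1}$; hence $\widetilde{M}_\epsilon$ agrees over $M\setminus N$ with this graph --- a Nash manifold without boundary, carried Nash diffeomorphically onto $M\setminus N$ by $\pi$ --- so all boundary points of $\widetilde{M}_\epsilon$ lie over $N$. Since $F(x)\neq -F(x)$, the sets $\widetilde{M}_+$ and $\widetilde{M}_-$ are disjoint over $M\setminus N$; thus $\widehat{M}$ over $M\setminus N$ is a Nash manifold, two disjoint copies of $M\setminus N$ on which $\sigma$ acts freely by swapping them.

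Over a point $q\in N$ I would match the global picture with the local model of \S\ref{bigstep}. Set $e:=\dim N$ and choose a Nash diffeomorphism $u=(u_1,\ldots,u_d)\colon U\to\R^d$ on an open semialgebraic neighborhood $U\subset M$ of $q$ with $N\cap U=\{u_{e+1}=\cdots=u_d=0\}$, and put $\psi:=u^{-1}$. The key point is to produce the required Nash maps $\zeta_j$. As $u_{e+1},\ldots,u_d$ generate the ideal of the submanifold germ $N_q$, after shrinking $U$ one has $f_i=\sum_{j=e+1}^{d}a_{ij}u_j$ and, conversely, $u_j=\sum_{i=1}^{k}b_{ji}f_i$ with $a_{ij},b_{ji}\in\mathcal{N}(U)$; set $\zeta_j:=(a_{1j},\ldots,a_{kj})\circ\psi$ for $j=e+1,\ldots,d$ and $\varphi(y,z):=\sum_{j=e+1}^{d}\zeta_j(y,z)z_j$. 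Differentiating $u_j=\sum_i b_{ji}f_i$ along $N\cap U$, where the covectors $du_{e+1},\ldots,du_d$ are independent, gives $\bigl(b_{ji}(x)\bigr)\bigl(a_{ij}(x)\bigr)=I_{d-e}$ for $x\in N\cap U$, whence $\zeta_{e+1}(y,0),\ldots,\zeta_d(y,0)$ are linearly independent for $y\in u(N\cap U)$; and $\varphi\circ u=(f_1,\ldots,f_k)|_U$ yields $\varphi(y,z)=0\iff z=0$ together with $(\varphi/\|\varphi\|)\circ u=F|_{U\setminus N}$. Hence $(u,\id)$ carries $\Gamma_\epsilon\cap(U\times\sph^{k-1})$, and so $\widetilde{M}_\epsilon$, $\Rr$ and $\widehat{M}$, onto the corresponding objects of the local model built from $\psi,\zeta_{e+1},\ldots,\zeta_d$ (reading the local Facts over $u(U)$ in place of $\R^d$, or precomposing with a further Nash diffeomorphism; this does not affect the local analysis). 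Facts \ref{bigstepa2} and \ref{bigstepa3}, or Fact \ref{locald-1} when $e=d-1$, now give near $p$ precisely what is claimed: $\widetilde{M}_\epsilon$ is a Nash manifold with boundary $\Rr$ and interior $\Gamma_\epsilon$, $\Rr=\partial\widetilde{M}_+=\partial\widetilde{M}_-$, $\widehat{M}=\widetilde{M}_+\cup\widetilde{M}_-$ is a Nash manifold, and $\sigma$ is a fixed-point free Nash involution with $\sigma(\widetilde{M}_+)=\widetilde{M}_-$.

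Patching these two local descriptions gives all the global assertions; for the involution one uses globally that $\sigma(\Gamma_+)=\Gamma_-$, hence $\sigma(\widetilde{M}_+)=\widetilde{M}_-$. It remains to treat the Nash-closure claim when $M$ is connected. Here $\widehat{M}$ is a closed Nash submanifold of the affine Nash manifold $M\times\sph^{k-1}$ of pure dimension $d$, hence a Nash subset containing $\widetilde{M}_+$; and if a Nash function vanishes on $\widetilde{M}_+$, then it vanishes on the open subset $\Gamma_+$ of $\widehat{M}$, hence --- by the local models near $\Rr$ --- on a full two-sided neighborhood of $\Rr$ in $\widehat{M}$, hence on a nonempty open subset of each component of $\Gamma_-$ (every component of $M\setminus N$, and so of $\Gamma_-\cong M\setminus N$, has a point of $N$, resp.\ of $\Rr$, in its closure, as $M$ is connected and $N\neq\varnothing$), hence on all of $\Gamma_-$ and therefore on $\widehat{M}$; thus $\widehat{M}$ is the smallest Nash subset containing $\widetilde{M}_+$, i.e.\ its Nash closure, and symmetrically for $\widetilde{M}_-$. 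I expect the main obstacle to be the third paragraph --- extracting the local data $\zeta_j$ from the \emph{global} generators $f_1,\ldots,f_k$ and verifying the linear-independence and nonvanishing hypotheses of the local Facts --- with the connectedness argument as a secondary technical point.
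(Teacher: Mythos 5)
Your argument is correct, but note that the paper itself offers no proof of this statement: Fact \ref{bigstepa2g} is recalled from \cite[\S5]{fe3}, the section being organized exactly as you proceed (local structure in \S\ref{bigstep}, then the global definition in \S\ref{gdndbu}), so your reduction of the global claims to Facts \ref{bigstepa2}, \ref{bigstepa3} and \ref{locald-1} via charts adapted to $N$ is precisely the expected route and there is no written proof to compare it with line by line. Two points in your sketch deserve an explicit justification or citation. First, the step producing $u_j=\sum_i b_{ji}f_i$ on a shrunk chart uses that the \emph{global} generators $f_1,\ldots,f_k$ of $\mathcal{I}(N)\subset{\mathcal N}(M)$ generate the ideal of germs of Nash functions vanishing on $N$ at each point (equivalently, the ideal $\mathcal{I}(N\cap U)\subset{\mathcal N}(U)$ after shrinking $U$); this is true but is a nontrivial consequence of coherence and the solution of the global problems for Nash sheaves (Coste--Ruiz--Shiota, see also \cite{fgr}), and should be invoked rather than taken for granted -- once it is, your verification that $B(x)A(x)=I_{d-e}$ on $N\cap U$, hence the linear independence of the $\zeta_j(y,0)$ and the equivalence $\varphi(y,z)=0\Leftrightarrow z=0$, is exactly right. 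Second, in the Nash-closure argument the assertion that every connected component of $\Gamma_-$ has a point of $\Rr$ in its closure needs one more line beyond the purely topological fact about components of $M\setminus N$: take $x_n\in C=\pi(C')$ with $x_n\to q\in N$ and use compactness of $\sph^{k-1}$ (or properness of $\pi|_{\widetilde{M}_-}$, which follows from the local model) to extract a limit $(q,b)\in\cl(C')\cap\widehat{\pi}^{-1}(N)=\cl(C')\cap\Rr$. With these two points filled in, the patching of the local models, the identity-principle argument across $\Rr$, and the conclusion that $\widehat{M}$ is the Nash closure of $\widetilde{M}_\pm$ when $M$ is connected are all sound.
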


\begin{fact}\label{bigstepb2}\cite[\S5.B.2]{fe3} 
Consider the projection $\pi:M\times\sph^{k-1}\to M$ onto the first factor. Denote $\pi_\epsilon:=\pi|_{\widetilde{M_\epsilon}}$ for $\epsilon=\pm$ and $\widehat{\pi}:=\pi|_{\widehat{M}}$. We have:
\begin{itemize}
\item[(i)] $\pi_\epsilon$ is proper, $\pi_\epsilon(\widetilde{M_\epsilon})=M$ and $\Rr=\pi^{-1}_\epsilon(N)$.
\item[(ii)] The restriction $\pi_\epsilon|_{\Gamma_\epsilon}:\Gamma_\epsilon\to M\setminus N$ is a Nash diffeomorphism.
\item[(iii)] Consider the Nash map $f:=(f_1,\ldots,f_k):M\to\R^k$ (whose components generate $\mathcal{I}(N)$). Fix $q\in N$ and let $E_q$ be any complementary linear subspace of $T_qN$ in $T_qM$. Then $\pi_\epsilon^{-1}(q)=\{q\}\times\sph_q^{d-e-1}$, where $\sph_q^{d-e-1}$ denotes the sphere of dimension $d-e-1$ obtained when intersecting $\sph^{k-1}$ with the $(d-e)$-dimensional linear subspace $d_qf(E_q)$. In case $e=d-1$, each $q\in N$ has exactly two preimages under $\pi_\epsilon$.
 \item[(iv)] The Nash maps $\pi_\epsilon$ and $\widehat{\pi}$ have local representations of the type 
$$
(x_1,\ldots,x_d)\mapsto(x_1,\ldots,x_e,x_{e+1},x_{e+1}x_{e+2},\ldots,x_{e+1}x_d)
$$
in an open neighborhood of each point $p\in\Rr$. In case $e=d-1$ the previous local representations are $(x_1,\ldots,x_d)\mapsto(x_1,\ldots,x_d)$. In addition, $
d\pi_p(T_p\widehat{M})\not\subset T_{\pi(p)}N$ and if $e=d-1$, we have $d\pi_p(T_p\widehat{M})=T_{\pi(p)}M$.
\end{itemize}
\end{fact}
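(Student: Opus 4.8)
The plan is to derive all four assertions from the local analysis of \S\ref{bigstep} (and from Fact~\ref{locald-1} in the codimension-one case $e=d-1$), using the compactness of $\sph^{k-1}$ and the fact, already available from Fact~\ref{bigstepa2g}, that $\widetilde M_\epsilon$ and $\widehat M$ are Nash manifolds (with, resp.\ without, boundary). Items (i) and (ii) are global and soft, and I would dispatch them first; items (iii) and (iv) are pointwise, and I would obtain them by showing that, around each point of $N$, the global construction of \S\ref{gdndbu} is literally one of the local models of \S\ref{bigstep}.

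For (i), the projection $\pi:M\times\sph^{k-1}\to M$ is proper because $\sph^{k-1}$ is compact, and $\widetilde M_\epsilon$ is closed in $M\times\sph^{k-1}$ (a topological closure), so $\pi_\epsilon=\pi|_{\widetilde M_\epsilon}$ is proper, hence closed. Since $\Gamma_\epsilon$ is the graph of $\epsilon F$ over $M\setminus N$, we get $\pi_\epsilon(\Gamma_\epsilon)=M\setminus N$, which is dense in $M$, and a closed map with dense image is surjective, so $\pi_\epsilon(\widetilde M_\epsilon)=M$. As $F$ is Nash on $M\setminus N$, the graph $\Gamma_\epsilon$ is closed in $\pi^{-1}(M\setminus N)$, whence $\widetilde M_\epsilon\cap\pi^{-1}(M\setminus N)=\Gamma_\epsilon$ and $\pi_\epsilon^{-1}(N)=\widetilde M_\epsilon\setminus\Gamma_\epsilon=\Rr$. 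For (ii), the Nash map $M\setminus N\to\Gamma_\epsilon$, $x\mapsto(x,\epsilon F(x))$, is a two-sided inverse of $\pi_\epsilon|_{\Gamma_\epsilon}$, so $\pi_\epsilon|_{\Gamma_\epsilon}$ is a Nash diffeomorphism.

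For (iii) and (iv) I would fix $q\in N$ and a Nash chart $u=(u_1,\dots,u_d):M'\to\R^d$ on an open semialgebraic neighborhood $M'\subset M$ of $q$ with $N\cap M'=\{u_{e+1}=\dots=u_d=0\}$, and set $\psi:=u^{-1}$. By a Nash Hadamard-type decomposition along this coordinate subspace one can write, after shrinking $M'$, $f_j|_{M'}=\sum_{l=e+1}^{d}a_{jl}u_l$ with $a_{jl}\in\mathcal{N}(M')$, and conversely $u_l=\sum_j b_{jl}f_j|_{M'}$ with $b_{jl}\in\mathcal{N}(M')$ (using that $u_{e+1},\dots,u_d$ and $f_1,\dots,f_k$ both generate the ideal of $N$ in the local rings at the points of $N\cap M'$); the latter relation forces the matrix $(a_{jl}(q))_{j,l}$ to have rank $d-e$. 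Putting $\zeta_l:=((a_{1l}\circ\psi),\dots,(a_{kl}\circ\psi)):\R^d\to\R^k$ for $l=e+1,\dots,d$, we obtain $f\circ\psi=\sum_l\zeta_l(\cdot)\,z_l$, vanishing exactly where $z=0$, with $\zeta_{e+1}(u(q)),\dots,\zeta_d(u(q))$ linearly independent. Hence $(\psi,\zeta_{e+1},\dots,\zeta_d)$ meets the hypotheses of \S\ref{bigstep}, and over $M'$ the global construction of \S\ref{gdndbu} coincides, through $u$, with the local model of \S\ref{bigstep}, resp.\ with that of Fact~\ref{locald-1} when $e=d-1$.

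Item (iv) then follows at once: the local normal forms $(x_1,\dots,x_d)\mapsto(x_1,\dots,x_e,x_{e+1},x_{e+1}x_{e+2},\dots,x_{e+1}x_d)$ (resp.\ the identity when $e=d-1$), as well as $d\pi_p(T_p\widehat M)\not\subset T_{\pi(p)}N$ (resp.\ $d\pi_p(T_p\widehat M)=T_{\pi(p)}M$), are exactly those recorded after \eqref{diagpol}, resp.\ in Fact~\ref{locald-1}. For (iii), the local description recalled in \S\ref{bigstep} yields $\pi_\epsilon^{-1}(q)=\{q\}\times(\sph^{k-1}\cap L_q)$ with $L_q=\operatorname{span}((\zeta_{e+1}\circ u)(q),\dots,(\zeta_d\circ u)(q))$; since $(\zeta_l\circ u)(q)=d_qf(\partial_{u_l}|_q)$ and $\operatorname{span}(\partial_{u_{e+1}}|_q,\dots,\partial_{u_d}|_q)$ is a complement $E_q$ of $T_qN=\ker d_qf$ in $T_qM$, we get $L_q=d_qf(E_q)=\operatorname{im}d_qf$, a $(d-e)$-dimensional subspace independent of the choice of complement; thus $\pi_\epsilon^{-1}(q)=\{q\}\times\sph_q^{d-e-1}$, which is a two-point set when $e=d-1$. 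The step I expect to be the genuine obstacle is the chart-matching in the third paragraph, i.e.\ producing the Nash decomposition $f_j=\sum_l a_{jl}u_l$ and its converse so that the global drilling blow-up is recognizably one of the models of \S\ref{bigstep}; it relies on the Nash Hadamard lemma along a coordinate subspace and on the fact that the globally chosen generators of $\mathcal{I}(N)$ still generate the ideal of $N$ in each local ring $\mathcal{N}_{q'}(M)$, $q'\in N$. Once this is in place, the remaining claims are formal transfers of the local statements, properness being supplied by the compactness of $\sph^{k-1}$.
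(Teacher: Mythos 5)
Your proof is correct and takes the route the paper itself sets up: Fact~\ref{bigstepb2} is recalled here from \cite[\S5]{fe3} without proof, and the intended argument is precisely your reduction of the global construction of \S\ref{gdndbu} to the local models of \S\ref{bigstep} (and Fact~\ref{locald-1} when $e=d-1$) via a chart adapted to $N$ and a decomposition $f_j=\sum_l a_{jl}u_l$, after which (i)--(iv) are read off from Fact~\ref{bigstepa2} and diagram~\eqref{diagpol} exactly as you do. The step you flag is indeed the only substantive one, and it is standard in the Nash category (coherence/Theorem A--B type results give both the Hadamard-type decomposition and the fact that the global generators of $\mathcal{I}(N)$ generate the ideal germs); note only that linear independence of the $\zeta_l$ is obtained at $q$ and must be propagated by shrinking the chart to a smaller neighborhood Nash diffeomorphic to $\R^d$, which is routine.
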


\begin{fact}\label{bigstepb3}\cite[\S5.B.3]{fe3}
Up to Nash diffeomorphisms compatible with the respective projections, the pairs $(\widetilde{M_\epsilon},\pi_\epsilon)$ and $(\widehat{M},\widehat{\pi})$ do not depend on the generators $f_1,\ldots,f_k$ of $\mathcal{I}(N)$. Moreover, such Nash diffeomorphisms are unique.
\end{fact}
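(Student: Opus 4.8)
\emph{Proof proposal.} The plan is to realize the comparison of two generating systems by an explicit, globally defined pair of mutually inverse Nash maps, and then to deduce uniqueness — hence well-definedness of all these identifications — from the density of the graphs $\Gamma_\pm$ inside $\widehat M$. Let $f=(f_1,\dots,f_k)$ and $g=(g_1,\dots,g_l)$ be two finite systems of generators of $\mathcal I(N)$, and write $\widehat M\subset M\times\sph^{k-1}$, $\widehat M'\subset M\times\sph^{l-1}$ for the corresponding drilling blow-ups, $F=f/\|f\|$, $F'=g/\|g\|$ on $M\setminus N$, and $\Rr,\Rr'$ for the respective exceptional sets. Since each system generates $\mathcal I(N)$ as an $\mathcal N(M)$-module, I would first fix matrices $A\in\operatorname{Mat}_{k\times l}(\mathcal N(M))$ and $B\in\operatorname{Mat}_{l\times k}(\mathcal N(M))$ with $f=A\,g$ and $g=B\,f$ as identities of column vectors of Nash functions on $M$; here it is essential that ``generate $\mathcal I(N)$'' is a global condition, so $A$ and $B$ exist over $\mathcal N(M)$ and no patching over $M$ is needed. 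Then I would introduce the Nash maps $\Theta(x,w)=(x,B(x)w/\|B(x)w\|)$, defined on the open semialgebraic set $\{(x,w)\in M\times\sph^{k-1}:B(x)w\neq 0\}$, and $\Theta'(x,w')=(x,A(x)w'/\|A(x)w'\|)$, defined on $\{(x,w')\in M\times\sph^{l-1}:A(x)w'\neq 0\}$, both of which patently commute with the projections onto $M$. The claim is that $\Theta$ is defined on all of $\widehat M$, restricts to a Nash diffeomorphism $\widehat M\to\widehat M'$ with inverse $\Theta'|_{\widehat M'}$, and sends $\widetilde M_\epsilon=\Gamma_\epsilon\sqcup\Rr$ onto $\widetilde M_\epsilon'=\Gamma_\epsilon'\sqcup\Rr'$.

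The verification splits into the behaviour away from $N$ and over $N$. Over $M\setminus N$ everything is formal from the tautological description $\Gamma_\epsilon=\{(x,\epsilon F(x))\}$: if $w=\epsilon f(x)/\|f(x)\|$ then $B(x)w=\epsilon g(x)/\|f(x)\|\neq 0$ and its normalization is $\epsilon F'(x)$, so $\Theta(\Gamma_\epsilon)=\Gamma_\epsilon'$; moreover $A(x)B(x)w$ is proportional to $A(x)g(x)=f(x)$, hence to $w$, which gives $\Theta'\circ\Theta=\id$ on $\Gamma_+\sqcup\Gamma_-$. The delicate part — the one I expect to be the main obstacle — is the exceptional fibre $\pi_\epsilon^{-1}(q)$ over a point $q\in N$. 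Here I would invoke Fact~\ref{bigstepb2}(iii): $\pi_\epsilon^{-1}(q)=\{q\}\times(\sph^{k-1}\cap d_qf(E_q))$ for a complement $E_q$ of $T_qN$ in $T_qM$, together with the observation that, since $f(q)=0$ (resp. $g(q)=0$), differentiating $g=Bf$ (resp. $f=Ag$) at $q$ kills the derivatives of $B$ (resp. $A$) and leaves $d_qg|_{E_q}=B(q)\circ d_qf|_{E_q}$ (resp. $d_qf|_{E_q}=A(q)\circ d_qg|_{E_q}$); finally $d_qf|_{E_q}$ and $d_qg|_{E_q}$ are both injective, which is exactly a reformulation of the generating hypothesis (equivalently $\ker d_qf=\ker d_qg=T_qN$). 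Combining these one gets $B(x)w\neq 0$ on $\pi_\epsilon^{-1}(q)$, that $\Theta$ maps $\pi_\epsilon^{-1}(q)$ bijectively onto $\pi_\epsilon'^{-1}(q)$, and that $A(q)B(q)$ acts as the identity on $d_qf(E_q)$, so $\Theta'\circ\Theta=\id$ on the whole exceptional set. Since $\widehat M=\Gamma_+\sqcup\Rr\sqcup\Gamma_-$ by Fact~\ref{bigstepa2g} and $\Rr=\bigcup_{q\in N}\pi_\epsilon^{-1}(q)$, this yields the asserted Nash diffeomorphisms $\widehat M\to\widehat M'$ and $\widetilde M_\epsilon\to\widetilde M_\epsilon'$ compatible with the projections.

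For uniqueness I would argue as follows. If $\Psi_1,\Psi_2\colon\widetilde M_\epsilon\to\widetilde M_\epsilon'$ are Nash diffeomorphisms with $\pi_\epsilon'\circ\Psi_i=\pi_\epsilon$, then each $\Psi_i$ maps $\Gamma_\epsilon=\pi_\epsilon^{-1}(M\setminus N)$ into $\Gamma_\epsilon'=\pi_\epsilon'^{-1}(M\setminus N)$, and, since $\pi_\epsilon'|_{\Gamma_\epsilon'}$ is a Nash diffeomorphism onto $M\setminus N$ by Fact~\ref{bigstepb2}(ii), necessarily $\Psi_i|_{\Gamma_\epsilon}=(\pi_\epsilon'|_{\Gamma_\epsilon'})^{-1}\circ(\pi_\epsilon|_{\Gamma_\epsilon})$, which is independent of $i$; as $\Gamma_\epsilon$ is dense in $\widetilde M_\epsilon=\cl(\Gamma_\epsilon)$ and the $\Psi_i$ are continuous, $\Psi_1=\Psi_2$. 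The same argument over the dense subset $\Gamma_+\sqcup\Gamma_-\subset\widehat M$ gives uniqueness for $(\widehat M,\widehat\pi)$. Finally, uniqueness shows that the diffeomorphisms constructed above do not depend on the choices of $A$ and $B$ and are transitive under composition, so the pairs $(\widetilde M_\epsilon,\pi_\epsilon)$ and $(\widehat M,\widehat\pi)$ are canonically attached to $N\subset M$ up to unique compatible Nash diffeomorphism, as required.
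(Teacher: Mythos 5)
This Fact is quoted in the paper from \cite[\S 5]{fe3} without proof, so there is no in‑paper argument to compare with; judged on its own, your construction of the identification is correct and is the natural one. Writing $f=Ag$ and $g=Bf$ with $A,B$ matrices of Nash functions on $M$ (which exist precisely because both tuples generate the ideal $\mathcal{I}(N)$ of $\mathcal{N}(M)$), the map $\Theta(x,w)=(x,B(x)w/\|B(x)w\|)$ is Nash on an open semialgebraic set containing $\widehat M$: on $\Gamma_\epsilon$ one has $B(x)w=\epsilon g(x)/\|f(x)\|\neq0$, and on a fibre over $q\in N$ your computation $d_qg=B(q)\,d_qf$ (valid since $f(q)=0$) together with $\ker d_qf=\ker d_qg=T_qN$ shows $B(q)$ restricts to a linear isomorphism $d_qf(E_q)\to d_qg(E_q)$, so $B(q)w\neq0$ and the fibre $\{q\}\times(\sph^{k-1}\cap d_qf(E_q))$ of Fact~\ref{bigstepb2}(iii) is carried onto the corresponding fibre for $g$; the relation $A(q)B(q)=\id$ on $d_qf(E_q)$ (and the analogous identity on $\Gamma_\pm$) gives the two‑sided inverse $\Theta'$, hence compatible Nash diffeomorphisms $\widehat M\to\widehat M'$ and $\widetilde M_\epsilon\to\widetilde M'_\epsilon$. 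Your uniqueness argument for $(\widetilde M_\epsilon,\pi_\epsilon)$ is also fine, since $\pi'_\epsilon|_{\Gamma'_\epsilon}$ is injective and $\Gamma_\epsilon$ is dense in $\widetilde M_\epsilon$.

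The gap is the final sentence ``the same argument over the dense subset $\Gamma_+\sqcup\Gamma_-\subset\widehat M$ gives uniqueness for $(\widehat M,\widehat\pi)$''. That argument does not transfer: over $x\in M\setminus N$ the fibre of $\widehat\pi$ consists of the two points $(x,\pm F(x))$, so the condition $\widehat\pi'\circ\Psi=\widehat\pi$ does not determine $\Psi$ fibrewise, and the step ``necessarily $\Psi_i|_{\Gamma_\epsilon}=(\pi'_\epsilon|_{\Gamma'_\epsilon})^{-1}\circ\pi_\epsilon|_{\Gamma_\epsilon}$'' has no analogue because $\widehat\pi'$ is two‑to‑one there. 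Concretely, if $\sigma'\colon\widehat M'\to\widehat M'$, $(a,b)\mapsto(a,-b)$, is the involution of Fact~\ref{bigstepa2g}, then $\sigma'\circ\Theta$ is a second Nash diffeomorphism compatible with the projections and distinct from $\Theta$, so uniqueness in the naive reading fails for the twisted double. The statement has to be read (and your proof repaired) by including in ``compatible'' the requirement that the diffeomorphism maps $\widetilde M_+$ onto $\widetilde M'_+$ and $\widetilde M_-$ onto $\widetilde M'_-$ (equivalently, that it intertwines the involutions $\sigma$ and $\sigma'$); under that hypothesis your density argument applies separately on $\Gamma_+$ and $\Gamma_-$ and yields uniqueness on $\widehat M$. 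Your $\Theta$ does satisfy this extra compatibility, so the existence half needs no change, and with this amendment the transitivity remark at the end also goes through.
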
 
 
\begin{defn}
The pair $(\widetilde{M}_+,\pi_+)$ is the \em drilling blow-up \em of the Nash manifold $M$ with center the closed Nash submanifold $N\subset M$ and $(\widehat{M},\widehat{\pi})$ is the \em twisted Nash double \em of $(\widetilde{M}_+,\pi_+)$.\hfill$\sqbullet$ 
\end{defn}

\subsection{Relationship between drilling blow-up and classical blow-up.}\label{dbme}
Let $M\subset\R^m$ be a $d$-dimensional Nash manifold and $N\subset M$ a closed $e$-dimensional Nash submanifold. Let $f_1,\ldots,f_k\in{\mathcal N}(M)$ be a system of generators of the ideal $\mathcal{I}(N)$. Define
$$
\Gamma':=\{(x,[f_1(x):\ldots:f_k(x)])\in M\times\R\PP^{k-1}:\ x\in M\setminus N\}.
$$
The closure $B(M,N)$ of $\Gamma'$ in $M\times\R\PP^{k-1}$ together with the restriction $\pi'$ to $B(M,N)$ of the projection $M\times\R\PP^{k-1}\to M$ is the classical \em blow-up \em of $M$ with center $N$ (in the Nash setting). 

\begin{cor}\label{bu}\cite[Cor.3.12]{fe3} 
Let $(\widehat{M},\widehat{\pi})$ be the twisted Nash double of the drilling blow-up $(\widetilde{M}_+,\pi_+)$. Let $\sigma:\widehat{M}\to\widehat{M},\ (a,b)\mapsto(a,-b)$ be the involution of $\widehat{M}$ without fixed points. Consider the Nash map $\Theta:M\times\sph^{k-1}\to M\times\R\PP^{k-1}, (p,q)\to (p,[q])$ and its restriction $\theta:=\Theta|_{\widehat{M}}:\widehat{M}\to B(M,N)$. We have:
\begin{itemize}
\item[(i)] $\theta(\widehat{M})=B(M,N)$, $\theta\circ\sigma=\theta$, $\pi'\circ\theta=\widehat{\pi}$ and $\theta^{-1}(a,[b])=\{(a,b),(a,-b)\}$ for each $(a,[b])\in B(M,N)$. 
\item[(ii)] $\theta$ is an unramified $2$ to $1$ Nash covering of $B(M,N)$.
\end{itemize}
\end{cor}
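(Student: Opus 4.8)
The plan is to identify $\Theta$ with the fibrewise version of the standard unramified two-sheeted Nash covering $p\colon\sph^{k-1}\to\R\PP^{k-1}$, whose deck group is generated by the antipodal map, and then to exploit that, by Fact~\ref{bigstepa2g}, the Nash manifold $\widehat{M}$ is invariant under the antipodal involution $\sigma$ in the sphere factor and that $\sigma|_{\widehat M}$ acts without fixed points. In particular $\widehat M$ is \emph{saturated} for the covering $\Theta$, i.e. $\Theta^{-1}(\theta(\widehat M))=\widehat M\cup\sigma(\widehat M)=\widehat M$, and this will be the engine behind all the assertions.

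First I would dispatch the formal identities of (i). Since $[q]=[-q]$ in $\R\PP^{k-1}$ we get $\theta\circ\sigma=\theta$ at once, and since $\Theta$, $\pi$ and $\pi'$ all act as the identity on the $M$-coordinate we get $\pi'\circ\theta=\widehat{\pi}$. For $\theta(\widehat M)=B(M,N)$, I would first compute on the dense subset $\Gamma_+\sqcup\Gamma_-\subset\widehat M$: here $\Theta(x,\pm F(x))=(x,[F(x)])=(x,(f_1(x):\cdots:f_k(x)))$, so $\Theta(\Gamma_+\cup\Gamma_-)=\Gamma'$. Now $p$ is proper (its source $\sph^{k-1}$ is compact), hence $\Theta=\id_M\times p$ is proper and therefore closed; consequently, using $\widehat M=\cl(\Gamma_+\cup\Gamma_-)$,
\[
\theta(\widehat M)=\Theta\big(\cl(\Gamma_+\cup\Gamma_-)\big)=\cl\big(\Theta(\Gamma_+\cup\Gamma_-)\big)=\cl(\Gamma')=B(M,N).
\]
For the fibre description, note that $\Theta^{-1}(a,[b])=\{(a,b),(a,-b)\}$ for every $(a,[b])\in M\times\R\PP^{k-1}$. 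By the surjectivity just established, at least one of $(a,b),(a,-b)$ lies in $\widehat M$; and since $\sigma(\widehat M)=\widehat M$, applying $\sigma$ shows the other one lies there too. Hence $\theta^{-1}(a,[b])=\{(a,b),(a,-b)\}$ has exactly two elements.

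For (ii) I would argue as follows. The map $p\colon\sph^{k-1}\to\R\PP^{k-1}$ is an unramified two-sheeted Nash covering with deck transformation the antipodal map, so $\Theta\colon M\times\sph^{k-1}\to M\times\R\PP^{k-1}$ is an unramified two-sheeted Nash covering with deck transformation $\sigma$. Because $\widehat M$ is $\sigma$-invariant (Fact~\ref{bigstepa2g}) it is saturated for $\Theta$, so $\theta(\widehat M)=B(M,N)$ is a Nash submanifold of $M\times\R\PP^{k-1}$ and $\theta\colon\widehat M\to B(M,N)$ is itself an unramified Nash covering obtained by restriction; as $\sigma|_{\widehat M}$ is a free involution, this covering is exactly two-sheeted and realizes $B(M,N)=\widehat M/\langle\sigma\rangle$. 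This last point is where I expect the only real care to be needed: verifying that "unramified covering" is inherited under restriction to the closed $\sigma$-invariant Nash submanifold $\widehat M$, i.e. that saturation really forces the image to be a Nash submanifold over which $\theta$ is a local Nash diffeomorphism. Once $\widehat M$ is known to be a Nash manifold with $\sigma|_{\widehat M}$ free (Fact~\ref{bigstepa2g}), this is a soft, local argument, and all the substance has been reduced to the elementary fact that $\sph^{k-1}\to\R\PP^{k-1}$ is an honest double cover.
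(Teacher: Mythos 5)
Your proposal is correct, and it is exactly the argument the paper has in mind: Corollary \ref{bu} is stated without proof as an immediate consequence of the construction, namely that $\theta$ is the restriction of the product covering $\id_M\times(\sph^{k-1}\to\R\PP^{k-1})$ to the closed, $\sigma$-saturated Nash submanifold $\widehat{M}$ on which $\sigma$ acts freely (Fact \ref{bigstepa2g}), with $\theta(\widehat M)=B(M,N)$ obtained by taking closures of $\Gamma_\pm$ and $\Gamma'$ under the proper map $\Theta$. The one point you flag -- that restricting to the saturated set still gives an unramified two-to-one covering onto a Nash submanifold -- is handled exactly as you indicate, by choosing evenly covered neighborhoods $U$ with $\Theta^{-1}(\Theta(U))=U\sqcup\sigma(U)$ and using $\theta\circ\sigma=\theta$, so there is no gap.
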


\subsection{Algebraic description of drilling blow-up.}\label{addbu}
We analyze a general enough situation for which we can guarantee that the twisted (Nash) double of the drilling blow-up is a non-singular real algebraic set. Let $X\subset\R^n$ be a $d$-dimensional non-singular (pure dimensional) real algebraic set and $Y\subset X$ a $e$-dimensional non-singular (pure dimensional) real algebraic subset. Let $f_1,\ldots,f_r\in\R[\x]:=\R[\x_1,\ldots,\x_n]$ be a system of generators of the ideal $\mathcal{I}(Y)$ of polynomials vanishing identically on $Y$ and denote $u:=(u_1,\ldots,u_r)\in\sph^{r-1}$. Fix $\epsilon=\pm$ and let $\widetilde{X}_\epsilon$ be the (topological) Euclidean closure of
$$
\Gamma_\epsilon:=\Big\{(x,u)\in(X\setminus Y)\times\sph^{r-1}:\ 
{\rm rk}\begin{pmatrix}
u_1&\cdots&u_r\\
f_1(x)&\cdots&f_r(x)
\end{pmatrix}=1,\, \epsilon(u_1f_1(x)+\cdots+u_rf_r(x))>0\Big\}
$$
in $X\times\sph^{r-1}$. Let $\pi:X\times\sph^{r-1}\to X$ be the projection (onto the first factor) and $\widehat{X}$ the (topological) Euclidean closure of
$$
\Gamma:=\Big\{(x,u)\in (X\setminus Y)\times\sph^{r-1}:\ 
{\rm rk}\begin{pmatrix}
u_1&\cdots&u_r\\
f_1(x)&\cdots&f_r(x)
\end{pmatrix}=1
\Big\}.
$$
Observe that $\widehat{X}=\widetilde{X}_+\cup\widetilde{X}_-$ is the union of the irreducible components of the real algebraic set
$$
\Big\{(x,u)\in X\times\sph^{r-1}:\ 
{\rm rk}\begin{pmatrix}
u_1&\cdots&u_r\\
f_1(x)&\cdots&f_r(x)
\end{pmatrix}=1
\Big\}
$$
different from $Y\times\sph^{r-1}$. If we denote $\widehat{\pi}:=\pi|_{\widehat{X}}$, it holds that $(\widehat{X},\widehat{\pi})$ is the twisted Nash double of the drilling blow-up $(\widetilde{X}_+,\pi_+)$ of $X$ with center $Y$ (where $\pi_+:=\pi|_{\widetilde{X}_+}$). Proceeding similarly to the proof of \cite[Prop.3.5.11(i)]{bcr}, one shows that $\widehat{X}$ is a non-singular real algebraic set. In addition, proceeding similarly to the proof of \cite[Prop.3.5.11(ii)]{bcr}, one shows that there exists a finite open cover of $Y$ by Zariski open subsets $U_i$ such that $\pi^{-1}(U_i)$ is Nash diffeomorphic to $U_i\times\sph^{d-e-1}$ via $\pi$ (use also Fact \ref{bigstepb2}(iii)). It holds $\widetilde{X}_\epsilon=\widehat{X}\cap\{\epsilon(u_1f_1+\cdots+u_rf_r)\geq0\}$ and the Zariski closure of $\widetilde{X}_\epsilon$ is contained in $\widehat{X}$.

If $\Ss\subset X$ is a pure dimensional semialgebraic set of dimension $d$, then $\Reg(\Ss)=\Sth(\Ss)$ and $\Tt:=\cl(\pi_+^{-1}(\Ss\setminus Y))\cap\pi_+^{-1}(\Ss)$ is a pure dimensional semialgebraic set of dimension $d$ such that $\Reg(\Tt)=\Sth(\Tt)$. This is because both $X$ and $\widehat{X}$ are non-singular real algebraic sets of dimension $d$ and $\pi_+^{-1}|_{\widetilde{X}_+}:\widetilde{X}_+\setminus\pi_+^{-1}(Y)\to X\setminus Y$ is a Nash diffeomorphism.

\begin{remark}
If $X$ is irreducible, then $\widehat{X}$ does not need to be irreducible (even if $X$ is connected). 

Consider for instance the plane $X:=\R^2$ and the line $Y:=\{\x=0\}$. Then $\mathcal{I}(Y)=(\x)$ and $\widehat{X}=\{\z=-1\}\cup\{\z=1\}\subset\R^3$, which is disconnected. However, $\widehat{X}$ is the Zariski closure of $\widetilde{X}_+=\{\x\geq0,\z=1\}\cup\{\x\leq 0,\z=-1\}$ (resp. $\widetilde{X}_-=\{\x\leq0,\z=1\}\cup\{\x\geq0,\z=-1\}$).
\hfill$\sqbullet$
\end{remark}

However, if $X$ is irreducible, at least we have the following result that we prove below:

\begin{lem}\label{irredx}
If $X$ is irreducible, then $\widehat{X}$ is the Zariski closure of $\widetilde{X}_\epsilon$ for $\epsilon=\pm1$.
\end{lem}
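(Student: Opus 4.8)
I want to show that when $X$ is irreducible, the Zariski closure $\ol{\widetilde{X}_\epsilon}^{\zar}$ of $\widetilde{X}_\epsilon$ equals all of $\widehat{X}$, for either sign $\epsilon=\pm1$. First I would record two facts already available in the excerpt: (a) $\widehat{X}=\widetilde{X}_+\cup\widetilde{X}_-$, it is a non-singular real algebraic set, and $\widetilde{X}_\epsilon=\widehat{X}\cap\{\epsilon(u_1f_1+\cdots+u_rf_r)\geq0\}$; and (b) the involution $\sigma\colon(x,u)\mapsto(x,-u)$ restricts to a Nash involution of $\widehat{X}$ without fixed points, exchanging $\widetilde{X}_+$ and $\widetilde{X}_-$, and moreover $\sigma$ is the restriction of a (bi-regular) algebraic involution of $X\times\sph^{r-1}$, hence an automorphism of the real algebraic set $\widehat{X}$. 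The other key input is that $\pi_+|_{\Gamma_+}\colon\Gamma_+\to X\setminus Y$ is a Nash diffeomorphism (Fact \ref{bigstepb2}(ii)), so $\Gamma_+$ is connected whenever $X\setminus Y$ is, and $\dim\Gamma_+=\dim X=d$.

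**Main argument.** Since $X$ is irreducible and $Y\subsetneq X$ is a proper algebraic subset (of dimension $e<d$), the complement $X\setminus Y$ is Zariski-dense and connected in the Euclidean topology; therefore $\Gamma_+\cong X\setminus Y$ is connected, and likewise $\Gamma_-=\sigma(\Gamma_+)$ is connected. Each of $\widetilde{X}_+=\cl(\Gamma_+)$ and $\widetilde{X}_-=\cl(\Gamma_-)$ is then connected, $d$-dimensional, and (being the closure of the graph of a Nash map on the Zariski-dense set $X\setminus Y$) has Zariski closure of dimension exactly $d$. Now $\widehat{X}$ is a non-singular, hence locally irreducible, real algebraic set of pure dimension $d$; let $Z:=\ol{\widetilde{X}_+}^{\zar}$, an irreducible $d$-dimensional algebraic subset of $\widehat{X}$. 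Because $\widehat{X}$ is non-singular of dimension $d$, $Z$ is a union of connected components of $\widehat{X}$ — indeed a proper subvariety would have dimension $<d$, contradicting $\dim Z=d$; more precisely, $Z\cap\widehat{X}$ open and closed in $\widehat{X}$ forces $Z$ to be a union of connected components. I would then argue that $\widehat{X}$ has at most two connected components and that $\sigma$ acts on their set: if $\widehat{X}$ is connected we are done since $Z=\widehat{X}\supseteq\widetilde{X}_-$; if $\widehat{X}=Z\sqcup Z'$ with $Z'$ the complementary union of components, then $\Gamma_+\subseteq Z$ forces $\Gamma_-=\sigma(\Gamma_+)\subseteq\sigma(Z)$, and since $\Gamma_-$ is connected and $d$-dimensional it lies in a single component union; the point is to rule out $\sigma(Z)=Z$ (which would give $\Gamma_-\subseteq Z$, hence $\widetilde{X}_-\subseteq Z$, hence $\widehat{X}=Z$, done) versus $\sigma(Z)=Z'$. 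To close the gap I would use the local representation of $\widehat\pi$ from Fact \ref{bigstepb2}(iv): near a point $p\in\Rr=\widehat\pi^{-1}(Y)$, in suitable coordinates $\widehat\pi$ looks like $(x_1,\dots,x_d)\mapsto(x_1,\dots,x_e,x_{e+1},x_{e+1}x_{e+2},\dots,x_{e+1}x_d)$ (or the identity when $e=d-1$), so $\widehat{M}$ is connected through $\Rr$: a neighborhood of any point of $\Rr$ in $\widehat{X}$ is connected and meets both $\Gamma_+$ and $\Gamma_-$ (the two sides $x_{e+1}\gtrless0$). Hence $\overline{\Gamma_+}\cap\Rr=\Rr=\overline{\Gamma_-}\cap\Rr$, so $\widetilde{X}_+$ and $\widetilde{X}_-$ share the common boundary $\Rr$, which has dimension $d-1\geq 0$; therefore $Z=\overline{\widetilde{X}_+}^{\zar}\supseteq\Rr$ and, being a union of components of $\widehat X$ containing points that also lie in the closure of $\Gamma_-$, $Z$ must contain the component(s) carrying $\Gamma_-$ as well. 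Thus $Z=\widehat{X}$. The case $\epsilon=-1$ is identical after applying $\sigma$.

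**Expected main obstacle.** The delicate point is the connectivity/components bookkeeping: I must be careful that "$\widetilde X_+$ and $\widetilde X_-$ meet along $\Rr$" genuinely forces their Zariski closures to coincide, i.e.\ that $\Rr$ is not itself a separating subvariety inside $\widehat X$. The clean way is to observe that since $\widehat{X}$ is non-singular, each connected component of $\widehat{X}$ is irreducible as a real algebraic set, so it suffices to exhibit a single point at which $\widetilde X_+$ is Zariski-dense and which also lies in $\cl(\Gamma_-)$; the local model of $\widehat\pi$ at a point of $\Rr$ provides exactly such a point, because there the ambient germ of $\widehat X$ is irreducible of dimension $d$ and is the closure of $\Gamma_+$ (equivalently of $\Gamma_-$) locally. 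Everything else — that $X\setminus Y$ is connected, that closures of graphs over Zariski-dense sets have full-dimensional Zariski closure, that $\sigma$ is algebraic — is routine. I would phrase the final step as: the irreducible algebraic set $Z=\ol{\widetilde X_+}^{\zar}$ contains a Euclidean-open subset of $\widehat X$ (namely a neighborhood in $\widehat{X}$ of any point of $\Rr$, by the local model), hence $Z$ contains the whole connected component of $\widehat X$ through that point; running over all components (each of which meets $\Rr$, again by the local model and connectedness of $X\setminus Y$ forcing $\Gamma_+$ to accumulate onto all of $\Rr$) gives $Z=\widehat X$, and symmetrically for $\widetilde X_-$.
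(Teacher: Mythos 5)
Your proposal breaks at its very first structural claim: that irreducibility of $X$ forces $X\setminus Y$ (hence $\Gamma_\pm$, hence $\widehat{X}$ up to the involution) to be connected. Neither implication holds in the real setting, and the lemma is stated precisely to cover the situations where they fail. First, an irreducible non-singular real algebraic set need not be connected (e.g.\ the curve $\{\y^2=\x^3-\x\}$ has two connected components). Second, the center $Y$ is allowed to have codimension one (the construction explicitly includes $e=d-1$, see Fact \ref{locald-1}), and the example given in the paper immediately before the lemma --- $X=\R^2$, $Y=\{\x=0\}$ --- already has $X\setminus Y$ disconnected and $\widehat{X}=\{\z=1\}\cup\{\z=-1\}$ disconnected. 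So the assertions ``$\Gamma_+$ is connected'', ``$\widehat{X}$ has at most two connected components'', and the component bookkeeping built on them, collapse. (A smaller inaccuracy: $\widetilde{X}_+$ is only a closed half-space near a point of $\Rr$, not a neighborhood; the conclusion you want there is still fine, since any polynomial vanishing on a set with non-empty interior in a connected component of the non-singular set $\widehat{X}$ vanishes on that whole component.)

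Even after repairing these points, your argument only proves the easy half of the lemma. The local model near $\Rr$ plus the identity principle shows that every connected component of $\widehat{X}$ that meets $\Rr$ (and more generally every component meeting $\Gamma_\epsilon$) is contained in $\ol{\widetilde{X}_\epsilon}^{\zar}$. The genuinely delicate case, which your route never reaches, is a connected component of $\widehat{X}$ lying entirely inside $\Gamma_{-\epsilon}$: such components occur whenever $X\setminus Y$ has a connected component whose closure misses $Y$ (for instance a connected component of $X$ disjoint from $Y$, as with the oval of $\{\y^2=\x^3-\x\}$ when $Y$ is a point on the unbounded branch); then both of its two lifts to $\widehat{X}$ are open and closed, one inside $\Gamma_+$ and one inside $\Gamma_-$, and nothing in your argument forces the Zariski closure of $\widetilde{X}_+$ to swallow the copy sitting in $\Gamma_-$. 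This is exactly where irreducibility of $X$ must act globally, across connected components, and it is what the paper's proof is organized around: it passes to the two-to-one unramified polynomial covering $\theta:\widehat{X}\to B(X,Y)$ of the irreducible blow-up $B(X,Y)$, complexifies, compares generic fiber cardinalities of $\theta$ restricted to each irreducible component of $\widehat{X}$ with the degree of the induced field extension, and uses a parity argument to show each component either meets both $\widetilde{X}_+$ and $\widetilde{X}_-$ or forces $\theta$ to be a trivial double covering with $\ell=2$, in which case the fiber over a point of $(\pi')^{-1}(Y)$ again places both components in $\ol{\widetilde{X}_\epsilon}^{\zar}$. Your proposal uses irreducibility only through the false connectedness inference, so this key mechanism is missing and the gap is essential, not cosmetic.
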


\subsubsection{Relationship with algebraic blow-up}\label{missing}
Denote $[z]:=[z_1:\cdots:z_r]\in\R\PP^{r-1}$ and define
$$
\Gamma':=\{(x,[z])\in (X\setminus Y)\times\R\PP^{r-1}:\ {\rm rk}\begin{pmatrix}
z_1&\cdots&z_r\\
f_1(x)&\cdots&f_r(x)
\end{pmatrix}=1\}.
$$
The Zariski closure $B(X,Y)$ of $\Gamma'$ in $X\times\R\PP^{r-1}$ together with the restriction $\pi'$ to $B(X,Y)$ of the projection $X\times\R\PP^{r-1}\to X$ (onto the first factor) is the (algebraic) \em blow-up \em of $X$ with center~$Y$. 

\begin{remarks}\label{closzar}
(i) If $X\subset\R^n$ is a $d$-dimensional non-singular (pure dimensional) real algebraic set and $Y\subset X$ is a $e$-dimensional non-singular (pure dimensional) algebraic subset, then $B(X,Y)$ is in fact the (topological) Euclidean closure of $\Gamma'$ in $X\times\R\PP^{r-1}$. 

Indeed, by \cite[Prop.3.5.11]{bcr} there exists a finite cover of $Y$ by Zariski open subsets $U_i$, such that $\pi'^{-1}(U_i)$ is biregularly diffeomorphic to $U_i\times\R\PP^{d-e-1}$. In fact, it is shown in the proof of \cite[Prop.3.5.11 (p.80)]{bcr} that for each $y\in U_i$ the inverse image $\pi'^{-1}(y)=\{y\}\times\R\PP^{d-e-1}$ is contained in the (topological) Euclidean closure of $\Gamma'$. As $B(X,Y)=\Gamma'\cup\pi'^{-1}(Y)=\Gamma'\cup\bigcup_i\pi'^{-1}(U_i)\subset\cl(\Gamma')$, we conclude that $B(X,Y)=\cl(\Gamma')$, as required.

(ii) Consider the polynomial map $\Theta:X\times\sph^{r-1}\to X\times\R\PP^{r-1}, (x,z)\to (x,[z])$ and its restriction $\theta:=\Theta|_{\widehat{X}}:\widehat{X}\to B(X,Y)$. We have:
\begin{itemize}
\item[(i)] $\theta(\widehat{X})=B(X,Y)$, $\pi'\circ\theta=\widehat{\pi}$ and $\theta^{-1}(x,[z])=\{(x,z),(z,-z)\}$ for each $(x,[z])\in B(X,Y)$. 
\item[(ii)] $\theta$ is an unramified $2$ to $1$ polynomial covering map of $B(X,Y)$.\hfill$\sqbullet$
\end{itemize}
\end{remarks}

\begin{proof}[Proof of Lemma \em \ref{irredx}]
As $X$ is irreducible, $T:=B(X,Y)$ is irreducible (because $X\setminus Y$ and $T\setminus\pi'^{-1}(Y)$ are biregularly diffeomorphic via $\pi'|_{T\setminus\pi'^{-1}(Y)}$ \cite[Prop.3.5.8]{bcr}). The polynomial map $\theta:\widehat{X}\to T$ is an unramified $2$ to $1$ polynomial covering map. If $\widehat{X}$ is irreducible, $\widehat{X}$ is the Zariski closure of $\widetilde{X}_\epsilon$ for $\epsilon=\pm1$, because $\dim(\widehat{X})=\dim(\widetilde{X}_\epsilon)$. Thus, let us assume $\widehat{X}$ is reducible. Let $Z_1,\ldots,Z_\ell$ be irreducible components of $\widehat{X}=\widetilde{X}_+\cup\widetilde{X}_-$ and fix $1\leq i\leq\ell$. 

As $\widehat{X}$ is pure dimensional and non-singular, we deduce $Z_i$ is pure dimensional and non-singular, so it is a union of connected components of $\widehat{X}$. Thus, $Z_i$ is an open and closed subset of $\widehat{X}$. As $\theta$ is an open and closed map and each $Z_i$ is an open and closed subset of $\widehat{X}$, we deduce $\theta(Z_i)$ is a union of connected components of $T$ and the fibers of $\theta|_{Z_i}:Z_i\to\theta(Z_i)$ have cardinality either $1$ or $2$. We claim: {\em All the fibers of the restriction map $\theta|_{Z_i}:Z_i\to\theta(Z_i)$ have the same cardinality, which is either $1$ or $2$.}

Let $\widetilde{\widehat{X}}$ be the complexification of $\widehat{X}$, $\widetilde{Z}_i$ the complexification of $Z_i$ and $\widetilde{T}$ the complexification of $T$. Observe that $\widetilde{\widehat{X}}=\bigcup_{i=1}^\ell\widetilde{Z}_i$. Consider the rational extension $\widetilde{\theta}:\widetilde{\widehat{X}}\to\widetilde{T}$ of $\theta$ to $\widetilde{\widehat{X}}$. Its image is contained in $\widetilde{T}$, because $\theta(\widehat{X})=T$. Observe that $\widetilde{\theta}|_{\widetilde{Z}_i}$ is a dominant rational map, because $\theta$ is a local diffeomorphism and $Z_i$ is a union of connected components of $\widehat{X}$. In particular $\widetilde{\theta}(\widetilde{Z}_i)$ is a Zariski open subset of the irreducible algebraic set $\widetilde{T}$.

Denote with ${\mathcal R}(\widetilde{Z}_i)$ the field of rational functions on $\widetilde{Z}_i$ and with ${\mathcal R}(\widetilde{T})$ the field of rational functions on $\widetilde{T}$. The map $\widetilde{\theta}|_{\widetilde{Z}_i}^*:{\mathcal R}(\widetilde{T})\to{\mathcal R}(\widetilde{Z}_i),\ f\mapsto f\circ\widetilde{\theta}|_{\widetilde{Z}_i}$ is a homomorphism of fields of the same transcendence degree $d=\dim(X)$ over $\C$. Consequently, ${\mathcal R}(\widetilde{Z}_i)$ is an algebraic extension of ${\mathcal R}(\widetilde{T})$ of finite degree $m_i$. By \cite[Prop.7.16]{ha} the number of points in each generic fiber of $\widetilde{\theta}|_{\widetilde{Z}_i}$ is equal to $m_i$. As the difference $(\widetilde{\theta}|_{\widetilde{Z}_i})^{-1}(q)\setminus(\theta|_{Z_i})^{-1}(q)$ has even cardinality for each $q\in\theta(Z_i)$ and $1$ and $2$ have different parity, generic fibers of $\theta|_{Z_i}:Z_i\to\theta(Z_i)\subset T$ have the same cardinality, which is either $1$ or $2$. As $\theta$ is an unramified $2$ to $1$ polynomial covering map and for each connected component of $\theta(Z_i)$ there are generic fibers, we conclude that the claim holds. 

Thus, each $\theta|_{Z_i}:Z_i\to\theta(Z_i)\subset T$ is either a polynomial map that is a Nash diffeomorphism or an unramified $2$ to $1$ polynomial covering map for $i=1,\ldots,\ell$ and we distinguish the situations that appear:

\noindent{\sc Case 1.} If $\theta|_{Z_i}:Z_i\to\theta(Z_i)\subset T$ is a polynomial diffeomorphism for some $i=1,\ldots,\ell$, we claim: {\em $\theta(Z_i)=T$, $\ell=2$ and $\theta|_{Z_j}:Z_j\to T$ is a polynomial map that is a Nash diffeomorphism for $j=1,2$}.

Suppose that $\theta(Z_i)\neq T$ for some $i=1,\ldots,\ell$. Then there exists a connected component $M$ of $T$ such that $\theta(Z_i)\cap M=\varnothing$ (recall that $\theta(Z_i)$ is a union of connected components of $T$). As $T$ is pure dimensional of dimension $d$, we deduce that $M$ has dimension $d$. As $\theta|_{Z_i}:Z_i\to\theta(Z_i)$ is a polynomial diffeomorphism, we deduce that $m_i$ is odd. Pick a generic point $q\in M$ such that $(\widetilde{\theta}|_{\widetilde{Z}_i})^{-1}(q)$ has exactly $m_i$ points. As the cardinality of the difference $(\widetilde{\theta}|_{\widetilde{Z}_i})^{-1}(q)\setminus(\theta|_{Z_i})^{-1}(q)$ is even and $m_i$ is odd, the cardinality of $(\theta|_{Z_i})^{-1}(q)=\varnothing$ is also odd, which is a contradiction. Consequently, $\theta(Z_i)=T$, as claimed.

As $\theta:\widehat{X}\to T$ is an unramified $2$ to $1$ polynomial covering map, we deduce that $\theta$ is trivial, that is, $\ell=2$ and $\theta|_{Z_j}:Z_j\to T$ is a polynomial map diffeomorphism for $j=1,2$. Pick a point $y\in(\pi')^{-1}(Y)\subset T$. Observe that $\theta^{-1}(y)\subset\widetilde{X}_+\cap \widetilde{X}_-$, so $\widetilde{X}_\epsilon\cap Z_i\neq\varnothing$ for $\epsilon=\pm$ and $i=1,2$. As $\widetilde{X}_\epsilon$ is pure dimensional of dimension $d$ and each $Z_i$ is pure dimensional of dimension $d$ and open and closed in $\widetilde{X}_+\cup\widetilde{X}_-\widehat{X}=Z_1\sqcup Z_2$, we conclude that each $Z_j$ is contained in the Zariski closure of $\widetilde{X}_\epsilon$, so $\widehat{X}$ is the Zariski closure of $\widetilde{X}_\epsilon$ for $\epsilon=\pm$.

\noindent{\sc Case 2.} $\theta|_{Z_i}:Z_i\to\theta(Z_i)\subset T$ is an unramified $2$ to $1$ polynomial covering map for each $i=1,\ldots,\ell$. Thus, $Z_i\cap\widetilde{X}_\epsilon\neq\varnothing$ and $Z_i$ is contained in the intersection of the  Zariski closures of $\widetilde{X}_\epsilon$ for $\epsilon=\pm1$. Consequently, the Zariski closure of each $\widetilde{X}_\epsilon$ is $\bigcup_{i=1}^\ell Z_i=\widehat{X}$ for $\veps=\pm1$, as required.
\end{proof}
\begin{remark}
Although $\theta$ is a surjective polynomial map, we cannot guarantee, a priori in {\sc Case 2.} of the previous proof, that $\theta(Z_i)=T$ for some $i=1,\ldots,\ell$ (and consequently that $\ell=1$) if $T$ is not connected. This is so, because some of the connected components of $T$ can be covered by couples of complex conjugated points of $\widetilde{Z}_i$. As we check in the following example, the only fact we can assure is that the complex image $\widetilde{\theta}(\widetilde{Z}_i)$ is a Zariski dense subset of the irreducible algebraic set $\widetilde{T}$.\hfill$\sqbullet$ 
\end{remark}
\begin{example}
Consider for instance the real non-singular algebraic sets $T:=\{\y^2-\x^2-1=0\}\subset\R^2$ (which is irreducible) and $Z:=\{\y^2-\x^4-1=0,\z^2=1\}\subset\R^3$ (which is reducible) and the polynomial map $\theta:Z\to T,\ (x,y,z)\mapsto(zx^2,y)$, which is a $2$ to $1$ polynomial covering map. Observe that $Z$ has two irreducible components $Z_{\epsilon}:=\{\y^2-\x^4-1=0,\z=\epsilon\}$ for $\epsilon=\pm1$ and the restriction $\theta|_{Z_\epsilon}:Z_\epsilon\to T$ is a $2$ to $1$ polynomial covering that only covers the connected component $T_\epsilon=T\cap\{\epsilon\x>0\}$ of $T$ for $\epsilon=\pm1$. However, the complexification of $\widetilde{\theta}:\widetilde{Z}=\widetilde{Z}_{-1}\cup\widetilde{Z}_{+1}\to\widetilde{T}$ of $\theta$ is a $4$ to $1$ polynomial covering map and the restriction $\widetilde{\theta}|_{\widetilde{Z}_\epsilon}:\widetilde{Z}_\epsilon\to\widetilde{T}$ is a $2$ to $1$ polynomial covering map for $\epsilon=\pm1$.
\hfill$\sqbullet$ 
\end{example}

\subsection{Strict transforms.}\label{exst0}
Let $g:\Ss\to\Tt$ be a Nash map between semialgebraic sets $\Ss\subset\R^m$ and $\Tt\subset\R^n$. Let $\Rr\subset\Tt$ be a closed semialgebraic subset of $\Tt$ of striclty smaller dimension than $\dim(\Tt)$ and suppose that $\Tt\setminus\Rr$ and $\Ss\setminus g^{-1}(\Rr)$ are Nash manifolds and $g|_{\Ss\setminus g^{-1}(\Rr)}:\Ss\setminus g^{-1}(\Rr)\to\Tt\setminus\Rr$ is a Nash diffeomorphism. Let $\Aa\subset\Tt$ be a semialgebraic subset of $\Tt$ such that $\Aa\cap\Rr$ has strictly smaller dimension than $\Aa$. We define the {\em strict transform of $\Aa$ under $g$ with respect to $\Rr$} as $\widehat{A}:=g^{-1}(\Aa)\cap\cl(g^{-1}(\Aa\setminus\Rr))$. Of course, if $\Aa$ is closed in $\Tt$, we have $\widehat{\Aa}=\cl(g^{-1}(\Aa\setminus\Rr))$. We recall next some properties of the strict transform in some well-known situations. We will use such properties freely along the sequel.

\begin{remarks}\label{exst}
(i) Let $X\subset\R^n$ be a non-singular (pure dimensional) real algebraic set and $Y\subset X$ a non-singular (pure dimensional) algebraic subset of smaller dimension. Let $(B(X,Y),\pi')$ be the blow-up of $X$ of center $Y$. Let $Z\subset X$ be a non-singular (pure dimensional) real algebraic subset of $X$ that contains $Y$ and has strictly larger dimension than $Y$. Let $(B(Z,Y),\pi'')$ be the blow-up of $Z$ of center $Y$. By Remark \ref{closzar}(i) $B(Z,Y)$ coincides with the strict transform $\cl(\pi'^{-1}(Z\setminus Y))$ of $Z$ under $\pi'$ (with respect to $Y$). This means in particular that $\cl(\pi'^{-1}(Z\setminus Y))$ is a non-singular (pure dimensional) real algebraic set.

(ii) Let $M_0\subset\R^n$ be a Nash manifold of dimension $d$ and let $N\subset M_0$ be a closed submanifold of dimension $e<d$. Let $M_1\subset M_0$ be a Nash submanifold that contains $N$ and has strictly larger dimension. Let $(\widetilde{M}_{k,+},\pi_{k,+})$ be the drilling blow-up of $M_k$ of center $N$ and let $(\widehat{M}_k,\widehat{\pi}_k)$ be the twisted Nash double of $(\widetilde{M}_{k,+},\pi_{k,+})$ for $k=0,1$. Following the corresponding definitions one realizes that $\widetilde{M}_{1,+}$ coincides with the strict transform of $M_1$ under $\pi_{0,+}$ (with respect to $N$) and $\widehat{M}_1$ coincides with the strict transform of $M_1$ under $\widehat{\pi}_0$.

Indeed, one only has to observe that a finite system of generators $f_1,\ldots,f_k\in{\mathcal N}(M_1)$ of the ideal $\mathcal{I}(N)$ can be obtained considering a finite system of generators $g_1,\ldots,g_k\in{\mathcal N}(M_0)$ of the ideal $\mathcal{I}(N)$ and defining $f_j:=g_j|_{M_1}$ for $j=1,\ldots,k$.
\hfill$\sqbullet$
\end{remarks}

\section{Nash uniformization of closed chessboard sets}\label{s4}

In this section we prove Theorem \ref{red2}, but this requires some preparation. We begin proving that connected chessboard sets are connected by analytic paths.

\begin{lem}\label{conccccap}
Let $\Ss\subset\R^n$ be a chessboard set. Then the connected components of $\Ss$ coincide with the components of $\Ss$ connected by analytic paths.
\end{lem}
\begin{proof}
It is enough to check that if $\Ss$ is a connected $d$-dimensional chessboard set, then $\Ss$ is connected by analytic paths. As $\Ss$ is a chessboard set, $X:=\ol{\Ss}^{\zar}$ is a non-singular real algebraic set and there exists a normal-crossings divisor $Z$ of $X$ and connected components $\Cc_1,\ldots,\Cc_r$ of $X\setminus Z$ such that $\bigsqcup_{i=1}^r\Cc_i\subset\Ss\subset\bigcup_{i=1}^r\cl(\Cc_i)$. As $\Ss$ is connected, we may reorder the indices $i=1,\ldots,r$ such that there exists a point $p_i\in\cl(\Cc_i)\cap\Ss\cap\bigcup_{j=1}^{i-1}\cl(\Cc_j)$ for $i=1,\ldots,r$. As $\Cc_i\cap\cl(\Cc_j)=\varnothing$ if $i\neq j$, we deduce 
$$
p_i\in(\cl(\Cc_i)\setminus\Cc_i)\cap\Ss\cap\bigcup_{j=1}^{i-1}(\cl(\Cc_j)\setminus\Cc_j)\subset Z
$$
Thus, there exists an open neighborhood $U_i\subset X$ of $p_i$ and a Nash diffeomorphism $\psi_i:U_i\to\R^d$ such that $\psi_i(p)=0$ and $\psi_i(Z\cap U_i)=\{\x_1\cdots\x_m=0\}$ for some $1\leq m\leq d$. We may assume that $\{\x_1>0,\ldots,\x_m>0\}\subset\psi_i(\Cc_i)$ and there exists $j=1,\ldots,i-1$ and $s=0,\ldots,m$ such that $\{\x_1>0,\ldots,\x_s>0,-\x_{s+1}>0,\ldots,-\x_m>0\}\subset\psi_j(\Cc_j)$. Consider the Nash arc 
\begin{align*}
\alpha:[-1,1]&\to\{\x_1>0,\ldots,\x_m>0\}\cup\{\x_1>0,\ldots,\x_s>0,-\x_{s+1}>0,\ldots,-\x_m>0\}\cup\{0\},\\ 
t&\mapsto(t^2,\overset{(s)}{\ldots},t^2,t,\overset{(n-s)}{\ldots},t).
\end{align*}
Consequently, there exists a Nash arc between $\Cc_i$ and $\Cc_j$ for some $j=1,\ldots,i-1$. By \cite[Main Thm.1.4 \& Cor.7.6]{fe3} we conclude that $\Ss$ is connected by analytic paths.
\end{proof}

We recall next the concept of checkerboard set \cite[Thm.8.4]{fe3}, which is a special type of chessboard sets, and some relevant properties.

\subsection{Checkerboard sets}\label{checkerb}

If $\Ss\subset\R^m$ is a semialgebraic set, we denote $\partial\Ss:=\cl(\Ss)\setminus\Reg(\Ss)$. If $\Ss$ is closed, we have $\partial\Ss=\Sing(\Ss):=\Ss\setminus\Reg(\Ss)$. If $\Qq$ is a Nash manifold with corners such that $\ol{\Qq}^{\zar}$ is a non-singular real algebraic set, then $\Int(\Qq)=\Reg(\Qq)$ and the differences $\Qq\setminus\Int(\Qq)$ and $\Qq\setminus\Reg(\Qq)$ define the same semialgebraic set $\partial\Qq$. In other situations, one should be careful with the possible ambiguity in the definition of the set $\partial\Qq$. In this section such ambiguity does not appear because all the Zariski closures of all involved Nash manifolds with corners are non-singular.

\begin{defn}
A pure dimensional semialgebraic set $\Tt\subset\R^n$ is a {\em checkerboard set} (Figure \ref{check}) if it satisfies the following properties:
\begin{itemize}
\item $\ol{\Tt}^{\zar}$ is a non-singular real algebraic set.
\item $ \ol{\partial\Tt}^{\zar}$ is a normal-crossings divisor of $\ol{\Tt}^{\zar}$.
\item $\Reg(\Tt)=\Sth(\Tt)$ is connected.\hfill$\sqbullet$ 
\end{itemize}
\end{defn}

\begin{figure}[!ht]
\begin{center}
\begin{tikzpicture}[scale=0.75]
\draw[fill=gray!50,opacity=0.4,draw=none] (0,0) -- (0,5) -- (5,5) -- (5,0) -- (0,0);
\draw[fill=white,draw=none] (1,2.5) -- (1,4) -- (2.5,4) -- (2.5,2.5) -- (1,2.5);
\draw[fill=white,draw=none] (2.5,1) -- (4,1) -- (4,2.5) -- (2.5,2.5) -- (2.5,1);

\draw[line width=1pt, dotted] (0,0) -- (0,5);
\draw[line width=1pt] (0,0) -- (2.5,0);
\draw[line width=1pt,dotted] (2.5,0) -- (5,0);

\draw[line width=1pt] (0,5) -- (1,5);
\draw[line width=1pt,dotted] (1,5) -- (3,5);
\draw[line width=1pt] (3,5) -- (5,5);

\draw[line width=1pt] (5,0) -- (5,5);

\draw[line width=1pt] (1,2.5) -- (1,4);
\draw[line width=1pt] (1,4)-- (2.5,4);
\draw[line width=1pt,dotted] (2.5,4) -- (2.5,2.5);
\draw[line width=1pt] (2.5,2.5) -- (1,2.5);

\draw[line width=1pt,dotted] (4,1) -- (2.5,1);
\draw[line width=1pt] (2.5,1) -- (2.5,2.5);
\draw[line width=1pt] (2.5,2.5) -- (4,2.5);
\draw[line width=1pt] (4,1) -- (4,2.5);

\draw[fill=black,draw] (0,0) circle (0.75mm);
\draw[fill=white,draw] (5,0) circle (0.75mm);
\draw[fill=white,draw] (0,5) circle (0.75mm);
\draw[fill=black,draw] (5,5) circle (0.75mm);

\draw[fill=white,draw] (1,5) circle (0.75mm);
\draw[fill=black,draw] (3,5) circle (0.75mm);
\draw[fill=white,draw] (2.5,0) circle (0.75mm);

\draw[fill=white,draw] (1,4) circle (0.75mm);
\draw[fill=black,draw] (1,2.5) circle (0.75mm);
\draw[fill=black,draw] (2.5,2.5) circle (0.75mm);
\draw[fill=white,draw] (2.5,1) circle (0.75mm);
\draw[fill=white,draw] (2.5,4) circle (0.75mm);
\draw[fill=white,draw] (4,1) circle (0.75mm);
\draw[fill=black,draw] (4,2.5) circle (0.75mm);

\draw (1.5,1.5) node {\small$\Ss$};

\draw[fill=gray!50,opacity=0.4,draw=none] (8,0) -- (8,5) -- (13,5) -- (13,0) -- (8,0);
\draw[fill=white,draw=none] (9,2.5) -- (9,4) -- (10.5,4) -- (10.5,2.5) -- (9,2.5);
\draw[fill=white,draw=none] (10.5,1) -- (12,1) -- (12,2.5) -- (10.5,2.5) -- (10.5,1);

\draw[line width=1pt] (8,0) -- (8,5) -- (13,5) -- (13,0) -- (8,0);
\draw[line width=1pt] (10.5,2.5) -- (9,2.5) -- (9,4) -- (10.5,4) -- (10.5,2.5);
\draw[line width=1pt] (10.5,2.5) -- (10.5,1) -- (12,1) -- (12,2.5) -- (10.5,2.5);

\draw (11.5,3.5) node{$\Tt$};

\end{tikzpicture}
\end{center}
\caption{\small{A general checkerboard set $\Ss$ (left) and a closed checkerboard set $\Tt$ (right).\label{check}}}
\end{figure}

Any checkerboard set is connected by analytic paths \cite[Main Thm.1.4, Lem.8.2]{fe3}. In \cite[Thm.8.4]{fe3} the following crucial result is proved (making use at its initial stage of Theorem \ref{bpd}).

\begin{thm}[{\cite[Thm.8.4]{fe3}}]\label{ridwell}
Let $\Ss\subset\R^m$ be a semialgebraic set of dimension $d\geq 2$ connected by analytic paths and denote $X:=\ol{\Ss}^{\zar}$. Then there exists a checkerboard set $\Tt\subset\R^n$ of dimension $d$ and a proper regular map $f:\ol{\Tt}^{\zar}\to X$ such that $f(\Tt)=\Ss$.
\end{thm}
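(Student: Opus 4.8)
My plan is to build $\Tt$ by resolving $X:=\ol{\Ss}^{\zar}$ in two stages — first an ordinary Hironaka desingularization, then an embedded one to tame the boundary that appears — and to deal with the connectedness requirement separately, as the heart of the argument.

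\emph{Step 1 (desingularizing $X$).} Since $\Ss$ is connected by analytic paths it is pure $d$-dimensional, so $X$ is pure $d$-dimensional and $\Reg(\Ss)$ is a Nash submanifold dense in $\Ss$. Apply Theorem \ref{hi1} to $X$ and discard the components of dimension $<d$: this yields a pure $d$-dimensional non-singular real algebraic set $X_1$ and a proper polynomial map $\pi_1:X_1\to X$ that restricts to a diffeomorphism over $\Reg(X)=X\setminus\Sing(X)$. Put $\Ss_1:=\cl_{X_1}(\pi_1^{-1}(\Reg(\Ss)))$. As $\pi_1$ is proper, hence closed, and $\Reg(\Ss)$ is dense in $\Ss$, one checks $\pi_1(\Ss_1)=\Ss$ and $\pi_1|_{\Ss_1}$ is proper; moreover $\Ss_1$ is a pure $d$-dimensional closed semialgebraic subset of $X_1$, its Zariski closure is a union of (irreducible) connected components of $X_1$, and $\Reg(\Ss_1)=\Sth(\Ss_1)$ equals the interior of $\Ss_1$ in $X_1$.

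\emph{Step 2 (normalizing the boundary).} Let $W:=\ol{\partial\Ss_1}^{\zar}\subset X_1$; this is a real algebraic set of dimension $<d$ containing $\Sing(\Ss_1)=\partial\Ss_1$. Apply Theorem \ref{hi2} to the pair $W\subset X_1$ (again discarding low-dimensional components) to obtain a pure $d$-dimensional non-singular real algebraic set $X_2$ and a proper surjective polynomial map $\pi_2:X_2\to X_1$ such that $D:=\pi_2^{-1}(W)$ is a normal-crossings divisor of $X_2$ and $\pi_2$ restricts to a diffeomorphism over $X_1\setminus W$. Set $\Ss_2:=\cl_{X_2}(\pi_2^{-1}(\Ss_1\setminus W))$ and $f:=\pi_1\circ\pi_2$. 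Since $\Ss_1\setminus W=\Reg(\Ss_1)\setminus W$ is open and closed in $X_1\setminus W$, the set $U:=\pi_2^{-1}(\Ss_1\setminus W)$ is a union of connected components of $X_2\setminus D$ and $\Ss_2=\cl(U)$. A direct check now gives: $\ol{\Ss_2}^{\zar}$ is a union of connected components of $X_2$ (hence non-singular of dimension $d$); $f:\ol{\Ss_2}^{\zar}\to X$ is a proper regular map with $f(\Ss_2)=\Ss$ and $f|_{\Ss_2}$ proper; $\Reg(\Ss_2)=\Sth(\Ss_2)$ is the interior of $U$ in $\ol{\Ss_2}^{\zar}$; and $\ol{\partial\Ss_2}^{\zar}$ is the union of the irreducible components of $D$ that meet $\partial\Ss_2$ in dimension $d-1$, hence a normal-crossings divisor of $\ol{\Ss_2}^{\zar}$. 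Thus $\Ss_2$ satisfies the first two conditions in the definition of a checkerboard set.

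\emph{Step 3 (connectedness — the main obstacle).} It remains to arrange that $\Reg(\Ss_2)=\Sth(\Ss_2)$ is connected. Observe that $\Reg(\Ss_2)$ is the interior of the union of the ``selected'' tiles $U$, so two selected tiles sharing a full codimension-$1$ face of $D$ merge; this is why $\Reg(\Ss_2)$ can become connected even when $\Reg(\Ss)$ is not (e.g.\ the regular locus of a nodal-cubic-type set has several components that get reconnected along exceptional divisors of $\pi_1$). There is no freedom to add ``bridge'' tiles: an unselected tile is mapped by $f$ diffeomorphically onto a $d$-dimensional open subset of $X$ disjoint from $\Reg(\Ss)$, hence not contained in $\Ss$. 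So the only option is to choose the resolution — allowing finitely many additional blow-ups whose centers lie in the closed strata of $D$ and avoid $\Reg(\Ss_2)$, and therefore preserve everything obtained in Steps 1–2 — so that all selected tiles become linked through common codimension-$1$ faces. This is exactly where the hypothesis that $\Ss$ is connected by analytic paths enters, and I expect it to be the crux: given two selected tiles, one takes an analytic path in $\Ss$ joining points of their images, lifts it through $f$, uses that the lift meets $D$ only along the strata over which the relevant local branches of $\Ss$ cross, and — after finitely many blow-ups making that lift meet $D$ like a coordinate axis — reads off that the two tiles lie on the two sides of a common face and hence merge. Since everything in sight is semialgebraic, finitely many such paths suffice (Theorem \ref{apcc0}), and one ends with a non-singular model in which $U$, hence $\Reg(\Ss_2)$, is connected. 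Taking $\Tt:=\Ss_2$, $Y:=\ol{\Tt}^{\zar}$ and $f$ as above for this final model finishes the proof; the delicate point is the claim that connectedness by analytic paths is precisely what makes the selected tiles linkable after finitely many blow-ups, carried out without destroying the normal-crossings structure of the boundary or the equality $f(\Tt)=\Ss$.
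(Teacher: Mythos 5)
Your two resolution steps are the standard preparation and are fine as far as they go, but there are two genuine gaps. First, the theorem is stated for an arbitrary semialgebraic $\Ss$ connected by analytic paths, not a closed one, and your $\Tt$ is built exclusively out of closures: $\Ss_1:=\cl(\pi_1^{-1}(\Reg(\Ss)))$ and $\Ss_2:=\cl(\pi_2^{-1}(\Ss_1\setminus W))$ are closed, and since the maps are proper you get $f(\Tt)=\cl(\Reg(\Ss))$, which equals $\Ss$ only when $\Ss$ is closed (already for an open ball your construction outputs the closed ball). To keep $f(\Tt)=\Ss$ and the properness of $f|_{\Tt}$ one must carry along the non-closed strata, e.g.\ take $\Tt:=f^{-1}(\Ss)\cap\cl\bigl(f^{-1}(\Ss)\setminus E\bigr)$ (this is exactly how strict transforms of non-closed sets are handled in Step 3 of the proof of Theorem \ref{red4} in this paper, and it is what makes the stronger formulation of Theorem \ref{ridwell2} possible); one then has to re-verify pure dimensionality, $\Reg(\Tt)=\Sth(\Tt)$, and the normal-crossings property of $\ol{\partial\Tt}^{\zar}$ for this modified $\Tt$, none of which is in your text.

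Second, and more seriously, Step 3 is not a proof: the connectedness of $\Reg(\Tt)$ is precisely the content of the cited result (\cite[Thm.8.4]{fe3}), and the present paper does not reprove it either — it imports it. Your plan (lift an analytic path joining two selected tiles, blow up until the lift is normal-crossings with $D$, conclude the tiles share a facet) leaves the crucial claims unargued. In particular, connectedness by analytic paths only gives paths inside $\Ss$, which may run for whole subintervals inside $\partial\Ss$ (hence inside the divisor upstairs); for such a path the lifting heuristic says nothing about which $d$-dimensional tiles become linked, and the claim that "the lift meets $D$ only along the strata over which the relevant local branches of $\Ss$ cross" is unjustified. One also needs the auxiliary centers to be algebraic (the lifted path is merely Nash/analytic), the process to terminate, and the previously achieved structure ($f(\Tt)=\Ss$, properness, the normal-crossings boundary) to survive each blow-up. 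Bridging exactly these points is what the machinery of well-welded sets in \cite{fe3} (whose equivalence with connectedness by analytic paths is itself a main theorem there) is for; without it, your Step 3 is an expectation, not an argument, so the proposal does not establish the theorem.
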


As the map $f$ is proper, if the semialgebraic set $\Ss$ is compact, we may assume that also the checkerboard set $\Tt$ is compact (see the proof of \cite[Thm.8.4]{fe3}). Even if it is not explicitly quoted in the statement of \cite[Thm.8.4]{fe3}, there is actually proved much more. Looking at its proof and taking in mind that the components connected by analytic paths of a pure dimensional semialgebraic set of dimension $d$ are again pure dimensional semialgebraic sets of dimension $d$, we can reformulate the statement of Theorem \ref{ridwell} in the following more general form:

\begin{thm}\label{ridwell2}
Let $\Ss\subset\R^m$ be a pure dimensional semialgebraic set of dimension $d\geq 2$ and let $r$ be the number of components of $\Ss$ connected by analytic paths. Then there exist: 
\begin{itemize}
\item[(i)] A pairwise disjoint finite union $\Tt$ of $r$ checkerboard sets $\Tt_i\subset\R^n$ of the same dimension $d$ such that $\ol{\Tt}^{\zar}$ is a non-singular real algebraic set. 
\item[(ii)] A proper polynomial map $f:\ol{\Tt}^{\zar}\to\ol{\Ss}^{\zar}$ such that $f(\Tt)=\Ss$ and the restriction $f|_{\Tt}:\Tt\to\Ss$ is also proper.
\item[(iii)] A semialgebraic set $\Rr\subset\Ss$ of dimension strictly smaller than $d$ such that $f^{-1}(\Rr)\subset\ol{\partial\Tt}^{\zar}$, $\Ss\setminus\Rr$ and $\Tt\setminus f^{-1}(\Rr)$ are Nash manifolds of dimension $d$ and $f|_{\Tt\setminus f^{-1}(\Rr)}:\Tt\setminus f^{-1}(\Rr)\to\Ss\setminus\Rr$ is a Nash diffeomorphism. 
\end{itemize}
In particular, if $\ol{\Ss}^{\zar}$ is compact, also $\ol{\Tt}^{\zar}$ is compact.
\end{thm}
\begin{remark}
Theorem \ref{ridwell2} reduces the proof of Theorems \ref{red2} and \ref{red4} to the cases of closed checkerboard sets and general checkerboard sets.\hfill$\sqbullet$
\end{remark}

\subsection{Definition and properties of the function $e$ of a closed checkerboard set}
Given a non-singular real algebraic set $X\subset\R^n$ of dimension $d$ and a normal-crossings divisor $Z\subset X$, we denote\begin{align*}
&\Sing_0(Z):=Z,\\
&\Sing_\ell(Z):=\Sing(\Sing_{\ell-1}(Z))\quad\text{for $1\leq\ell\leq d$}.
\end{align*}
Observe that if $\Sing_\ell(Z)\neq\varnothing$, then $\dim(\Sing_\ell(Z))=d-\ell-1$. In addition, if $\Sing_\ell(Z)=\varnothing$, then $\Sing_k(Z)=\varnothing$ for $k\geq\ell$. In particular, $\Sing_d(Z)=\varnothing$. Recall that if $A_x\subset\R^n_x$ is a set germ at $x$ of a subset $A\subset\R^n$, its {\em analytic closure} $\ol{A}^\an_x$ is the smallest analytic set germ of $\R^n_x$ that contains $A_x$.

Let $\Tt\subset\R^n$ be a closed checkerboard set and denote $X:=\ol{\Tt}^{\zar}$ and $\partial\Tt:=\Tt\setminus\Reg(\Tt)$. For each point $x\in\Tt$ there exists a coordinate system $(\u_1,\ldots,\u_d)$ of the Nash manifold $X$ at $x$ and an integer $0\leq r_x\leq d$ such that either $\ol{\partial\Tt}^\an_x=\{\u_1\cdots\u_{r_x}=0\}_x$ if $r_x\geq 1$ or $x\in\Reg(\Tt)=\Tt\setminus\partial\Tt$ if $r_x=0$. We denote with $e_x:=e_x(\Tt)\leq r_x$ the number of indices $1\leq i\leq r_x$ such that the germ $\Tt_x\setminus\{\u_i=0\}_x$ is disconnected. If $r_x\leq1$, then $e_x=0$. We have the following:

\begin{lem}\label{angoli}
The value $e_x=0$ if and only if $\Tt_x$ is the germ at $x$ of a Nash manifold with corners.
\end{lem}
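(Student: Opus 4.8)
The plan is to prove Lemma~\ref{angoli} by analysing the local structure of $\Tt$ at $x$ in the chosen coordinate system $(\u_1,\ldots,\u_d)$, in which $X$ is identified (as a germ) with $\R^d_0$ and $\ol{\partial\Tt}^\an_x=\{\u_1\cdots\u_{r_x}=0\}_0$. Since $\partial\Tt=\Tt\setminus\Reg(\Tt)$ and $\Tt$ is a closed checkerboard set of dimension $d$ with $\Reg(\Tt)=\Sth(\Tt)$ connected, the germ $\Tt_x$ is a closed, pure $d$-dimensional semialgebraic germ whose "singular locus" germ $(\partial\Tt)_x$ is contained in the normal-crossings germ $\{\u_1\cdots\u_{r_x}=0\}_0$. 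Away from that divisor, $\Tt$ coincides with the ambient Nash manifold $X$; hence $\Tt_x\setminus\{\u_1\cdots\u_{r_x}=0\}_0$ is open in $\R^d_0$ and is therefore a union of some of the $2^{r_x}$ open "orthants" $\{\varepsilon_1\u_1>0,\ldots,\varepsilon_{r_x}\u_{r_x}>0\}$ (times the free variables $\u_{r_x+1},\ldots,\u_d$), indexed by sign vectors $\varepsilon\in\{-1,+1\}^{r_x}$; since $\Tt$ is closed, $\Tt_x$ is the closure of this union. The whole statement then reduces to a combinatorial fact about which collections $E\subseteq\{-1,+1\}^{r_x}$ of orthants arise, and when the corresponding closed set is a manifold-with-corners germ.

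The key step is to translate both sides of the equivalence into statements about the set $E$ of sign vectors. For the quantity $e_x$: removing $\{\u_i=0\}$ from $\Tt_x$ disconnects it precisely when $E$ fails to be "connected in direction $i$" — more precisely, $\Tt_x\setminus\{\u_i=0\}$ is the (closure of the) union of orthants whose sign vectors lie in $E$, split according to the sign of $\u_i$, and it is disconnected iff $E$ contains sign vectors with $\varepsilon_i=+1$ and sign vectors with $\varepsilon_i=-1$ but no "face-adjacency path" within $\{\u_i>0\}$-orthants joining them to the $\{\u_i<0\}$-orthants through the hyperplanes $\{\u_j=0\}$, $j\neq i$. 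So $e_x=0$ means: for every index $i\le r_x$ the subfamily $E$ does \emph{not} contain orthants on both sides of $\{\u_i=0\}$ that get separated by deleting that hyperplane. I would show this is equivalent to $E$ being a "sub-box": after relabelling signs, $E=\{+1\}^{s}\times\{-1,+1\}^{r_x-s}$ for some $0\le s\le r_x$ (i.e. $E$ is, up to reflections, the set of sign vectors that are $+1$ on a fixed subset of coordinates and arbitrary on the rest). Indeed, if $E$ is such a sub-box then $\Tt_x=\{\u_{i_1}\ge0,\ldots,\u_{i_s}\ge0\}_0$ is manifestly the germ of a Nash manifold with corners, and for every $i$ the set $\Tt_x\setminus\{\u_i=0\}$ stays connected (if $i$ is one of the "free" indices, both halves are connected through the faces; if $i\in\{i_1,\ldots,i_s\}$, then one half is empty). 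Conversely, if $E$ is not of this form, one produces an index $i$ witnessing $e_x\ge1$ by taking $i$ along which $E$ "has a gap," and also checks that $\Tt_x$ then has a non-corner singularity.

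For the forward direction ($e_x=0\Rightarrow$ manifold with corners) I will argue that $e_x=0$ forces $E$ to be a sub-box: pick any coordinate $i\le r_x$; if $E$ meets both $\{\varepsilon_i=+1\}$ and $\{\varepsilon_i=-1\}$, then by $e_x=0$ deleting $\{\u_i=0\}$ keeps $\Tt_x$ connected, which I will show forces $E$ to be \emph{symmetric} under the reflection $\varepsilon_i\mapsto-\varepsilon_i$ and moreover forces $i$ to be a free coordinate of $E$ (the connecting path argument, together with the fact that, by the definition of the analytic closure $\ol{\partial\Tt}^\an_x$, every hyperplane $\{\u_j=0\}$ with $j\le r_x$ genuinely appears in the closure of $\partial\Tt$, hence $E$ cannot contain all of $\{-1,+1\}^{r_x}$ unless $r_x$ is as large as possible). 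Iterating over all $i$ partitions $\{1,\ldots,r_x\}$ into "free" coordinates (on which $E$ is symmetric) and "fixed" coordinates (on which $E$ takes a single sign after relabelling), i.e. $E$ is a sub-box, and then $\Tt_x$ is a corner germ. For the reverse direction, if $\Tt_x$ is a Nash manifold-with-corners germ, by Theorem~\ref{corners0}(iii) (or directly from the local normal form of manifolds with corners) there is a coordinate system in which $\Tt_x=\{\u_1\ge0,\ldots,\u_k\ge0\}_0$; comparing with $\ol{\partial\Tt}^\an_x$ one gets $k=r_x$ up to the usual identifications, so $E$ is a sub-box and each $\Tt_x\setminus\{\u_i=0\}$ is connected, giving $e_x=0$.

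The main obstacle I anticipate is the care needed in the combinatorial/topological lemma characterising which unions-of-orthants are manifold-with-corners germs, and in matching the index $r_x$ coming from $\ol{\partial\Tt}^\an_x$ (which is intrinsic, defined via the analytic closure of $\partial\Tt$) with the number of coordinates actually "used" by the corner structure — one must rule out, for instance, a phantom hyperplane $\{\u_j=0\}$ that lies in $\ol{\partial\Tt}^\an_x$ but along which $\Tt_x$ looks locally like $\R^d$. That is precisely excluded because $\ol{\partial\Tt}^\an_x$ is by definition the \emph{smallest} analytic germ containing $(\partial\Tt)_x$, so each component hyperplane really meets $\partial\Tt$; I will use this minimality repeatedly. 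The rest (properness, Nash/semialgebraic bookkeeping, passing between germs and genuine neighbourhoods) is routine given the results already recalled, in particular Theorem~\ref{corners0} and the description of $\Reg(\Tt)=\Sth(\Tt)$.
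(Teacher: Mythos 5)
Your overall setup --- identify $\Tt_x$ off the divisor with (the closure of) a union of open orthants indexed by a set $E\subseteq\{-1,+1\}^{r_x}$ of sign vectors --- is the same as the paper's, but the combinatorial criterion you build on is false, and it is precisely the step your forward implication needs. If $E$ contains sign vectors with $\varepsilon_i=+1$ and sign vectors with $\varepsilon_i=-1$, then $\Tt_x\setminus\{\u_i=0\}_x$ is automatically disconnected: the sets $\Tt_x\cap\{\u_i>0\}$ and $\Tt_x\cap\{\u_i<0\}$ are nonempty, open in $\Tt_x\setminus\{\u_i=0\}_x$, disjoint and cover it, and no ``face-adjacency path through the hyperplanes $\{\u_j=0\}$, $j\neq i$'' can join them, because any path from $\{\u_i>0\}$ to $\{\u_i<0\}$ must cross $\{\u_i=0\}$, which has been removed. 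Consequently your claimed equivalence ``$e_x=0$ iff $E$ is a sub-box'' is wrong: a sub-box with at least one free coordinate $i$ gives $\Tt_x=\{\u_{i_1}\ge0,\ldots,\u_{i_s}\ge0\}_x$ with $\Tt_x\setminus\{\u_i=0\}_x$ disconnected, hence $e_x\ge1$; your parenthetical ``both halves are connected through the faces'' is exactly the false point. Moreover, the case you plan to analyse in the forward direction --- $E$ meeting both sides of $\{\u_i=0\}$ while deletion of that hyperplane keeps $\Tt_x$ connected --- is vacuous, so the announced symmetry/connecting-path argument cannot be carried out.

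Once the disconnection criterion is corrected the lemma becomes much simpler, and this is what the paper does: $e_x=0$ means that for every $i\le r_x$ the germ $\Tt_x$ meets only one open side of $\{\u_i=0\}$, so after changing signs $\Tt_x\subset\{\u_1\ge0,\ldots,\u_{r_x}\ge0\}_x$; since $\Tt_x\setminus\ol{\partial\Tt}^{\an}_x=\Reg(\Tt)_x\setminus\ol{\partial\Tt}^{\an}_x$ is open and closed in the complement of the divisor, it is a union of open orthants contained in the positive one, hence equals $\{\u_1>0,\ldots,\u_{r_x}>0\}_x$, and closedness plus pure dimensionality yield $\Tt_x=\{\u_1\ge0,\ldots,\u_{r_x}\ge0\}_x$; in particular $E$ is a single sign vector, not a general sub-box. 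Your converse direction is essentially fine and agrees with the paper (by minimality of the analytic closure the faces of the corner germ are exactly the components $\{\u_i=0\}_x$, so $\Tt_x$ is the standard corner on all $r_x$ hyperplanes and removing any one of them leaves a convex, hence connected, germ), but the forward direction as proposed rests on a false intermediate claim and would fail as written.
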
 
\begin{proof}
The if implication is clear because after changing the sign of some of the variables if necessary, we may assume either $x\in\Reg(\Tt)$ or $\Tt_x=\{\u_1\geq0,\ldots,\u_{r_x}\geq0\}_x$ for some $1\leq r_x\leq d$, so $e_x=0$. 

Conversely, suppose $e_x=0$. Then, either $x\in\Reg(\Tt)$ (so $\Tt_x=\Reg(\Tt)_x$ is the germ at $x$ of a Nash manifold) or $\ol{\partial\Tt}^\an_x=\{\u_1\cdots\u_{r_x}=0\}_x$ for some $1\leq r_x\leq d$ and, after changing the sign of some of the variables, we may assume $\Tt_x\subset\{\u_1\geq0,\ldots,\u_{r_x}\geq0\}_x$, because $e_x=0$. As $\Tt_x\setminus\ol{\partial\Tt}^\an_x=\Reg(\Tt)_x\setminus\ol{\partial\Tt}^\an_x$ is an open and closed germ, $\Tt_x\setminus\ol{\partial\Tt}^\an_x$ is a union of connected components of $\{\u_1\cdots\u_{r_x}\neq0\}_x$ contained in $\{\u_1>0,\ldots,\u_{r_x}>0\}_x$, so $\Tt_x\setminus\ol{\partial\Tt}^\an_x=\{\u_1>0,\ldots,\u_{r_x}>0\}_x$. As $\Tt_x$ is closed and pure dimensional and $\dim(\ol{\partial\Tt}^\an_x)<\dim(\Tt_x)$, we conclude $\Tt_x=\{\u_1\geq0,\ldots,\u_{r_x}\geq0\}_x$ is the germ at $x$ of a Nash manifold with corners, as required.
\end{proof}

It follows from the previous statement: \em A closed checkerboard set is a Nash manifold with corners if and only if $e_x(\Tt)=0$ for each $x\in\Tt$.\em

\begin{lem}\label{dimensione giusta}
Let $\Tt\subset\R^n$ be a closed checkerboard set. Then $e_x\neq 1$, for each $x\in\partial\Tt$.
\end{lem}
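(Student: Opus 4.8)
The plan is to reduce the statement to the local combinatorics of a checkerboard germ. Suppose, towards a contradiction, that $e_x=1$ for some $x\in\partial\Tt$; since $e_x\ge1$ and $e_x=0$ whenever $r_x\le1$, we have $r:=r_x\ge2$. Fix a coordinate system $(\u_1,\dots,\u_d)$ of $X=\ol{\Tt}^{\zar}$ at $x$ with $\ol{\partial\Tt}^{\an}_x=\{\u_1\cdots\u_r=0\}_x$, as in the paragraph preceding Lemma \ref{angoli}. Arguing exactly as in the proof of that lemma, $\Tt_x\setminus\ol{\partial\Tt}^{\an}_x=\Reg(\Tt)_x\setminus\ol{\partial\Tt}^{\an}_x$ is an open and closed germ, hence a union $\bigcup_{\sigma\in Q}C_\sigma$ of ``orthant'' germs $C_\sigma:=\{\sigma_1\u_1>0,\dots,\sigma_r\u_r>0\}_x$ for some non-empty $Q\subseteq\{-1,1\}^r$; and since $\Tt_x$ is closed and pure dimensional while $\dim(\ol{\partial\Tt}^{\an}_x)<\dim(\Tt_x)$, we get $\Tt_x=\cl\big(\bigcup_{\sigma\in Q}C_\sigma\big)$.

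The first key step is to compute, for each $1\le i\le r$, the number of connected components of $\Tt_x\setminus\{\u_i=0\}_x$ directly from $Q$. Writing $D_\sigma:=\cl(C_\sigma)\setminus\{\u_i=0\}_x$, each $D_\sigma$ is a non-empty convex cone germ (intersection of half-spaces, approached by the points $t\sigma$ as $t\to0^+$), hence connected, and $\Tt_x\setminus\{\u_i=0\}_x=\bigcup_{\sigma\in Q}D_\sigma$. I would check that whenever $\sigma_i=\sigma'_i$ the cone $\{\sigma_k\u_k\ge 0\ (k\le r),\ \sigma_i\u_i>0,\ \u_k=0\ \text{for }k\le r\text{ with }\sigma_k\neq\sigma'_k\}$ is a non-trivial germ contained in $\cl(D_\sigma)\cap D_{\sigma'}$, so that $D_\sigma\cup D_{\sigma'}$ is connected; on the other hand $\{\u_i>0\}_x$ and $\{\u_i<0\}_x$ are disjoint open germs. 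Hence $\Tt_x\setminus\{\u_i=0\}_x$ has exactly one connected component when $\sigma\mapsto\sigma_i$ is constant on $Q$ and exactly two otherwise, and consequently $e_x$ equals the number of indices $i\in\{1,\dots,r\}$ for which $\sigma\mapsto\sigma_i$ is non-constant on $Q$.

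Granting this, the conclusion is short. Since $e_x=1$, there is exactly one such index, say $i=1$ after relabeling, and after replacing some of the $\u_k$ by $-\u_k$ we may assume $\sigma_k=1$ for all $\sigma\in Q$ and all $2\le k\le r$. The only two sign vectors with this property are $(1,1,\dots,1)$ and $(-1,1,\dots,1)$, and both must lie in $Q$ because $\sigma\mapsto\sigma_1$ is non-constant; thus $Q=\{(1,1,\dots,1),(-1,1,\dots,1)\}$ and
$$
\Tt_x=\cl\big(C_{(1,1,\dots,1)}\cup C_{(-1,1,\dots,1)}\big)=\{\u_2\ge0,\dots,\u_r\ge0\}_x,
$$
which is the germ at $x$ of a Nash manifold with corners. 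By Lemma \ref{angoli} this forces $e_x=0$, contradicting $e_x=1$. (Alternatively, one may observe directly that $\ol{\partial\Tt}^{\an}_x=\{\u_2\cdots\u_r=0\}_x$ has $r-1$ irreducible components, whereas by construction it has $r$, and $r\ge2$.)

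The main obstacle I expect is the first key step: one must be careful that deleting the hyperplane germ $\{\u_i=0\}_x$ from $\Tt_x$ never disconnects the part lying on a fixed side of that hyperplane — the apparent separation between two orthants differing in several coordinates disappears, because the ``edge'' rays along which all differing coordinates other than $\u_i$ vanish (and $\u_i\neq0$) survive the deletion. Getting this connectedness bookkeeping right, and in particular that the count of components is always $1$ or $2$ and never more, is the crux; everything else is formal.
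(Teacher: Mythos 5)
Your proposal is correct and follows essentially the same route as the paper: both arguments reduce to the local orthant decomposition $\Tt_x=\cl\big(\bigcup_{\sigma\in Q}C_\sigma\big)$, use that a non-disconnecting hyperplane forces $\Tt_x$ to lie on one side of it, and reach the same contradiction, namely that $\Tt_x$ would be the corner germ $\{\u_2\geq0,\ldots,\u_r\geq0\}_x$, so that $\{\u_1=0\}_x$ could not lie in $\ol{\partial\Tt}^{\an}_x$ (equivalently, $e_x=0$ by Lemma \ref{angoli}). The only difference is cosmetic: you make explicit the formula that $e_x$ equals the number of coordinates on which the sign set $Q$ is non-constant, and your connectedness bookkeeping for $\Tt_x\setminus\{\u_i=0\}_x$ via the shared germ $\{\u_k=0\ (k\leq r,\ k\neq i),\ \sigma_i\u_i>0\}$ is sound, whereas the paper runs the same dichotomy directly without stating the formula.
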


\begin{figure}[!ht]
\begin{center}
\begin{tikzpicture}[scale=0.5]
\draw[fill=gray!,opacity=0.6,draw=none] (0,0) -- (-4,-2) -- (-4,1) -- (0,3) -- (0,0);
\draw[fill=gray!,opacity=0.6,draw=none] (0,0) -- (4,-2) -- (4,1) -- (0,3) -- (0,0);
\draw[fill=gray!,opacity=0.6,draw=none] (0,0) -- (-4,-2) -- (0,-4) -- (4,-2) -- (0,0);

\draw[fill=gray!25,opacity=0.4,draw=none] (0,0) -- (-4,2) -- (-4,5) -- (0,7) -- (4,5) -- (4,2) -- (0,0);
\draw[fill=gray!25,opacity=0.4,draw=none] (0,0) -- (0,3) -- (4,5) -- (8,3) -- (8,0) -- (4,-2) -- (0,0);
\draw[fill=gray!25,opacity=0.4,draw=none] (0,0) -- (0,3) -- (-4,5) -- (-8,3) -- (-8,0) -- (-4,-2) -- (0,0);
\draw[fill=gray!25,opacity=0.4,draw=none] (0,0) -- (0,-3) -- (-4,-5) -- (-8,-3) -- (-8,0) -- (-4,-2) -- (0,0);
\draw[fill=gray!25,opacity=0.4,draw=none] (0,0) -- (-4,-2) -- (-4,-5) -- (0,-7) -- (4,-5) -- (4,-2) -- (0,0);
\draw[fill=gray!25,opacity=0.4,draw=none] (0,0) -- (0,-3) -- (4,-5) -- (8,-3) -- (8,0) -- (4,2) -- (0,0);

\draw[gray,dashed] (-4,-5) -- (-4,1) -- (4,5) -- (4,-1) -- (-4,-5);
\draw[gray,dashed] (4,1) -- (-4,5) -- (-4,-1) -- (4,-5) -- (4,1);
\draw[gray,dashed] (0,4) -- (-8,0) -- (0,-4) -- (8,0) -- (0,4);
\draw[gray,dashed] (0,-3) -- (0,3);
\draw[gray,dashed] (-4,-2) -- (4,2);
\draw[gray,dashed] (4,-2) -- (-4,2);
\draw[gray,dashed] (0,0) -- (0,7);
\draw[gray,dashed] (8,-3) -- (0,1) -- (-8,-3);

\draw[red,line width=1.75pt] (0,0) -- (0,3);
\draw[red,line width=1.75pt] (0,0) -- (-4,-2);
\draw[red,line width=1.75pt] (0,0) -- (4,-2);
\draw[black,thick] (0,-4) -- (0,-7);
\draw[black,thick] (0,3) -- (-4,1) -- (-8,3) -- (0,7) -- (8,3) -- (4,1) -- (0,3);
\draw[black,thick] (-4,1) -- (-4,-2) -- (0,-4) -- (4,-2) -- (4,1);
\draw[black,thick] (8,3) -- (8,-3) -- (0,-7) -- (-8,-3) -- (-8,3);

\draw (1,5) node{$\Ss$};
\filldraw[cyan] (0,0) circle (5pt);
\filldraw[red] (0,3) circle (5pt);
\filldraw[red] (-4,-2) circle (5pt);
\filldraw[red] (4,-2) circle (5pt);

\end{tikzpicture}
\end{center}
\caption{\small{Closed checkerboard set $\Ss$. Set of points with $e=3$ (cyan), set of points with $e=2$ (red) and set of points with $e=0$ (grey).}\label{e}}
\end{figure}

\begin{proof}
Let $X:=\ol{\Tt}^{\zar}$. For each $x\in\partial\Tt$ there exist an open semialgebraic set $U\subset X$ equipped with a Nash diffeomorphism $u:=(u_1,\ldots,u_d):U\to\R^d$ and an integer $1\leq r_x\leq d$ such that $u(x)=0$ and $\ol{\partial\Tt}^{\an}_x=\{\u_{1}\cdots\u_{r_x}=0\}_x$. 

Suppose that $e_x=1$ for some $x\in\partial\Tt$. As $e_x\neq 0$, then $r_x\geq 2$, because otherwise $\Tt_x$ is the germ of a Nash manifold with boundary and $e_x=0$. Up to rename the variables if necessary, we may assume $\Tt_x\setminus\{\u_1=0\}_x$ is disconnected. Suppose that for each $2\leq i\leq r_x$ the germ $\Tt_x\setminus\{\u_i=0\}_x$ is connected. After changing the signs of some of the variables if necessary, we may assume $\Tt_x\subset\{\u_2\geq 0,\ldots,\u_{r_x}\geq 0\}_x$. Proceeding as in the proof of Lemma \ref{angoli}, as $\Tt_x\setminus\ol{\partial\Tt}^\an_x=\Reg(\Tt)_x\setminus\ol{\partial\Tt}^\an_x$ is an open and closed germ, $\Tt_x\setminus\ol{\partial\Tt}^\an_x$ is a union of connected components of $\{\u_1\cdots\u_{r_x}\neq0\}_x$ contained in $\{\u_2>0,\ldots,\u_{r_x}>0\}_x$. As $\Tt_x$ is closed and pure dimensional and $\dim(\ol{\partial\Tt}^\an_x)<\dim(\Tt_x)$, we have only two possibilities:
\begin{itemize}
\item $\Tt_x=\{\u_1\geq 0,\ldots,\u_{r_x}\geq0\}_x$ (up to changing the sign of the germ $\u_1$ if necessary),
\item $\Tt_x=\{\u_2\geq 0,\ldots,\u_{r_x}\geq 0\}_x$.
\end{itemize}
In the first case $e_x=0$, which contradicts the fact that $\Tt_x\setminus\{\u_1=0\}_x$ is disconnected, whereas in the second case $\{\u_1=0\}_x\not\subset\ol{\partial\Tt}^{\an}$, which contradicts the fact that $\ol{\partial\Tt}^{\an}=\{\u_{1}\cdots\u_{r_x}=0\}_x$. Thus, there exists $2\leq i\leq r_x$ such that $\Tt_x\setminus\{\u_i=0\}_x$ is disconnected, so $e_x\geq 2$ as required.
\end{proof}

We show next that the function $e(\Tt)$ is a semialgebraic function (Figure \ref{e}).

\begin{lem}[Semialgebricity of $e(\Tt)$]\label{exs}
Let $\Tt\subset\R^n$ be a $d$-dimensional closed checkerboard set and let $0\leq e\leq d$. The set $\Tt_e:=\{x\in\Tt:\, e_x=e\}$ is a semialgebraic set and $\Tt_0$ is an open subset of $\Tt$. In addition, if $Z$ is the Zariski closure of $\partial\Tt$ and $C$ is a connected component of $\Sing_\ell(Z)\setminus\Sing_{\ell+1}(Z)$ for some $0\leq\ell\leq d-1$, then either $C\cap\Tt=\varnothing$ or $C\subset\Tt$ and $e(\Tt)$ is constant on $C$.
\end{lem}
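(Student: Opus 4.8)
The statement has three parts: (1) each level set $\Tt_e$ is semialgebraic; (2) $\Tt_0$ is open in $\Tt$; and (3) along a connected component $C$ of $\Sing_\ell(Z)\setminus\Sing_{\ell+1}(Z)$, either $C\cap\Tt=\varnothing$ or $C\subset\Tt$ and $e(\Tt)$ is constant on $C$. The plan is to deduce (1) and (2) from (3), so the bulk of the work is in (3).

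First I would set up the stratification. Since $Z:=\ol{\partial\Tt}^{\zar}$ is a normal-crossings divisor of the non-singular algebraic set $X:=\ol{\Tt}^{\zar}$, the sets $\Sing_\ell(Z)$ form a descending chain of non-singular algebraic sets with $\dim\Sing_\ell(Z)=d-\ell-1$ (when non-empty), and $X=\Reg(\Tt)\sqcup\bigsqcup_{\ell\ge 0}\bigl(\Sing_\ell(Z)\setminus\Sing_{\ell+1}(Z)\bigr)$ up to the open part $X\setminus Z$. On a connected component $C$ of $\Sing_\ell(Z)\setminus\Sing_{\ell+1}(Z)$, exactly $\ell+1$ irreducible components of $Z$ pass through each point, and locally one can choose a Nash coordinate system $(\u_1,\dots,\u_d)$ on an open semialgebraic $U\subset X$ with $\ol{\partial\Tt}^\an_x=\{\u_1\cdots\u_{r_x}=0\}_x$; here $r_x=\ell+1$ for $x\in C$ (the local branches of $\partial\Tt$ through $x$ are exactly the branches of $Z$ through $x$, by the checkerboard hypothesis and the analogue of Theorem~\ref{corners0}(ii)). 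Then $e_x$ counts how many of the $\ell+1$ hyperplanes $\{\u_i=0\}_x$ disconnect the germ $\Tt_x$.

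Next, the key local observation: as $x$ varies continuously within $C$, the germ $\Tt_x$ (hence the disconnectedness pattern of $\Tt_x\setminus\{\u_i=0\}_x$ for each branch) is locally constant. More precisely, using the local triviality of the normal-crossings divisor along $C$ one gets a semialgebraic trivialization of a neighborhood of $C$ in $X$ over $C$, carrying the divisor $Z$ and — since $\Tt$ is closed, pure-dimensional, and equals $\Reg(\Tt)$ off $Z$ — the set $\Tt$ itself, to a product $C\times(\text{model germ in }\R^{d-\ell})$. This is essentially a relative version of the local description used in Lemma~\ref{angoli} and Lemma~\ref{dimensione giusta}: near a point of $C$, $\Tt$ looks like $C\times G$ where $G\subset\R^{\ell+1}\times\R^{d-\ell-1}$ is a union of some orthants $\{\pm\u_1>0,\dots,\pm\u_{\ell+1}>0\}$ (closed up), and which orthants appear — equivalently, the integer $e_x$ — is determined by the connected component of $C$. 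I would prove this by a covering-compactness argument on $C$: the pattern is locally constant, $C$ is connected, hence it is globally constant; in particular $C\cap\Tt$ is either empty or all of $C$ (whether the "model germ" $G$ is empty or not does not change along $C$), and $e(\Tt)$ takes a single value on $C$.

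Having (3), parts (1) and (2) follow quickly. Since $X$ is partitioned into finitely many connected semialgebraic pieces $\Reg(\Tt)$ and the $C$'s (connected components of $\Sing_\ell(Z)\setminus\Sing_{\ell+1}(Z)$ for $0\le\ell\le d-1$, each a semialgebraic set with finitely many components), and $e$ is constant on each piece (it is $0$ on $\Reg(\Tt)$), the set $\Tt_e=\{x\in\Tt:\,e_x=e\}$ is a finite union of such pieces, hence semialgebraic. For openness of $\Tt_0$: on $\Reg(\Tt)$ we have $e\equiv0$ and $\Reg(\Tt)$ is open in $\Tt$; moreover, by Lemma~\ref{angoli}, $e_x=0$ exactly at points where $\Tt_x$ is the germ of a Nash manifold with corners, and such points form an open subset of $\Tt$ (being a Nash manifold with corners is an open condition on germs). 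Alternatively, $\Tt\setminus\Tt_0=\bigcup_{e\ge 1}\Tt_e$ is a finite union of the closed pieces $\cl(C)\cap\Tt$ with $e(\Tt)|_C\ge 1$, hence closed in $\Tt$.

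\textbf{Main obstacle.} The crux is the relative local-triviality claim along $C$: that a neighborhood of $C$ in $X$, together with the pair $(Z,\Tt)$, trivializes semialgebraically over $C$, so that $e_x$ depends only on the component $C$. One must be careful that $\Tt$ — not just the divisor $Z$ — is carried along by the trivialization; this uses crucially that $\Tt$ is closed, pure-dimensional, and coincides with $\Reg(\Tt)$ off $Z$, so that $\Tt_x$ is recovered from $\Tt_x\setminus\ol{\partial\Tt}^\an_x$ together with closedness and the dimension bound $\dim\ol{\partial\Tt}^\an_x<\dim\Tt_x$, exactly as in the proofs of Lemmas~\ref{angoli} and~\ref{dimensione giusta}. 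Once this relative normal form is in hand, everything else is a routine connectedness-plus-finiteness argument.
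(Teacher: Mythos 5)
Your core computation coincides with the paper's: in an adapted chart $U\cong\R^d$ in which $Z\cap U$ is a union of coordinate hyperplanes, the facts that $\Tt$ is closed and pure dimensional and that $\Tt\setminus Z$ is open and closed in $X\setminus Z$ force $\Tt\cap U$ to be a union of closed orthant-slabs, i.e.\ a product of a fixed orthant pattern with the stratum direction; the paper extracts exactly this product decomposition ($u_i(\Tt')=\pi_i(u_i(\Tt'))\times\Lambda_i$) from the finite atlas of \cite[Prop.4.4, Prop.4.6]{fgr} and then runs a finite covering/chain argument along the connected stratum $C$, which is your ``locally constant pattern plus connectedness'' step. Proving constancy on the strata first and then reading off semialgebraicity of the sets $\Tt_e$ and openness of $\Tt_0$ is only a reorganization of the same argument.

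Two points need correction. First, your identification $r_x=\ell+1$ for $x\in C$ (equivalently, that $e_x$ counts how many of the $\ell+1$ branches of $Z$ through $x$ disconnect $\Tt_x$) is false: a branch of $Z$ through $x$ need not lie in $\ol{\partial\Tt}^{\an}_x$. For instance, take $X:=\R^2$, $Z:=\{\y=0\}\cup\{\y=\x^2-1\}$ and $\Tt$ the closure of the union of the connected component of $\R^2\setminus Z$ lying above both curves with the component $\{\x>1,\ 0<\y<\x^2-1\}$; this is a closed checkerboard set with $\ol{\partial\Tt}^{\zar}=Z$, but at the crossing point $(1,0)$ the germ of $\Tt$ is $\{\y\geq0\}_{(1,0)}$, so $r_{(1,0)}=1$ and $e_{(1,0)}=0$, although both branches of $Z$ pass through $(1,0)$ and the parabola does disconnect the germ, so your count would give the value $1$, which Lemma \ref{dimensione giusta} even excludes (a cylinder over this example produces the same discrepancy along a positive-dimensional stratum). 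This does not sink your proof, because $e_x$ is an invariant of the germ $\Tt_x$ and what your local normal form really shows is that these germs are equivalent at all points of $C\cap U$, so constancy of the true $e(\Tt)$ along $C$ still follows; but the appeal to the analogue of Theorem \ref{corners0}(ii) and the count over all $\ell+1$ branches must be dropped. Second, the global semialgebraic trivialization of a neighborhood of $C$ carrying $(Z,\Tt)$ to a product $C\times G$ is more than you need and may fail (the normal data of the branches along $C$ can have monodromy, and $C$ need not be compact, so ``covering-compactness'' should be a finite semialgebraic covering); the weaker statement your argument actually uses, namely local constancy of the germ along $C$ together with connectedness of $C$, suffices and is exactly what the paper's chain argument formalizes.
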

\begin{proof}
The boundary $\partial\Tt$ is a closed semialgebraic subset of the Nash manifold $X:=\ol{\Tt}^{\zar}$. For each $x\in\partial\Tt$ there exists a coordinate system $(\u_1,\ldots,\u_d)$ of $X$ at $x$ and an integer $1\leq r_x\leq d$ such that $\ol{\partial\Tt}^\an_x=\{\u_1\cdots\u_{r_x}=0\}_x$. By \cite[Prop.4.4, Prop.4.6]{fgr} there exist finitely many open semialgebraic sets $\{U_i\}_{i=1}^{s}$ equipped with Nash diffeomorphisms $\u_i:=(\u_{i1},\ldots,\u_{id}):U_i\to\R^d$ and integers $r_i\geq1$ such that $\ol{\partial\Tt}^{\an}_x=\{\u_{i1}\cdots\u_{ir_i}=0\}_x$ for all $x\in\Tt\cap U_i$. 

Fix $i\in\{1,\ldots,s\}$ and $J\subset\{1,\ldots,r_i\}$. Reordering the variables if necessary, we may assume $J=\{1,\ldots,m\}$ for some $1\leq m\leq r_i$. Let $V$ be a connected component of $U_i\setminus\{\u_{i,m+1}\cdots\u_{i,r_i}=0\}$. After changing the signs of some of the variables if necessary, we may assume $V:=\{\u_{i,m+1}>0,\ldots,\u_{i,r_i}>0\}$. Consider the semialgebraic set $\Tt':=\Tt\cap U_i\cap V$ and the projection $$\pi_i:\R^d\equiv\R^{m}\times\R^{d-m}\to\R^{m}$$ onto the first $m$ coordinates. We take coordinates $(x_1,\ldots,x_m)$ on $\R^m$ and $(x_{m+1},\ldots,x_d)$ on $\R^{d-m}$. Denote $\Lambda_i:=\{\x_{i,m+1}>0,\ldots,\x_{i,r_i}>0\}\subset\R^{d-m}$. As $u_i(\Tt\cap U_i)$ is the union of connected components of $\R^d\setminus\{\x_1\ldots\x_{r_i}=0\}$, there exist $\veps_{i,1},\ldots,\veps_{i,k}\in\{-1,1\}^{r_i}$ such that
$$
u_i(\Tt\cap U_i)=\bigcup_{p=1}^k\{\veps_{1p_1}\x_1\geq 0,\ldots,\veps_{ip_{r_i}}\x_{r_i}\geq 0\}
$$
where $\veps_{ip}:=(\veps_{ip_1},\ldots,\veps_{ip_{r_i}})$. Consequently,
$$
u_i(\Tt')=u_i(\Tt\cap U_i\cap V)=\bigcup_{p=1}^k\{\veps_{1p_1}\x_1\geq 0,\ldots,\veps_{ip_{r_i}}\x_{r_i}\geq 0,\x_{m+1}>0,\ldots,\x_{r_i}>0\}
$$
Observe that
\begin{multline*}
\{\veps_{1p_1}\x_1\geq 0,\ldots,\veps_{ip_{r_i}}\x_{r_i}\geq 0,\x_{m+1}>0,\ldots,\x_{r_i}> 0\}\\
=
\begin{cases}
\{\veps_{1p_1}\x_1\geq 0,\ldots,\veps_{ip_{m}}\x_{m}\geq 0,\x_{m+1}>0,\ldots,\x_{r_i}>0\}&\text{if }\veps_{ip_{m+1}}=\cdots=\veps_{ip_{r_i}}=1,\\
\varnothing&\text{otherwise}.
\end{cases}
\end{multline*}
Thus,
$$
u_i(\Tt')=\bigcup_{\substack{p\in\{1,\ldots,k\}\\ (\veps_{ip_{m+1}},\ldots,\veps_{ip_{r_i}})=(1,\ldots,1)}}\hspace{-1cm}\{\veps_{1p_1}\x_1\geq 0,\ldots,\veps_{ip_{m}}\x_{m}\geq 0,\x_{m+1}>0,\ldots,\x_{r_i}>0\}=\pi_i(u_i(\Tt'))\times\Lambda_i.
$$
Observe that $W:=\{\u_{i1}=0,\ldots,\u_{im}=0,\u_{i,m+1}>0,\ldots,\u_{ir_i}>0\}\subset\Tt$ and for each $x\in W$, we have $e_x(\Tt)=e_0(\pi_i(u_i(\Tt')))$, so $e_x(\Tt)$ is constant on $W$. As each $x\in\Tt\cap U_i\cap\{\u_{i1}\ldots\u_{ir_i}=0\}$ belongs to a set of the type $W_{J,\veps}:=\{\u_{ij}=0, j\in J\}\cap\{\veps_{j}\u_{ij}>0, j\not\in J\}$ where $J=\{1,\ldots,r_i\}$ and $\veps_{j}\in\{-1,1\}$, the function $e(\Tt)$ provides a semialgebraic partition of $\partial\Tt\cap U_i$ for each $i=1,\ldots,s$. In particular, each set $\Tt_e$ is semialgebraic. As the condition `to be a Nash manifold with corners' is a local open condition, we deduce $\Tt_0$ is an open semialgebraic subset of $\Tt$.

We have proved above that if $Z'_i:=\{\u_{i1}\cdots\u_{ir_i}=0\}$ and $C_i'$ is a connected component of $\Sing_\ell(Z'_i)\setminus\Sing_{\ell+1}(Z'_i)$ for some $0\leq\ell\leq d-1$, then either $C_i'\cap\Tt=\varnothing$ or $C_i'\subset\Tt$ and $e(\Tt)$ is constant on $C_i'$. Let $C$ be a connected component of $\Sing_\ell(Z)\setminus\Sing_{\ell+1}(Z)$. As $\Tt$ is a checkerboard set, either $C\cap\Tt=\varnothing$ or $C\subset\Tt$. Assume we are in the second case. As we have proved above, there exists a finite semialgebraic open covering $\Ww_C:=\{W_i\}_{i=1}^p$ of $C$ such that $e(\Tt)$ is constant on $W_i$. If $x,y\in C$, there exists $W_{i_1},\ldots,W_{i_q}\in\Ww_C$ such that $x\in W_{i_1}$, $y\in W_{i_q}$ and $W_{i_j}\cap W_{i_{j+1}}\neq\varnothing$ for $j=1,\ldots,q-1$. As $e(\Tt)|_{W_{i_j}}$ is constant, we deduce recursively that $e_x(\Tt)=e_y(\Tt)$, as required.
\end{proof}

We show next that the function $e(\Tt)$ is upper semi-continuous (Figure \ref{e}).

\begin{lem}[Upper semi-continuity of $e(\Tt)$]\label{exs1}
Let $\Tt\subset\R^d$ be a $d$-dimensional closed checkerboard set and let $x\in\partial\Tt$. Then $e_x\geq e_y$ for each $y\in\Tt$ close enough to $x$.
\end{lem}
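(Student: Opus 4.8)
The plan is to translate the statement into an elementary combinatorial fact about finite unions of closed quadrants. First I would fix one of the coordinate charts constructed in the proof of Lemma \ref{exs}: by \cite[Prop.4.4, Prop.4.6]{fgr} there are an open semialgebraic neighborhood $U\subset X:=\ol{\Tt}^{\zar}$ of $x$ and a Nash diffeomorphism $u:=(u_1,\ldots,u_d):U\to\R^d$ with $u(x)=0$ such that $\ol{\partial\Tt}^{\an}_z=\{u_1\cdots u_r=0\}_z$ for every $z\in\Tt\cap U$, where $r:=r_x$, and (shrinking $U$ as in that proof) $u(\Tt\cap U)=\bigcup_{\veps\in E}Q_\veps$ with $Q_\veps:=\{\veps_1 x_1\ge 0,\ldots,\veps_r x_r\ge 0\}$ for a non-empty $E\subset\{-1,1\}^r$. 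The hypothesis that $\ol{\partial\Tt}^{\an}_x$ has exactly the $r$ branches $\{u_j=0\}_x$ forces each coordinate $j\in\{1,\ldots,r\}$ to be \emph{essential} for $E$, i.e.\ there is $\gamma\in\{-1,1\}^{\{1,\ldots,r\}\setminus\{j\}}$ exactly one of whose two extensions to $\{1,\ldots,r\}$ lies in $E$. It then suffices to prove $e_z\le e_x$ for every $z\in\Tt\cap U$.

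Next I would describe $\Tt_z$ and $e_z$ in terms of $E$. For $z\in\Tt\cap U$ put $a:=u(z)$, $J:=\{j\le r:a_j=0\}$, $\tau:=(\operatorname{sign}a_j)_{j\le r,\ a_j\ne 0}$, and $E_{J,\tau}:=\{\veps|_J:\veps\in E,\ \veps_l=\tau_l\ \text{for all }l\le r,\ l\notin J\}\subset\{-1,1\}^J$. In the coordinates $u_j-a_j$ centered at $z$, the germ $\Tt_z$ is $\bigcup_{\alpha\in E_{J,\tau}}\{\alpha_j(u_j-a_j)\ge 0:j\in J\}$, the analytic closure of $\partial\Tt_z$ is the union of the germs $\{u_j=0\}_z$ over the coordinates $j\in J$ that are essential for $E_{J,\tau}$, and $e_z$ is the number of such essential $j$ for which $\Tt_z\setminus\{u_j=0\}_z$ is disconnected; if $E_{J,\tau}$ has no essential coordinate (in particular if $z\in\Reg(\Tt)$) then $e_z=0\le e_x$ and there is nothing to prove.

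The heart of the argument is the following claim, to be applied both with $F=E_{J,\tau}$ and with $F=E$: for a non-empty $F\subset\{-1,1\}^K$ and $j\in K$, the germ at the origin of $\bigl(\bigcup_{\alpha\in F}\{\alpha_l t_l\ge 0:l\in K\}\bigr)\setminus\{t_j=0\}$ is disconnected \emph{if and only if} $F$ is \emph{$j$-split}, meaning $\alpha_j\ne\beta_j$ for some $\alpha,\beta\in F$. The ``if'' direction is clear, since the set then meets both connected components of the complement of $\{t_j=0\}$ near the origin. For ``only if'', assume all $\alpha\in F$ have $\alpha_j=+1$; for any $\alpha,\beta\in F$ the common face $\{\alpha_l t_l\ge 0:l\in K\}\cap\{\beta_l t_l\ge 0:l\in K\}$ is a non-empty cone whose relative interior lies in $\{t_j>0\}$ (because $\alpha_j=\beta_j=+1$) and is contained in both of the convex sets $\{\alpha_l t_l\ge 0:l\in K\}\cap\{t_j>0\}$ and $\{\beta_l t_l\ge 0:l\in K\}\cap\{t_j>0\}$; hence these two connected pieces meet, and since this holds for every pair the whole union is connected. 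Consequently $\Tt_z\setminus\{u_j=0\}_z$ is disconnected exactly when $E_{J,\tau}$ is $j$-split, and $\Tt_x\setminus\{u_j=0\}_x$ is disconnected exactly when $E$ is $j$-split; as every coordinate of $E$ is essential, $e_x=\#\{j\in\{1,\ldots,r\}:E\text{ is }j\text{-split}\}$.

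Finally, $j$-splitness passes from a slice to the ambient set: if $E_{J,\tau}$ is $j$-split for some $j\in J$, lifting two witnesses in $\{-1,1\}^J$ to elements of $E$ shows that $E$ is $j$-split. Therefore $\{j\in J:E_{J,\tau}\text{ is }j\text{-split}\}\subset\{j\in\{1,\ldots,r\}:E\text{ is }j\text{-split}\}$, and counting gives $e_z\le e_x$, which is the assertion. I expect the only real obstacle to be the ``only if'' half of the combinatorial claim — that removing the hyperplane $\{u_j=0\}$ cannot disconnect a union of closed quadrants all of which lie on one side of it; the remaining steps are bookkeeping with the charts of Lemma \ref{exs} and with the definition of the function $e$.
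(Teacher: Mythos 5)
Your argument is correct, but it is organized quite differently from the paper's proof, so let me compare. The paper sets $k:=\max\{e:\ x\in\cl(\Tt_e)\}$ (so that $e_y\le k$ for $y$ near $x$, by finiteness of the possible values), assumes $e_x<k$, reorders so that the non-splitting directions at $x$ are $\u_k,\ldots,\u_r$, shrinks $U$ so that $\Tt\cap U\subset\{\u_k\ge0,\ldots,\u_r\ge0\}$ and $\partial\Tt\cap U\subset\{\u_1\cdots\u_r=0\}$, and concludes $e_y<k$ for all $y\in U$, contradicting $x\in\cl(\Tt_k)$. You instead prove the pointwise inequality $e_z\le e_x$ directly on one adapted chart, via the dictionary ``$\Tt_z$ splits along $\{u_j=0\}_z$ iff the sign set is $j$-split'' together with the observation that $j$-splitness of the slice $E_{J,\tau}$ lifts to $E$. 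What your route buys: it makes explicit, through the convex-cone/pairwise-intersection argument, the fact that a union of closed quadrants lying on one side of $\{t_j=0\}$ remains connected after removing that hyperplane --- precisely the point on which the paper's one-line conclusion ``Thus $e_y<k$'' silently relies --- and it yields a slightly stronger statement (an explicit neighborhood, namely the whole chart, plus a formula for $e_z$ in terms of essential and split coordinates of $E_{J,\tau}$). What the paper's route buys is brevity: no slice bookkeeping and no essentiality analysis, at the cost of leaving that connectedness step implicit. One cosmetic caveat in your write-up: the chart cannot literally satisfy $\ol{\partial\Tt}^{\an}_z=\{u_1\cdots u_r=0\}_z$ for \emph{every} $z\in\Tt\cap U$ (at a point of $\Reg(\Tt)$ lying on some $\{u_j=0\}$ the left-hand side is empty); what you need, and what you actually use, is the containment $\partial\Tt\cap U\subset\{u_1\cdots u_r=0\}$ on $U$ together with the exact equality at $x$ itself, and both are available from the definition of $r_x$ after shrinking $U$, so this does not affect the correctness of your argument.
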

\begin{proof}
For each integer $e\geq 0$ denote $\Tt_e:=\{x\in\Tt:\, e_x=e\}$. Let $k$ be the maximum of the values $e\geq0$ such that $x\in\cl(\Tt_e)$. It is enough to check that $e_x\geq k$. Consider the Nash manifold $X:=\ol{\Tt}^{\zar}$ and let $U\subset X$ be an open semialgebraic neighborhood of $x$ equipped with a Nash diffeomorphism $u:=(u_1,\ldots,u_d):U\to\R^d$ such that $u(x)=0$ and $\ol{\partial\Tt}^\an_x=\{\u_1\cdots\u_r=0\}_x$. If $e_x< k$, we may assume $\Tt_x\subset\{\u_k\geq0,\ldots,\u_r\geq0\}_x$ and $\partial\Tt_x\subset\{\u_1\cdots\u_r=0\}_x$. Shrinking $U$ if necessary, we have $\Tt\cap U\subset\{\u_k\geq0,\ldots,\u_r\geq0\}$ and $\partial\Tt\cap U\subset\{\u_1\cdots\u_r=0\}$. Thus, $e_y<k$ for each $y\in U$, which is a contradiction because $x\in\cl(\Tt_k)$.
\end{proof}

\begin{remark}\label{ridremark2}
As $e(\Tt)$ is upper semi-continuous, the set $\bigcup_{k\geq e}\Tt_k$ is a closed subset of $\partial\Tt$ for each $1\leq e\leq d$. 
\hfill$\sqbullet$
\end{remark}

\subsection{Closed checkerboard sets and drilling blow-up.}

We want to study now how the value $e_x(\Tt)$ changes after performing a drilling blow-up. We show the following:

\begin{lem}\label{exs2}
Let $\Ss\subset\R^n$ be a $d$-dimensional closed checkerboard set. Denote $X:=\ol{\Ss}^{\zar}$ and $Z:=\ol{\partial\Ss}^{\zar}$. Let $Z_1,\ldots,Z_r$ be the irreducible components of $Z$ and $Y$ an irreducible component of $Z_1\cap\cdots\cap Z_\ell$ for some $2\leq\ell\leq r$. Let $(\widehat{X},\widehat{\pi})$ be the twisted Nash double of the drilling blow-up $(\widetilde{X},\pi_+)$ of $X$ with center $Y$. Let $\Tt:=\cl(\pi_+^{-1}(\Ss\setminus Y))=\pi_+^{-1}(\Ss)\cap\cl(\pi_+^{-1}(\Ss\setminus Y))$ be the strict transform of $\Ss$ under $\pi_+$. Then $\Tt$ is a $d$-dimensional closed checkerboard set, $e_y(\Tt)\leq e_{\pi_+(y)}(\Ss)$ for each $y\in\partial\Tt$ and $e_y(\Tt)<e_{\pi_+(y)}(\Ss)$ for each $y\in\partial\Tt\cap\pi_+^{-1}(Y)$ such that $\Ss_{\pi_+(y)}\setminus Z_{i,\pi_+(y)}$ is not connected for $i=1,\ldots,\ell$.
\end{lem}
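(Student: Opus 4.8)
The plan is to treat separately the points $y$ in the open part $\Gamma_+=\widetilde X\setminus\Rr$, where $\pi_+$ is a Nash diffeomorphism onto $X\setminus Y$, and the points $y$ in the exceptional divisor $\Rr:=\pi_+^{-1}(Y)$, where the explicit local form of the drilling blow-up is needed; the content of the statement sits in the second case and reduces to a sign-combinatorics count. That $\Tt$ is again a $d$-dimensional closed checkerboard set is read off the algebraic description in \S\ref{addbu}: $\widehat X$ is a non-singular real algebraic set of dimension $d$, so $\ol{\Tt}^{\zar}$ — a union of irreducible components of $\widehat X$ — is non-singular of dimension $d$; $\Tt$ is pure of dimension $d$ with $\Reg(\Tt)=\Sth(\Tt)$ (last paragraph of \S\ref{addbu}); $\partial\Tt\subset\pi_+^{-1}(Z)$ and the Nash closure $\widehat\pi^{-1}(Z)$ of $\pi_+^{-1}(Z)$ is a normal-crossings divisor (Remark \ref{bigstepa6}(i)), so $\ol{\partial\Tt}^{\zar}$, being pure of codimension $1$, is a union of its irreducible components and hence a normal-crossings divisor of $\ol{\Tt}^{\zar}$; finally $\Reg(\Tt)$ is connected because $\pi_+|_{\Gamma_+}$ carries $\Reg(\Tt)\cap\Gamma_+$ onto $\Reg(\Ss)\setminus Y$, which is connected (as $Y$ has codimension $\ell\ge2$ in $X$) and dense in $\Reg(\Tt)$ (as $\dim\Rr=d-1$).

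For $y\in\partial\Tt\cap\Gamma_+$ the Nash diffeomorphism $\pi_+|_{\Gamma_+}$ identifies the germs $\Tt_y$ and $\Ss_{\pi_+(y)}$, and $e_\bullet$ is an invariant of the germ, so $e_y(\Tt)=e_{\pi_+(y)}(\Ss)$; this gives (ii) there, and the strict statement concerns only $y\in\Rr$, so fix $y\in\partial\Tt\cap\Rr$, $x:=\pi_+(y)\in Y$, $m:=\dim Y=d-\ell$. We may assume $x\in\partial\Ss$ (if $x\in\Reg(\Ss)$ then $\Ss_x$ is a manifold germ, $\Tt_y$ a manifold-with-corners germ, and $e_y(\Tt)=0=e_x(\Ss)$). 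By the local structure of checkerboard sets at points of $\partial\Ss$ (as in the proof of Lemma \ref{exs}, via \cite{fgr}) pick coordinates $u=(u_1,\dots,u_d)$ of $X$ at $x$ with $u(x)=0$, $Y=\{u_{m+1}=\cdots=u_d=0\}$, $Z_i=\{u_{m+i}=0\}$ near $x$ for $i\le\ell$, and $\ol{\partial\Ss}^{\an}_x=\{\prod_{j\in A}u_j=0\}_x$ with $A\supseteq\{m+1,\dots,d\}$, and write $\Ss_x=\cl\bigl(\bigcup_{\sigma\in\Sigma}\{\sigma_ju_j>0:j\in A\}\bigr)$ for some $\Sigma\subset\{\pm1\}^A$. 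Two orthants of $\Ss_x$ share the face obtained by annihilating the coordinates in which their signs differ; this face lies in $\Ss_x$ and, for $j\in A$ not among those coordinates, avoids $\{u_j=0\}$, so $\Ss_x\setminus\{u_j=0\}$ is disconnected exactly when $\Sigma$ meets both signs of $u_j$, and $e_x(\Ss)$ is the number of such $j\in A$.

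By Fact \ref{bigstepa2}, the diagram \eqref{diagpol} and Remark \ref{bigstepa6}, a chart of $\widetilde X$ at $y$ identifies $\pi_+$ with $(y',\rho,w)\mapsto(y',\rho w)$ on $\R^m\times[0,+\infty)\times\sph^{\ell-1}$, with $\Rr=\{\rho=0\}$; let $w_0\in\sph^{\ell-1}$ be the spherical component of $y$ and $B:=\{i\le\ell:w_{0,m+i}=0\}$. Pulling back the orthants, $\Tt_y$ is the closure of the union, over the $\sigma\in\Sigma$ whose vertical signs agree with those of $w_0$ off $B$ — call this subset $\Sigma_y\subset\Sigma$ — of the chunks $\{\sigma_jy'_j>0\}\times\{\rho>0\}\times\{\sigma_{m+i}w_{m+i}>0:i\in B\}$; its walls at $y$ are $\{\rho=0\}$, the $\{y'_j=0\}$ ($j\in A\cap\{1,\dots,m\}$) and the $\{w_{m+i}=0\}$ ($i\in B$). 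The same face argument, applied now to the chunks and using that pullbacks of faces of $\Ss_x$ not contained in $Y$ remain in $\{\rho>0\}$ (Remark \ref{bigstepa6}(ii)--(iii)), shows that all chunks meet within $\{\rho>0\}$, so $\Tt_y\setminus\{\rho=0\}$ is connected and $\{\rho=0\}$ is not disconnecting, whereas each other wall is disconnecting iff $\Sigma_y$ meets both signs of its coordinate. Hence $e_y(\Tt)$ is the number of coordinates among $A\cap\{1,\dots,m\}$ and $\{m+i:i\in B\}$ on which $\Sigma_y$ meets both signs; since $\Sigma_y\subset\Sigma$, each such coordinate already contributes to $e_x(\Ss)$, which gives $e_y(\Tt)\le e_x(\Ss)$ and hence (ii). Under the hypothesis of (iii) all $\ell$ coordinates $m+1,\dots,d$ contribute to $e_x(\Ss)$, so its ``vertical'' part is $\ell$, whereas that of $e_y(\Tt)$ is at most $|B|\le\ell-1$ (as $w_0\in\sph^{\ell-1}$ has a non-zero coordinate) and the ``horizontal'' parts compare by $\le$; adding, $e_y(\Tt)<e_x(\Ss)$.

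I expect the main obstacle to be the bookkeeping in the exceptional case: producing the chart in which $\pi_+$ has the monomial form \eqref{diagpol}, tracking the irreducible components of $\ol{\partial\Tt}^{\an}_y$ through the blow-up — in particular that the exceptional divisor is always a component of $\ol{\partial\Tt}^{\an}_y$ yet never a disconnecting one — and making the ``shared face'' disconnectedness criterion for union-of-orthant germs precise enough to justify the count. The global half of (i), that $\ol{\partial\Tt}^{\zar}$ is a genuine algebraic (not merely Nash) normal-crossings divisor of $\ol{\Tt}^{\zar}$, also needs some care but follows from \S\ref{addbu} together with Remark \ref{bigstepa6}(i).
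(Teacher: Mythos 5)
Your proposal is correct and follows the paper's proof essentially step for step: reduce to points of the exceptional divisor, choose normal-crossings coordinates at $x=\pi_+(y)$ adapted to $Z$ and $Y$, use the monomial local model of the drilling blow-up (diagram \eqref{diagpol} and Remarks \ref{bigstepa6}), and count the disconnecting walls, the strict drop under hypothesis (iii) coming from the fact that some spherical coordinate of $y$ is nonzero so at least one ``vertical'' wall disappears; your explicit check that the exceptional wall $\{\rho=0\}$ never disconnects $\Tt_y$ (the shared-face argument) is a point the paper leaves implicit, and is a welcome addition. The one unproved step is your assertion $A\supseteq\{m+1,\dots,d\}$, i.e.\ that every $Z_i$ with $i\le\ell$ contributes a component of $\ol{\partial\Ss}^{\an}_x$ (this can fail, e.g.\ at $x\in Y\cap\Reg(\Ss)$ or when $\partial\Ss\cap Z_i$ is of codimension $\ge 2$ near $x$), but the paper's own identity $e_x(\Ss)=(\ell-m)+(r'-s)$ rests on the same tacit assumption, so this does not distinguish your argument from the paper's.
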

\begin{proof}
As $\ell\geq2$, we have $\dim(Y)\leq d-2$, so $\Reg(\Ss)\setminus Y$ is connected, because $\Reg(\Ss)$ is a connected $d$-dimensional Nash manifold. As $\Ss$ and $\Tt$ are both pure dimensional, we have
$$
\pi_+^{-1}(\Reg(\Ss)\setminus Y)\subset\Sth(\Tt)=\Reg(\Tt)\subset\cl(\pi_+^{-1}(\Ss\setminus Y))=\cl(\pi_+^{-1}(\Reg(\Ss)\setminus Y)).
$$
Thus, $\Reg(\Tt)$ is connected and $\Tt$ is a checkerboard set, because (see \S\ref{addbu}): it is closed, $\ol{\Tt}^{\zar}=\widehat{X}$ is a non-singular real algebraic set and the Zariski closure of $\partial\Tt$ is a union of irreducible components of $\widehat{\pi}^{-1}(\ol{\partial\Ss}^{\zar})$, which is a Nash normal-crossings divisor by Fact \ref{bigstepa6} and Remark \ref{exst}(ii).

As $\pi_+|_{\widetilde{X}\setminus\pi_+^{-1}(Y)}:\widetilde{X}\setminus\pi_+^{-1}(Y)\to X\setminus Y$ is a Nash diffeomorphism, it holds $e_y(\Tt)= e_{\pi_+(y)}(\Ss)$ for each $y\in\partial\Tt\setminus\pi_+^{-1}(Y)$. Let us see what happens at the points of $\partial\Tt\cap\pi_+^{-1}(Y)$. Fix a point $y\in\partial\Tt\cap\pi_+^{-1}(Y)$ and denote $x:=\pi_+(y)\in Y$.

Assume that the irreducible components of $Z$ that contain $x$ are $Z_1,\ldots,Z_{r'}$ for some $2\leq\ell\leq r'\leq r$. Let $U\subset X$ be an open semialgebraic neighborhood of $x$ equipped with a Nash diffeomorphism $u:=(u_1,\ldots,u_d):U\to\R^d$ such that $u(Z\cap U)=\{\u_1\cdots\u_{r'}=0\}$ and $u(Y\cap U)=\{\u_1=0,\ldots,\u_\ell=0\}$. Write $e:=\dim(Y)=d-\ell$ and assume that $e_x(\Ss)=k\leq r$. Reordering the variables and changing their signs if necessary, we may assume 
\begin{equation}\label{inc4}
\Ss\cap U\subset\{\u_1\geq0,\ldots,\u_m\geq0,\u_{\ell+1}\geq0,\ldots,\u_s\geq0\}
\end{equation}
for some $0\leq m\leq\ell$ and $\ell\leq s\leq r'$ and both $m$ and $s$ are maximal satisfying \eqref{inc4}. If $m=0$, then $\Ss\cap U\subset\{\u_{\ell+1}\geq0,\ldots,\u_s\geq0\}$, whereas if $s=\ell$, then $\Ss\cap U\subset\{\u_1\geq0,\ldots,\u_m\geq0\}$. As $e_x(\Ss)=k$, we have $k=(\ell-m)+(r'-s)$. By Fact \ref{bigstepa6} we can choose (local) coordinates in $\widehat{X}$ such that $\pi_+$ behaves (with respect to the already taken (local) coordinates in $X$) as the Nash map
$$
g_+:[0,+\infty)\times\sph^{d-e-1}\times\R^e\to\R^d,\ (\rho,w,z)\mapsto(\rho w,z),
$$
where $w:=(w_1,\ldots,w_\ell)$ and $z:=(z_{\ell+1},\ldots,z_d)\in\R^e=\R^{d-\ell}$. We have 
\begin{align*}
g_+^{-1}(u(Z\cap U))&=\{\uprho^\ell\w_1\cdots\w_\ell\z_{\ell+1}\cdots\z_{r'}=0,\w_1^2+\cdots+\w_\ell^2=1\},\\
g_+^{-1}(u(Y\cap U))&=\{\uprho\w_1=0,\ldots,\uprho\w_\ell=0,\w_1^2+\cdots+\w_\ell^2=1\}=\{\uprho=0,\w_1^2+\cdots+\w_\ell^2=1\},\\
g_+^{-1}(u(\Ss\cap U\setminus Y))&\subset\{\uprho\geq0,\w_1\geq0,\ldots,\w_m\geq0,\z_{\ell+1}\geq0,\ldots,\z_{s}\geq0,\w_1^2+\cdots+\w_\ell^2=1\}.
\end{align*}
If $m=0$, then $\Ss\cap U\subset\{\u_{\ell+1}\geq 0,\ldots,\u_s\geq 0\}$ and 
$$
g_+^{-1}(u(\Ss\cap U\setminus Y))\subset\{\uprho\geq0,\z_{\ell+1}\geq0,\ldots,\z_{s}\geq0,\w_1^2+\cdots+\w_\ell^2=1\}.
$$
Thus, $e_y(\Tt)\leq\ell-1+r'-s=k-1<k=e_x(\Ss)$ for each $y\in g^{-1}(x)$. The condition $m=0$ means that $\Ss_x\setminus Z_{i,x}$ is not connected for $i=1,\ldots,\ell$. 

We assume in the following $m\geq 1$. Observe that $\Ss_x\setminus Z_{i,x}$ is not connected for $i=m+1,\ldots,\ell$. Let us show $e_y(\Tt)\leq e_x(\Ss)$ for each $y\in g^{-1}(x)$. It may happen that for some $y\in g^{-1}(x)$ the previous inequality is strict even if $\Ss_x\setminus Z_{i,x}$ is connected for the indices $i=1,\ldots,m$. If some $\w_i(y)\neq0$, this variable has no relevance in the description of $\Tt$ locally around $y$ and $\w_i$ behaves as $\pm\sqrt{1-\sum_{j\neq i}\w_j^2}$. Analogously, if some $\z_j(y)\neq0$, this variable has no relevance in the description of $\Tt$ locally around $y$. As $\w_1^2+\cdots+\w_\ell^2=1$, there exists an index $1\leq i\leq\ell$ such that $\w_i(y)\neq0$: 

\noindent{\sc Case 1.} If $1\leq i\leq m$, we may assume $i=m$. Thus, $\w_m(y)>0$ and $\w_m=+\sqrt{1-\sum_{j\neq m}\w_j^2}$. We consider (local) coordinates $(\uprho,\w_1,\ldots,\w_{m-1},\w_{m+1},\ldots,\w_\ell,\z_{\ell+1},\ldots,\z_s,\z_{s+1},\ldots,\z_{r'},\z_{r'+1},\ldots\z_d)$ and 
$$
e_y(\Tt)\leq\ell-(m-1+1)+(r'-s)=(\ell-m)+(r'-s)=k=e_x(\Ss).
$$

\noindent{\sc Case 2.} If $m+1\leq i\leq\ell$, we may assume $i=\ell$. Thus, $\w_\ell(y)\neq0$ and $\w_\ell=\pm\sqrt{1-\sum_{j=1}^{\ell-1}\w_i^2}$. We consider (local) coordinates $(\uprho,\w_1,\ldots,\w_m,\w_{m+1},\ldots,\w_{\ell-1},\z_ {\ell+1},\ldots,\z_s,\z_{s+1},\ldots,\z_{r'},\z_{r'+1},\ldots\z_d)$ and 
$$
e_y(\Tt)\leq 1+(\ell-1)-(m+1)+(r'-s)=(\ell-m)+(r'-s)-1=k-1<k=e_x(\Ss), 
$$
as required.
\end{proof}

\subsection{Proof of Theorem \ref{red2} for closed checkerboard sets}\label{44}

We are ready to prove Theorem \ref{red2}. After the previous preparation (in particular Theorem \ref{ridwell2}) we are reduced to the case when $\Ss$ is a $d$-dimensional closed checkerboard set. 

\begin{proof}[Proof of Theorem {\em \ref{red2}} for closed checkerboard sets]
Let us construct the Nash manifold with corners $\Qq$ first. Let $\Ss\subset\R^m$ be a $d$-dimensional closed checkerboard set and denote $X_0:=\ol{\Ss}^{\zar}$. Let $Z:=\ol{\partial\Ss}^{\zar}$ and $Z_1,\ldots,Z_r$ be its irreducible components. Define $e:=\max\{e_x(\Ss):\ x\in\partial\Ss\}$.

If $e=0$, we conclude by Lemma \ref{angoli} that $\Ss$ is already a Nash manifold with corners. Otherwise, by Lemma \ref{dimensione giusta} $e\geq 2$. By Remark \ref{ridremark2} $\Ss_e:=\{x\in\Ss:\ e_x=e\}$ is a closed semialgebraic subset of $\partial\Ss$. By Lemma \ref{exs} $\Ss_e$ is a union of connected components of the semialgebraic sets $\Sing_\ell(Z)\setminus\Sing_{\ell+1}(Z)$ for $1\leq\ell\leq d-1$ (recall that $e\geq2$). 

Pick a point $x\in\Ss_e$ and assume that $Z_1,\ldots,Z_e$ are the irreducible components of $Z$ such that the germ $\Ss_x\setminus Z_{i,x}$ is not connected for $i=1,\ldots,e$. Then there exists an open semialgebraic neighborhood $U\subset X$ of $x$ equipped with a Nash diffeomorphism $u:=(u_1,\ldots,u_d):U\to\R^d$ such that $u(x)=0$, $Z\cap U=\{\u_1\cdots\u_r=0\}$, $Z_i\cap U=\{\u_i=0\}$ and $\Ss\cap U\subset\{\u_{e+1}\geq0,\ldots,\u_r\geq0\}$. Thus, 
$
\{\u_1=0,\ldots,\u_e=0,\u_{e+1}\geq0,\ldots,\u_r\geq0\}\subset\Ss_e. 
$
By Lemma \ref{exs1} the connected component $\Cc$ of $(Z_1\cap\cdots\cap Z_e)\setminus\bigcup_{i=e+1}^rZ_i$ that contains $\{\u_1=0,\ldots,\u_e=0,\u_{e+1}\geq0,\ldots,\u_r\geq0\}$ is contained in $\Ss_e$. Thus, the Zariski closure of $\Cc$ is the irreducible component of $Z_1\cap\cdots\cap Z_e$ that contains $x$. As we can repeat the previous argument for each $x\in\Ss_e$, we conclude that the Zariski closure of $\Ss_e$ is a union of irreducible components of the real algebraic set $\bigcup_{\{i_1,\ldots,i_e\}\subset\{1,\ldots,r\}}\bigcap_{j=1}^eZ_{i_j}$. As $e\geq2$, we have $\dim(\ol{\Ss_e}^{\zar})\leq d-2$.

In addition, for each $x\in\Ss_e$ there exist irreducible components $Z_{i_1},\ldots,Z_{i_e}$ of $Z$ such that the germ $\Ss_x\setminus Z_{i_j,x}$ is not connected for $j=1,\ldots,e$. We proceed by double induction on $e$ and the number $m$ of irreducible components of the Zariski closure of $\ol{\Ss_e}^{\zar}$ of $\Ss_e$. 

Let $W$ be an irreducible component of $\ol{\Ss_e}^{\zar}$, which has dimension $\leq d-2$. Let $(\widehat{X}_0,\widehat{\pi})$ be the twisted Nash double of the drilling blow-up $(\widetilde{X}_0,\pi_+)$ of $X_0$ with center $W$, which is by Subsection \ref{addbu} a non-singular real algebraic set. Let 
$$
\Tt:=\cl(\pi_+^{-1}(\Ss\setminus W))=\pi_+^{-1}(\Ss)\cap\cl(\pi_+^{-1}(\Ss\setminus W))
$$
be the strict transform of $\Ss$ under $\pi_+$ (recall that $\Ss$ is closed). As $\Ss$ is pure dimensional and $\Ss_e\subset\ol{\partial\Ss}^{\zar}$ has dimension strictly smaller, $\Ss\setminus W$ is dense in $\Ss$, so
$$
\pi_+(\Tt)=\pi_+(\cl(\pi^{-1}_+(\Ss\setminus W)))=\cl(\pi_+(\pi_+^{-1}(\Ss\setminus W)))=\cl(\Ss\setminus W)=\Ss,
$$
because $\pi_+:\widetilde{X}_0\to X_0$ is proper and surjective. As $e\geq2$, we can apply Lemma \ref{exs2} and we deduce: {\em $\Tt$ is a checkerboard set, $e_y(\Tt)\leq e_{\pi_+(y)}(\Ss)$ for each $y\in\partial\Tt$ and $e_y(\Tt)<e_{\pi_+(y)}(\Ss)$ for each $y\in\partial\Tt\cap\pi_+^{-1}(W)$ such that $\Ss_{\pi_+(y)}\setminus Z_{i_j,\pi_+(y)}$ is not connected for $j=1,\ldots,e$}. 

If $\max\{e(\Tt)_y:\ y\in\partial\Tt\}<e$, by induction hypothesis the statement holds for $\Tt$, so it also holds for $\Ss$. If $\max\{e(\Tt)_y:\ y\in\partial\Tt\}=e$, the Zariski closure of $\Tt_e$ is contained in $\cl(\widehat{\pi}^{-1}(\ol{\Ss_e}^{\zar}\setminus W))$ and it has $m-1$ irreducible components. As by Lemma \ref{angoli} $e=0$ if and only if $\Tt$ is a Nash manifold with corners, our inductive argument is consistent. Thus, by induction hypothesis the statement holds for $\Tt$, so it also holds for $\Ss$.

Let $\Qq\subset\R^n$ be the Nash manifold with corners obtained by our inductive process. As $\Qq$ is a checkerboard set, $\Reg(\Qq)$ is connected, so also $\Qq=\cl(\Reg(\Qq))$ is connected. We have constructed $\Qq$, starting from $\Ss$, with a finite number of drilling blow-ups. Namely, we have constructed a finite number of tuples $\{\Tt_i, (\widetilde{X}_{\Tt_i},\pi_{+,i}),(\widehat{X}_{\Tt_i},\widehat{\pi}_i)\}_{i=0}^s$ where:
\begin{itemize}
\item $\Tt_0:=\Ss$, $\widetilde{X}_{\Tt_0}:=X_0=\ol{\Ss}^{\zar}$, $\pi_{+,0}:=\id_{X_0}$, $\widehat{X}_{\Tt_0}:=X_0$ and $\widehat{\pi}_0:=\id_{X_0}$. 
\item $(\widetilde{X}_{\Tt_{i+1}},\pi_{+,i+1})$ is the drilling blow-up of the irreducible non-singular real algebraic set $\ol{\Tt}_i^{\zar}$ with center a (suitable) irreducible non-singular real algebraic subset $W_i$ of $\ol{\Tt}_i^{\zar}$ of dimension $\leq d-2$.
\item $(\widehat{X}_{\Tt_{i+1}},\widehat{\pi}_{i+1})$ is the twisted Nash double of $(\widetilde{X}_{\Tt_{i+1}},\pi_{+,i+1})$ and it is a non-singular real algebraic set of dimension $d$. By Lemma \ref{irredx} $\widehat{X}_{\Tt_{i+1}}$ is the Zariski closure of $\widetilde{X}_{\Tt_{i+1}}$.
\item $\Tt_{i+1}:=\cl(\pi_{+,i+1}^{-1}(\Tt_i\setminus W_i))$ is the strict transform of $\Tt_i$ under $\pi_{+,i+1}$ and $\pi_{+,i+1}(\Tt_{i+1})=\Tt_i$. By Lemma \ref{exs2} $\Tt_{i+1}$ is a checkerboard set.
\item $\pi_{+,i+1}^{-1}(W_i)\cap\Tt_{i+1}\subset\partial\Tt_{i+1}$, $\pi_{+,i+1}^{-1}(\partial\Tt_i\setminus W_i)\subset\partial\Tt_{i+1}$ and $\pi_{+,i+1}^{-1}(\Reg(\Tt_i)\setminus W_i)\subset\Reg(\Tt_{i+1})$ (use Fact \ref{bigstepb2}(ii) and \S\ref{addbu}). Consequently, $\Reg(\Tt_{i+1})=\pi_{+,i+1}^{-1}(\Reg(\Tt_i)\setminus W_i)$ and $\partial\Tt_{i+1}=\pi_{+,i+1}^{-1}(\partial\Tt_i\setminus W_i)\cup(\pi_{+,i+1}^{-1}(W_i)\cap\Tt_{i+1})$.
\item[$(*)$] $\pi_{+,i+1}(\partial\Tt_{i+1})=\partial\Tt_i\cup(W_i\cap\Tt_i)$.
\item[$(**)$] $\pi_{+,i+1}|_{\Reg(\Tt_{i+1})}:\Reg(\Tt_{i+1})\to\Reg(\Tt_i)\setminus(W_i\cap\Tt_i)$ is a Nash diffeomorphism (Fact \ref{bigstepb2}(ii)).
\item $\Tt_s=\Qq$.
\end{itemize}
As $\Tt_i$ is connected by analytic paths, $\ol{\Tt}_i^{\zar}$ is by \cite[Lem.7.3]{fe3} an irreducible component of the pure dimensional non-singular real algebraic set $\widehat{X}_{\Tt_i}$ of dimension $d$. Thus, $\ol{\Tt}_i^{\zar}$ is a pure dimensional irreducible non-singular real algebraic set of dimension $d$. Consequently, the Nash manifold with corners $\Qq=\Tt_s$ is (by Lemma \ref{exs2}) a checkerboard set, $X:=\ol{\Qq}^{\zar}$ is a $d$-dimensional irreducible non-singular real algebraic set and $Y:=\ol{\partial\Qq}^{\zar}$ is a normal-crossings divisor of $X$. Thus, (i) and (ii) hold. As all the involved polynomials maps $\widehat{\pi}_i$ are proper, if $X_0$ is compact, then $X$ is also compact.

Consider the polynomial map $f:=\widehat{\pi}_{1}\circ\ldots\circ\widehat{\pi}_{s}:\widehat{X}_{\Tt_s}\to X_0$. By Fact \ref{bigstepb2}(i) $f:\widehat{X}_{\Tt_s}\to X_0$ is composition of proper maps, so it is proper. Moreover, as $\Ss$ is closed and $\Qq$ is obtained from $\Ss$ after a finite number of drilling blow-ups taking strict transforms in each step, $\Qq$ is a closed subset of $X$. Thus, also the restriction $f|_{\Qq}:\Qq\to X_0$ is proper. In addition, as $\widehat{\pi}_{i+1}(\Tt_{i+1})=\Tt_i$ for $i=0,\ldots,s-1$, we conclude that $f(\Qq)=\Ss$. 

Let us show (iv). By property $(**)$ applied inductively we deduce 
$$
f(\Reg(\Qq))\subset\Reg(\Tt_0)\setminus ((W_0\cap\Tt_0)\cup\bigcup_{k=1}^{s-1}(\pi_{+,1}\circ\cdots\circ\pi_{+,k})(W_k\cap\Tt_k)), 
$$
whereas by property $(*)$ applied recursively we have 
\begin{equation}\label{residuo01}
f(\partial\Qq)=\partial\Tt_0\cup(W_0\cap\Tt_0)\cup\bigcup_{k=1}^{s-1}(\pi_{+,1}\circ\cdots\circ\pi_{+,k})(W_k\cap\Tt_k),
\end{equation}
so $f(\Reg(\Qq))\cap f(\partial\Qq)=\varnothing$. Thus, as $\Ss=f(\Qq)=f(\Reg(\Qq))\cup f(\partial\Qq)$, we deduce $f(\Reg(\Qq))=\Ss\setminus f(\partial\Qq)$ and $f^{-1}(f(\partial\Qq))=\partial\Qq$. Property $(**)$ implies $f|_{\Reg(\Qq)}=(\pi_{+,1}\circ\cdots\circ\pi_{+,s})|_{\Reg(\Qq)}:\Reg(\Qq)\to\Ss\setminus f(\partial\Qq)$ is a Nash diffeomorphism (because it is a composition of finitely many Nash diffeomorphisms). By \eqref{residuo01} we have $\partial\Ss\subset f(\partial\Qq)$. The semialgebraic set $\Rr:=f(\partial\Qq)\subset\Ss$ is closed, because $f$ is proper and $\partial\Qq$ is closed. As $\partial\Qq$ has dimension not greater than $d-1$, we have $\dim(\Rr)\leq d-1<d$. The semialgebraic sets $\Ss\setminus\Rr$ and $\Qq\setminus f^{-1}(\Rr)$ are Nash manifolds, because $\Ss\setminus\Rr$ is an open semialgebraic subset of the Nash manifold $\Reg(\Ss)=\Ss\setminus\partial\Ss$ and $\Qq\setminus f^{-1}(\Rr)=\Qq\setminus\partial\Qq=\Reg(\Qq)$. Consequently, $f|_{\Reg(\Qq)}:\Reg(\Qq)=\Qq\setminus f^{-1}(\Rr)\to\Ss\setminus\Rr$ is a Nash diffeomorphism, as required.
\end{proof}

\begin{figure}[!ht]
\begin{center}
\begin{tikzpicture}[scale=0.75]
\draw[fill=gray!50,opacity=0.4,draw=none] (0,0) -- (0,5) -- (5,5) -- (5,0) -- (0,0);
\draw[fill=white,draw=none] (1,2.5) -- (1,4) -- (2.5,4) -- (2.5,2.5) -- (1,2.5);
\draw[fill=white,draw=none] (2.5,1) -- (4,1) -- (4,2.5) -- (2.5,2.5) -- (2.5,1);

\draw[line width=1pt] (0,0) -- (1.5,0);
\draw[line width=1pt] (0,0) -- (0,1.5);
\draw[line width=1pt] (0,1.5) -- (0,5);
\draw[line width=1pt] (3.5,5) -- (0,5);
\draw[line width=1pt] (3.5,5) -- (5,5);
\draw[line width=1pt] (5,0) -- (5,5);
\draw[line width=1pt] (1.5,0) -- (5,0);
\draw[line width=1pt] (1,2.5) -- (1,4) -- (2.5,4) -- (2.5,2.5) -- (1,2.5);
\draw[line width=1pt] (4,1) -- (2.5,1) -- (2.5,2.5) -- (4,2.5);
\draw[line width=1pt] (4,1) -- (4,2.5);

\filldraw[color=white, fill=white] (2.5,2.5) circle (0.2);
\draw[line width=1pt] (2.5,2.3) arc (270:180:0.2cm);
\draw[line width=1pt] (2.5,2.7) arc (90:0:0.2cm);

\draw[fill=gray!50,opacity=0.4,draw=none] (8,0) -- (8,5) -- (13,5) -- (13,0) -- (8,0);
\draw[fill=white,draw=none] (9,2.5) -- (9,4) -- (10.5,4) -- (10.5,2.5) -- (9,2.5);
\draw[fill=white,draw=none] (10.5,1) -- (12,1) -- (12,2.5) -- (10.5,2.5) -- (10.5,1);

\draw[line width=1pt] (8,0) -- (8,5) -- (13,5) -- (13,0) -- (8,0);
\draw[line width=1pt] (10.5,2.5) -- (9,2.5) -- (9,4) -- (10.5,4) -- (10.5,2.5);
\draw[line width=1pt] (10.5,2.5) -- (10.5,1) -- (12,1) -- (12,2.5) -- (10.5,2.5);

\filldraw[color=white, fill=white] (1,2.5) circle (0.2);
\draw[line width=1pt] (1,2.7) arc (90:360:0.2cm);

\filldraw[color=white, fill=white] (1,4) circle (0.2);
\draw[line width=1pt] (1.2,4) arc (0:270:0.2cm);

\filldraw[color=white, fill=white] (2.5,4) circle (0.2);
\draw[line width=1pt] (2.5,3.8) arc (270:540:0.2cm);

\filldraw[color=white, fill=white] (2.5,1) circle (0.2);
\draw[line width=1pt] (2.5,1.2) arc (90:360:0.2cm);

\filldraw[color=white, fill=white] (4,1) circle (0.2);
\draw[line width=1pt] (3.8,1) arc (180:450:0.2cm);

\filldraw[color=white, fill=white] (4,2.5) circle (0.2);
\draw[line width=1pt] (4,2.3) arc (270:540:0.2cm);

\draw (3.5,3.5) node{$\Qq$};
\draw (11.5,3.5) node{$\Ss$};
\draw[->] (5.5 ,2.5)--(7.5, 2.5);
\draw (6.5,2.9) node{\small $f|_{\Qq}$};
\end{tikzpicture}
\end{center}
\caption{\small{Nash uniformization of the closed checkerboard set $\Ss$ (right) by the Nash manifold with corners $\Qq$ (left).}}
\end{figure}

\subsection{Application 1: Nash compactification of Nash manifolds with corners}
We prove next Theorem \ref{compactclosure}.

\begin{proof}[Proof of Theorem \em \ref{compactclosure}]
If $\Qq$ is compact, there is nothing to prove, so we assume that $\Qq$ is not compact. Let $M\subset\R^n$ be a Nash envelope of $\Qq$ such that $\Qq$ is closed in $M$ and the Nash closure $X$ of $\partial\Qq$ in $M$ is a Nash normal-crossings divisor satisfying $\Qq\cap X=\partial\Qq$ (Theorem \ref{divisorialPre}). Up to a suitable Nash embedding of $M$ in some aﬃne space $\R^m$ we may assume by \cite[Lem.C.1]{fe3} that:
\begin{itemize}
\item[(i)] $M$ is a (finite) union of connected components of its Zariski closure $V$ in $\R^m$, which is in addition a non-singular real algebraic subset of $\R^m$ of pure dimension $d$.
\item[(ii)] The Zariski closure $Y$ of $X$ in $\R^m$ is a normal-crossings divisor of $V$ and $M\cap Y=X$.
\end{itemize}
As $\Qq$ is non-compact and closed in $M$, we deduce that $M$ is non-compact. As $M$ is a (finite) union of connected components of $V$, also $V$ is non-compact. Let $\phi:\R^m\to\sph^m$ be the inverse of the stereographic projection of $\sph^m$ with respect to its north pole $p_N:=(0,\ldots,0,1)$. The Zariski closure of $\phi(V)$ is $W:=\phi(V)\cup\{p_N\}$ and the Zariski closure of $\phi(Y)$ is contained in $Z:=\phi(Y)\cup\{p_N\}$. If $W$ is non-singular, we do nothing in this step. Otherwise, $\Sing(W)=\{p_N\}$ and by Hironaka's resolution of singularities \cite{hi} there exist a non-singular algebraic set $W'\subset\R^q$ and a proper regular map $f:W'\to W$ such that
\begin{equation*}
f|_{W'\setminus f^{-1}(\{p_N\})}:W'\setminus f^{-1}(\{p_N\})\rightarrow W\setminus\{p_N\}
\end{equation*}
is a diffeomorphism whose inverse map is also regular. Denote $Z':=f^{-1}(Z)$ and observe that $Z'\setminus f^{-1}(p_N)$ is a normal-crossings divisor of $W'\setminus f^{-1}(p_N)$. By \cite[Thm.1.5]{bm4} there exists a non-singular algebraic set $W''\subset\R^p$ and a proper regular map $g:W''\to W'$ such that $Z'':=g^{-1}(Z')$ is a normal-crossings divisor of $W''$ and the restriction
\begin{equation*}
g|_{W''\setminus g^{-1}(f^{-1}(p_N))}:W'\setminus g^{-1}(f^{-1}(p_N))\rightarrow W'\setminus f^{-1}(p_N)
\end{equation*}
is a biregular diffeomorphism whose inverse map is also regular. As $p_N\not\in\Qq$, the inverse image $\Qq'':=g^{-1}(f^{-1}(\Qq))$ is Nash diffeomorphic to $\Qq$. Define $\Ss:=\cl(\Qq)=\Qq\cup\{p_N\}$ and $\Ss'':=\cl(\Qq'')$. The Zariski closure of $\Ss''\setminus\Int(\Qq'')\subset g^{-1}(f^{-1}(\partial\Qq))$ is a union of irreducible components of the normal-crossings divisor $Z''=g^{-1}(f^{-1}(Z))$. In addition, $\Ss''\setminus\Qq''\subset g^{-1}(f^{-1}(\Ss\setminus\Qq))=g^{-1}(f^{-1}(p_N))$. 

Observe that $e(\Ss'')_x=0$ for each $x\in\Ss''$. If we apply the procedure to prove Theorem \ref{red2} in \S\ref{44} we obtain a compact Nash manifold with corners $\Qq^\bullet\subset\R^q$ and a polynomial map $h:\R^q\to\R^p$ such that the restriction $h|_{\Qq^\bullet}:\Qq^\bullet\to\Ss''$ is a proper Nash map and there exists an algebraic set $T$ of dimension $<d$ contained in $g^{-1}(f^{-1}(p_N))$ such that $h|_{\Qq^\bullet\setminus h^{-1}(T)}:\Qq^\bullet\setminus h^{-1}(T)\to\Ss''\setminus T$ is a Nash diffeomorphism. Thus, $\Qq^\bullet\setminus h^{-1}(g^{-1}(f^{-1}(p_N)))$ is Nash diffeomorphic via $f\circ g\circ h$ to $\Ss\setminus\{p_N\}=\Qq$. Let ${\tt j}$ be the inverse of $(f\circ g\circ h)|_{\Qq^\bullet\setminus h^{-1}(g^{-1}(f^{-1}(p_N)))}$ composed with the inclusion of $\Qq^\bullet\setminus h^{-1}(g^{-1}(f^{-1}(p_N)))$ into $\Qq^\bullet$. Thus, $(\Qq^\bullet,{\tt j})$ is a compactification of $\Qq$ such that $\Qq^\bullet$ is a compact Nash manifold with corners, as required.
\end{proof}

\section{Nash uniformization of general chessboard sets}\label{s5}

To prove Theorem \ref{red4} we introduce {\em Nash quasi-manifolds with corners}, that is, semialgebraic sets $\Tt\subset\R^n$ whose closure is a Nash manifold with corners $\Qq\subset\R^n$ and $\Qq\setminus\Tt$ is a union of some of the strata of certain (Nash) stratification of $\partial\Qq$, that we introduce below. 

\subsection{Nash quasi-manifolds with corners}
Let us recall the definition of (Nash) stratification of a semialgebraic set.

\begin{defn}
Let $\Ss\subset\R^m$ be a semialgebraic set. A (Nash) \em stratification \em of $\Ss$ is a finite semialgebraic partition $\{\Ss_\alpha\}_{\alpha\in A}$ of $\Ss$, where each $\Ss_\alpha$ is a connected Nash submanifold of $\R^m$ and the following property is satisfied: if $\Ss_\alpha\cap\cl(\Ss_\beta)\neq\varnothing$ and $\alpha\neq\beta$, then $\Ss_\alpha\subset\cl(\Ss_\beta)$ and $\dim(\Ss_\alpha)<\dim(\Ss_\beta)$. The $\Ss_\alpha$ are called the \em strata \em of the (Nash) stratification and if $d:=\dim(\Ss_\alpha)$, then $\Ss_\alpha$ is a \em$d$-stratum\em.\hfill$\sqbullet$
\end{defn}

The condition $\dim(\Ss_\alpha)<\dim(\Ss_\beta)$ follows from \cite[Prop.2.8.13]{bcr}, because $\Ss_\alpha\subset\cl(\Ss_\beta)\setminus\Ss_\beta$.

\begin{defn}\label{sap}
Given a $d$-dimensional semialgebraic set $\Ss\subset\R^m$, we consider the following semialgebraic partition of $\Ss$. Recall that $\Sth(\Ss)$ is the set of points $x\in\Ss$ at which the germ $\Ss_x$ is the germ of a Nash manifold (Section \ref{regsmooth}). Define $\Gamma_1:=\Sth(\Ss)$ and $\Gamma_k:=\Sth(\Ss\setminus\bigcup_{j=1}^{k-1}\Gamma_j)$ for $k\geq 2$. Let $s\geq1$ be the largest index $k$ such that $\Gamma_k\neq\varnothing$. For each $k\geq1$ let $\Gamma_{k\ell}$ (for $\ell=1,\ldots,r_k$) be the connected components of $\Gamma_k$. The collection ${\mathfrak G}(\Ss):=\{\Gamma_{k\ell}:\ 1\leq k\leq s,1\leq\ell\leq r_k\}$ is a partition of $\Ss$. We say that ${\mathfrak G}(\Ss)$ is {\em compatible with} a semialgebraic set $\Tt\subset\Ss$ if $\Tt$ is the union of some of the $\Gamma_{k\ell}$.\hfill$\sqbullet$
\end{defn}

\begin{examples}\label{exgs}
(i) The semialgebraic partition ${\mathfrak G}(\Ss)$ of a semialgebraic set $\Ss\subset\R^n$ is not in general a stratification of $\Ss$. Consider for instance the semialgebraic set $\Ss:=\{\y^2-\x^3=0\}\cap(\{\z>0\}\cup\{\z\leq0,\y\geq0\})\subset\R^3$. Then 
$$
\Gamma_{11}:=\{\y^2-\x^3=0,\y>0\},\ \Gamma_{12}:=\{\y^2-\x^3=0,\y<0,\z>0\},\ \Gamma_{21}=\{\x=0,\y=0\}
$$
and ${\mathfrak G}(\Ss)=\{\Gamma_{11},\Gamma_{12},\Gamma_{21}\}$. Observe that $\Gamma_{21}\cap\cl(\Gamma_{12})\neq\varnothing$, but $\Gamma_{21}\not\subset\cl(\Gamma_{12})$. Thus, ${\mathfrak G}(\Ss)$ is not a stratification of $\Ss$.

(ii) If $X\subset\R^n$ is a $d$-dimensional non-singular real algebraic set and $Y\subset X$ is a normal-crossings divisor, then ${\mathfrak G}(Y)$ is a stratification of $Y$.

It is enough to consider local models, that is, $Y:=\{\x_1\cdots\x_r=0\}\subset\R^d$. In fact, we may assume $r=d$, because $Y=Z\times\R^{d-r}$ where $Z:=\{\x_1\cdots\x_r=0\}\subset\R^r$. The semialgebraic partition ${\mathfrak G}(Y)$ of $Y$ is the collection of semialgebraic sets $\Lambda_\ell:=\{\x_1*_10,\ldots,\x_d*_d0\}$ where $*_i\in\{<,=,>\}$ and at least one of the symbols $*_i$ is equal to $=$. The closure of each $\Lambda_\ell$ is a union of finitely many $\Lambda_k$ and consequently ${\mathfrak G}(Y)$ is a stratification of $Y$.

In this case $\Sth(Y)=\Reg(Y)$ and $\Sth(\Sing_\ell(Y))=\Reg(\Sing_\ell(Y))$ for each $\ell\geq1$.

(iii) Let $X\subset\R^n$ be a non-singular algebraic set, $Y\subset X$ is a normal-crossings divisor and $\Ss$ the closure of a union of connected components of $X\setminus Y$. Then ${\mathfrak G}(\Ss)$ is a stratification of $\Ss$ and ${\mathfrak G}(\Ss)$ is compatible with $\partial\Ss=\Ss\setminus\Reg(\Ss)$.

It is enough to consider local models, so we may assume $Y:=\{\x_1\cdots\x_r=0\}\subset\R^d$. We suppose $r=d$, because $Y=Z\times\R^{d-r}$ where $Z:=\{\x_1\cdots\x_r=0\}\subset\R^r$. Thus, $\Ss:=\bigcup_{\veps\in{\mathfrak F}}\Qq_\veps$, where $\Qq_\veps:=\{\veps_1\x_1\geq0,\ldots,\veps_d\x_d\geq0\}$, $\veps:=(\veps_1,\ldots,\veps_d)$ and ${\mathfrak F}\subset\{-1,1\}^d$. The semialgebraic partition ${\mathfrak G}(\Ss)$ of $\Ss$ is a collection of the type $\Gamma_\ell:=\{\x_{i_1}*_{i_1}0,\ldots,\x_{i_\ell}*_{i_\ell}0\}$ where $0\leq\ell\leq d$, $1\leq i_1<\cdots<i_\ell\leq d$ and $*_{i_k}\in\{<,=,>\}$ for $k=1,\ldots,\ell$. The closure of each $\Gamma_\ell$ is a union of finitely many $\Gamma_k$, so ${\mathfrak G}(\Ss)$ is a stratification of $\Ss$. Observe that $\partial\Ss=\Ss\setminus\Reg(\Ss)$ and $\Ss\cap Y$ are unions of finitely many of the sets $\{\x_{i_1}*_{i_1}0,\ldots,\x_{i_\ell}*_{i_\ell}0\}$ with the condition that some of the $*_{i_k}$ are equal to $=$, that is, all of them belong to ${\mathfrak G}(\Ss)$ and ${\mathfrak G}(\Ss)$ is compatible with $\partial\Ss=\Ss\setminus\Reg(\Ss)$.

In this case $\Sth(\Ss)=\Reg(\Ss)$ and $\Sth(\partial^\ell\Ss)=\Reg(\partial^\ell\Ss)$, where $\partial^\ell\Ss:=\partial(\partial^{\ell-1}\Ss)$ for each $\ell\geq2$. This is because $\partial\Ss\subset Y$ and $\partial^\ell\Ss\subset\Sing_{\ell-1}(Y)$ for each $\ell\geq2$.

(iv) If $\Qq\subset\R^n$ is a $d$-dimensional Nash manifold with corners, ${\mathfrak G}(\Qq)={\mathfrak G}(\partial\Qq)\sqcup{\mathfrak G}(\Int(\Qq))$ is a stratification of $\Qq$.

It is enough to apply Theorem \ref{divisorialPre} and (iii). 
\hfill$\sqbullet$
\end{examples}

\begin{defn}
A subset $\Tt\subset\R^n$ is a {\em Nash quasi-manifold with corners} if $\Qq:=\cl(\Tt)$ is a Nash manifold with corners and $\Qq\setminus\Tt$ is a union of elements of the stratification ${\mathfrak G}(\partial\Qq)$.\hfill$\sqbullet$ 
\end{defn}

\subsection{Proof of Theorem \ref{red4}}
We are ready to prove Theorem \ref{red4}. 

\begin{proof}[Proof of Theorem \em \ref{red4}]
The proof is conducted in several steps and subsequent reductions:

\noindent{\sc Step 1. Initial preparation.} 
We embed $\R^m$ in $\R\PP^m$ and the latter in $\R^p$ for $p$ large enough, so we can suppose that $\Ss$ is a bounded chessboard set. We may assume that the previous embedding is a regular map \cite[Prop.2.4.1]{ak2}, which is in particular a Nash map. Thus, $\cl(\Ss)$ is compact and the Zariski closure of $\Ss$ is also compact. By Theorem \ref{ridwell2} we may assume that $\Ss\subset\R^m$ is a checkerboard set, whose Zariski closure $X$ is a $d$-dimensional non-singular compact algebraic subset of $\R^m$. In particular, $\Reg(\Ss)$ is connected and the Zariski closure $Z_1$ of $\partial\Ss:=\cl(\Ss)\setminus\Reg(\Ss)$ is a normal-crossings divisor of $X$. We construct next a semialgebraic partition of $\cl(\Ss)$ as a finite union of Nash manifolds of different dimensions compatible with $\Tt_1:=\cl(\Ss)\setminus\Ss$ and $\Tt_2:=\Ss\setminus\Reg(\Ss)$. Observe that $\Tt_1$ and $\Tt_2$ are disjoint semialgebraic sets, they have dimensions $\leq d-1$ and $\partial\Ss=\Tt_1\sqcup\Tt_2$. 

\begin{figure}[!ht]
\begin{center}
\begin{tikzpicture}[scale=0.75]
\draw[fill=gray!50,opacity=0.4,draw=none] (0,0) -- (0,5) -- (5,5) -- (5,0) -- (0,0);
\draw[fill=white,draw=none] (1,2.5) -- (1,4) -- (2.5,4) -- (2.5,2.5) -- (1,2.5);
\draw[fill=white,draw=none] (2.5,1) -- (4,1) -- (4,2.5) -- (2.5,2.5) -- (2.5,1);

\draw[line width=1pt, dotted] (0,0) -- (0,5);
\draw[line width=1pt] (0,0) -- (2.5,0);
\draw[line width=1pt,dotted] (2.5,0) -- (5,0);

\draw[line width=1pt] (0,5) -- (1,5);
\draw[line width=1pt,dotted] (1,5) -- (3,5);
\draw[line width=1pt] (3,5) -- (5,5);

\draw[line width=1pt] (5,0) -- (5,5);

\draw[line width=1pt] (1,2.5) -- (1,4);
\draw[line width=1pt] (1,4)-- (2.5,4);
\draw[line width=1pt,dotted] (2.5,4) -- (2.5,2.5);
\draw[line width=1pt] (2.5,2.5) -- (1,2.5);

\draw[line width=1pt,dotted] (4,1) -- (2.5,1);
\draw[line width=1pt] (2.5,1) -- (2.5,2.5);
\draw[line width=1pt] (2.5,2.5) -- (4,2.5);
\draw[line width=1pt] (4,1) -- (4,2.5);

\draw[fill=black,draw] (0,0) circle (0.75mm);
\draw[fill=white,draw] (5,0) circle (0.75mm);
\draw[fill=white,draw] (0,5) circle (0.75mm);
\draw[fill=black,draw] (5,5) circle (0.75mm);

\draw[fill=white,draw] (1,5) circle (0.75mm);
\draw[fill=black,draw] (3,5) circle (0.75mm);
\draw[fill=white,draw] (2.5,0) circle (0.75mm);

\draw[fill=white,draw] (1,4) circle (0.75mm);
\draw[fill=black,draw] (1,2.5) circle (0.75mm);
\draw[fill=black,draw] (2.5,2.5) circle (0.75mm);
\draw[fill=white,draw] (2.5,1) circle (0.75mm);
\draw[fill=white,draw] (2.5,4) circle (0.75mm);
\draw[fill=white,draw] (4,1) circle (0.75mm);
\draw[fill=black,draw] (4,2.5) circle (0.75mm);

\draw (1.5,1.5) node {\small$\Ss$};

\draw[fill=gray!50,opacity=0.4,draw=none] (8,0) -- (8,5) -- (13,5) -- (13,0) -- (8,0);
\draw[fill=white,draw=none] (9,2.5) -- (9,4) -- (10.5,4) -- (10.5,2.5) -- (9,2.5);
\draw[fill=white,draw=none] (10.5,1) -- (12,1) -- (12,2.5) -- (10.5,2.5) -- (10.5,1);

\draw[Blue, line width=1pt] (8,0) -- (8,5);
\draw[Red, line width=1pt] (8,0) -- (10.5,0);
\draw[Blue, line width=1pt] (10.5,0) -- (13,0);

\draw[Red, line width=1pt] (8,5) -- (9,5);
\draw[Blue, line width=1pt] (9,5) -- (11,5);
\draw[Red, line width=1pt] (11,5) -- (13,5);
\draw[Red, line width=1pt] (13,0) -- (13,5);

\draw[Red, line width=1pt] (9,2.5) -- (9,4);
\draw[Red, line width=1pt] (9,4)-- (10.5,4);
\draw[Blue, line width=1pt] (10.5,4) -- (10.5,2.5);
\draw[Red, line width=1pt] (10.5,2.5) -- (9,2.5);

\draw[Blue, line width=1pt] (12,1) -- (10.5,1);
\draw[Red, line width=1pt] (10.5,1) -- (10.5,2.5);
\draw[Red, line width=1pt] (10.5,2.5) -- (12,2.5);
\draw[Red, line width=1pt] (12,1) -- (12,2.5);

\draw[fill=Red,draw=none] (8,0) circle (0.75mm);
\draw[fill=Blue,draw=none] (13,0) circle (0.75mm);
\draw[fill=Blue,draw=none] (8,5) circle (0.75mm);
\draw[fill=Red,draw=none] (13,5) circle (0.75mm);

\draw[fill=Blue,draw=none] (9,5) circle (0.75mm);
\draw[fill=Red,draw=none] (11,5) circle (0.75mm);
\draw[fill=Blue,draw=none] (10.5,0) circle (0.75mm);

\draw[fill=Blue,draw=none] (9,4) circle (0.75mm);
\draw[fill=Red,draw=none] (9,2.5) circle (0.75mm);
\draw[fill=Red,draw=none] (10.5,2.5) circle (0.75mm);
\draw[fill=Blue,draw=none] (10.5,1) circle (0.75mm);
\draw[fill=Blue,draw=none] (10.5,4) circle (0.75mm);
\draw[fill=Blue,draw=none] (12,1) circle (0.75mm);
\draw[fill=Red,draw=none] (12,2.5) circle (0.75mm);

\draw (9.5,1.5) node {\small$\Ss$};
\draw (11.5,0.5) node {\small$\Tt_1$};
\draw (12.2,4) node {\small$\Tt_2$};

\end{tikzpicture}
\end{center}
\caption{\small{A bounded checkerboard set $\Ss$ (left) and $\Ss$ with the sets $\Tt_1$ (blue) and $\Tt_2$ (red) coloured (right).}}
\end{figure}

Let $N_1$ be the union of the connected components of $\Reg(\Tt_1)\sqcup\Reg(\Tt_2)$ of dimension $d-1$. Note that the connected components of dimension $d-1$ of $\Reg(\Tt_1)\sqcup\Reg(\Tt_2)$ are in general different from the connected components of dimension $d-1$ of $\Reg(\partial\Ss)$. As $\dim(\Tt_i\setminus\Reg(\Tt_i))\leq d-2$ and $\partial\Ss=\Tt_1\sqcup\Tt_2$, the semialgebraic set $\partial\Ss\setminus N_1$ has dimension $\leq d-2$. Let $Z_2$ be the Zariski closure of $\partial\Ss\setminus N_1$. Each connected component of the Nash manifold $N_1':=\partial\Ss\setminus Z_2=N_1\setminus Z_2$ has dimension $d-1$ and it is contained either in $\Tt_1$ or $\Tt_2$. In addition, $\partial\Ss\setminus N_1'\subset Z_2$ has dimension $\leq d-2$, $Z_2$ is the Zariski closure of $\partial\Ss\setminus N_1'$ (because $\partial\Ss\setminus N_1\subset\partial\Ss\setminus N_1'\subset Z_2$ and $Z_2$ is the Zariski closure of $\partial\Ss\setminus N_1$) and $\cl(N_1')\subset\partial\Ss$.

\noindent{\sc Substep 1.1.} Let us construct recursively: 
{\em\begin{itemize}
\item Pairwise disjoint semialgebraic sets $N_k'\subset\partial\Ss$ that are either Nash manifolds of dimension $d-k$, whose connected components are contained in either $\Tt_1$ or $\Tt_2$, or the empty set, 
\item Real algebraic sets $Z_k$ of dimension $\leq d-k$,
\end{itemize}
such that :
\begin{itemize}
\item[(1)] $N_k'\subset Z_k$ is an open semialgebraic subset of $Z_k$, 
\item[(2)] $Z_k$ is the Zariski closure of $\partial\Ss\setminus (N_1'\cup\cdots\cup N_{k-1}')$, 
\item[(3)] $N_k'\cap Z_{k+1}=\varnothing$ and $Z_{k+1}\subset Z_k$, 
\item[(4)]$\partial\Ss\setminus (N_1'\cup\cdots\cup N_k')\subset Z_{k+1}$ and $\cl(N_k')\subset\partial\Ss\setminus (N_1'\cup\cdots\cup N_{k-1}')$.
\end{itemize}
In addition, $\partial\Ss=N_1'\sqcup\cdots\sqcup N_d'$ and $Z_k$ is the Zariski closure of $N'_k\sqcup\cdots\sqcup N_d'$ for $k=1,\ldots,d$. 
}

Suppose that we have constructed the Nash manifolds $N_1',\ldots,N_{k-1}'$ and the real algebraic sets $Z_1,\ldots,Z_k$ satisfying the required conditions and let us construct $N_k'$ and $Z_{k+1}$. Let $N_k$ be the union of the connected components of dimension $d-k$ of $\Reg(\Tt_1\cap Z_k)\sqcup\Reg(\Tt_2\cap Z_k)$ or the empty set if $\dim((Z_k\cap\cl(\Ss))\setminus\Ss)<d-k$. Observe that $N_k$ is an open semialgebraic subset of $Z_k$. As $\dim((\Tt_i\cap Z_k)\setminus\Reg(\Tt_i\cap Z_k))\leq d-(k+1)$, the semialgebraic set $\partial\Ss\setminus(N_1'\cup\cdots\cup N_{k-1}'\cup N_k)\subset(\partial\Ss\cap Z_k)\setminus N_k$ has dimension $\leq d-(k+1)$. Let $Z_{k+1}$ be the Zariski closure of $\partial\Ss\setminus(N_1'\cup\cdots\cup N_{k-1}'\cup N_k)$, which has dimension $\leq d-(k+1)$. 

In case $N_k=\varnothing$, then $\dim(Z_k)<d-k$ and $Z_{k+1}=Z_k$. Suppose $N_k\neq\varnothing$. Each connected component of the Nash manifold $N_k':=\partial\Ss\setminus(N_1'\cup\cdots\cup N_{k-1}'\cup Z_{k+1})=N_k\setminus Z_{k+1}$ has dimension $d-k$ and it is contained in either $\Tt_1$ or $\Tt_2$. In addition, $N_k'$ is an open semialgebraic subset of $Z_k$, $\partial\Ss\setminus(N_1'\cup\cdots\cup N_k')\subset Z_{k+1}$ (so it has dimension $\leq d-(k+1)$) and $Z_{k+1}$ is the Zariksi closure of $\partial\Ss\setminus(N_1'\cup\cdots\cup N_k')$ (because $\partial\Ss\setminus(N_1'\cup\cdots\cup N_{k-1}'\cup N_k)\subset\partial\Ss\setminus(N_1'\cup\cdots\cup N_{k-1}'\cup N_k')\subset Z_{k+1}$ and $Z_{k+1}$ is the Zariski closure of $\partial\Ss\setminus(N_1'\cup\cdots\cup N_{k-1}'\cup N_k)$).

As $Z_k\subset Z_\ell$ if $\ell<k$ and $N_\ell'\cap Z_{\ell+1}=\varnothing$, we deduce $N_\ell'\cap Z_k=\varnothing$ if $\ell<k$ and $\cl(N_k')\subset Z_k$ does not meet $N_1'\cup\cdots\cup N_{k-1}'$, so $\cl(N_k')\subset\partial\Ss\setminus(N_1'\cup\cdots\cup N_{k-1}')$. Thus, we have constructed the Nash manifolds $N'_k$ of dimension $d-k$ (or the empty set) and the real algebraic sets $Z_k$ of dimension $\leq d-k$ satisfying the required conditions for $k=1,\ldots,d-1$. In particular,
$$
\cl(N_k')\setminus N_k'\subset\partial\Ss\setminus(N_1'\cup\cdots\cup N_k')\subset Z_{k+1}
$$
for $k=1,\ldots,d$ (and $Z_{d+1}:=\varnothing$). 

\noindent{\sc Substep 1.2.} For technical reasons we reorder the indices of the real algebraic sets $Z_k$ and the Nash manifolds $N_k'$.

Define the real algebraic set $T_k:=Z_{d-k}$, which has dimension $\leq k$, and the Nash manifold $M_k:=N'_{d-k}$, which is either empty or has dimension $k$, for $k=0,\ldots,d-1$. The real algebraic set $T_k$ is the Zariski closure of $M_1\sqcup\cdots\sqcup M_k$ and each connected component of $M_k$ is either contained in $\Tt_1$ or $\Tt_2$.

If $M_0\neq\varnothing$, then $M_0$ is a finite set and $M_0=T_0$. Otherwise, if $m$ is the least $k$ such that $M_k\neq\varnothing$, then $\cl(M_m)\setminus M_m=\varnothing$ (so $M_m$ is closed in $\R^n$) and $T_m$ is the Zariski closure of $M_m$. As $M_m$ is open and closed in $T_m$, we deduce that $M_m$ is a (finite) union of connected components of $T_m$ of dimension $m$ and $M_k=\varnothing$ for $0\leq k<m$. 

Observe that $\dim(T_k)\leq k$, $T_k\subset T_{k+1}$, $M_k\subset T_k$, $M_k\cap T_{k-1}=\varnothing$, $\cl(M_k)\setminus M_k\subset M_0\sqcup\cdots\sqcup M_{k-1}\subset T_{k-1}$. In addition,
\begin{equation*}
\begin{split}
\partial\Ss\cap T_k&=N_{d-k}'\sqcup\cdots\sqcup N_d'=M_0\sqcup\cdots\sqcup M_k,\\
\Tt_1\cap T_k&=(\cl(\Ss)\setminus\Ss)\cap T_k=(M_0\cap\Tt_1)\sqcup\cdots\sqcup(M_k\cap\Tt_1),\\
\Tt_2\cap T_k&=(\Ss\setminus\Reg(\Ss))\cap T_k=(M_0\cap\Tt_2)\sqcup\cdots\sqcup(M_k\cap\Tt_2),
\end{split}
\end{equation*}
and each intersection $M_\ell\cap\Tt_i$ is the union of the connected components of the Nash manifold $M_\ell$ contained in $\Tt_i$ for $i=1,2$. Observe that $M_\ell=(M_\ell\cap\Tt_1)\sqcup(M_\ell\cap\Tt_2)$ for $\ell=0,\ldots,d-1$. 

\noindent{\sc Step 2. Initial algebraic resolution procedure.}
Denote $E_{-1}:=\varnothing$, $X_{-1}:=X$, $T_{-1,i}:=T_i$ and $Y_0:=T_0$. By Theorem \ref{hi2} there exists a non-singular (compact) real algebraic set $X_0$ and a proper surjective polynomial map $g_0:X_0\to X_{-1}$ such that $E_0:=g_0^{-1}(E_{-1}\cup Y_0)$ is a normal-crossings divisor of $X_0$ and the restriction $g_0|_{X_0\setminus E_0}:X_0\setminus E_0\to X\setminus(E_{-1}\cup Y_0)$ is a biregular diffeomorphism. In fact, $g_0$ is a composition of finitely many blow-ups whose non-singular centers are contained in $T_0\subset T_i$ and have dimension $\leq\min\{\dim(Y_0),d-2\}$. Denote $T_{0i}:=\cl(g_0^{-1}(T_{-1,i}\setminus(E_{-1}\cup Y_0)))$ the strict transform of $T_{-1,i}$ under $g_0$ for $i\geq1$, which is by Remark \ref{exst}(i) a real algebraic set of the same dimension as $T_i$ and denote $Y_1:=T_{01}$. Observe that no $T_{0i}$ is contained in the real algebraic set $E_0$ for $i\geq1$. In particular, $\dim(T_{0i}\cap E_0)<\dim(T_{0i})$ for each $i\geq1$ and no irreducible component of $Y_1$ is contained in $E_0$. We keep on similarly with $E_0\cup Y_1$. 

We proceed recursively and in the step $k\leq d-1$ we find by Theorem \ref{hi2} for the couple $E_{k-1}:=g_{k-1}^{-1}(E_{k-2}\cup Y_{k-1})$ and $Y_k:=T_{k-1,k}$ a non-singular (compact) real algebraic set $X_k$ and a proper surjective polynomial map $g_k:X_k\to X_{k-1}$ such that $E_k:=g_k^{-1}(E_{k-1}\cup Y_k)$ is a normal-crossings divisor of $X_k$ and the restriction $g_k|_{X_k\setminus E_k}:X_k\setminus E_k\to X_{k-1}\setminus(E_{k-1}\cup Y_k)$ is a biregular diffeomorphism. In fact, $g_k$ is a composition of finitely many blow-ups whose non-singular centers are contained in $Y_k=T_{k-1,k}$ and have dimension $\leq\min\{\dim(Y_k),d-2\}$. Denote 
$$
T_{ki}:=\cl(g_k^{-1}(T_{k-1,i}\setminus(E_{k-1}\cup Y_k)))
$$ 
the strict transform of $T_{k-1,i}$ under $g_k$ for $i\geq k+1$, which is by Remark \ref{exst}(i) a real algebraic set of the same dimension as $T_i$ and let $Y_{k+1}:=T_{k,k+1}$. Observe that no $T_{ki}$ is contained in the real algebraic set $E_k$ for $i\geq k+1$. In particular, $\dim(T_{ki}\cap E_k)<\dim(T_{ki})$ for each $i\geq k+1$ and no irreducible component of $Y_{k+1}$ is contained in $E_k$. Observe that $Y_d=\varnothing$ (because $T_d=\varnothing$, so $T_{d-1,d}=\varnothing$) and $E_{d-1}=(g_0\circ\cdots\circ g_{d-1})^{-1}(T_{d-1})$ is a normal-crossings divisor.

\noindent{\sc Step 3. Properties of the strict transform.} 
We begin studying some properties of the composition $g:=g_0\circ\cdots\circ g_{d-1}:X_{d-1}\to X$. We claim: {\em There exists a real algebraic subset $R\subset X$ of dimension $\leq d-2$ such that the restriction $g|_{X_{d-1}\setminus g^{-1}(R)}:X_{d-1}\setminus g^{-1}(R)\to X\setminus R$ is a Nash diffeomorphism.}

For each $k=0,\ldots,d-1$ the polynomial map $g_k:X_k\to X_{k-1}$ (where $X_{-1}:=X$) is the composition of finitely many blow-ups whose centers have dimensions $\leq d-2$. Recall that the blow-up $b:\widehat{V}\to V$ of a $d$-dimensional non-singular real algebraic set $V$ with center a non-singular real algebraic subset $W$ of dimension $\leq d-2$ provides a Nash diffeomorphism $b|_{\widehat{V}\setminus b^{-1}(W)}:\widehat{V}\setminus b^{-1}(W)\to V\setminus W$. As the image of a semialgebraic set of dimension $\leq d-2$ is a semialgebraic set of dimension $\leq d-2$, we conclude recursively that there exists a semialgebraic set $R_{k-1}\subset X_{k-1}$ of dimension $\leq d-2$ such that $g_k|_{X_k\setminus g_k^{-1}(R_{k-1})}:X_k\setminus g_k^{-1}(R_{k-1})\to X_{k-1}\setminus R_{k-1}$ is a Nash diffeomorphism and $(g_0\circ\cdots\circ g_{k-1})(R_{k-1})$ has dimension $\leq d-2$. 

Let $R$ be the Zariski closure of $R_{-1}\cup\bigcup_{k=0}^{d-2}(g_0\circ\cdots\circ g_{k})(R_k)$, which is a real algebraic set of dimension $\leq d-2$. The restriction $g|_{X_{d-1}\setminus g^{-1}(R)}:X_{d-1}\setminus g^{-1}(R)\to X\setminus R$ is a Nash diffeomorphism, as claimed.

\noindent{\sc Substep 3.1.} Define $\Ss^*:=g^{-1}(\Ss)\cap\cl(g^{-1}(\Ss)\setminus E_{d-1})$ the strict transform of $\Ss$ under $g$ and $\Ss_k:=g_k^{-1}(\Ss_{k-1})\cap\cl(g_k^{-1}(\Ss_{k-1})\setminus E_k)$ the strict transform of $\Ss_{k-1}$ under $g_k$ for $k=0,\ldots,d-1$, where $\Ss_{-1}:=\Ss$. We claim: {\em $\Ss^*=\Ss_{d-1}$}. Let us prove by induction on $\ell$ that: {\em $\Ss_\ell^*:=(g_0\circ\cdots\circ g_\ell)^{-1}(\Ss)\cap\cl((g_0\circ\cdots\circ g_\ell)^{-1}(\Ss)\setminus E_\ell)$ equals $\Ss_\ell$ for each $\ell=0,\ldots,d-1$.} 

If $\ell=0$, then $\Ss^*_0=\Ss_0$. Suppose the result true for $\ell-1$, that is, $\Ss^*_{\ell-1}=\Ss_{\ell-1}$ and let us check $\Ss^*_\ell=\Ss_\ell$. Denote $h_{\ell-1}:=g_0\circ\cdots\circ g_{\ell-1}$. We have 
$$
\Ss_\ell^*=(g_0\circ\cdots\circ g_\ell)^{-1}(\Ss)\cap\cl((g_0\circ\cdots\circ g_\ell)^{-1}(\Ss)\setminus E_\ell)
=g_\ell^{-1}(h_{\ell-1}^{-1}(\Ss))\cap\cl(g_\ell^{-1}(h_{\ell-1}^{-1}(\Ss))\setminus E_\ell)
$$
is the strict transform of $h_{\ell-1}^{-1}(\Ss)$ under $g_\ell$. It holds $\Ss^*_{\ell-1}=h_{\ell-1}^{-1}(\Ss)\cap\cl(h_{\ell-1}^{-1}(\Ss)\setminus E_{\ell-1})$ and recall that $E_\ell=g_\ell^{-1}(E_{\ell-1}\cup Y_\ell)$.

The strict transform of $\Ss^*_{\ell-1}$ under $g_\ell$ is
\begin{multline*}
g_\ell^{-1}(\Ss^*_{\ell-1})\cap\cl(g_\ell^{-1}(\Ss^*_{\ell-1})\setminus E_\ell)
=g_\ell^{-1}(h_{\ell-1}^{-1}(\Ss))\cap g_\ell^{-1}(\cl(h_{\ell-1}^{-1}(\Ss)\setminus E_{\ell-1}))\\
\cap\cl((g_\ell^{-1}(h_{\ell-1}^{-1}(\Ss))\setminus E_\ell)\cap (g_\ell^{-1}(\cl(h_{\ell-1}^{-1}(\Ss)\setminus E_{\ell-1})\setminus E_\ell)).
\end{multline*}

As $h_{\ell-1}^{-1}(\Ss)\setminus E_{\ell-1}\subset\cl(h_{\ell-1}^{-1}(\Ss)\setminus E_{\ell-1})$ and $g_\ell^{-1}(E_{\ell-1})\subset E_\ell$, we deduce
$$
g_\ell^{-1}(h_{\ell-1}^{-1}(\Ss))\setminus E_\ell\subset g_\ell^{-1}(\cl(h_{\ell-1}^{-1}(\Ss)\setminus E_{\ell-1})).
$$
Consequently, 
\begin{align*}
\cl(g_\ell^{-1}(h_{\ell-1}^{-1}(\Ss))\setminus E_\ell)&\subset g_\ell^{-1}(\cl(h_{\ell-1}^{-1}(\Ss)\setminus E_{\ell-1})),\\
g_\ell^{-1}(h_{\ell-1}^{-1}(\Ss))\setminus E_\ell&\subset g_\ell^{-1}(\cl(h_{\ell-1}^{-1}(\Ss)\setminus E_{\ell-1}))\setminus E_\ell.
\end{align*}
Thus,
\begin{align*}
&\cl((g_\ell^{-1}(h_{\ell-1}^{-1}(\Ss))\setminus E_\ell)\cap (g_\ell^{-1}(\cl(h_{\ell-1}^{-1}(\Ss)\setminus E_{\ell-1})\setminus E_\ell))=\cl(g_\ell^{-1}(h_{\ell-1}^{-1}(\Ss))\setminus E_\ell),\\
&\cl(g_\ell^{-1}(h_{\ell-1}^{-1}(\Ss))\setminus E_\ell)\cap g_\ell^{-1}(\cl(h_{\ell-1}^{-1}(\Ss)\setminus E_{\ell-1}))=\cl(g_\ell^{-1}(h_{\ell-1}^{-1}(\Ss))\setminus E_\ell).
\end{align*}
We conclude
$$
g_\ell^{-1}(\Ss^*_{\ell-1})\cap\cl(g_\ell^{-1}(\Ss^*_{\ell-1})\setminus E_\ell)=g_\ell^{-1}(h_{\ell-1}^{-1}(\Ss))\cap\cl(g_\ell^{-1}(h_{\ell-1}^{-1}(\Ss))\setminus E_\ell)=\Ss_\ell^*.
$$
As by induction hypothesis $\Ss^*_{\ell-1}=\Ss_{\ell-1}$, we have $\Ss_\ell^*=g_\ell^{-1}(\Ss_{\ell-1})\cap\cl(g_\ell^{-1}(\Ss_{\ell-1})\setminus E_\ell)=\Ss_\ell$, as claimed.

\noindent{\sc Substep 3.2.} We prove next: {\em $\Reg(\Ss^*)$ is a connected $d$-dimensional Nash manifold and it contains the inverse image $g^{-1}(\Reg(\Ss)\setminus R)$ as a connected dense open semialgebraic subset.}

As $\Reg(\Ss)$ is a connected Nash manifold and $\dim(R)\leq d-2$, also $\Reg(\Ss)\setminus R$ is a connected Nash manifold. As the restriction $g|_{X_{d-1}\setminus g^{-1}(R)}:X_{d-1}\setminus g^{-1}(R)\to X\setminus R$ is a Nash diffeomorphism, $g^{-1}(\Reg(\Ss)\setminus R)$ is a connected Nash manifold. As $\dim(R)\leq d-2$, $\Ss$ is pure dimensional of dimension $d$ and $\Reg(\Ss)$ is dense in $\Ss$, we deduce $\Reg(\Ss)\setminus R$ is a dense open semialgebraic subset of $\Ss$, so $g^{-1}(\Reg(\Ss)\setminus R)$ is a dense open semialgebraic subset of $g^{-1}(\Ss\setminus R)$. 

As $\dim(E_{d-1})=d-1$, $g^{-1}(R)\subset E_{d-1}$ and $g^{-1}(\Ss\setminus R)$ is pure dimensional of dimension $d$, we deduce that $g^{-1}(\Reg(\Ss))\setminus E_{d-1}=g^{-1}(\Reg(\Ss)\setminus R)\setminus E_{d-1}$ is a dense open semialgebraic subset of $g^{-1}(\Ss)\setminus E_{d-1}$, so 
\begin{equation}\label{bel1}
\cl(g^{-1}(\Reg(\Ss)\setminus R)\setminus E_{d-1})=\cl(g^{-1}(\Reg(\Ss))\setminus E_{d-1})=\cl(g^{-1}(\Ss)\setminus E_{d-1}).
\end{equation}
As $g^{-1}(\Reg(\Ss)\setminus R)$ is a $d$-dimensional Nash manifold and $\dim(E_{d-1})=d-1$, we deduce that $g^{-1}(\Reg(\Ss)\setminus R)\setminus E_{d-1}$ is dense in $g^{-1}(\Reg(\Ss)\setminus R)$, so 
$$
\cl(g^{-1}(\Reg(\Ss)\setminus R))=\cl(g^{-1}(\Reg(\Ss)\setminus R)\setminus E_{d-1}).
$$
Consequently, 
\begin{multline*}
g^{-1}(\Reg(\Ss)\setminus R)=g^{-1}(\Reg(\Ss)\setminus R)\cap\cl(g^{-1}(\Reg(\Ss)\setminus R)\setminus E_{d-1})\\
\subset g^{-1}(\Reg(\Ss))\cap\cl(g^{-1}(\Reg(\Ss))\setminus E_{d-1})=\Ss^*.
\end{multline*}
As $\Ss$ is connected by analytic paths, we deduce by \cite[Lem.7.16]{fe3} that also $\Ss^*$ is connected by analytic paths, so in particular $\Ss^*$ is pure dimensional (of dimension $d$). Thus, 
\begin{equation}\label{sed}
\Ss^*\setminus E_{d-1}=g^{-1}(\Ss)\setminus E_{d-1}
\end{equation}
is a dense open semialgebraic subset of $\Ss^*$. As $g^{-1}(\Reg(\Ss)\setminus R)$ is a dense open semialgebraic subset of $g^{-1}(\Ss\setminus R)$ and $g^{-1}(R)\subset E_{d-1}$, we deduce that $g^{-1}(\Reg(\Ss)\setminus R)$ is a dense open semialgebraic subset of $\Ss^*$. As $g^{-1}(\Reg(\Ss)\setminus R)$ is a $d$-dimensional connected Nash manifold, $g^{-1}(\Reg(\Ss)\setminus R)\subset\Reg(\Ss^*)$, so $\Reg(\Ss^*)$ is connected because it contains a dense connected subset. 

\noindent{\sc Substep 3.3.} Let us prove: {\em $\partial\Ss^*\subset E_{d-1}$.} 

It holds $\partial\Ss=\cl(\Ss)\setminus\Reg(\Ss)\subset\bigcup_{k=0}^{d-1}T_k$ and 
$$
g^{-1}(\partial\Ss)=g^{-1}(\cl(\Ss)\setminus\Reg(\Ss))\subset\bigcup_{k=0}^{d-1}g^{-1}(T_k).
$$
Recall that $T_{ki}$ is the strict transform of $T_{k-1,i}$ under $g_k$ for $i\geq k$, $Y_k:=T_{k-1,k}$, $E_k=g_k^{-1}(E_{k-1}\cup Y_k)$ for $k\geq 1$ and $E_0=g_0^{-1}(T_0)$. Thus,
$$
(g_0\circ\cdots\circ g_{k-1})^{-1}(T_k)\subset T_{k-1,k}\cup E_{k-1}\subset Y_k\cup E_{k-1},
$$
so $(g_0\circ\cdots\circ g_k)^{-1}(T_k)\subset g_k^{-1}(Y_k\cup E_{k-1})=E_k$ and $g^{-1}(T_k)=(g_0\circ\cdots\circ g_{d-1})^{-1}(T_k)\subset E_{d-1}$ for each $k=0,\ldots,d-1$. Thus, 
\begin{equation}\label{bel2}
g^{-1}(\partial\Ss)=g^{-1}(\cl(\Ss)\setminus\Reg(\Ss))\subset\bigcup_{k=0}^{d-1}g^{-1}(T_k)\subset E_{d-1}.
\end{equation}
We deduce using that $g^{-1}(R)\subset E_{d-1}$ and $g^{-1}(\Reg(\Ss)\setminus R)\subset\Reg(\Ss^*)$
\begin{equation*}
\begin{split}
\cl(\Ss^*)\setminus E_{d-1}&\subset\cl(g^{-1}(\Ss))\cap\cl(g^{-1}(\Ss)\setminus E_{d-1})\setminus E_{d-1}\\
&=\cl(g^{-1}(\Ss))\setminus E_{d-1}\subset g^{-1}(\cl(\Ss))\setminus E_{d-1}\\
&=(g^{-1}(\Reg(\Ss))\setminus E_{d-1})\cup(g^{-1}(\partial\Ss)\setminus E_{d-1}))\\
&=g^{-1}(\Reg(\Ss))\setminus E_{d-1}\subset g^{-1}(\Reg(\Ss)\setminus R)\subset\Reg(\Ss^*),
\end{split}
\end{equation*}
so $\partial\Ss^*=\cl(\Ss^*)\setminus\Reg(\Ss^*)\subset E_{d-1}$. 

\noindent{\sc Substep 3.4.} Let $\Tt_1^*:=\cl(\Ss^*)\setminus\Ss^*$ and $\Tt_2^*:=\Ss^*\setminus\Reg(\Ss^*)$. Let us check: $g(\Tt_i^*)\subset\Tt_i$ {\em for} $i=1,2$.

Recall that by \eqref{bel1} and \eqref{bel2} we have 
\begin{equation}\label{ccc}
\cl(g^{-1}(\Ss)\setminus E_{d-1})=\cl(g^{-1}(\cl(\Ss))\setminus E_{d-1})=\cl(g^{-1}(\Reg(\Ss))\setminus E_{d-1}).
\end{equation}
As $\Ss$ is connected by analytic paths, $\cl(\Ss)$ is also connected by analytic paths \cite[Lem.7.4]{fe3}. Thus, the strict transform $\cl(\Ss)^*$ of $\cl(\Ss)$ under $g$ is connected by analytic paths \cite[Lem.7.16]{fe3}, so it is pure dimensional of dimension $d$. Thus, 
$$
\cl(\Ss)^*\setminus E_{d-1}=g^{-1}(\cl(\Ss))\setminus E_{d-1}=g^{-1}(\Ss)\setminus E_{d-1}\subset\Ss^*\subset\cl(\Ss^*)
$$
is a dense subset of $\cl(\Ss)^*$. As $\cl(\Ss)^*$ is a closed set that contains $\Ss^*$, we conclude that $\cl(\Ss^*)=\cl(\Ss)^*$ is the strict transform under $g$ of $\cl(\Ss)$. Consequently, $\Tt_1^*=\cl(\Ss^*)\setminus\Ss^*=g^{-1}(\cl(\Ss)\setminus\Ss)\cap\cl(g^{-1}(\Ss)\setminus E_{d-1})\subset g^{-1}(\Tt_1)$, so $g(\Tt_1^*)\subset\Tt_1$.

In addition, the strict transform $\Reg(\Ss)^*$ of $\Reg(\Ss)$ under $g$ is 
$$
g^{-1}(\Reg(\Ss))\cap\cl(g^{-1}(\Reg(\Ss))\setminus E_{d-1}).
$$
As $g^{-1}(\Reg(\Ss))$ is pure dimensional of dimension $d$, because it is an open semialgebraic subset of $X_{d-1}$, and $E_{d-1}$ has dimension $d-1$, we deduce that $g^{-1}(\Reg(\Ss))\setminus E_{d-1}$ is dense in $g^{-1}(\Reg(\Ss))$, so $\Reg(\Ss)^*=g^{-1}(\Reg(\Ss))$ is an open semialgebraic subset of $X_{d-1}$. Thus, $\Reg(\Ss)^*\subset\Reg(\Ss^*)$ and by \eqref{ccc}
\begin{equation*}
\Tt_2^*=\Ss^*\setminus\Reg(\Ss^*)\subset\Ss^*\setminus\Reg(\Ss)^*=g^{-1}(\Ss\setminus\Reg(\Ss))\cap\cl(g^{-1}(\Ss)\setminus E_{d-1})\subset g^{-1}(\Tt_2),
\end{equation*}
so $g(\Tt_2^*)\subset\Tt_2$.

\noindent{\sc Substep 3.5.} Let $Z$ be the Zariski closure of $\partial\Ss^*:=\cl(\Ss^*)\setminus\Reg(\Ss^*)$ in the non-singular (compact) real algebraic set $\ol{\Ss^*}^{\zar}$. We claim: {\em $Z$ is a normal-crossings divisor of $X_{d-1}=\ol{\Ss^*}^{\zar}$, $\Ss^*$ is a checkerboard set, $\Tt_1^*:=\cl(\Ss^*)\setminus\Ss^*$, $\Tt_2^*:=\Ss^*\setminus\Reg(\Ss^*)$, $\Tt_3^*:=\cl(\Ss^*)\setminus\Reg(\cl(\Ss^*))$ and $\Tt_4^*:=\Reg(\cl(\Ss^*))\setminus\Ss^*$ are unions of elements of the stratification ${\mathfrak G}(Z)$}.

We prove first: {\em $\Tt_1^*$ and $\Tt_2^*$ are unions of elements of ${\mathfrak G}(E_{d-1})$}.

Define $T_{-1}:=\varnothing$. Let $E_{d-1}^k$ be the Zariski closure of $g^{-1}(T_k\setminus T_{k-1})$ for $k=0,\ldots,d-1$, which is the union of the irreducible components of $g^{-1}(T_k)$ that are not contained in $g^{-1}(T_{k-1})$. We have $E_{d-1}=\bigcup_{k=0}^{d-1}E_{d-1}^k$ and each irreducible component of $E_{d-1}$ is an irreducible component of $E_{d-1}^k$ for exactly one $k=0,\ldots,d-1$. Conversely, each irreducible component of $E_{d-1}^k$ is an irreducible component of $E_{d-1}$, because if $H$ is an irreducible component of $E_{d-1}^k$, then $g(H)\subset T_k$ and $g(H)\not\subset T_\ell$ if $\ell<k$. 

Let $W\in{\mathfrak G}(\partial\Ss^*)$ and $L$ be the Zariski closure of $W$. As $W$ is a connected Nash manifold, $L$ is an irreducible real algebraic set. Observe that $L$ is an irreducible component of $\Sing_j(E_{d-1})$ for some $j\geq1$ and $W$ is a connected component of $\Reg(\Sing_j(E_{d-1}))$, so $W\in{\mathfrak G}(E_{d-1})$. Let $k\geq0$ be such that $g(L)\subset T_k$, but $g(L)\not\subset T_{k-1}$. Then $L\subset E_{d-1}^\ell$ for $\ell\geq k$, but $L\not\subset\bigcup_{\ell=0}^{k-1}E_{d-1}^\ell=g^{-1}(T_{k-1})$. Observe that $W$ is a connected component of $L\setminus g^{-1}(T_{k-1})$, because 
$$
L\cap\Sing(\Sing_j(E_{d-1}))=L\cap g^{-1}(T_{k-1})
$$ 
(recall that $L\subset E_{d-1}^\ell$ for $\ell\geq k$). Then $g(W)\subset\partial\Ss\cap T_k\setminus T_{k-1}=M_k$ is connected. As each connected component of $M_k$ is contained in either $\Tt_1$ or $\Tt_2$, we deduce either $g(W)\subset\Tt_1$ or $g(W)\subset\Tt_2$. If $W\cap\Tt_i^*\neq\varnothing$, then $W\subset\Tt_i^*$, because otherwise also $W\cap\Tt_j^*\neq\varnothing$ for $j\in\{1,2\}\setminus\{i\}$ and $g(W)$ meets $\Tt_1$ and $\Tt_2$, which is a contradiction because $g(W)$ is contained either in $\Tt_1$ or $\Tt_2$. Thus, $W$ is contained either in $\Tt_1^*$ or $\Tt_2^*$. As $\partial\Ss^*=\Tt_1^*\sqcup\Tt_2^*$ is covered by the elements of ${\mathfrak G}(\partial\Ss^*)\subset{\mathfrak G}(E_{d-1})$, we conclude that both $\Tt_1^*$ and $\Tt_2^*$ are unions of elements of ${\mathfrak G}(E_{d-1})$.

We show next: {\em $\Tt_3^*$ and $\Tt_4^*$ are unions of elements of ${\mathfrak G}(E_{d-1})$}.

By \eqref{sed} and \eqref{bel2} 
$$
\Ss^*\setminus E_{d-1}=g^{-1}(\Ss)\setminus E_{d-1}=g^{-1}(\Reg(\Ss))\setminus E_{d-1}=g^{-1}(\cl(\Ss))\setminus E_{d-1}=\cl(\Ss^*)\setminus E_{d-1}
$$ 
is an open and closed subset of the Nash manifold $X_{d-1}\setminus E_{d-1}$, so it is a union of connected components of the Nash manifold $X_{d-1}\setminus E_{d-1}$. Thus, $\cl(\Ss^*)\setminus E_{d-1}\subset\Reg(\cl(\Ss^*))$, so $\Tt_3^*:=\cl(\Ss^*)\setminus\Reg(\cl(\Ss^*))$ is a $(d-1)$-dimensional semialgebraic subset contained in $E_{d-1}$ (because $\cl(\Ss^*)$ is the closure of a finite union of connected components of $X_{d-1}\setminus E_{d-1}$). In fact, using (local) coordinates, one realizes that both $\Tt_3^*$ and $\cl(\Ss^*)\cap E_{d-1}$ are unions of elements of ${\mathfrak G}(E_{d-1})$ (see Examples \ref{exgs}(iii)). Thus, 
\begin{align*}
\Tt_1^*\cap\Tt_3^*&=(\cl(\Ss^*)\setminus\Ss^*)\cap(\cl(\Ss^*)\setminus\Reg(\cl(\Ss^*)))=\cl(\Ss^*)\setminus(\Ss^*\cup\Reg(\cl(\Ss^*)))\\
&=(\cl(\Ss^*)\cap E_{d-1})\setminus((\Ss^*\cup\Reg(\cl(\Ss^*)))\cap E_{d-1})
\end{align*}
is a union of elements of ${\mathfrak G}(E_{d-1})$, so $(\Ss^*\cup\Reg(\cl(\Ss^*)))\cap E_{d-1}$ is also a union of elements of ${\mathfrak G}(E_{d-1})$. As 
$$
\Ss^*\cap E_{d-1}=(\cl(\Ss^*)\cap E_{d-1})\setminus(\cl(\Ss^*)\setminus\Ss^*)=(\cl(\Ss^*)\cap E_{d-1})\setminus\Tt_1^*
$$ 
is a union of elements of ${\mathfrak G}(E_{d-1})$, we conclude
\begin{multline*}
\Tt_4^*:=(\Reg(\cl(\Ss^*))\setminus\Ss^*)=(\Reg(\cl(\Ss^*))\setminus\Ss^*)\cap E_{d-1}\\
=((\Ss^*\cup\Reg(\cl(\Ss^*)))\cap E_{d-1})\setminus(\Ss^*\cap E_{d-1})
\end{multline*}
is a union of elements of ${\mathfrak G}(E_{d-1})$. 

\noindent{\sc Current situation (1).} 
The Zariski closure of $\Ss^*$ is $X_{d-1}$, which is a non-singular (compact) real algebraic set, and the Zariski closure of $\partial\Ss^*$ is a union of irreducible components of $E_{d-1}$, which is a normal-crossings divisor of $X_{d-1}$. As $\Ss^*$ is the strict transform of $\Ss$ under $g$ and $\Ss$ is pure dimensional of dimension $d$, the restriction $g|_{\Ss^*}:\Ss^*\to\Ss$ is a proper surjective map. Take $\Rr:=g(E_{d-1})\cap\Ss$, which has dimension $\leq d-1$ and observe that $g|_{\Ss^*\setminus g^{-1}(\Rr)}:\Ss^*\setminus g^{-1}(\Rr)\to\Ss\setminus\Rr$ is a Nash diffeomorphism, because $g|_{X_{d-1}\setminus E_{d-1}}:X_{d-1}\setminus E_{d-1}\to X\setminus T_{d-1}$ is a Nash diffeomorphism. As $\Ss^*\setminus g^{-1}(\Rr)=\Ss^*\setminus E_{d-1}\subset\Ss^*\setminus\partial\Ss^*$ is a Nash manifold, its image $\Ss\setminus\Rr$ under $g|_{X_{d-1}\setminus E_{d-1}}$ is also a Nash manifold. It still remains to improve the construction to achieve that $\cl(\Ss^*)$ is a Nash manifold with corners. 

At this point we assume that the initial situation is the one quoted concerning $\Ss^*$. For the sake of simplicity we reset all the previous notations above to continue the proof.

\noindent{\sc Step 4. First drilling desingularization procedure.} 
We assume: {\em $\Ss$ is a checkerboard set (and $\Reg(\Ss)$ is in particular a connected Nash manifold), $\cl(\Ss)$ is compact, the Zariski closure $X$ of $\Ss$ is a non-singular (compact) real algebraic set, the Zariski closure $Z$ of $\cl(\Ss)\setminus\Reg(\Ss)$ is a normal-crossings divisor of $X$, the semialgebraic sets $\Tt_1:=\cl(\Ss)\setminus\Ss$, $\Tt_2:=\Ss\setminus\Reg(\Ss)$, $\Tt_3:=\cl(\Ss)\setminus\Reg(\cl(\Ss))$ and $\Tt_4:=\Reg(\cl(\Ss))\setminus\Ss$ are unions of elements of the stratification ${\mathfrak G}(Z)$}. 

By Theorem \ref{red2} applied to $\cl(\Ss)$ there exist:
\begin{itemize}
\item[{\rm(i)}] A $d$-dimensional (compact) irreducible non-singular real algebraic set $X'$ and a normal-crossings divisor $Z'\subset X'$.
\item[{\rm(ii)}] A connected Nash manifold with corners $\Qq\subset X'$ (which is a closed subset of $X'$) whose boundary $\partial\Qq$ has $Z'$ as its Zariski closure.
\item[\rm{(iii)}] A polynomial map $g:\R^n\to\R^m$ such that the restriction $g|_{\Qq}:\Qq\to\cl(\Ss)$ is proper and $g(\Qq)=\cl(\Ss)$. 
\item[\rm{(iv)}] A closed semialgebraic set $\Rr\subset\cl(\Ss)$ of dimension strictly smaller than $d$ such that $\cl(\Ss)\setminus\Rr$ and $\Qq\setminus g^{-1}(\Rr)$ are Nash manifolds and the polynomial map 
$$
g|_{\Qq\setminus g^{-1}(\Rr)}:\Qq\setminus g^{-1}(\Rr)\to\cl(\Ss)\setminus\Rr
$$ 
is a Nash diffeomorphism. 
\end{itemize}
In addition, as $X$ is compact, also $X'$ is compact.

Let $\Ss^*:=g^{-1}(\Ss)\cap\cl(g^{-1}(\Ss)\setminus\Rr)$ be the strict transform of $\Ss$ under $g$. We claim: {\em The semialgebraic sets $\Tt_1^*:=\cl(\Ss^*)\setminus\Ss^*$, $\Tt_2^*:=\Ss^*\setminus\Reg(\Ss^*)$, $\Tt_3^*:=\cl(\Ss^*)\setminus\Reg(\cl(\Ss^*))$ and $\Tt_4^*:=\Reg(\cl(\Ss^*))\setminus\Ss^*$ are unions of elements of the stratification ${\mathfrak G}(Z')$.} 

Using the properties of the drilling blow-up and especially Fact \ref{bigstepa6}, one deduces straightforwardly that the semialgebraic sets $\Tt_1^*$, $\Tt_2^*$, $\Tt_3^*$ and $\Tt_4^*$ are unions of elements of the stratification ${\mathfrak G}(Z')$. To that end, one can almost reproduce the procedure developed in {\sc Step 3} taking into account the particularities of the proof of Theorem \ref{red2}, when one applies it to $\cl(\Ss)$. 

\noindent{\sc Current situation (2).}
If $\Reg(\cl(\Ss^*))=\Reg(\Ss^*)$, then $\Tt_3^*=\cl(\Ss^*)\setminus\Reg(\Ss^*)=\partial\Ss^*$ and $\Tt_4^*=\varnothing$. Thus, $\Ss^*$ is a Nash quasi-manifold with corners and the proof is finished for this case. Consequently, to continue we suppose $\Reg(\cl(\Ss^*))\neq\Reg(\Ss^*)$. This means that $\Tt_4^*\neq\varnothing$, because otherwise $\Reg(\cl(\Ss^*))\subset\Ss^*$ and consequently $\Reg(\cl(\Ss^*))=\Reg(\Ss^*)$ (because $\Reg(\cl(\Ss^*))$ is an open semialgebraic subset of $X'$ contained in $\Ss^*$ that contains $\Reg(\Ss^*)$).

At this point we assume that the initial situation is the one quoted concerning $\Ss^*$. For the sake of simplicity we reset all the previous notations above to continue the proof.

\noindent{\sc Step 5. Second drilling desingularization procedure.}
We assume in the following: {\em $\Ss$ is a checkerboard set (and in particular $\Reg(\Ss)$ is a connected Nash manifold), $\Qq:=\cl(\Ss)$ is a compact Nash manifold with corners, the Zariski closure $X$ of $\Ss$ is a non-singular (compact) real algebraic set, the Zariski closure $Z$ of $\Qq\setminus\Reg(\Ss)$ is a normal-crossings divisor of $X$, the semialgebraic sets $\Tt_1:=\Qq\setminus\Ss$, $\Tt_2:=\Ss\setminus\Reg(\Ss)$, $\Tt_3:=\Qq\setminus\Reg(\Qq)$ and $\Tt_4:=\Reg(\Qq)\setminus\Ss\neq\varnothing$ are unions of elements of the stratification ${\mathfrak G}(Z)$}. Let us prove: {\em We may assume in addition $\Reg(\Tt_4)$ is a pure dimensional semialgebraic set of dimension $d-1$, $\partial\Tt_4=\cl(\Tt_4)\setminus\Reg(\Tt_4)\subset\partial\Qq$ and $\Tt_4=\Reg(\Tt_4)$.}

As $\Tt_4$ is a union of elements of the stratification ${\mathfrak G}(Z)$, the semialgebraic set $\cl(\Tt_4)$ is a union of elements of the stratification ${\mathfrak G}(Z)$. If $\Tt_4$ has dimension $\leq d-2$, it is contained in $\Sing(Z)$ and $\varnothing\neq\Tt_4\subset\cl(\Tt_4)\cap\Reg(\Qq)$. If $\Tt_4$ has dimension $d-1$, define $\Mm_4$ as the union of the connected components of $\Reg(\Tt_4)$ of dimension $d-1$. Observe that both $\Mm_4$ and $\cl(\Tt_4)\setminus\Mm_4$ are unions of elements of the stratification ${\mathfrak G}(Z)$. Define 
$$
\Aa(\Ss):=\begin{cases}
\cl(\Tt_4)\cap\Reg(\Qq)&\text{if $\dim(\Tt_4)\leq d-2$,}\\
(\cl(\Tt_4)\setminus\Mm_4)\cap\Reg(\Qq)&\text{if $\dim(\Tt_4)=d-1$.}
\end{cases}
$$
Observe that $\dim(\Aa(\Ss))\leq d-2$ and $\Aa(\Ss)=\varnothing$ if and only if $\dim(\Tt_4)=d-1$ and $\cl(\Tt_4)\setminus\Mm_4\subset\partial\Qq$. As $\Tt_4=\Reg(\Qq)\setminus\Ss$, this means that $\Reg(\Tt_4)=\Mm_4$ is a pure dimensional semialgebraic set of dimension $d-1$. We next develop an inductive procedure to reduce to the latter case.
 
Let $Y$ be the Zariski closure of $\Aa(\Ss)$, which is a union of irreducible components of $\Sing_\ell(Z)$ for $\ell=1,\ldots,d-2$, maybe of different dimensions and denote $\Sing_0(Y):=Y$. Let $e$ be the dimension of $Y$ and $Y_k$ the union of $\Sing_{e-k}(Y)$, which is either the empty set or a real algebraic set of dimension $k$, and the irreducible components of $Y$ of dimension $k$ for $k=0,\ldots,e$. 

If $\Aa(\Ss)=\varnothing$, define $\ell(\Ss)=d-1$ and $m(\Ss)=0$. If $\Aa(\Ss)\neq\varnothing$, let $\ell:=\ell(\Ss)\leq e$ be the minimum value $k$ such that $Y_k\neq\varnothing$ and $m:=m(\Ss)$ the number of irreducible components of $Y_\ell$. Observe that $Y_\ell$ is a pure dimensional non-singular (compact) real algebraic set of dimension $\leq d-2$. We proceed by double induction on $\ell$ and $m$.

Let $W$ be an irreducible component of $Y_\ell$. Let $(\widehat{X},\widehat{\pi})$ be the twisted Nash double of the drilling blow-up $(\widetilde{X},\pi_+)$ of $X$ with center $W$, which is by Section \ref{addbu} a real algebraic set. Let 
$$
\Qq^*:=\pi_+^{-1}(\Qq)\cap\cl(\pi_+^{-1}(\Qq\setminus W))
$$
be the strict transform of $\Qq$ under $\pi_+$. As $\Qq$ is pure dimensional and $Y_\ell\subset Z$ has dimension strictly smaller, $\Qq\setminus W$ is dense in $\Qq$, so $\pi_+(\Qq^*)=\Qq$, because $\pi_+:\widetilde{X}\to X$ is proper and surjective. By Lemma \ref{exs2} $\Qq^*$ is a checkerboard set and a Nash manifold with corners such that $\pi_+^{-1}(W)\cap\Qq^*\subset\partial\Qq^*$. Let $\Ss^*:=\pi_+^{-1}(\Ss)\cap\cl(\pi_+^{-1}(\Ss\setminus W))$ be the strict transform of $\Ss^*$ under $\pi_+$, which keeps the same properties required to $\Ss$ (to check this fact one proceeds similarly as we have done in {\sc Steps 3} and 4). As $\pi_+^{-1}(W)\cap\Qq^*\subset\partial\Qq^*$, we deduce $\Aa(\Ss^*)=\pi_+^{-1}(\Aa(\Ss)\setminus W)$, so $m(\Ss^*)=m(\Ss)-1$ and 
$$
\ell(\Ss^*)\begin{cases}
>\ell(\Ss)&\text{if $m(\Ss)=1$,}\\
=\ell(\Ss)&\text{if $m(\Ss)>1$.}
\end{cases}
$$
The restriction $\pi_+|_{\Ss^*}:\Ss^*\to\Ss$ is a surjective proper polynomial map and if $\Rr:=\partial\Ss\cup W$, the restriction $\pi_+|_{\Ss^*\setminus\pi_+^{-1}(\Rr)}:\Ss^*\setminus\pi_+^{-1}(\Rr)\to\Ss\setminus\Rr$ is a Nash diffeomorphism. 

We proceed inductively and after finitely many steps we may assume $\Aa(\Ss)=\varnothing$. Under such assumption, we have: $\Reg(\Tt_4)=\Tt_4$. 

As $\Tt_4=\Reg(\Qq)\setminus\Ss\subset\Reg(\Qq)$ and $\partial\Tt_4=\cl(\Tt_4)\setminus\Reg(\Tt_4)\subset\partial\Qq$, we deduce $\Tt_4\cap\partial\Tt_4\subset\Reg(\Qq)\cap\partial\Qq=\varnothing$, so $\Tt_4=\Reg(\Tt_4)$.

\noindent{\sc Step 6. Final drilling desingularization procedure.} After resetting notations, we assume in the following: {\em $\Ss$ is a checkerboard set (and in particular $\Reg(\Ss)$ is a connected Nash manifold), $\Qq:=\cl(\Ss)$ is a compact Nash manifold with corners, the Zariski closure $X$ of $\Ss$ is a non-singular (compact) real algebraic set, the Zariski closure $Z$ of $\Qq\setminus\Reg(\Ss)$ is a normal-crossings divisor of $X$, the semialgebraic sets $\Tt_1:=\Qq\setminus\Ss$, $\Tt_2:=\Ss\setminus\Reg(\Ss)$, $\Tt_3:=\Qq\setminus\Reg(\Qq)$, and $\Tt_4:=\Reg(\Qq)\setminus\Ss\neq\varnothing$ are unions of elements of the stratification ${\mathfrak G}(Z)$. In addition, $\Tt_4=\Reg(\Tt_4)$ is a pure dimensional semialgebraic set of dimension $d-1$ and $\partial\Tt_4=\cl(\Tt_4)\setminus\Reg(\Tt_4)\subset\partial\Qq$.}

In order to finish the proof, we will take advantage of Fact \ref{locald-1}. Except for the initial embedding of $\R\PP^m$ in $\R^p$, which is a regular map, until this step all the involved maps are polynomial. As we will perform the drilling blow-up of a Nash submanifold of dimension $d-1$, we have to proceed carefully in order to not disconnect the regular locus of $\Ss$ (Example \ref{truffa}). Thus, in the following the involved maps are a priori only Nash maps.

Let $C_1,\ldots,C_m$ be the connected components of $\Tt_4$. Each intersection $\cl(C_i)\cap C_j=\varnothing$ for $i\neq j$, because $\cl(C_i)\cap C_j=\cl(C_i)\cap\Tt_4\cap C_j=C_i\cap C_j=\varnothing$ (we have used that the connected components of $\Tt_4$ are pairwise disjoint closed subsets of $\Tt_4$). As $\Reg(C_i)=C_i$, we have
\begin{equation}\label{dci}
\partial\Tt_4=\cl(\Tt_4)\setminus\Reg(\Tt_4)=\cl(\Tt_4)\setminus\Tt_4=\cl\Big(\bigcup_{i=1}^mC_i\Big)\setminus\bigcup_{i=1}^mC_i=\bigcup_{i=1}^m\cl(C_i)\setminus C_i=\bigcup_{i=1}^m\partial C_i.
\end{equation}
Consequently, $\partial C_i\subset\partial\Tt_4\subset\partial\Qq$ for $i=1,\ldots,m$. As $\Reg(\Tt_4)=\Tt_4$ is a union of elements of the stratification ${\mathfrak G}(Z)$, each connected component $C_i$ of $\Reg(\Tt_4)$ is a union of elements of the stratification ${\mathfrak G}(Z)$.

\noindent{\sc Substep 6.1.} We claim: {\em $\cl(C_i)\cap\cl(C_j)=\varnothing$ if $i\neq j$}.

Assume $\cl(C_1)\cap\cl(C_2)\neq\varnothing$. As $\cl(C_i)\cap C_j=\varnothing$ if $i\neq j$, we deduce 
\begin{equation}\label{clcicj}
\cl(C_1)\cap\cl(C_2)=(\cl(C_1)\setminus C_1)\cap(\cl(C_2)\setminus C_2)=\partial C_1\cap\partial C_2\subset\partial\Qq.
\end{equation}
Pick $x\in\cl(C_1)\cap\cl(C_2)$ and let $U\subset X$ be an open semialgebraic neighborhood of $x$ such that $Z\cap U=\{\x_1\cdots\x_r=0\}$ in (local) coordinates. As ${\mathfrak G}(Z)$ is compatible with $C_1,C_2$, which are connected components of the Nash manifold $\Tt_4=\Reg(\Tt_4)$, there exist indices $1\leq i,j\leq r$ such that $C_1\cap U\subset\{\x_i=0\}$ and $C_2\cap U\subset\{\x_j=0\}$. If $i=j$, we suppose $\Qq\cap U=\{\x_1\geq0,\ldots,\x_s\geq0\}$ for some $1\leq s\leq r-1$, $C_1\cap U\subset\{\x_r=0\}$ and $C_2\cap U\subset\{\x_r=0\}$. As ${\mathfrak G}(Z)$ is compatible with $C_1$ and $C_2$, we may assume 
\begin{align*}
\{\x_1\geq0,\ldots,\x_s\geq0,\x_{s+1}\geq0,\ldots,\x_{r-2}\geq0,\x_{r-1}\geq0,\x_r=0\}&\subset\cl(C_1)\cap U,\\
\{\x_1\geq0,\ldots,\x_s\geq0,\x_{s+1}*_{s+1}0,\ldots,\x_{r-2}*_{r-2}0,\x_{r-1}*_{r-1}0,\x_r=0\}&\subset\cl(C_2)\cap U,
\end{align*}
where $*_j\in\{\geq,\leq\}$ for $j=s+1,\ldots,r-1$. By \eqref{clcicj}
\begin{multline*}
\{\x_1\geq0,\ldots,\x_s\geq0,\x_{s+1}=0,\ldots,\x_r=0\}\subset\cl(C_1)\cap\cl(C_2)\cap U\\
\subset\partial\Qq\cap U=\bigcup_{i=1}^s\{\x_1\geq0,\ldots,\x_s\geq0,\x_i=0\},
\end{multline*}
which is a contradiction. If $i\neq j$, we suppose $\Qq\cap U=\{\x_1\geq0,\ldots,\x_s\geq0\}$ for some $1\leq s\leq r-2$, $C_1\cap U\subset\{\x_{r-1}=0\}$ and $C_2\cap U\subset\{\x_r=0\}$. As ${\mathfrak G}(Z)$ is compatible with $C_1$ and $C_2$, we may assume 
\begin{align*}
\{\x_1\geq0,\ldots,\x_s\geq0,\x_{s+1}\geq0,\ldots,\x_{r-2}\geq0,\x_{r-1}=0,\x_r\geq0\}&\subset\cl(C_1)\cap U,\\
\{\x_1\geq0,\ldots,\x_s\geq0,\x_{s+1}*_{s+1}0,\ldots,\x_{r-2}*_{r-2}0,\x_{r-1}\geq0,\x_r=0\}&\subset\cl(C_2)\cap U,
\end{align*}
where $*_j\in\{\geq,\leq\}$ for $j=s+1,\ldots,r-2$. By \eqref{clcicj}
\begin{multline*}
\{\x_1\geq0,\ldots,\x_s\geq0,\x_{s+1}=0,\ldots,\x_r=0\}\subset\cl(C_1)\cap\cl(C_2)\cap U\\
\subset\partial\Qq\cap U=\bigcup_{i=1}^s\{\x_1\geq0,\ldots,\x_s\geq0,\x_i=0\},
\end{multline*}
which is a contradiction. Consequently, $\cl(C_1)\cap\cl(C_2)=\varnothing$, as claimed.

\noindent{\sc Substep 6.2.} Let $\Gamma$ be a stratum of ${\mathfrak G}(Z)$ contained in $\Qq$ such that $\Gamma$ is not contained in $\cl(C_i)$. We prove next: {\em If the Zariski closure of $\Gamma$ is contained in the Zariski closure of $C_i$, then $\cl(\Gamma)\cap\cl(C_i)=\varnothing$}.

As $\Gamma$ is not contained in $\cl(C_i)$ and the stratification ${\mathfrak G}(Z)$ is compatible with $C_i$, we have $\cl(\Gamma)\cap C_i=\varnothing$, so 
$$
\cl(\Gamma)\cap\cl(C_i)=\cl(\Gamma)\cap(\cl(C_i)\setminus C_i)=\cl(\Gamma)\cap\partial C_i.
$$
Suppose $\cl(\Gamma)\cap\cl(C_i)\neq\varnothing$, pick $x\in\cl(\Gamma)\cap\cl(C_i)$ and let $U\subset X$ be an open semialgebraic neighborhood of $x$ such that $Z\cap U=\{\x_1\cdots\x_r=0\}$ in (local) coordinates. We may assume $\Qq\cap U=\{\x_1\geq0,\ldots,\x_s\geq0\}$ for some $1\leq s\leq r-1$ and $C_i\cap U\subset\{\x_r=0\}$. As the Zariski closure of $\Gamma$ is contained in the Zariski closure of $C_i$ and $x\in\cl(\Gamma)\cap\cl(C_i)\cap U$, we deduce $\cl(\Gamma)\cap U\subset\Qq\cap U\cap\{\x_r=0\}$. In addition, $\partial C_i\cap U\subset\partial\Qq\cap U=\bigcup_{i=1}^s\{\x_1\geq0,\ldots,\x_s\geq0,\x_i=0\}$. As ${\mathfrak G}(Z)$ is compatible with $C_i$ and $\partial C_i\subset\partial\Qq$ (see \eqref{dci}), we conclude
\begin{align*}
C_i\cap U&=\{\x_1>0,\ldots,\x_s>0,\x_r=0\}=\Reg(\Qq)\cap U\cap\{\x_r=0\},\\
\cl(C_i)\cap U&=\{\x_1\geq0,\ldots,\x_s\geq0,\x_r=0\}=\Qq\cap U\cap\{\x_r=0\},
\end{align*}
so $\cl(\Gamma)\cap U\subset\Qq\cap U\cap\{\x_r=0\}=\cl(C_i)\cap U$, which is a contradiction. Thus, $\cl(\Gamma)\cap\cl(C_i)=\varnothing$.

\noindent{\sc Substep 6.3.} Recall that $\dim(C_i)=d-1$ for each $i=1,\ldots,m$. The Zariski closure of $C_i$ is the irreducible component $Z_i$ of $Z$ that contains $C_i$. The semialgebraic set $Z_i\cap\Qq\setminus\cl(C_i)$ is a union of elements of ${\mathfrak G}(Z)$ and it is closed, because otherwise there exists a stratum $\Gamma$ of ${\mathfrak G}(Z)$ contained in $\Qq$ such that $\Gamma\not\subset\cl(C_i)$ but $\cl(\Gamma)\cap\cl(C_i)\neq\varnothing$, which is a contradiction by {\sc Substep 6.2}. Consider the closed semialgebraic set $K_i:=(Z_i\cap\Qq\setminus\cl(C_i))\cup\bigcup_{j\neq i}\cl(C_j)$ and observe that $K_i\cap\cl(C_i)=\varnothing$. As both semialgebraic sets are compact (recall that the Zariski closure of $\Ss$ is compact) and disjoint,
$$
\veps:=\frac{1}{2}\min\{\dist(K_i,\cl(C_i)):\ i=1,\ldots,m\}>0.
$$ 
Define $U_i:=\{x\in Z_i:\ \dist(x,\cl(C_i))<\veps\}$, which is an open semialgebraic neighborhood of $\cl(C_i)$ in $Z_i$. We claim: {\em 
\begin{itemize}
\item[(1)] The union $N:=\bigcup_{i=1}^mU_i$ is a closed Nash submanifold of the Nash manifold $M_0:=X\setminus\bigcup_{i=1}(\cl(U_i)\setminus U_i)$, 
\item[(2)] $\Qq\subset M_0$ and 
\item[(3)] $\Qq\cap N=\bigcup_{i=1}^m\cl(C_i)=\cl(\Tt_4)$.
\end{itemize}
}

We prove first (1). It is clear that $N$ is a closed subset of $M_0$. As each $U_i$ is an open semialgebraic subset of the Nash manifold $Z_i$, to prove that $N\subset M_0$ is a Nash manifold, it is enough to show that $\cl(U_i)\cap\cl(U_j)=\varnothing$ if $i\neq j$. 
If there exists $x\in\cl(U_i)\cap\cl(U_j)$, then 
\begin{multline*}
\dist(\cl(C_i),\cl(C_j))\leq\dist(x,\cl(C_i))+\dist(x,\cl(C_j))\\
<2\veps\leq\dist(K_i,\cl(C_i))\leq\dist(\cl(C_i),\cl(C_j)),
\end{multline*}
which is a contradiction. Consequently, the semialgebraic sets $\cl(U_i)$ for $i=1,\ldots,m$ are pairwise disjoint and $N$ is a closed Nash submanifold of $M_0$.

We check next (2): $\Qq\subset M_0$.

Suppose there exists $x\in\Qq\cap(\cl(U_i)\setminus U_i)$, so 
$$
x\in Z_i\cap\Qq\cap(\cl(U_i)\setminus U_i)\subset((Z_i\cap\Qq)\setminus\cl(C_i))\cap\cl(U_i),
$$ 
because $\cl(C_i)\subset U_i$. As $x\in\cl(U_i)$ and $x\in(Z_i\cap\Qq)\setminus\cl(C_i)$, we have 
\begin{equation*}
\dist(x,\cl(C_i))\leq\veps\leq\frac{1}{2}\dist(K_i,\cl(C_i))\leq\frac{1}{2}\dist(Z_i\cap\Qq\setminus\cl(C_i),\cl(C_i))\leq\frac{1}{2}\dist(x,\cl(C_i)),
\end{equation*}
which is a contradiction. Consequently, $\Qq\cap(\cl(U_i)\setminus U_i)=\varnothing$ for each $i=1,\ldots,m$. Thus, $\Qq\subset X\setminus\bigcup_{i=1}(\cl(U_i)\setminus U_i)=M_0$.

As $\cl(\Tt_4)=\bigcup_{i=1}^m\cl(C_i)\subset\Qq\cap\bigcup_{i=1}^mU_i=\Qq\cap N$, to prove (3): $\Qq\cap N=\cl(\Tt_4)$, it is enough to check: $\Qq\cap U_i\subset\cl(C_i)$ or, equivalently, $\Qq\cap(U_i\setminus\cl(C_i))=\varnothing$ for $i=1,\ldots,m$. 

If $x\in\Qq\cap(U_i\setminus\cl(C_i))\subset Z_i$, then $x\in U_i$ and $x\in Z_i\cap\Qq\setminus\cl(C_i)$, which is a contradiction as we have seen when proving (2). Thus, $\Qq\cap U_i\subset\cl(C_i)$ for $i=1,\ldots,m$.

\noindent{\sc Substep 6.4.} As $\partial\Tt_4\subset\partial\Qq$, we have $\cl(\Tt_4)\cap\Reg(\Qq)=\Reg(\Tt_4)\cap\Reg(\Qq)$, so $\cl(\Tt_4)\cap\Reg(\Qq)=\Tt_4\cap\Reg(\Qq)$. As $\Tt_4=\Reg(\Qq)\setminus\Ss$, the difference
$$
\Reg(\Qq)\setminus\cl(\Tt_4)=\Reg(\Qq)\setminus\Tt_4=\Ss\cap\Reg(\Qq)\subset\Ss
$$
is an open semialgebraic subset of $X$ contained in $\Ss$, so $\Ss\cap\Reg(\Qq)\subset\Reg(\Ss)$. As $\Reg(\Ss)\subset\Ss\cap\Reg(\Qq)$, we conclude $\Reg(\Ss)=\Reg(\Qq)\setminus\cl(\Tt_4)=\Reg(\Qq)\setminus(\Qq\cap N)=\Reg(\Qq)\setminus N$. In general, $N$ is not a real algebraic set and its Zariski closure $Y$ is not an option because $\Reg(\Qq)\setminus Y$ might be disconnected (Example \ref{truffa}). Thus, the following drilling blow-up is in general only a Nash map. Let $(\widehat{M},\widehat{\pi})$ be the twisted Nash double of the drilling blow-up $(\widetilde{M},\pi_+)$ of $M:=X$ with center the closed Nash submanifold $N$ of $M$, which is by Section \ref{gdndbu} a Nash manifold. As $\widehat{\pi}:\widehat{M}\to M$ is proper and surjective and $M$ is compact, also $\widehat{M}$ is compact. We have denoted $X$ by $M$ in order to stress that $\widehat{M}$ is a compact Nash manifold, which is not in general a non-singular real algebraic set (but only one of its compact connected components). Let $\Qq^\bullet:=\pi_+^{-1}(\Qq)\cap\cl(\pi_+^{-1}(\Qq\setminus N))$ be the strict transform of $\Qq$ under $\pi_+$. As $\Qq$ is pure dimensional and $N\subset Z$ has dimension strictly smaller, $\Qq\setminus N$ is dense in $\Qq$, so $\pi_+(\Qq^\bullet)=\Qq$, because $\pi_+:\widetilde{M}\to M$ is proper and surjective. By Facts \ref{bigstepa6} and \ref{locald-1} $\Qq^\bullet$ is a Nash manifold with corners such that $\pi_+^{-1}(N)\cap\Qq^\bullet\subset\partial\Qq^\bullet$. Observe that $\Qq^\bullet\setminus\pi_+^{-1}(N)=\pi_+^{-1}(\Qq\setminus N)$ is Nash diffeomorphic to $\Qq\setminus N$. Thus, 
\begin{equation*}
\Sth(\Qq^\bullet)=\Sth(\pi_+^{-1}(\Qq\setminus N))=\pi_+^{-1}(\Reg(\Qq\setminus N))=\pi_+^{-1}(\Reg(\Qq)\setminus N)=\pi_+^{-1}(\Reg(\Ss)),
\end{equation*} 
so $\Sth(\Qq^\bullet)$ is connected, because $\pi_+|_{\widetilde{M}\setminus\pi_+^{-1}(N)}:\widetilde{M}\setminus\pi_+^{-1}(N)\to M\setminus N$ is a Nash diffeomorphism.
Let $\Ss^\bullet:=\pi_+^{-1}(\Ss)\cap\cl(\pi_+^{-1}(\Ss\setminus N))$ be the strict transform of $\Ss$ under $\pi_+$, which keeps the same properties required to $\Ss$ if one changes the operator $\Reg(\cdot)$ by the operator $\Sth(\cdot)$ in each case. To check this fact one proceeds similarly as we have done in {\sc Steps 3}, 4 and 5. In addition, 
$$
\Sth(\Ss^\bullet)\subset\Sth(\Qq^\bullet)=\pi_+^{-1}(\Reg(\Ss))\subset\Sth(\Ss^\bullet)
$$ 
(because $\Reg(\Ss)\subset M\setminus N$), so $\Sth(\Ss^\bullet)=\Sth(\Qq^\bullet)$. Observe that $\pi_+|_{\Ss^\bullet}:\Ss^\bullet\to\Ss$ is a surjective proper Nash map and if $\Rr:=\Ss\setminus\Reg(\Ss)$, the restriction $\pi_+|_{\Ss^\bullet\setminus\pi_+^{-1}(\Rr)}:\Ss^\bullet\setminus\pi_+^{-1}(\Rr)\to\Ss\setminus\Rr$ is a Nash diffeomorphism.

\noindent{\sc Substep 6.5.} To finish we shall `algebrize' our construction as much as possible. Recall that by \cite[Thm.1.1]{ak} the pair constituted by a compact Nash manifold and a Nash normal-crossings divisor is diffeomorphic to a pair constituted by a non-singular (compact) real algebraic set and a normal-crossings divisor and the previous diffeomorphism preserves Nash irreducible components of the corresponding Nash normal-crossings divisor. By the proof of the approximation results \cite[Thm.1.7 \& Prop.8.2]{bfr} modified to fit our situation ,we may assume that the previous diffeomorphism is in addition a Nash diffeomorphism. To that end, we have to substitute Efroymson's approximation result \cite[Thm.II.4.1]{sh} for differentiable semialgebraic functions on a Nash manifold by Nash functions by Stone-Weierstrass approximation for differentiable functions on differentiable manifolds by polynomial functions (see also \cite[\S6]{gs}). 

Using the previous fact and \cite[Lem.8.3 \& Lem.C.1]{fe3}, we may assume in addition (using a suitable Nash embedding of $\widetilde{M}$ in some affine space) that the Nash quasi-manifold with corners $\Ss^\bullet$ is a checkerboard set, the Nash manifold with corners $\Qq^\bullet=\cl(\Ss^\bullet)$ is a checkerboard set, the Zariski closure $X^\bullet$ of $\Ss^\bullet$ is a connected compact irreducible non-singular real algebraic set, the Zariski closure $Z^\bullet$ of $\partial\Ss^\bullet=\Qq^\bullet\setminus\Reg(\Ss^\bullet)=\Qq^\bullet\setminus\Reg(\Qq^\bullet)=\partial\Qq^\bullet$ is a normal-crossings divisor of $X^\bullet$ and the stratification ${\mathfrak G}(Z^\bullet)$ is compatible with $\Ss^\bullet\setminus\Reg(\Ss^\bullet)$. 
\end{proof}

\begin{figure}[!ht]
\begin{center}
\begin{tikzpicture}[scale=0.75]
\draw[fill=gray!50,opacity=0.4,draw=none] (0,0) -- (0,5) -- (5,5) -- (5,0) -- (0,0);
\draw[fill=white,draw=none] (1,2.5) -- (1,4) -- (2.5,4) -- (2.5,2.5) -- (1,2.5);
\draw[fill=white,draw=none] (2.5,1) -- (4,1) -- (4,2.5) -- (2.5,2.5) -- (2.5,1);

\draw[line width=1pt, dotted] (0,0) -- (0,5);
\draw[line width=1pt] (0,0) -- (2.5,0);
\draw[line width=1pt, dotted] (2.5,0) -- (5,0);

\draw[line width=1pt] (0,5) -- (1,5);
\draw[line width=1pt, dotted] (1,5) -- (3,5);
\draw[line width=1pt] (3,5) -- (5,5);

\draw[line width=1pt] (5,0) -- (5,5);

\draw[line width=1pt] (1,2.5) -- (1,4);
\draw[line width=1pt] (1,4)-- (2.5,4);
\draw[line width=1pt, dotted] (2.5,4) -- (2.5,2.5);
\draw[line width=1pt] (2.5,2.5) -- (1,2.5);

\draw[line width=1pt, dotted] (4,1) -- (2.5,1);
\draw[line width=1pt] (2.5,1) -- (2.5,2.5);
\draw[line width=1pt] (2.5,2.5) -- (4,2.5);
\draw[line width=1pt] (4,1) -- (4,2.5);

\draw[fill=black,draw] (0,0) circle (0.5mm);
\draw[fill=white,draw] (5,0) circle (0.5mm);
\draw[fill=white,draw] (0,5) circle (0.5mm);
\draw[fill=black,draw] (5,5) circle (0.5mm);

\draw[fill=white,draw] (1,5) circle (0.5mm);
\draw[fill=black,draw] (3,5) circle (0.5mm);
\draw[fill=white,draw] (2.5,0) circle (0.5mm);

\draw[fill=white,draw] (1,4) circle (0.5mm);
\draw[fill=black,draw] (1,2.5) circle (0.5mm);
\draw[fill=black,draw] (2.5,2.5) circle (0.5mm);
\draw[fill=white,draw] (2.5,1) circle (0.5mm);
\draw[fill=white,draw] (2.5,4) circle (0.5mm);
\draw[fill=white,draw] (4,1) circle (0.5mm);
\draw[fill=black,draw] (4,2.5) circle (0.5mm);

\filldraw[color=white, fill=white] (3,5) circle (0.2);
\draw[line width=1pt] (2.8,5) arc (180:360:0.2cm);
\draw[fill=white,draw] (2.8,5) circle (0.5mm);
\draw[fill=black,draw] (3.2,5) circle (0.5mm);

\filldraw[color=white, fill=white] (1,2.5) circle (0.2);
\draw[line width=1pt] (1,2.7) arc (90:360:0.2cm);
\draw[fill=black,draw] (1,2.7) circle (0.5mm);
\draw[fill=black,draw] (1.2,2.5) circle (0.5mm);

\filldraw[color=white, fill=white] (4,2.5) circle (0.2);
\draw[line width=1pt] (4,2.3) arc (-90:180:0.2cm);
\draw[fill=black,draw] (4,2.3) circle (0.5mm);
\draw[fill=black,draw] (3.8,2.5) circle (0.5mm);

\filldraw[color=white, fill=white] (2.5,2.5) circle (0.2);
\draw[line width=1pt] (2.7,2.5) arc (0:90:0.2cm);
\draw[line width=1pt] (2.3,2.5) arc (180:270:0.2cm);
\draw[fill=black,draw] (2.7,2.5) circle (0.5mm);
\draw[fill=white,draw] (2.5,2.7) circle (0.5mm);
\draw[fill=black,draw] (2.5,2.3) circle (0.5mm);
\draw[fill=black,draw] (2.3,2.5) circle (0.5mm);

\filldraw[color=white, fill=white] (1,4) circle (0.2);
\draw[ dotted, line width=1pt] (1.2,4) arc (0:270:0.2cm);
\draw[fill=black,draw] (1.2,4) circle (0.5mm);
\draw[fill=black,draw] (1,3.8) circle (0.5mm);

\filldraw[color=white, fill=white] (2.5,4) circle (0.2);
\draw[ dotted, line width=1pt] (2.5,3.8) arc (-90:180:0.2cm);
\draw[fill=white,draw] (2.5,3.8) circle (0.5mm);
\draw[fill=black,draw] (2.3,4) circle (0.5mm);

\filldraw[color=white, fill=white] (2.5,1) circle (0.2);
\draw[ dotted, line width=1pt] (2.5,1.2) arc (90:360:0.2cm);
\draw[fill=black,draw] (2.5,1.2) circle (0.5mm);
\draw[fill=white,draw] (2.7,1) circle (0.5mm);

\filldraw[color=white, fill=white] (4,1) circle (0.2);
\draw[ dotted, line width=1pt] (3.8,1) arc (-180:90:0.2cm);
\draw[fill=black,draw] (4,1.2) circle (0.5mm);
\draw[fill=white,draw] (3.8,1) circle (0.5mm);

\filldraw[color=white, fill=white] (2.5,0) circle (0.2);
\draw[ dotted, line width=1pt] (2.7,0) arc (0:180:0.2cm);
\draw[fill=black,draw] (2.3,0) circle (0.5mm);
\draw[fill=white,draw] (2.7,0) circle (0.5mm);

\filldraw[color=white, fill=white] (1,5) circle (0.2);
\draw[ dotted, line width=1pt] (0.8,5) arc (180:360:0.2cm);
\draw[fill=black,draw] (0.8,5) circle (0.5mm);
\draw[fill=white,draw] (1.2,5) circle (0.5mm);

\draw[fill=gray!50,opacity=0.4,draw=none] (8,0) -- (8,5) -- (13,5) -- (13,0) -- (8,0);
\draw[fill=white,draw=none] (9,2.5) -- (9,4) -- (10.5,4) -- (10.5,2.5) -- (9,2.5);
\draw[fill=white,draw=none] (10.5,1) -- (12,1) -- (12,2.5) -- (10.5,2.5) -- (10.5,1);

\draw[line width=1pt, dotted] (8,0) -- (8,5);
\draw[line width=1pt] (8,0) -- (10.5,0);
\draw[line width=1pt, dotted] (10.5,0) -- (13,0);

\draw[line width=1pt] (8,5) -- (9,5);
\draw[line width=1pt, dotted] (9,5) -- (11,5);
\draw[line width=1pt] (11,5) -- (13,5);
\draw[line width=1pt] (13,0) -- (13,5);

\draw[line width=1pt] (9,2.5) -- (9,4);
\draw[line width=1pt] (9,4)-- (10.5,4);
\draw[line width=1pt, dotted] (10.5,4) -- (10.5,2.5);
\draw[line width=1pt] (10.5,2.5) -- (9,2.5);

\draw[line width=1pt, dotted] (12,1) -- (10.5,1);
\draw[line width=1pt] (10.5,1) -- (10.5,2.5);
\draw[line width=1pt] (10.5,2.5) -- (12,2.5);
\draw[line width=1pt] (12,1) -- (12,2.5);

\draw[fill=black,draw] (8,0) circle (0.5mm);
\draw[fill=white,draw] (13,0) circle (0.5mm);
\draw[fill=white,draw] (8,5) circle (0.5mm);
\draw[fill=black,draw] (13,5) circle (0.5mm);

\draw[fill=white,draw] (9,5) circle (0.5mm);
\draw[fill=black,draw] (11,5) circle (0.5mm);
\draw[fill=white,draw] (10.5,0) circle (0.5mm);

\draw[fill=white,draw] (9,4) circle (0.5mm);
\draw[fill=black,draw] (9,2.5) circle (0.5mm);
\draw[fill=black,draw] (10.5,2.5) circle (0.5mm);
\draw[fill=white,draw] (10.5,1) circle (0.5mm);
\draw[fill=white,draw] (10.5,4) circle (0.5mm);
\draw[fill=white,draw] (12,1) circle (0.5mm);
\draw[fill=black,draw] (12,2.5) circle (0.5mm);

\draw (3.5,3.5) node{$\Ss^\bullet$};
\draw (11.5,3.5) node{$\Ss$};
\draw[->] (5.5 ,2.5)--(7.5, 2.5);
\draw (6.5,2.9) node{\small $f|_{\Ss^\bullet}$};
\end{tikzpicture}
\end{center}

\caption{\small{Nash uniformization of the checkerboard set $\Ss$ (right) by the Nash quasi-manifold with corners $\Ss^\bullet$ (left).}}
\end{figure}

\vspace{3mm}
\begin{example}\label{truffa}
Let $X:=\{\x_1^2+\cdots+\x_n^2=1\}\subset\R^n$ and $\Ss:=X\cap\{\x_n^2\leq\tfrac{1}{4}\}\setminus\{\x_{n-2}\leq0,\x_{n-1}=0\}$, which is a checkerboard set whose Zariski closure is $X$. The real algebraic set $X$ is the $(n-1)$-dimensional unit sphere, so it is compact and non-singular. The closure $\cl(\Ss)=X\cap\{\x_n^2\leq\frac{1}{4}\}$ is a compact Nash manifold with corners. Observe that $\Reg(\Ss)=\Ss\cap\{\x_n^2<\frac{1}{4}\}$, so 
$$
\cl(\Ss)\setminus\Reg(\Ss)=(X\cap\{\x_n^2=\tfrac{1}{4}\})\cup(X\cap\{\x_{n-2}\leq0,\x_{n-1}=0\}\cap\{\x_n^2\leq\tfrac{1}{4}\}).
$$
The Zariski closure of $\cl(\Ss)\setminus\Reg(\Ss)$ is
$$
Z:=(X\cap\{\x_n=\tfrac{1}{2}\})\cup(X\cap\{\x_n=-\tfrac{1}{2}\})\cup(X\cap\{\x_{n-1}=0\}),
$$
which is a normal-crossings divisor of $X$. Denote $\Qq:=\cl(\Ss)$. The semialgebraic sets
\begin{align*}
\Tt_1:&=\Qq\setminus\Ss=X\cap\{\x_{n-2}\leq0,\x_{n-1}=0\}\cap\{\x_n^2\leq\tfrac{1}{4}\},\\
\Tt_2:&=\Ss\setminus\Reg(\Ss)=X\cap\{\x_n^2=\tfrac{1}{4}\}\setminus\{\x_{n-2}\leq0,\x_{n-1}=0\},\\
\Tt_3:&=\Qq\setminus\Reg(\Qq)=X\cap\{\x_n^2=\tfrac{1}{4}\},\\
\Tt_4:&=\Reg(\Qq)\setminus\Ss=X\cap\{\x_{n-2}\leq0,\x_{n-1}=0\}\setminus\{\x_n^2=\tfrac{1}{4}\}\neq\varnothing
\end{align*}
are unions of elements of the stratification ${\mathfrak G}(Z)$. In addition, $\Reg(\Tt_4)$ is a pure dimensional semialgebraic set of dimension $d-1$ and
$$
\partial\Tt_4=\cl(\Tt_4)\setminus\Reg(\Tt_4)=X\cap\{\x_n^2=\tfrac{1}{4}\}\cap\{\x_{n-2}\leq0,\x_{n-1}=0\}\subset\partial\Qq.
$$
Thus, we are under the hypothesis of {\sc Step 6} of the Proof of Theorem \ref{red4}. We consider as $N:=X\cap\{\x_{n-2}<0,\x_{n-1}=0\}$. If we consider the Zariski closure $Y:=X\cap\{\x_{n-1}=0\}$ of $N$, we have that $\Reg(\Ss)\setminus Y$ has two connected components, which are $X\cap\{\x_{n-1}>0,\x_n^2<\frac{1}{4}\}$ and $X\cap\{\x_{n-1}<0,\x_n^2<\frac{1}{4}\}$. This means that we cannot take $Y$ instead of $N$ to perform the drilling blow-up of $X$ with center $Y$, because $\Reg(\Ss)\setminus Y$ is not connected, whereas $\Reg(\Qq)\setminus N=\Reg(\Ss)\setminus N=X\cap\{\x_n^2<\frac{1}{4}\}\setminus\{\x_{n-2}\leq0,\x_{n-1}=0\}=\Reg(\Ss)$ is connected. Consequently, when one applies the procedure of {\sc Step 6} of the Proof of Theorem \ref{red4}, the reasonable choice for the center of the drilling blow-up is $N$.
\hfill$\sqbullet$
\end{example}

\subsection{Application 2: Nash uniformization of general semialgebraic sets}
We prove next Corollary \ref{amal}, which is the combination of Bierstone-Parusinski's desingularization of semialgebraic sets \cite[Thm.1.1., Rmks.2.3 \& 2.6]{bp} with Theorem \ref{red4}. We take advantage once more of Theorem \ref{ridwell2} (see \cite[Thm.8.4]{fe3}), whose proof uses implicitly the same strategy as \cite[Thm.1.1., Rmks.2.3 \& 2.6]{bp}. We also recall the concept of {\em bricks of a semialgebraic set} proposed in \cite[\S3]{fe1}. For each non-empty semialgebraic set $\Ss\subset\R^n$ there exists a unique finite family of non-empty pure dimensional semialgebraic sets $\Bb_1,\ldots,\Bb_s$ of dimensions $d_1>\cdots>d_s$ such that $\Bb_i$ is the closure in $\Ss$ of the set of points of dimension $d_i$ of $\Ss$. We have $\Ss=\bigcup_{i=1}^s\Bb_i$ and $\Bb_i\setminus\bigcup_{j\neq i}\Bb_j$ is a dense semialgebraic subset of $\Bb_i$ for $i=1,\ldots,s$. 

\begin{proof}[Proof of Corollary {\em\ref{amal}}]
Let $\Bb_1,\ldots,\Bb_s$ be the bricks of the semialgebraic set $\Ss$. Define $\Bb:=\bigsqcup_{k=1}^s(\Bb_k\times\{k\})$, whose bricks $\Bb_k\times\{k\}$ are pairwise disjoint, and consider the projection $\pi:\R^n\times\R\to\R^n$ onto the first factor. For each $t\in\R$ the restriction $\pi|_{\R^n\times\{t\}}:\R^n\times\{t\}\to\R^n$ is an affine isomorphism, and in particular $\pi|_{\Bb}:\Bb\to\Ss$ is a surjective proper Nash map and $\pi|_{\Bb_k\times\{k\}}:\Bb_k\times\{k\}\to\Bb_k\subset\Ss$ is a surjective proper Nash map and a Nash diffeomorphism for each $k=1,\ldots,s$. Thus, it is enough to prove Corollary \ref{amal} for each $\Bb_k\times\{k\}$, so we may assume from the beginning that $\Ss$ is pure dimensional. By Theorem \ref{ridwell2} we may assume that $\Ss$ is a checkerboard set. Now we apply either Theorem \ref{red2} (if $\Ss$ is in addition closed) or Theorem \ref{red4} (otherwise) to prove the statement.
\end{proof}






\bibliographystyle{amsalpha}

\begin{thebibliography}{FFQU}

\bibitem[A1]{a1} S.S. Abhyankar: Resolution of singularities of embedded algebraic surfaces. {\em Pure and Applied Mathematics}, {\bf24} Academic Press, New York-London (1966).

\bibitem[A2]{a2} S.S. Abhyankar: Weighted expansions for canonical desingularization. {\em Lect. Notes Math.}{\bf910}, Berlin Heidelberg New York: Springer (1982).

\bibitem[A3]{a3} S.S. Abhyankar: Good points of a hypersurface. {\em Adv. in Math.} {\bf68} (1988), no. 2, 87--256.

\bibitem[AK1]{ak} S. Akbulut, H. King: A relative Nash theorem. {\em Trans. Amer. Math. Soc.} {\bf267} (1981), no. 2, 465--481. 

\bibitem[AK2]{ak2} S. Akbulut, H. King: Topology of real algebraic sets. {\em Math. Sci. Res. Inst. Publ.}, {\bf25} Springer-Verlag, New York, (1992).

\bibitem[BFR]{bfr} E. Baro, J.F. Fernando, J.M. Ruiz: Approximation on Nash sets with monomial singularities. \em Adv. Math. \em {\bf262} (2014), 59--114.

\bibitem[BM1]{bm1} E. Bierstone, P.D. Milman: Uniformization of analytic spaces. {\em J. Amer. Math. Soc.} {\bf2} (1989), no. 4, 801--836.

\bibitem[BM2]{bm2} E. Bierstone, P.D. Milman: Canonical desingularization in characteristic zero by blow up the maximum strata of a local invariant. {\em Invent. Math.} {\bf 128} (1997), no. 2, 207--302.

\bibitem[BM3]{bm3} E. Bierstone, P.D. Milman: Functoriality in resolution of singularities.{\em Publ. Res. Inst. Math. Sci.} {\bf 44} (2008), no. 2, 609--639.

\bibitem[BM4]{bm4} E. Bierstone, P.D. Milman: Resolution except for minimal singularities I. {\em Adv. Math.} {\bf231} (2012), no. 5, 3022--3053.

\bibitem[BP]{bp} E. Bierstone, A. Parusi\'nski: Global smoothing of a subanalytic set. {\em Duke Math. J.} {\bf167} (2018), no.16, 3115--3128.

\bibitem[BCR]{bcr} J. Bochnak, M. Coste, M.-F. Roy: Real algebraic geometry. {\em Ergeb. Math. } {\bf 36}, Springer-Verlag, Berlin (1998).

\bibitem[CF1]{cf1} A. Carbone, J.F. Fernando: Surjective Nash maps between semialgebraic sets. {\em Adv. Math.} {\bf 438} (2024), 109288 (57 pages).

\bibitem[CF2]{cf2} A. Carbone, J.F. Fernando: Applications of the Nash double of a Nash manifold with corners. {\em Preprint RAAG} (2025).

\bibitem[CF3]{cf3} A. Carbone, J.F. Fernando: Nash approximation of differentiable semialgebraic maps. {\em Preprint RAAG} (2025).

\bibitem[Fe1]{fe1} J.~F. Fernando: On chains of prime ideals in rings of semialgebraic functions, {em Q. J. Math.} {\bf 65} (2014), no.~3, 893--930

\bibitem[Fe2]{fe2} J.F. Fernando: On the substitution theorem for rings of semialgebraic functions. {\em J. Inst. Math. Jussieu} {\bf14} (2015), no. 4, 857--894.

\bibitem[Fe3]{fe3} J.F. Fernando: On Nash images of Euclidean spaces. {\em Adv. Math.} {\bf 331} (2018), 627--719.

\bibitem[FFQU]{ffqu} J.F. Fernando, G. Fichou, R. Quarez, C. Ueno: On regulous and regular images of Euclidean spaces, {\em Q. J. Math.} {\bf 69} (2018), no. 4, 1327--1351.

\bibitem[FG1]{fg1} J.F. Fernando, J.M. Gamboa: On the irreducible components of a semialgebraic set. {\em Internat. J. Math.} {\bf23} (2012), no. 4, 1250031, 40 pp.

\bibitem[FG2]{fg2} J.F. Fernando, J.M. Gamboa: On the semialgebraic Stone--\v{C}ech compactification of a semialgebraic set. \em Trans. Amer. Math. Soc \em {\bf 364} (2012), no. 7, 3479--3511.

\bibitem[FG3]{fg3} J.F. Fernando, J.M. Gamboa: On the Krull dimension of rings of continuous semialgebraic functions. {\em Rev. Mat. Iberoam.} {\bf31} (2015), no. 3, 753--756.

\bibitem[FGR]{fgr} J.F. Fernando, J.M. Gamboa, J.M. Ruiz: Finiteness problems on Nash manifolds and Nash sets. \em J. Eur. Math. Soc. \em (JEMS) {\bf16} (2014), no. 3, 537--570.

\bibitem[GS]{gs} R. Ghiloni, E. Savi: The topology of real algebraic sets with isolated singularities is determined by the field of rational numbers. \em Preprint \em (2023). {\tt arXiv:2302.04142}

\bibitem[Ha]{ha} J. Harris: Algebraic geometry. A first course. Corrected reprint of the 1992 original. {\em Graduate Texts in Mathematics}, {\bf133}. Springer-Verlag, New York, (1995).

\bibitem[Hi1]{hi} H. Hironaka: Resolution of singularities of an algebraic variety over a field of characteristic zero. I, II. \em Ann. of Math. \em (2) {\bf79} (1964), 109--203; ibid. (2) {\bf79} (1964) 205--326.

\bibitem[Hi2]{hi2} H. Hironaka: Stratification and flatness. Real and complex singularities (\em Proc. Ninth Nordic Summer School/NAVF Sympos. Math.\em, Oslo, 1976). Sijthoff and Noordhoff, Alphen aan den Rijn (1977), 199--265. 

\bibitem[HPV]{hpv} J. Hubbard, P. Papadopol, V. Veselov: A compactification of H\'enon mappings in $\C^2$ as dynamical systems. \em Acta Math. \em {\bf184} (2000), no. 2, 203--270. 

\bibitem[Ko]{ko} J. Koll\'ar: Lectures on resolution of singularities. {\em Annals of Mathematics Studies}, {\bf166}. Princeton University Press, Princeton, NJ, (2007).

\bibitem[L]{li} J. Lipman: Introduction to resolution of singularities. {\em Algebraic geometry} (Proc. Sympos. Pure Math., {\bf29}, Humboldt State Univ., Arcata, Calif., 1974), pp. 187--230. Amer. Math. Soc., Providence, R.I., (1975). 

\bibitem[Sh]{sh} M. Shiota: Nash manifolds. \em Lecture Notes in Mathematics\em, {\bf 1269}. Springer-Verlag, Berlin (1987).

\bibitem[M]{mo} T. Mostowski: Some properties of the ring of Nash functions. {\em Ann. Scuola Norm. Sup. Pisa Cl. Sci.} (4) {\bf3} (1976), no. 2, 245--266.

\bibitem[S]{ds} F.P. Di Stefano: Oriented blow-up. {\em Ann. Univ. Ferrara Sez.} VII (N.S.) {\bf24} (1979), 25--35. 

\bibitem[St]{st} J. Stasica: Smooth points of a semialgebraic set. {\em Ann. Pol. Math.} {\bf82} (2003), no.2, 149--153.

\bibitem[Vi]{vi} O. Villamayor: Constructiveness of Hironaka's resolution. {\em Ann. Sci. \'Ecole Norm. Sup.} (4) {\bf22} (1989), no. 1, 1--32.

\bibitem[Z1]{z1} O. Zariski: Algebraic varieties over ground fields of characteristic zero. 
{\em Amer. J. Math.} {\bf62} (1940), 187--221. 

\bibitem[Z2]{z2} O. Zariski: A simplified proof for the resolution of singularities of an algebraic surface. {\em Ann. of Math.} (2) {\bf43} (1942), 583--593. 

\bibitem[Z3]{z3} O. Zariski: Reduction of the singularities of algebraic three dimensional varieties. {\em Ann. of Math.} (2) {\bf45} (1944), 472--542. 

\end{thebibliography}

\end{document}